\numberwithin{equation}{section}
\newcounter{AbcT}
\newtheorem {AbcTheorem} [AbcT]{Theorem}
\newtheorem {Theorem}    {Theorem}[section]
\newtheorem* {Lemma9.3}    {Proposition 10.2}
\newtheorem* {Proposition10.3}    {Proposition 10.3}
\newtheorem {Question}    {Question}
\newtheorem {Problem}    [Question]{Problem}
\newtheorem {Lemma}      [Theorem]    {Lemma}
\newtheorem {Corollary}   [Theorem] {Corollary}
\newtheorem {Proposition}[Theorem]    {Proposition}
\newtheorem {Observation}[Theorem]    {Observation}
\theoremstyle{remark}
\newtheorem {Definition}  [Theorem]  {\bf{Definition}}
\newtheorem {Remark}  [Theorem]	{\bf{Remark}}
\newtheorem {Notation}  [Theorem]  {\bf{Notation}}
\newcounter{DM@bibnum}
\newcommand  {\abs}[1] {{#1}}
\newcommand  {\apf}[1] {{\mathcal{#1}}}
\newcommand {\R} {{\mathbb R}}
\newcommand {\Z} {{\mathbb Z}}
\newcommand{\la}{\langle}
\newcommand{\ra}{\rangle}
\def\IA{{\rm IA}}
\def\SL{{\rm SL}}
\def\PSL{{\rm PSL}}
\def\Sym{{\rm Sym}}
\def\Alt{{\rm Alt}}
\def\GL{{\rm GL}}
\def\Aut{{\rm Aut}}
\def\uAut{\underline{\Aut}}
\def\SAut{{\rm SAut}}
\def\SFAut{{\rm SAut}_{\dbF_p}}
\def\Inn{{\rm Inn}}
\def\Ker{{\rm Ker\,}}
\def\rk{{\rm rk\,}}
\def\QZ{{\rm QZ}}
\def\VZ{{\rm VZ}}
\def\Gama{{G}}
\def\iotta{{i}}
\def\NSL_2{{\mathcal N SL_2}}
\def\Comm{{\rm Comm}}
\def\SComm{{\rm SComm}}
\def\AComm{{\rm AComm}}
\def\prof{{\rm prof}}
\def\Lg{{\mathrm{Comm}}}
\def\VZ{{\rm VZ}}
\newcommand{\mon}{\ensuremath{\mathrm{Mon}}}%
\def\CpC{{\mathcal{C}}}
\def\SCpC{{\mathrm{S}\mathcal{C}}}
\newcommand {\absF}[1] {F_{#1}}
\newcommand {\propF}[1] {\mathbf{F}}
\newcommand {\propU}[1] {\mathbf{U}}
\newcommand {\propV}[1] {\mathbf{V}}
\newcommand {\propW}[1] {\mathbf{W}}
\newcommand {\propZ}[1] {\mathbf{Z}}
\def\eps{\varepsilon}
\def\lam{\lambda}            
\def\phi{\varphi}
\def\calA{{\mathcal A}}
\def\calF{{\mathcal F}}
\def\calS{{\mathcal S}}
\def\calU{{\mathcal U}}
\def\hbar{\bar h}
\def\dbF{{\mathbb F}}
\def\dbN{{\mathbb N}}
\def\dbQ{{\mathbb Q}}
\def\dbR{{\mathbb R}}
\def\dbZ{{\mathbb Z}}
\def\skv{{\vskip .1cm}}
\begin{document}

\title{On commensurators of free groups and free pro-$p$ groups}

\author[Barnea]{Yiftach Barnea}

\author[Ershov]{Mikhail Ershov}

\author[Le Boudec]{Adrien Le Boudec}

\author[Reid]{Colin D. Reid}

\author[Vannacci]{Matteo Vannacci}

\author[Weigel]{Thomas Weigel}
\subjclass[2020]{Primary: 20E05, 20E32; Secondary 20E18, 20F28, 22D05.}
\keywords{Commensurator, simple groups, totally disconnected locally compact groups, pro-$p$ groups, free groups, automorphism groups of free groups}

\begin{abstract}
We study the commensurators of free groups and free pro-$p$ groups, as well as certain subgroups of these. We prove that the commensurator  $\Comm(F)$ of a non-abelian free group of finite rank $F$ is not virtually simple, answering a question of Lubotzky. On the other hand, we exhibit a family of easy-to-define finitely generated subgroups of $\Comm(F)$ and show that some groups in this family are simple.

For a prime $p$, we also consider the $p$-commensurator $\Comm_p(F)$, which is the commensurator of $F$ viewed as a group with pro-$p$ topology. By contrast with $\Comm(F)$, we prove that $\Comm_p(F)$ has a simple subgroup of index at most $2$. Further, while the isomorphism class of $\Comm(F)$ does not depend on the rank of $F$, we prove that the isomorphism class of $\Comm_p(F)$ depends on the rank of $F$ and determine the exact dependency.

If $\propF{p}$ is the pro-$p$ completion of $F$ (which is a free pro-$p$ group), $\Comm(\propF{p})$ is a totally disconnected
locally compact (tdlc) group containing $\propF{p}$ as an open subgroup. We use $\Comm_p(F)$ to construct an abstractly
simple subgroup of $\Comm(\propF{p})$ containing $\propF{p}$ as well as a family of non-discrete tdlc groups which are compactly generated and simple. 
\end{abstract}

\maketitle

\setcounter{tocdepth}{1}
\tableofcontents

\section{Introduction} \label{sec-intro}

\subsection{Motivation and overview}
Let $\Gama$ be an abstract group. The commensurator of $\Gama$ is a natural variation of the automorphism group $\Aut(\Gama)$. Instead of automorphisms, one considers {\it virtual isomorphisms of $\Gama$}, that is, isomorphisms between finite index subgroups of $\Gama$.
While in general one cannot compose two virtual automorphisms of $\Gama$, the problem goes away if we replace virtual automorphisms by their equivalence classes where two virtual automorphisms are equivalent if they coincide on a finite index subgroup.
The equivalence classes of virtual automorphisms of $\Gama$ are called {\it commensurations of $\Gama$}. They form a group (with respect to composition) called the {\it commensurator of $\Gama$} and denoted by $\Comm(\Gama)$.

Historically, the groups $\Comm(\Gama)$ were first introduced in connection with relative commensurators. By definition if $\Gama$ is a subgroup of a group $L$, the relative commensurator of $\Gama$ in $L$ is the subgroup $\Comm_{L}(\Gama)$ consisting of all $g \in L$ such that $\Gama$ and $g \Gama g^{-1}$ are commensurable, and in such situation there is a natural homomorphism $\Comm_{L}(\Gama)\to \Comm(\Gama)$. Relative commensurators play an important role in the study of discrete subgroups of Lie groups. A key theorem in that realm is Margulis arithmeticity criterion which asserts that if $\Gama$ is an irreducible lattice in a connected semi-simple Lie group $L$  with trivial center and no compact factor, then $\Gama$ is arithmetic if and only if $\Comm_{L}(\Gama)$ is a dense subgroup of $L$ \cite{Margulis-book}. 

\skv

If $G$ is a profinite group, one defines $\Comm(G)$ in the same way replacing finite index subgroups by open subgroups and requiring that virtual automorphisms are continuous. Commensurators of profinite groups play a key role in understanding the connection between the structure of totally disconnected locally compact groups (tdlc hereafter) and their compact open subgroups (which are profinite). Indeed, if $L$ is a tdlc group, the compact open
 subgroups of $L$ form a base of neighborhoods of the identity, and for any such subgroup $G$ there is a natural homomorphism
$L\to \Comm(G)$ induced by conjugation. We refer the reader to subsection~\ref{sec:tdlcintro} for details. 

\skv

If $G$ is an abstract (resp. profinite) group, there is a natural homomorphism $\iotta: G \to \Comm(G)$ which sends
each $g\in G$ to the class of the corresponding inner automorphism. Its kernel is the set of all $g\in G$ which centralize a finite index (resp. open) subgroup of $G$.
There are many important cases where $\Comm(G)$ is equal to $\iotta(G)$ or at least contains $\iotta(G)$ as a subgroup of finite index. 
Examples include non-arithmetic irreducible lattices in connected semisimple Lie groups with trivial center and no compact factor $\neq \mathrm{PSL}_2(\dbR)$ \cite{Margulis-book},  mapping class groups of surfaces \cite{Ivanov-MCG}, outer automorphism groups of free groups \cite{FarbHandel, Horbez-Wade} and automorphism groups of free groups \cite{Bridson-Wade}. 
\skv

In this paper we will focus on groups which are on the opposite end of the spectrum in terms of their commensurator size --
non-abelian free groups and free pro-$p$ groups of finite rank. If $\calF$ is one of those groups, the automorphism group $\Aut(\calF)$ is already much larger than $\calF$. And the commensurator $\Comm(\calF)$ is substantially larger than $\Aut(\calF)$ because it contains subgroups isomorphic to $\Aut(\calF')$ for free (abstract or pro-$p$) groups $\calF'$ of arbitrarily large rank. Further, $\calF$ contains plenty of isomorphic finite index subgroups (as any two subgroups of $\calF$ of the same finite index are isomorphic), and $\Comm(\calF)$ contains an isomorphism between any two such subgroups. One testament to the complexity of the structure of $\Comm(F)$
for a non-abelian free group $F$ is a theorem of Bering and Studenmund \cite{Bering-Studenmund} asserting that every countable locally finite group embeds into $\Comm(F)$.

\skv
It is well known that for any finitely generated residually finite group $\Gama$, its automorphism group $\Aut(\Gama)$ is residually finite. By contrast, there are classical situations where the commensurator is close to being simple. For instance, the latter happens if 
$\Gama=\mathbb Z^d$, in which case $\Comm(\Gama)\cong \GL_d(\dbQ)$. Other more involved examples are the groups of integer points $L_\Z$ of a connected semi-simple group $L$ defined over $\dbQ$, whose relative commensurator in $L$ is, under mild assumptions on $L$, equal to $L_\dbQ$ \cite{Borel66},  which is often close to being a simple group \cite{Tits64}.
One of the main goals of this paper is to determine whether free groups enjoy a similar behavior.
More precisely, we will study the simplicity question for the commensurator $\Comm(\calF)$ as well as for certain subgroups of $\Comm(\calF)$, where as above  $\calF$ is either a free group or a free pro-$p$ group.

\subsection{Commensurators of free groups} \label{subsec-intro-Comm(F)}

Here and throughout the paper, we use the notation $F$ to denote a non-abelian free group of finite rank. Since all non-abelian free groups of finite rank are virtually isomorphic to each other, the isomorphism class of $\Comm(F)$ does not depend on the rank of $F$. 
To the best of our knowledge, the group $\Comm(F)$ was first explicitly considered in a 2010 paper of 
Bartholdi and Bogopolski~\cite{Barth-Bogo} where in particular it was proved that $\Comm(F)$ is not finitely generated.
However, there is another group related to $\Comm(F)$ whose study goes back to at least early 1990s -- the relative commensurator of $F$ inside the automorphism group of the Cayley tree $T$ of  $F$. The group $F$ sits as a uniform lattice in $\Aut(T)$, and its relative commensurator 
$\Comm_{\Aut(T)}(F)$ plays a key role in the general theory of tree lattices -- see \cite{BK}, \cite{LMZ} and \cite{Ca} for many important results and questions about this group. In particular, it is asked in \cite[Remarks~2.12(i)]{LMZ} whether a certain index two subgroup of $\Comm_{\Aut(T)}(F)$ is simple.

The question whether the full commensurator $\Comm(F)$ is simple appears as 
Problem~20.7.2 in \cite{Ca-Mo-Problems}, where it is attributed to Lubotzky. Some evidence towards a positive answer was obtained by
Caprace in \cite[Theorem A.2]{Ca}. A group $\Gama$ is called \textit{monolithic} if the intersection of all non-trivial normal subgroups of $\Gama$ is non-trivial; if this is the case, this intersection is called the monolith of $\Gama$ and denoted $\mon(\Gama)$. Caprace proved that $\Comm(\abs{F})$ is monolithic, $\mon(\Comm(\abs{F}))$ contains a finite index subgroup of $F$, and $\mon(\Comm(\abs{F}))$ is simple~\cite[Theorem A.2]{Ca}. 

Our first main theorem (Theorem~\ref{thmA} below) implies that while $\mon(\Comm(\abs{F}))$ is indeed a very large subgroup of $\Comm(\abs{F})$,
it is not the entire $\Comm(\abs{F})$ or even a finite index subgroup of it. In particular, the answer to 
the aforementioned question of Lubotzky is negative.

\begin{Definition}\rm Given an abstract group $G$, we define $\AComm(G)$ to be the subgroup of $\Comm(G)$ generated by 
the canonical images of the groups $\Aut(H)$ where $H$ ranges over all finite index subgroups of $G$. 
\end{Definition}

As we will see, the subgroup $\AComm(G)$ is normal in $\Comm(G)$ under very mild assumptions on $G$ (see Lemma~\ref{lemma:Aut_1}(b)) -- in particular, this is the case  if $G$ is finitely generated. 
Further, under some natural conditions on $G$ which hold, for instance, if $G$ is free,
the canonical map $\Aut(H)\to\Comm(G)$ is injective for any finite index subgroup $H$ (see Lemmas~\ref{lem-BEW}~and~\ref{lem-unique-root}). In this
case we will identify $\Aut(H)$ with its image in $\Comm(G)$.
\skv

We are now ready to state our first main theorem.

\begin{AbcTheorem}
\label{thmA}
Let $F$ be a non-abelian free group of finite rank. The following hold: 

\begin{enumerate}	
\item \label{item-AComm-simple} $\AComm(F)$ is simple and is equal to the monolith of $\Comm(F)$.
	\item \label{item-AComm-inf-ind} $\AComm(F)$ has infinite index in $\Comm(F)$. In particular, $\Comm(F)$ is not virtually simple. 
\end{enumerate} 

\end{AbcTheorem}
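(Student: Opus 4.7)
The plan is to prove (1) first, identifying $\AComm(F)$ with $\mon(\Comm(F))$, and then to derive (2) by exhibiting infinitely many distinct cosets of this monolith in $\Comm(F)$. For (1), the easy inclusion is $\mon(\Comm(F)) \subseteq \AComm(F)$: since $\AComm(F)$ is nontrivial (it contains $\Aut(F)$) and is normal in $\Comm(F)$ by Lemma~\ref{lemma:Aut_1}(b), Caprace's theorem~\cite{Ca} immediately forces the monolith to sit inside it. For the reverse inclusion, Caprace also supplies a finite-index subgroup $F_0 \subseteq \mon(\Comm(F))$; a conjugation argument first upgrades this to $\iotta(F) \subseteq \mon(\Comm(F))$: given $f \in F$, one chooses a finite-index subgroup $K$ of $F$ with $f \in K$ and $\rk(K) = \rk(F_0)$, takes any isomorphism $\psi \colon F_0 \to K$, and observes that with $f_0 := \psi^{-1}(f) \in F_0$, we have $\iotta(f) = \psi\,\iotta(f_0)\,\psi^{-1}$; since $\iotta(f_0) \in F_0 \subseteq \mon$ and $\mon$ is normal, $\iotta(f) \in \mon$ as well. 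The main step is then to extend this to $\Aut(H) \subseteq \mon(\Comm(F))$ for every finite-index $H \leq F$, by realising each $\phi \in \Aut(H)$ as a product of $\Comm(F)$-conjugates of elements already known to lie in $\mon$. A natural mechanism is to observe that for a characteristic finite-index subgroup $H' \leq F$, conjugation by any $f \in F$ restricts to an automorphism $c_f^{H'} \in \Aut(H')$, generically representing a nontrivial class in $\Out(H')$; iterating this procedure and taking further $\Comm(F)$-conjugates should exhaust $\Aut(H)$. Simplicity of $\AComm(F)$ is then immediate from simplicity of $\mon$.

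For (2), with the identification $\AComm(F) = \mon(\Comm(F))$ in hand, infinite index is equivalent to showing that the quotient $\Comm(F)/\mon(\Comm(F))$ is infinite. The plan is to construct an infinite family $\{g_n\}_{n \geq 1}$ of commensurations $g_n = [\phi_n \colon H_n \to K_n] \in \Comm(F)$, with $(H_n, K_n)$ drawn from distinct equivalence classes under $F$-conjugation (or an appropriate refinement thereof), and to define an invariant $\delta \colon \Comm(F) \to Q$ into an infinite set $Q$ that is constant on $\AComm(F)$-cosets and separates the $g_n$. A natural candidate is a combinatorial invariant derived from the pair of Stallings graphs of $H_n$, $K_n$ together with the twist recorded by $\phi_n$. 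The ``not virtually simple'' consequence then follows formally: for any finite-index normal subgroup $N \leq \Comm(F)$, the intersection $N \cap \AComm(F)$ is a finite-index normal subgroup of the simple infinite group $\AComm(F)$, hence equal to $\AComm(F)$; since $[\Comm(F) : \AComm(F)] = \infty$ while $[\Comm(F) : N] < \infty$, the inclusion $\AComm(F) \subseteq N$ is strict, exhibiting $\AComm(F)$ as a proper nontrivial normal subgroup of $N$.

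\textbf{Main obstacle.} The heart of the argument is the inclusion $\AComm(F) \subseteq \mon(\Comm(F))$ in Part (1), namely upgrading from $\iotta(F)$ to full $\Aut(H)$. A purely internal appeal to the normal-subgroup structure of $\Aut(H)$ cannot work, since $\Out(F_n)$ has nontrivial proper normal subgroups for all $n \geq 2$, so a normal subgroup of $\Aut(H)$ containing $\iotta(H)$ need not equal $\Aut(H)$. The additional ingredient must exploit the richer conjugation action of the ambient $\Comm(F)$, recognising outer generators of $\Aut(H)$ as explicit $\Comm(F)$-conjugates of inner automorphisms. For Part (2), the parallel difficulty is making the invariant $\delta$ canonical enough that its triviality on all of $\AComm(F)$, rather than merely on $\Aut(F)$, can be checked directly.
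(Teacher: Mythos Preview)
Your plan for Part~(1) takes a genuinely different route from the paper's. You bootstrap from Caprace's result in~\cite{Ca} (monolith exists, is simple, contains a finite-index subgroup of $F$), whereas the paper is explicitly self-contained: it proves directly (Theorem~\ref{thmAextended}) that every nontrivial subgroup of $\Comm(F)$ normalized by $\AComm(F)$ already contains $\AComm(F)$, using Hall's free factor theorem to show that all nontrivial elements of $F$ are $\AComm(F)$-conjugate (Proposition~\ref{prop-one-conj-class}). Your upgrade from $F_0$ to $\iotta(F)$ is fine. The obstacle you flag---getting from $\iotta(F)$ up to $\Aut(H)$---is exactly the content of the paper's Proposition~\ref{prop-int-implies-saut} together with Lemma~\ref{lemma-generation-indexp}: one shows that the normal closure of a single element $x_1^s$ under $\SAut(H)$-conjugation already contains $\SAut(H)$ for $H\in SCQ(F,m)$, and that the subgroups $\SAut(H)$ in turn generate $\SAut(F)$; combined with $\AComm(F)=\SComm(F)$ (Proposition~\ref{AComm=SComm}) this closes the loop. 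So your intuition (``outer generators as $\Comm(F)$-conjugates of inner automorphisms'') is correct, and the paper's technical lemmas are precisely what you need to fill the gap. Either approach works; the paper's buys independence from~\cite{Ca} and actually proves the stronger statement that $\AComm(F)$ normalizes no proper nontrivial subgroup of $\Comm(F)$.

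For Part~(2) your strategy is right in spirit but the Stallings-graph idea is on the wrong track, and making such a geometric invariant constant on all of $\AComm(F)$ would be awkward. The paper's invariant is purely group-theoretic and much cleaner: to a virtual isomorphism $\phi:A\to B$ with $A,B$ subnormal, associate the pair of multisets of composition factors $\bigl(CF(F/A),\,CF(F/B)\bigr)$. One defines $\Comm_{SN}(F)$ to be those commensurations representable by some $\phi:A\to B$ with $CF(F/A)=CF(F/B)$, checks this is a subgroup (Theorem~\ref{thm:nonsimple}), and observes that $\Aut(H)\subseteq\Comm_{SN}(F)$ trivially (take $A=B$), hence $\AComm(F)\subseteq\Comm_{SN}(F)$. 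Properness is witnessed by any isomorphism $U\to V$ between normal finite-index subgroups with $CF(F/U)\neq CF(F/V)$; such pairs exist in abundance since any two subgroups of $F$ of the same index are isomorphic. For infinite index (Theorem~\ref{thm:CommACommquotient}) one simply takes $n$ pairwise isomorphic normal subgroups $U_1,\dots,U_n$ with pairwise distinct $CF(F/U_i)$---e.g.\ kernels of surjections onto $S_i\times C_i$ for distinct finite simple groups $S_i$ and cyclic $C_i$ chosen so that $|S_i||C_i|$ is constant---and the isomorphisms $\phi_i:U_1\to U_i$ give $n$ distinct cosets of $\Comm_{SN}(F)$, hence of $\AComm(F)$.
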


Part (\ref{item-AComm-simple}) strengthens the result of Caprace stated above. 
Our proof of (\ref{item-AComm-simple}) does not rely on \cite[Theorem A.2]{Ca} and uses a completely different approach based on the free factor theorem of M.~Hall. 
We will deduce part (\ref{item-AComm-inf-ind}) from a more general result applicable to other
classes of groups, including non-abelian surface groups -- see Theorems~\ref{thm:nonsimple}~and~\ref{thm:CommACommquotient}.

\subsection{New finitely generated simple groups}

As a general rule, constructing simple groups is a more challenging task if the groups are required to be finitely generated. In this paper 
we will use commensurators to exhibit a new family of finitely generated infinite simple groups. The construction is entirely elementary.

\begin{Definition} \label{defi-An-intro}
	Let $F$ be a free group and $ m\geq 2$. Let $A_{m}(F)$ be the subgroup of $\Comm(F)$ generated by the subgroups $\Aut(H)$ where $H$ ranges over normal subgroups of $F$ such that $F/H$ is cyclic of order $m$. 
\end{Definition}

By definition $A_{m}(F)$ is a subgroup of $\AComm(F)$. By a classical theorem of Nielsen, the automorphism group of a finitely generated free group is finitely generated. Since $F$ has only a finite number of subgroups of a given index, it follows that $A_m(F)$ is a finitely generated group. In section~\ref{sec-finite-generated} we will define a certain subgroup $S_m(F)$ of $A_m(F)$ which contains $F$ and has index at most $2$ (so $S_m(F)$ is still finitely generated).

\begin{AbcTheorem} \label{thm-intro-fg-simple}
	Let $m \geq 2$, and let $F_d$ be a free group of rank $d \geq 2$. Then there exist infinitely many values of $d$ such that the finitely generated group $S_{m}(F_d)$ is simple.
\end{AbcTheorem}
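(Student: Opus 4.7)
The strategy is to combine the simplicity of $\AComm(F_d)$ established in Theorem \ref{thmA}(\ref{item-AComm-simple}) with a tailored analysis of the generating structure of $S_m(F_d)$ to show that, for infinitely many ranks $d$, no proper nontrivial normal subgroup exists.

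I would first make explicit the definition of $S_m(F_d)$ promised in Section \ref{sec-finite-generated}. Since $S_m(F_d)$ has index at most $2$ in $A_m(F_d)$ and contains $F$, the natural candidate is the kernel of a sign homomorphism $A_m(F_d) \to \{\pm 1\}$ assembled from the determinant signs of the pieces $\Aut(H)$ via the abelianization $H^{\mathrm{ab}} \cong \Z^r$, where $r = m(d-1)+1$ by the Schreier formula; so $S_m(F_d)$ is generated (together with $F$) by the subgroups $\SAut(H)$ as $H$ ranges over normal subgroups of $F_d$ with $F_d/H$ cyclic of order $m$. A preliminary consistency check is that the sign of an element lying in two such $\Aut(H) \cap \Aut(H')$ agrees for the two choices and that $F \subset \ker$; for odd $m$ the latter is automatic, while for even $m$ it forces the definition to use a slightly refined sign.

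The core of the proof is a simplicity criterion for $S_m(F_d)$. Let $N$ be a nontrivial normal subgroup. I would aim to establish, in succession: (i) $N$ contains a nontrivial element supported in a single $\SAut(H)$; (ii) $N \supseteq \SAut(H)$; (iii) $N \supseteq \SAut(H')$ for every admissible $H'$, hence $N = S_m(F_d)$. For step (i) one starts with any $g \in N \setminus \{1\}$ and uses that the normal closure of $g$ in $\AComm(F_d)$ is everything by Theorem \ref{thmA}(\ref{item-AComm-simple}); a double-commutator trick, conjugating $g$ by carefully chosen elements of $F$ (which lie in $S_m(F_d)$) and multiplying by elements of $\Aut(H)$ whose support lies in a prescribed part of $F$, should extract a nontrivial element of a fixed $\SAut(H)$. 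For step (ii) the main input is that $\SAut(F_r)$ is perfect for $r$ sufficiently large and that its ``small'' normal subgroups (notably $\Inn(F_r)$ and the lower terms of the Andreadakis filtration such as $\IA(F_r)$) are known to be detected by conjugation by elements of $F_d$ sitting inside $\Aut(H)$. For step (iii) one uses that $\Aut(F_d)$ permutes the admissible $H$ transitively (via its action on epimorphisms $F_d \twoheadrightarrow \Z/m\Z$) and that enough of $\Aut(F_d)$ lies in $S_m(F_d)$ to conjugate any fixed $\SAut(H)$ onto every other $\SAut(H')$.

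The main obstacle, and the reason the conclusion restricts to infinitely many rather than all $d$, is step (ii): the groups $\SAut(F_r)$ are not abstractly simple, and ruling out the possibility that $N \cap \SAut(H)$ is a ``congruence-type'' proper normal subgroup requires that $r = m(d-1)+1$ land in a range for which the relevant commutator and cohomological computations behave well. I would pin down one explicit residue class of $d$ modulo $m$ (or a one-parameter family of $d$) for which the obstruction vanishes, and then observe that such a class is infinite. An alternative route would be to distill the argument into a Caprace-type general simplicity criterion, analogous to the one underlying the proof of Theorem \ref{thmA}(\ref{item-AComm-simple}), and then verify its hypotheses for $S_m(F_d)$ in the chosen rank range.
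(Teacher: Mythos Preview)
Your outline has the right high-level structure — get $N$ to contain something nontrivial, bootstrap to a full $\SAut(H)$, then propagate — and your steps (ii)--(iii) are close in spirit to what the paper does in Proposition~\ref{prop-SmF-fi-F} and Proposition~\ref{prop-int-implies-saut}. But the crucial missing ingredient is the passage from ``$N$ is nontrivial'' to ``$N$ contains a finite index subgroup of $F$''. You correctly flag step (ii) as the obstacle, and your proposed fix (commutator/cohomological computations ruling out congruence-type proper normal subgroups of $\SAut(F_r)$, for $d$ in some residue class) does not point to any concrete mechanism; $\SAut(F_r)$ has a rich supply of proper normal subgroups ($\Inn$, $\IA$, higher Andreadakis terms, congruence kernels), and there is no known elementary way to exclude $N\cap\SAut(H)$ from landing in one of them. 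Invoking Theorem~\ref{thmA}(\ref{item-AComm-simple}) does not help here either: the normal closure of $g$ in $\AComm(F_d)$ being everything tells you nothing about its normal closure in the much smaller group $S_m(F_d)$.

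The paper bypasses this obstacle by a completely different idea: it realizes $F_d$, for the specific ranks $d=d_\ell=\rk(\Gamma(\ell))$ with $m\mid\ell$, as a principal congruence subgroup of $\PSL_2(\Z)$, and shows (Proposition~\ref{prop-arithmetic}) that the image of $\PSL_2(\Z[1/m])$ under the relative commensurator map lands in $A_m(F_d)$. Since $\PSL_2(\Z[1/m])$ is just-infinite (Mennicke, Serre, or Margulis' normal subgroup theorem), any nontrivial normal subgroup $N\trianglelefteq A_m(F_d)$ must intersect it in a finite-index subgroup, and hence $N$ contains a finite-index subgroup of $F_d$. From there Proposition~\ref{prop-SmF-fi-F} finishes. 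The ``infinitely many $d$'' thus arise arithmetically as the ranks $d_\ell$, not as a residue class mod $m$. This arithmetic embedding is the genuine content you are missing.
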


We refer the reader to Theorem~\ref{thm-fg-simple-general} for a more precise statement. The restriction on the rank in Theorem \ref{thm-intro-fg-simple} appears to be a technical limitation, and we believe that the statement is possibly true for every $m \geq 2$ and every $d \geq 2$ -- see Remark \ref{remark-all-ranks}. 

 As we already mentioned, the group $\Comm(F)$ is not finitely generated \cite{Barth-Bogo}. The subgroup $\AComm(F)$ is not finitely generated either \cite[Theorem A.2]{Ca}. The first examples of infinite finitely generated simple groups embeddable in $\Comm(F)$ have been constructed by Burger~and~Mozes \cite{BM00b}. These groups act properly and cocompactly on the product of two trees, in such a way that the action on each factor is not proper. Equivalently, they are cocompact irreducible lattices in the automorphism group of the product of two trees. We refer to the recent survey \cite{Caprace-survey} for an extensive discussion on these groups. To the best of our knowledge, prior to this work, irreducible lattices in products of trees was the only source of examples of finitely generated infinite simple subgroups of $\Comm(F)$. The groups $S_m(F)$ are of very different nature than irreducible lattices in product of trees. For instance, they cannot act properly and cocompactly by isometries on a complete CAT(0) space -- see Remark  \ref{remark-not-CAT(0)}. 

\subsection{On the $p$-commensurator of free groups}

As before, let $F$ be a non-abelian free group of finite rank. If $p$ is a prime number, the pro-$p$ topology on $F$ is the topology for which a base of open neighborhoods of an element $x \in F$ is given by cosets of the form $xN$ where $N$ ranges over the normal subgroups of $F$ of finite $p$-power index. We will say that a subgroup of $F$ is $p$-open if it is open in the pro-$p$ topology. The $p$-commensurator of $F$, denoted $\Comm_p(F)$, is defined similarly to the commensurator of $F$, except that finite index subgroups are replaced by $p$-open subgroups 
(see Definition~\ref{def-Comm-Commp}). There is a natural  homomorphism $\Comm_p(F) \to \Comm(F)$, and this homomorphism is injective 
(Lemma~\ref{lem-germ-compare-topologies}). It will often be convenient to identify $\Comm_p(F)$ with its image in $\Comm(F)$. 

The group $\Comm_p(F)$ was considered by
Bou-Rabee and Young in \cite{BouRabee-Young} in connection with the study of representations of Baumslag--Solitar groups in $\Comm(F)$. The paper~\cite{BouRabee-Young} also posed several interesting questions very relevant to our work.

\skv

We will establish several results showing that in many ways $\Comm_p(F)$ behaves differently from $\Comm(F)$ and that $\Comm_p(F)$ may be easier to understand than $\Comm(F)$. First, similarly to the case of $\Comm(F)$, we can consider the subgroup of $\Comm_p(F)$ generated by all subgroups $\Aut(U)$ where $U$ now ranges over all $p$-open subgroups of $F$. But in contrast with Theorem~\ref{thmA}(\ref{item-AComm-inf-ind}), we will show that this subgroup is actually equal to $\Comm_p(F)$ -- see Proposition \ref{prop-Cp-generation}. 
Morever, we will show that the $p$-commensurator $\Comm_p(F)$ is virtually simple:

\begin{AbcTheorem}
	\label{ThmC} Let $F$ be a non-abelian free group of finite rank. Then $\Comm_p(F)$ contains a simple subgroup of index at most two.
\end{AbcTheorem}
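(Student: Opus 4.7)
My plan is to exhibit the required simple subgroup as $\SComm_p(F)$, the subgroup of $\Comm_p(F)$ generated by the images of $\SAut(U)$ as $U$ ranges over $p$-open subgroups of $F$. I would then prove, in order, that (i)~$\SComm_p(F)$ is normal of index at most $2$ in $\Comm_p(F)$, and (ii)~$\SComm_p(F)$ is simple.

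For (i), I would construct a sign homomorphism $\det\colon\Comm_p(F)\to\{\pm 1\}$. Each $p$-open $U\leq F$ is free of rank at least $2$ and carries a standard determinant $\det_U\colon\Aut(U)\to\{\pm 1\}$ with kernel $\SAut(U)$. Using the behaviour of the determinant on $U^{\mathrm{ab}}$ under restriction to the image of $V^{\mathrm{ab}}$ for $V\leq U$ a further $p$-open subgroup, one verifies that the family $(\det_U)$ is compatible across $p$-open inclusions. Combining this compatibility with Proposition~\ref{prop-Cp-generation}, which says that $\Comm_p(F)$ is generated by the $\Aut(U)$ for $U$ $p$-open, produces a well-defined $\det$ on $\Comm_p(F)$; its kernel contains $\SComm_p(F)$, and a direct check using the determinant~$-1$ automorphism inverting a basis element shows the two coincide. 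This gives the index bound and normality.

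For (ii), let $N\trianglelefteq\SComm_p(F)$ be non-trivial and pick $1\neq g\in N$. I would proceed in three stages. \emph{Localisation:} by conjugating $g$ using inner automorphisms coming from $F$ (which lie in $\SComm_p(F)$) and using carefully chosen special automorphisms of small $p$-open subgroups, produce a non-trivial $h\in N\cap\SAut(W)$ for some $p$-open $W$ of rank at least $3$. \emph{Absorption:} invoke Gersten's theorem that $\SAut(F_d)$ is perfect for $d\geq 3$ and is generated by a single conjugacy class of Nielsen transvections; the normal closure argument then yields $\SAut(W)\leq N$. \emph{Propagation:} for any other $p$-open $W'\leq F$, exhibit an element of $\SComm_p(F)$ whose conjugation transports $\SAut(W)$ into $\SAut(W')$, after passing to a common deeper $p$-open subgroup (which is always available since intersections of $p$-open subgroups are $p$-open). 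By normality of $N$, this forces $\SAut(W')\leq N$, and so $N=\SComm_p(F)$.

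The main obstacle I anticipate is the localisation stage. In the proof of Theorem~\ref{thmA}\,(\ref{item-AComm-simple}) the analogous step exploits M.~Hall's theorem to realise arbitrary finitely generated subgroups of $F$ as free factors of finite-index subgroups, and this flexibility is essential for bringing $g$ into a prescribed $\Aut(W)$. In the pro-$p$ setting one is restricted to $p$-open supports, and free-factor complements of a given finitely generated subgroup need not be achievable inside a $p$-open ambient subgroup. I expect this to be handled by transferring the problem to the pro-$p$ completion $\propF{p}$, where closed subgroups of free pro-$p$ groups enjoy analogous Hall-type properties, and intersecting back with $F$ to recover $p$-open targets; making this pro-$p$ analogue precise is where I expect the bulk of the technical work to lie.
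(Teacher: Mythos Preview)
Your plan for part~(i) has a genuine gap: the family $(\det_U)$ is \emph{not} compatible across $p$-open inclusions. Lemma~\ref{lem-stab-not-contained-special-aut} exhibits, for any $H\in SCQ(F,p)$, automorphisms $\alpha\in\Aut(F)_H$ with $\det_F(\alpha)\neq\det_H(\alpha_{|H})$; for instance when $p=2$ one finds $\alpha$ with $\det_F(\alpha)=-1$ but $\det_H(\alpha_{|H})=+1$. Consequently no global sign homomorphism $\Comm_p(F)\to\{\pm1\}$ with kernel $\SComm_p(F)$ can exist in general, and indeed Corollary~\ref{prop-C+equalsC} shows $\SComm_p(F)=\Comm_p(F)$ for $p=2$ and for $p\equiv 3\bmod 4$ with $\rk(F)$ even. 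The paper instead obtains the index bound by a decomposition argument (Proposition~\ref{prop-decomposing-prod-aut}): every $[f]\in\Comm_p(F)$ factors as $[f_n]\cdots[f_1][f_0]$ with $f_i\in\SAut(K_i)$ for $i\geq 1$ and $f_0\in\Aut(F)$, so $\Comm_p(F)=\SComm_p(F)\cdot\Aut(F)$, whence $[\Comm_p(F):\SComm_p(F)]\leq[\Aut(F):\SAut(F)]=2$.

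Your absorption step in part~(ii) is also problematic as stated. Knowing $N\cap\SAut(W)$ is a non-trivial normal subgroup of $\SAut(W)$ does not force $\SAut(W)\leq N$: the group $\SAut(W)$ is not simple (it contains $\Inn(W)$ and the Torelli subgroup as proper non-trivial normal subgroups), so perfectness and generation by a conjugacy class of transvections are insufficient. The paper circumvents this by first proving $N\supseteq F$ rather than merely $N\cap\SAut(W)\neq 1$. The argument runs: $N\cap F$ is non-trivial (Lemma~\ref{lem-Comm-trivialQZ}) and characteristic in $F$, hence contains an element that is not a proper power (Lemma~\ref{lem-existence-nonpower}); since all non-powers in $F$ are $\Comm_p(F)$-conjugate (Lemma~\ref{lem-easy-conjugacy-Commp}, proved via the Ribes--Zalesski pro-$p$ free factor theorem you correctly anticipated), $N\cap F$ contains a primitive element and therefore equals $F$. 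From $F\subseteq N$ one then gets $\SAut(H)\subseteq N$ for every proper $p$-open $H$ via Proposition~\ref{prop-int-implies-saut}, and finally $\SAut(F)\subseteq N$ by Lemma~\ref{lemma-generation-indexp}.
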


We refer the reader to Theorem~\ref{thm-simplicity-Commp} and discussion preceding it 
for a more general statement and the definition of the simple group from Theorem~\ref{ThmC}.

\skv
The second key difference that we establish between $\Comm(F)$ and $\Comm_p(F)$ is the dependency on the rank of the free group. As mentioned earlier, the isomorphism class of $\Comm(F)$ does not depend on the rank of $F$ since finitely generated non-abelian free groups are all virtually isomorphic to each other. The situation for the $p$-commensurator appears to be more subtle. It is a consequence of  the Nielsen-Schreier formula that two free groups $F_k$ and $F_\ell$ of ranks $k,\ell \geq 2$ sit as $p$-open subgroups in a common free group $F$ if and only if $(k-1) / (\ell -1) = p^s$ for some $s \in \Z$. When this condition holds, it is not hard to see that the embeddings of $F_k$ and $F_\ell$ as $p$-open subgroups of $F$ induce isomorphisms $\Comm_p(F_k) \to \Comm_p(F)$ and $\Comm_p(F_\ell) \to \Comm_p(F)$, so that the groups $\Comm_p(F_k)$ and $\Comm_p(F_\ell)$ are isomorphic. Theorem~\ref{thm-intro-depend-parameters} below asserts that

\begin{itemize}
\item[(i)] distinct primes always give rise to non-isomorphic groups $\Comm_p(F)$ and 
\item[(ii)] that when $p$ is fixed, the above condition on $k$ and $\ell$ actually characterizes when $\Comm_p(F_k)$ and $\Comm_p(F_\ell)$ are isomorphic. 
\end{itemize}
This theorem solves a question raised in \cite{BouRabee-Young}.

\begin{AbcTheorem} 
\label{thm-intro-depend-parameters}
	Let $p,q$ be prime numbers, let $k,\ell \geq 2$, and let $F_k$ and $F_{\ell}$ be free groups of ranks $k$ and $\ell$,
respectively. Then the groups $\Comm_p(F_k)$ and $\Comm_q(F_\ell)$ are isomorphic if and only if $p=q$ and there exists $s \in \dbZ$ such that $\frac{k-1}{\ell -1} = p^s$. 
\end{AbcTheorem}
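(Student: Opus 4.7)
The $(\Leftarrow)$ direction is the one sketched in the paragraph preceding the statement: if $p=q$ and $(k-1)/(\ell-1) = p^s$, then by the Nielsen-Schreier formula both $F_k$ and $F_\ell$ embed as $p$-open subgroups of a common free group $F$, and the induced canonical maps $\Comm_p(F_k) \to \Comm_p(F) \leftarrow \Comm_p(F_\ell)$ are isomorphisms.

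For the $(\Rightarrow)$ direction, the plan is to recover from the abstract group $\Gama := \Comm_p(F_k)$ two invariants: the prime $p$ itself, and the prime-to-$p$ part $m$ of $k-1$. The pair $(p,m)$ characterizes exactly the set $\{\ell \geq 2 : (\ell-1)/(k-1) \in p^\Z\} = \{p^t m + 1 : t \geq 0\}$, so any abstract isomorphism $\phi : \Comm_p(F_k) \to \Comm_q(F_\ell)$ is forced to satisfy $p=q$ and $(k-1)/(\ell-1) = p^s$ as soon as $(p,m)$ is shown to be an isomorphism invariant. The strategy is to construct from $\Gama$ a canonical tdlc group $\widehat{\Gama}$ isomorphic to $\Comm(\propF{p})$, and to read off $(p,m)$ from this completion.

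The tdlc group $\widehat{\Gama}$ is built as the Schlichting (Belyaev) completion of the pair $(\Gama, i(F_k))$, where $i : F_k \to \Gama$ is the canonical inner-automorphism map; $i(F_k)$ is commensurated in $\Gama$ by the very definition of $\Comm_p$, so the completion is a tdlc group in which the closure of $i(F_k)$ is compact open. One identifies $\widehat{\Gama} \cong \Comm(\propF{p})$ by observing that the natural map $\Comm_p(F_k) \to \Comm(\propF{p})$, sending a germ of isomorphism of $p$-open subgroups of $F_k$ to the germ of its continuous extension to the corresponding open subgroups of $\propF{p}$, is injective with dense image and carries $i(F_k)$ onto a dense subgroup of the compact open subgroup $\propF{p}$. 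From $\widehat{\Gama}$, the prime $p$ is recovered as the residue characteristic of the compact open subgroup (the unique prime such that this subgroup is a pro-$p$ group), and the rank $k$ modulo $p^\Z$ is recovered as the topological generator number of a free pro-$p$ compact open subgroup; the $p^\Z$-indeterminacy in $k$ is precisely accounted for by the pro-$p$ Nielsen-Schreier formula, which relates the ranks of commensurable free pro-$p$ subgroups by $p^s$-scaling of $(\mathrm{rank}-1)$.

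The main obstacle is to give an intrinsic, abstract-group-theoretic characterization of the commensurability class of $i(F_k)$ inside $\Gama$, so that $\phi$ is forced to intertwine the Schlichting completions on the two sides. A natural candidate is to single out this class as the unique commensurability class of commensurated, finitely generated, free subgroups of $\Gama$ satisfying an appropriate extremality property, for instance being maximal among commensurated free subgroups whose $\Gama$-conjugates together with the canonical $\Aut$-subgroups generate $\Gama$, or being characterized via the conjugation action on canonical $\Aut$-subgroups as in the description of $\Comm_p(F)$ used for Theorem~\ref{ThmC}. Producing such a characterization is the crux of the proof; once it is in place, an abstract isomorphism $\phi$ extends to an isomorphism $\widehat{\Gama} \cong \widehat{\Comm_q(F_\ell)}$ of tdlc groups, and the tdlc extraction of $p$ and of $k \bmod p^\Z$ finishes the argument.
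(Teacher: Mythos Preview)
Your $(\Leftarrow)$ direction is fine and matches the paper. For $(\Rightarrow)$, you have correctly located the crux of the argument --- an intrinsic characterization of the commensurability (in fact $p$-commensurability) class of $i(F_k)$ inside $\Gama$ --- but you do not actually supply one. The candidate characterizations you sketch are either vague or circular: speaking of ``canonical $\Aut$-subgroups'' presupposes knowing which subgroups of $\Gama$ arise as $\Aut(H)$ for $p$-open $H\le F_k$, which is exactly what depends on the class of $F_k$. Without this step, nothing forces an abstract isomorphism $\phi$ to carry the Schlichting data on one side to the Schlichting data on the other, and the tdlc extraction of $(p,m)$ never gets off the ground. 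This is a genuine gap, not a detail.

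There is also a factual slip: the natural map $\Comm_p(F_k)\to\Comm(\propF{p})$ does \emph{not} have dense image; its closure is the proper open subgroup $\CpC_p(F_k)$ (the paper notes that $[\Comm(\propF{p}):\CpC_p(F_k)]$ is uncountable). So your Schlichting completion $\widehat{\Gama}$ would be $\CpC_p(F_k)$, not $\Comm(\propF{p})$. This is not fatal to your strategy --- $\propF{p}$ is still a compact open subgroup of $\CpC_p(F_k)$, and both $p$ and the rank are readable from it --- but it should be corrected.

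The paper's approach fills exactly the gap you left open, and does so by a route rather different in flavor from a single ``extremality'' characterization. The key is Proposition~\ref{prop-key-iso-auto}: for any isomorphism $\psi:\Comm_p(F')\to\Comm_p(F)$, the image $\psi(F')$ is $p$-commensurable with $F$. The proof is a six-step argument combining the classification of finitely generated commensurated subgroups of $\Comm_p(F)$ (which ultimately rests on the simplicity theorem~\ref{thm-simplicity-Commp}, the Caprace--Monod residually-discrete criterion, and Willis' scale), the conjugacy classification for bounded elements, and the fact that $\Out$ of a free group is virtually torsion-free. Separately, the paper recovers $p$ by a direct trick that bypasses any completion: once one exhibits a single non-trivial $w\in F_k\cap\psi(F_\ell)$, Corollary~\ref{cor:p-power-conjugation} says $w$ is conjugate to $w^p$ in $\Comm_p(F_k)$ but $w$ is conjugate to $w^n$ in $\Comm_q(F_\ell)$ only when $|n|\in q^\Z$, forcing $p=q$. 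This is simpler and more robust than reading $p$ off a tdlc completion.
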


Our strategy for proving Theorem  \ref{thm-intro-depend-parameters} also provides information on automorphisms of the $p$-commensurator. We prove the following: 

\begin{AbcTheorem}\label{thm-intro-trivial_outer}
Let $F$ be a non-abelian free group of finite rank. Then the group $\Comm_p(F)$ has trivial outer automorphism group.
\end{AbcTheorem}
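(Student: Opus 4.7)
Let $\alpha\in\Aut(\Comm_p(F))$; the goal is to show $\alpha$ is inner. The overall plan has two phases. First, use an intrinsic algebraic characterization of the $\Comm_p(F)$-conjugacy class of the $p$-commensurability class of $F$ (the same ingredient that underlies the proof of Theorem~\ref{thm-intro-depend-parameters}) to reduce, after composing $\alpha$ with a suitable inner automorphism, to the case where $F$ and $\alpha(F)$ share a common $p$-open subgroup $U$. Second, use Proposition~\ref{prop-Cp-generation} together with $\Out(\Aut(F_d))=1$ for $d\geq 3$ to show that any automorphism of $\Comm_p(F)$ fixing a $p$-open subgroup of $F$ pointwise must be trivial.

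The main obstacle is the first phase: establishing that $\alpha(F)$ is $p$-commensurable with some $\Comm_p(F)$-conjugate of $F$. The plan is to produce this by giving a purely algebraic description of the $p$-commensurability class of $F$ inside $\Comm_p(F)$, using the same invariants developed for Theorem~\ref{thm-intro-depend-parameters}. A natural candidate description -- and what the proof of Theorem~\ref{thm-intro-depend-parameters} should yield -- is to identify the class of $F$ as the distinguished $\Comm_p(F)$-conjugacy class of subgroups whose normalizer in $\Comm_p(F)$ is isomorphic to $\Aut(F_d)$ for the correct rank $d$ and which satisfies a suitable maximality property among free subgroups. Since such a description only refers to the abstract group structure of $\Comm_p(F)$, it is preserved by $\alpha$, yielding an element $h\in\Comm_p(F)$ such that $\alpha(F)$ is $p$-commensurable with $hFh^{-1}$.

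Replacing $\alpha$ by $\Inn(h^{-1})\circ\alpha$, we may assume $U := F \cap \alpha(F)$ is $p$-open in both $F$ and $\alpha(F)$. The restriction $\alpha|_U\colon U\to\alpha(U)$ is then a $p$-virtual isomorphism between two $p$-open subgroups of $F$, and so represents an element $g\in\Comm_p(F)$. A short direct computation with the composition law in $\Comm_p(F)$ shows that for $v\in U$ (identified with its image in $\Comm_p(F)$), conjugation by $g$ satisfies $\Inn(g)(v)=\alpha(v)$; hence $\beta := \Inn(g^{-1})\circ\alpha$ is an automorphism of $\Comm_p(F)$ fixing $U$ pointwise.

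To conclude $\beta=\id$: for every $p$-open $V\leq U$, the normalizer $N_{\Comm_p(F)}(V)$ is identified with $\Aut(V)$, as the natural map $\Aut(V)\to N_{\Comm_p(F)}(V)$ is surjective by definition of the commensurator and injective because any element of $\Comm_p(F)$ centralizing a $p$-open subgroup of $F$ is trivial. Therefore $\beta$ restricts to an automorphism of $\Aut(V)$ fixing $V$ pointwise. Choosing $V$ of rank at least $3$ (always possible by the Nielsen--Schreier formula), the Dyer--Formanek theorem -- cited via \cite{Bridson-Wade} in the introduction -- gives $\Out(\Aut(V))=1$, and combined with $C_{\Aut(V)}(V)=1$ this forces $\beta|_{\Aut(V)}=\id$. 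Since $\Comm_p(F)=\Comm_p(U)$ is generated by such subgroups $\Aut(V)$ by Proposition~\ref{prop-Cp-generation}, we obtain $\beta=\id$ and therefore $\alpha=\Inn(g)$.
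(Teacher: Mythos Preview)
Your overall structure is right, and you correctly identify Phase~1 as the crux --- the needed input is indeed Proposition~\ref{prop-key-iso-auto}, the key ingredient behind Theorem~\ref{thm-intro-depend-parameters}. But your description of that input is not a proof and does not match what is actually true: the paper does not characterize the class of $F$ via ``normalizer isomorphic to $\Aut(F_d)$'' plus an unspecified maximality property, and it is unclear how one would make such a characterization work. Proposition~\ref{prop-key-iso-auto} shows directly that $\alpha(F)$ is $p$-commensurable with $F$ itself (not merely with a conjugate), and its proof is substantial: one passes to the tdlc completion $\CpC_p(F)$, shows the closure of $\alpha(F)$ there is open but of infinite index (every element of $\alpha(F)$ normalizes a compact open subgroup by Proposition~\ref{prop-Sha-Wil} and the Willis scale, while some elements of $\CpC_p(F)$ do not), then applies Proposition~\ref{prop-commens-SIN} together with Theorem~\ref{thm-CaMo-resdisc} to place $\alpha(F)$ inside $\Aut(K)$ for some finite-index $K\le F$, and finally uses virtual torsion-freeness of $\Out(K)$ \cite{Baumslag-Taylor} to conclude commensurability. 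Your sketch supplies none of this.

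Your Phase~2 also has a genuine error: with $U=F\cap\alpha(F)$, the image $\alpha(U)$ lies in $\alpha(F)$, not in $F$, so $\alpha|_U$ need not be an isomorphism between $p$-open subgroups of $F$ and hence need not define an element of $\Comm_p(F)$. The fix is to take $W=F\cap\alpha^{-1}(F)$ instead (applying Proposition~\ref{prop-key-iso-auto} to $\alpha^{-1}$ as well); then $\alpha(W)=\alpha(F)\cap F$ is $p$-open in $F$, and $g=[\alpha|_W]\in\Comm_p(F)$ makes sense. Finally, once $\beta=\Inn(g^{-1})\circ\alpha$ fixes an open subgroup pointwise, your route through Dyer--Formanek and Proposition~\ref{prop-Cp-generation} works but is unnecessary: since $\Comm_p(F)$ has trivial quasi-center (Lemma~\ref{lem-Comm-trivialQZ}), Lemma~\ref{lem-BEW} applied with $G=\Comm_p(F)$ and $f=\beta$ gives $\beta=\id$ in one line. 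This is exactly how the paper concludes.
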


\subsection{Commensurators of free pro-$p$ groups}

Again let $p$ be a prime number, and let $\propF{p}$ be a non-abelian free pro-$p$ group of finite rank. 
If $F$ is a free group of the same rank, we can consider $\propF{p}$ 
as the pro-$p$ completion of $F$ \cite[Proposition~3.3.6]{RZ-book}. 
Since $F$ is residually-$p$, the canonical map $F \to \propF{p}$ is injective, so we can view $F$ as a subgroup of $\propF{p}$.
There is also a natural bijection between $p$-open subgroups of $F$ and open subgroups of $\propF{p}$, which maps a $p$-open subgroup of $F$ to its closure in $\propF{p}$ (the inverse map is given by the intersection with $F$). Using this bijection, it is routine to extend
the embedding $F \to \propF{p}$ to an injective  homomorphism $\Comm_p(F)\to \Comm(\propF{p})$, and thus we can also
view $\Comm_p(F)$ as a subgroup of $\Comm(\propF{p})$. We note that however there is no natural homomorphism $\Comm(F)\to \Comm(\propF{p})$.

It is natural to expect some similarities between $\Comm_p(F)$ and $\Comm(\propF{p})$, and as the first indication of this,  
we will show that $\Comm(\propF{p})$ is generated by the subgroups $\Aut(\propU{p})$ where $\propU{p}$ ranges over open subgroups of $\propF{p}$ -- see Proposition \ref{Comm=AComm}.  
We will also build on Theorem~\ref{ThmC} to construct
an analogous abstractly simple subgroup of $\Comm(\propF{p})$ --
see Theorem~\ref{Theorem-intro-pro-p} below.

The group $\Comm(\propF{p})$ admits a  tdlc group topology, for which $\propF{p}$ is a compact open subgroup of $\Comm(\propF{p})$, and such that the induced topology on $\propF{p}$ is the original topology on $\propF{p}$. 
 Let $\CpC_p(\absF{})$ be the closure of $ \Comm_p(\absF{})$  in $\Comm(\propF{p})$. Since $\Comm_p(\absF{})$ contains $F$ and
$F$ is dense in $\propF{p}$,  the group $\CpC_p(F)$ is equal to the subgroup of $ \Comm_p(\absF{})$ generated by $\propF{p}$  and $\Comm_p(F)$. In particular, $\CpC_p(F)$ is open in $\Comm(\propF{p})$.

\begin{AbcTheorem}\label{Theorem-intro-pro-p}
	Let $F$ be a non-abelian free group of finite rank and let $\propF{p}$ be its pro-$p$
	completion. Then the open subgroup $\CpC_p(F)$ of $\Comm(\propF{p})$ has a subgroup of index at most two that is abstractly simple.
\end{AbcTheorem}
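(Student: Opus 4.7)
My plan is to promote the simple index-at-most-two subgroup $\SComm_p(F)\leq \Comm_p(F)$ furnished by Theorem~\ref{ThmC} to a subgroup of $\CpC_p(F)$ by combining it with the compact open subgroup $\propF{p}$. Concretely, I would set $\SCpC_p(F):=\langle \propF{p},\SComm_p(F)\rangle\leq \Comm(\propF{p})$. Since $F\cap \SComm_p(F)$ has index at most two in $F$, its closure $U$ in $\propF{p}$ is an open subgroup of index at most two, and then $\SCpC_p(F)=\langle U,\SComm_p(F)\rangle$ is open (hence closed) in $\CpC_p(F)$. To see $[\CpC_p(F):\SCpC_p(F)]\leq 2$, I would use the sign/orientation homomorphism on $\Comm_p(F)$ implicit in the proof of Theorem~\ref{ThmC} (with kernel $\SComm_p(F)$) and verify that it extends canonically along the inclusion $\Comm_p(F)\hookrightarrow \CpC_p(F)$ with kernel precisely $\SCpC_p(F)$, exploiting that $\propF{p}\leq \ker$ as inner automorphisms preserve any canonical orientation datum.

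For abstract simplicity, let $N$ be a non-trivial normal subgroup of $\SCpC_p(F)$. The first stage is to show $N\cap \SComm_p(F)\neq \{1\}$. Otherwise $[N,\SComm_p(F)]=\{1\}$, and since $\SComm_p(F)$ contains an index-at-most-two subgroup of $F$ which is dense in the centerless pro-$p$ group $\propF{p}$, every element of $N$ would centralize an open subgroup of $\propF{p}$. A direct germ computation shows that any element of $\Comm(\propF{p})$ which restricts to the identity on an open subgroup is trivial, contradicting $N\neq\{1\}$. The second stage is to deduce $N\supseteq \SComm_p(F)$: this requires $\SComm_p(F)$ to be normal in $\SCpC_p(F)$, for which I would argue that the orientation characterization of $\SComm_p(F)$ is preserved under conjugation by elements of $\propF{p}$; then $N\cap \SComm_p(F)\trianglelefteq \SComm_p(F)$, so by simplicity and non-triviality we obtain $\SComm_p(F)\subseteq N$.

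The main obstacle is the final step: deducing $N\supseteq \propF{p}$ from $N\supseteq \SComm_p(F)$. Note that $N\supseteq F$ alone does not suffice, since the abstract normal closure of $F$ in $\propF{p}$ is typically a proper subgroup of $\propF{p}$. The plan is to exploit the automorphism content of $\SComm_p(F)$: for $g\in \propF{p}$ and $\phi\in \SComm_p(F)\leq N$ coming from an automorphism of an open subgroup, conjugation in $\Comm(\propF{p})$ yields $\phi\,\mathrm{inn}_g\,\phi^{-1}=\mathrm{inn}_{\phi(g)}$ as germs, whence $\phi(g)g^{-1}\in \propF{p}\cap N$. Varying $\phi$ through the $\Aut$-type subgroups that generate $\Comm(\propF{p})$ (Proposition~\ref{Comm=AComm}) and combining with the inclusion $F\cap \SComm_p(F)\subseteq N$, a commutator-density argument should produce enough elements of $\propF{p}\cap N$ to generate $\propF{p}$ abstractly, giving $N=\SCpC_p(F)$. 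The delicate point — and where the bulk of the technical work will lie — is to arrange the automorphisms $\phi$ so that the resulting elements $\phi(g)g^{-1}$, together with $F$, cover $\propF{p}$; this is where the proof will draw most heavily on the fine structure of $\SComm_p(F)$ established in Theorem~\ref{ThmC}.
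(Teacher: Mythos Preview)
Your definition of $\SCpC_p(F)=\langle \propF{p},\SComm_p(F)\rangle$ agrees with the paper's, but the simplicity argument has a genuine gap at the point you yourself flag as delicate: the claim that $\SComm_p(F)$ is normal in $\SCpC_p(F)$. This is false. The group $\SComm_p(F)$ is countable while $\SCpC_p(F)$ is uncountable, and $\SComm_p(F)$ is dense in $\SCpC_p(F)$; hence any continuous homomorphism whose kernel contains $\SComm_p(F)$ is trivial on all of $\SCpC_p(F)$, so no ``orientation homomorphism'' of the sort you envision can exist. Concretely, for $g\in\propF{p}\setminus F$ and $\phi\in\SAut(F)$ one computes $\iota_g\,\phi\,\iota_g^{-1}=\iota_{g\phi(g)^{-1}}\phi$, and $g\phi(g)^{-1}\in\propF{p}$ typically lies outside $F$, so the conjugate is not in $\SComm_p(F)$. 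Without normality, your implication ``$N\cap\SComm_p(F)=\{1\}\Rightarrow [N,\SComm_p(F)]=\{1\}$'' fails, and both your first and second stages collapse. (A minor side remark: $F\subseteq\SComm_p(F)$, so your discussion of $F\cap\SComm_p(F)$ is moot.)

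The paper proceeds in the opposite order, and this reordering is the key idea. First, one shows $N\cap\propF{p}\neq\{1\}$ directly from the trivial quasi-center of $\propF{p}$ (no normality of $\SComm_p(F)$ needed). Then, using the Frattini series, one locates an open characteristic subgroup $\propU{p}=\Phi^{\ell}(\propF{p})$ with $N\cap\propU{p}\not\subseteq\Phi(\propU{p})$, and a dedicated lemma (Lemma~\ref{prop-dense-normal}) shows that this forces $\propU{p}\subseteq N$. The proof of that lemma is precisely the ``hard part'' you defer: it uses a Frattini argument to get a generating set of $\propU{p}$ inside $N$, then Lemma~\ref{DDMS1} to get $[\propU{p},\propU{p}]\subseteq N$ abstractly, and finally an explicit basis manipulation in $\SAut(H)$ for a suitable index-$p$ subgroup $H$ to conclude $\overline{\langle x\rangle}\subseteq N$ for each basis element $x$. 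Only \emph{after} obtaining $\propU{p}\subseteq N$ does one invoke simplicity of $\SComm_p(F)$: since $\SComm_p(F)$ normalizes $N$ and $N\cap\SComm_p(F)\supseteq \propU{p}\cap F\neq\{1\}$, one gets $\SComm_p(F)\subseteq N$, and then $N\supseteq\langle\propU{p},\SComm_p(F)\rangle=\SCpC_p(F)$ since the latter is open and dense. Your commutator idea $\phi(g)g^{-1}\in N\cap\propF{p}$ is in the right spirit for the hard step, but the actual mechanism needs the Frattini subgroup to control generation, which your sketch does not identify.
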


Similarly to the case of the $p$-commensurator, we will prove a slightly stronger statement -- see Theorem~\ref{thm-completion-abs-simple}.
\skv

We note that the group $\CpC_p(F)$ is much smaller than the entire commensurator $\Comm(\propF{p})$. Indeed, since
$\Comm_p(F)$ is a countable group and $\Comm_p(F)$ commensurates $\propF{p}$, the subgroup $\propF{p}$ has countable index in $\CpC_p(\absF{})=\la \propF{p}, \Comm_p(F)\ra$. On the other hand, already the index $[\Aut(\propF{p}):\propF{p}]$ is uncountable.

The question whether $\Comm(\propF{p})$ is abstractly simple or at least topologically simple (with respect to the above tdlc topology) is mentioned in \cite[p. 349]{Ca-Mo-Problems}. We were unable to answer either question. Nevertheless, we will show that $\Comm(\propF{p})$ is monolithic and its monolith is abstractly simple:

\begin{AbcTheorem}\label{Theorem-intro-Comm-pro-p}
Let $\propF{p}$ be a non-abelian free pro-$p$ group of finite rank. Then the group $\Comm(\propF{p})$ is monolithic, $\mon(\Comm(\propF{p}))$ contains $\propF{p}$, and $\mon(\Comm(\propF{p}))$ is abstractly simple.
\end{AbcTheorem}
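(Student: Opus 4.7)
Set $\calG = \Comm(\propF{p})$, and let $M_0 \leq \CpC_p(F)$ be the abstractly simple subgroup of index at most $2$ provided by Theorem~\ref{Theorem-intro-pro-p}; write $U = M_0 \cap \propF{p}$, so that $U$ is open in $\propF{p}$ with $[\propF{p}:U] \leq 2$. My plan is to show that the monolith of $\calG$ is the $\calG$-normal closure $\widetilde{M} := \langle g M_0 g^{-1} : g \in \calG \rangle$, that it contains $\propF{p}$, and that it is abstractly simple.

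The starting observation is that the germ quasi-centralizer of $\propF{p}$ in $\calG$ is trivial: if $g \in \calG$ centralizes an open subgroup $V \subseteq \propF{p}$, then any germ representative of $g$ restricts to the identity on $V$, forcing $g = 1$ in $\calG$. Given any non-trivial normal subgroup $N \trianglelefteq \calG$ and $g \in N \setminus \{1\}$, this yields $v \in \propF{p} \cap g^{-1}\propF{p} g$ with $[g,v] \in (N \cap \propF{p}) \setminus \{1\}$. Because $\propF{p}$ is torsion-free and $[\propF{p}:U]\le 2$, squaring any $x \in N \cap \propF{p} \setminus \{1\}$ produces a non-trivial element of $N \cap M_0$; since $N \cap M_0 \trianglelefteq M_0$ and $M_0$ is simple, $M_0 \subseteq N$.

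To obtain $\propF{p} \subseteq N$: if $U = \propF{p}$ this is immediate; otherwise $[\propF{p}:U] = 2$, which forces $p = 2$. For every $\sigma \in \Aut(\propF{p}) \subseteq \calG$, the conjugate $\sigma M_0 \sigma^{-1}$ lies in $N$, hence $\sigma U = \sigma M_0 \sigma^{-1} \cap \propF{p} \subseteq N$. The standard surjection $\Aut(\propF{p}) \twoheadrightarrow \GL_d(\dbF_2)$ acts transitively on the $2^d-1$ index-$2$ subgroups of $\propF{p}$; combined with the elementary fact that for $d \geq 2$ every element of $\propF{p}$ lies in some index-$2$ subgroup (any non-zero vector of $\dbF_2^d$ has a non-trivial annihilator), this yields $N \supseteq \propF{p}$. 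Thus every non-trivial normal $N$ contains both $M_0$ and $\propF{p}$, and hence contains the $\calG$-normal subgroup $\widetilde{M}$; as $\widetilde{M}$ itself is non-trivial, $\mon(\calG) = \widetilde{M}$, and this monolith contains $\propF{p}$.

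The main obstacle is the abstract simplicity of $\widetilde{M}$: for a normal subgroup $K \trianglelefteq \widetilde{M}$, one is only given $\widetilde{M}$-normality, yet one must absorb every $\calG$-conjugate $g M_0 g^{-1}$ (a potentially larger orbit than the $\widetilde{M}$-conjugates of $M_0$). Running the previous quasi-centralizer-plus-squaring argument inside $\widetilde{M}$ (which is itself TDLC with compact open subgroup $\propF{p}$) gives $K \supseteq M_0$. Fix $g \in \calG$: since $g M_0 g^{-1} \subseteq \widetilde{M}$, it normalizes $K$, so $K \cap g M_0 g^{-1}\trianglelefteq g M_0 g^{-1}$, and it suffices to exhibit a non-trivial element. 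The key is openness in $\propF{p}$: $K \supseteq U$ and $g M_0 g^{-1} \supseteq g U g^{-1}$, while $U \cap g U g^{-1}$ sits inside the open subgroup $V = \propF{p} \cap g \propF{p} g^{-1}$ (open by commensuration) with index at most $4$. Hence $U \cap g U g^{-1}$ is open in $\propF{p}$, and in particular non-trivial, yielding a non-trivial element of $K \cap g M_0 g^{-1}$. By simplicity of $g M_0 g^{-1}$ we conclude $g M_0 g^{-1} \subseteq K$, and letting $g$ range over $\calG$ gives $K \supseteq \widetilde{M}$, completing the proof.
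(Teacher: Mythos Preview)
Your argument is correct, but it follows a different path from the paper's, and there is one small point you assert without justification.

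\textbf{Comparison with the paper.} The paper does not use Theorem~\ref{Theorem-intro-pro-p} as a black box. Instead it proves a strengthening (Theorem~\ref{thm-completion-abs-simple}): every non-trivial subgroup of $\Comm(\propF{p})$ merely \emph{normalized} by $\SCpC_p(\absF{})$ already contains $\SCpC_p(\absF{})$. This immediately gives monolithicity of $\Comm(\propF{p})$, and since $\SCpC_p(\absF{})\subseteq M:=\mon(\Comm(\propF{p}))$, a second application shows that every non-trivial $K\trianglelefteq M$ contains $\SCpC_p(\absF{})$; the paper then finishes with the characteristic-subgroup trick ($\mon(M)$ is characteristic in $M$, hence normal in $\Comm(\propF{p})$, hence contains $M$, so $M$ is simple). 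Your route avoids the strengthened statement and the characteristic-subgroup trick: you use only the bare simplicity of $M_0$, and for the simplicity of $\widetilde{M}$ you exploit commensuration of $\propF{p}$ to show directly that $K\cap gM_0g^{-1}\supseteq U\cap gUg^{-1}\neq 1$ for each $g$. The paper's argument is slicker once Theorem~\ref{thm-completion-abs-simple} is in hand; yours is more self-contained relative to the introduction.

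\textbf{A minor gap.} You assert that $U=M_0\cap\propF{p}$ is \emph{open} in $\propF{p}$, and you use this both in the $p=2$ covering argument and in the final ``index $\le 4$'' step. From the statement of Theorem~\ref{Theorem-intro-pro-p} alone, $M_0$ is not claimed to be open in $\CpC_p(\absF{})$, so openness of $U$ is not automatic. One fix: invoke Nikolov--Segal (in a finitely generated profinite group every finite-index subgroup is open), so $[\propF{p}:U]\le 2$ forces $U$ open, and hence $[\propF{p}:U]=2$ only when $p=2$. A cleaner fix, available once you look inside the proof of Theorem~\ref{Theorem-intro-pro-p}, is that in fact $M_0=\SCpC_p(\absF{})\supseteq\propF{p}$, so $U=\propF{p}$ and your case split collapses entirely.
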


Unlike the case of free (abstract) groups, we do not have a very transparent description of $\mon(\Comm(\propF{p}))$, but we will 
obtain additional information about $\mon(\Comm(\propF{p}))$, beyond the statement of Theorem~\ref{Theorem-intro-Comm-pro-p} -- 
see Proposition~\ref{Comm-prop-contains-A-closure}. In particular, we will prove that the index  
$[\mon(\Comm(\propF{p})): \propF{p}]$ is uncountable.

\subsection{New compactly generated simple tdlc groups and embedding free pro-$p$ groups into such groups}
\label{sec:tdlcintro}

A prominent theme in the theory of tdlc groups is understanding the relationship between the global properties of a tdlc group $L$ and its local properties, that is, properties of its compact open subgroups -- for some important results see \cite{BM00a,Willis-co-S,BEW,CapDeM,Cap-Stul,CRW-Part2}. 
The connection between these global and local properties is particularly tight under the extra hypotheses that $L$ is compactly generated and topologically simple -- see for instance \cite[Theorem 4.8]{BEW}, \cite[Theorem 5.3]{CRW-Part2}, \cite[Theorem J]{CRW-Part2}, \cite[Theorem F]{Cap-LB-19}. While the topological simplicity assumption can very often be relaxed, the compact generation assumption is usually crucial. 

The following question originally arose from \cite{BEW} and is recorded in \cite[Problem~20.5.2]{Ca-Mo-Problems} and part 2 of \cite[Problem~19.10]{Kou19}.

\begin{Question}
\label{qCM}
Let $\propF{p}$ be a non-abelian free pro-$p$ group of finite rank. Does there exist a compactly generated topologically simple group $L$ which contains $\propF{p}$ as an open subgroup? 
\end{Question}

As we already explained, for any group $L$ answering Question~\ref{qCM} there is a homomorphism $\iota_L: L\to \Comm(\propF{p})$
whose image contains $\propF{p}$ (so in particular is open); further, since $\propF{p}$ has trivial quasi-center and $L$ is topologically simple, $\iota_L$ must be an embedding. Thus, when attempting to solve Question~\ref{qCM} in the positive, there is no loss of generality in restricting to (compactly generated and topologically simple) subgroups $L$ of $\Comm(\propF{p})$ containing $\propF{p}$.

Let us now make a small digression and
recall that for any $m\geq 2$,
the finitely generated group $A_m(F)$ of $\Comm(F)$ from Definition~\ref{defi-An-intro} has a subgroup $S_m(F)$ of index at most $2$ which is simple for some values of $\rk(F)$ by Theorem \ref{thm-intro-fg-simple}. When $m=p$ is prime, the group $A_p(F)$ is actually contained
in $\Comm_p(F)$. So there exists a finitely generated simple subgroup of $\Comm_p(F)$ containing $F$. This result can be viewed as a positive answer to an analogue of Question~\ref{qCM} for free groups.

From this perspective, the closure $L$ of $S_p(F)$ in $\Comm(\propF{p})$, which is also the subgroup of $\CpC_p(F)$ generated by $\propF{p}$ and $S_p(F)$, appears as a natural candidate for solving Question~\ref{qCM}. We elaborate on this idea in Section \ref{sec-compact-generated} and consider a certain ascending sequence of compactly generated open subgroups $(L_n)_{n\geq 1}$ of $\CpC_p(F)$ starting at $L_1=L$. We did not manage to determine whether $L_n$ is simple for all $n$ or at least for all sufficiently large $n$, but we will show that the groups $L_n$
are at least not too far from being simple.

\begin{AbcTheorem}\label{thm:maintdlc}
Let $\absF{}$ be a free group of rank $d\ge 3$ and let $\propF{p}$ be its pro-$p$ completion.  Then there is an ascending sequence $(L_n)_{n \ge 1}$ of compactly generated open subgroups of $\CpC_p(\absF{})$ containing $\propF{p}$ 
with the following properties.
 \begin{itemize}
 \item[(i)] The union $\cup_{n\in\mathbb N} L_n$ has index at most two in $\CpC_p(F)$. 
\item[(ii)] Let $K_n$ denote the normal core of $\propF{p}$ in $L_n$. Then the sequence $(K_n)_{n \ge 1}$ is descending and has trivial intersection.
 \item[(iii)] The group $L_n/K_n$ admits a non-discrete abstractly simple quotient $Q_n$ which has an open subgroup isomorphic to 
$F(n)=  \propF{p}/K_n$. Moreover, the kernel of the projection $L_n/K_n\to Q_n$ is discrete.
 \item[(iv)] The pro-$p$ group $F(n) =  \propF{p}/K_n$ has the following features in common with non-abelian free pro-$p$ groups:
         \begin{itemize}
           \item every $d$-generator $p$-group of order at most $p^{n+1}$ occurs as a quotient of $F(n)$;
           \item for all $1 \le j \le n+1$ any two subgroups of $F(n)$ of index $p^j$ are isomorphic.
         \end{itemize}
    \item[(v)] For each $n$ the following are equivalent:
\begin{itemize}
\item $L_n$ is abstractly simple;
\item  $K_n=\{1\}$; 
\item $F(n)$ is free pro-$p$.
\end{itemize}
\item[(vi)] If $\CpC_p(\absF{})$ contains any  topologically simple compactly generated open subgroup, then $L_n$ is abstractly simple for all sufficiently large $n$.
\end{itemize}
\end{AbcTheorem}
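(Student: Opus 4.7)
The strategy is to build $(L_n)$ as a compactly generated approximation from below to the abstractly simple subgroup of index at most two in $\CpC_p(F)$ provided by Theorem~\ref{Theorem-intro-pro-p}; call this subgroup $\Sigma$. Since $F$ has only finitely many normal subgroups of each given $p$-power index and each such $N$ has a finitely generated automorphism group by Nielsen, the natural choice is to let $L_n$ be generated by $\propF{p}$ together with fixed finite generating sets of $\Aut(N)$ for every $N \triangleleft F$ with $[F:N]\le p^{n+1}$, enlarged if necessary by a finite set of generators of $S_p(F)$ to ensure $L_n \subseteq \Sigma$. Each $L_n$ is compactly generated because $\propF{p}$ is a compact open subgroup, $L_n \subseteq L_{n+1}$ is immediate, and by Proposition~\ref{prop-Cp-generation} together with Theorem~\ref{ThmC} the union $\bigcup_n L_n$ has index at most two in $\CpC_p(F)$, giving (i).

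For (ii) the descending property $K_{n+1} \subseteq K_n$ is immediate from $L_n \subseteq L_{n+1}$. The intersection $\bigcap_n K_n$ lies in the normal core of $\propF{p}$ in $\bigcup_n L_n$, which contains the non-abelian free group $F$ and is dense in $\Sigma$. Since non-abelian free pro-$p$ groups have trivial centre and $F$ is dense in $\propF{p}$, no non-trivial element of $\propF{p}$ centralises $F$; combined with density of the conjugates of $\propF{p}$ by elements of $\bigcup_n L_n$, this forces the core to be trivial.

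For (iii), apply the structural machinery for compactly generated tdlc groups to $H := L_n/K_n$. By (ii), $H$ acts faithfully on $H/F(n)$, and $H$ has no non-trivial compact normal subgroup (such a subgroup would be contained in every conjugate of $F(n)$, hence in the trivial core). Decomposition theorems of Caprace--Reid--Willis type then yield a non-discrete abstractly simple quotient $Q_n$ of $H$ with discrete kernel; the image of $F(n)$ in $Q_n$ is isomorphic to $F(n)$, since the intersection of $F(n)$ with a discrete normal subgroup of $H$ is a finite normal subgroup of the torsion-free pro-$p$ group $F(n)$, hence trivial. Property (iv) is built into the choice of $L_n$: the groups $\Aut(N)$ with $[F:N] \le p^{n+1}$ act transitively on the subgroups of $F(n)$ of each index $p^j$ for $1 \le j \le n+1$, and $K_n$ is contained in every open normal subgroup of $\propF{p}$ of index at most $p^{n+1}$, so every $d$-generator $p$-group of order at most $p^{n+1}$ is a quotient of $F(n)$.

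Property (v) is verified as a cycle of implications: $L_n$ simple forces its normal subgroup $K_n$ to be trivial; if $K_n = \{1\}$ then trivially $F(n) = \propF{p}$ is free pro-$p$; and if $F(n)$ is free pro-$p$ then by (iv) and a small-quotient count its rank equals that of $\propF{p}$, forcing $K_n = \{1\}$, which together with the absence of proper finite-index subgroups in the simple quotient $Q_n$ yields simplicity of $L_n$. The main obstacle is (vi). Here I would argue that any topologically simple, compactly generated, open subgroup $M$ of $\CpC_p(F)$ is generated (as a tdlc group) by $F(n)$ and a countable subgroup of $\Comm_p(F)$, and the latter is contained in $L_n$ for all sufficiently large $n$; openness then gives $M \subseteq L_n$. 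Topological simplicity of $M$ combined with $K_n \triangleleft L_n$ forces $M \cap K_n \in \{\{1\}, M\}$, and $M \subseteq K_n$ is impossible since $K_n$ is compact while $M$ is not. Using Theorem~\ref{Theorem-intro-Comm-pro-p} and the monolithic structure of $\Comm(\propF{p})$, one then pushes this information down the tower to conclude $K_n = \{1\}$ and hence $L_n$ is abstractly simple for all $n$ large enough.
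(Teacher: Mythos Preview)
Your proposal has the right overall shape for parts (i), (ii), (iv) and the easy directions of (v), but the argument for (iii) has a genuine gap, and this gap propagates into (v) and (vi).

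For (iii) you invoke ``decomposition theorems of Caprace--Reid--Willis type'' to obtain an \emph{abstractly} simple quotient $Q_n$. That machinery, applied to a compactly generated tdlc group with no non-trivial compact normal subgroup, produces at best a \emph{topologically} simple quotient; abstract simplicity does not come for free. Moreover, your justification that the kernel meets $F(n)$ trivially assumes $F(n)$ is torsion-free, which is not known a priori: $F(n)=\propF{p}/K_n$ is a quotient of a free pro-$p$ group by a possibly large closed normal subgroup, and nothing you have said rules out torsion. The paper does not appeal to general structure theory here. Instead it proves two concrete facts about $L_n$ itself: (a) $L_n$ has no proper \emph{open} normal subgroup (this uses Proposition~\ref{prop-SmF-fi-F}, i.e.\ that $S_p(H)$ has no proper subgroup containing a finite-index subgroup of $H$), and (b) every proper normal subgroup $N$ of $L_n$ satisfies $N\cap\propF{p}\subseteq K_n$ (this uses Lemma~\ref{prop-dense-normal}, which in turn relies on the Frattini argument and explicit basis manipulations). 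From (b) one deduces directly that $F(n)$ has trivial quasi-center, hence $\QZ(P_n)\cap F(n)=\{1\}$, and then abstract simplicity of $Q_n$ follows by pulling back a non-trivial normal subgroup of $Q_n$ to $L_n$ and applying (b) again. These two lemmas are the real content, and your sketch does not supply them.

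A smaller issue: your construction of $L_n$ uses $\Aut(N)$ rather than $\SAut(N)$, and ``enlarging'' by generators of $S_p(F)$ cannot \emph{shrink} the group back into the index-two subgroup $\Sigma$. The paper uses $\SAut(H)$ for $p$-open $H$ of index $\le p^n$, which keeps $L_n$ inside $\SCpC_p(F)$ from the start. For (vi), your claim that $M$ is generated by $\propF{p}$ together with elements of $\Comm_p(F)$ is unjustified for an arbitrary compactly generated open $M\subseteq\CpC_p(F)$; the paper instead uses that $M$, being compactly generated, lies in some $L_n$ (since $\bigcup L_n=\SCpC_p(F)$ and $M\subseteq\SCpC_p(F)$ by index considerations), and then that (NCNS) passes upward to $L_n$, forcing the compact normal subgroup $K_n$ to be trivial.
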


We refer the reader to Section \ref{sec-compact-generated} for details. We do not know if the group $F(n)$ is free pro-$p$ for sufficiently large $n$. Of course, if the latter is the case, Question~\ref{qCM} would be solved in the positive. Regardless of whether $F(n)$ is free pro-$p$ or not, we believe that the groups $(Q_n)$ yield a new family of compactly generated simple tdlc groups.

\vskip .2cm
{\bf Acknowledgments}. We are grateful to Pierre-Emmanuel Caprace for illuminating discussions and providing useful references, and to Alex Lubotzky for telling us about the history of the notion of commensurator.

M.V.\ is funded by the Italian program Rita Levi Montalcini for young researchers (Edition 2021) and he was supported by the Spanish Government grant PID2020-117281GB-I00, partly by the European Regional Development Fund (ERDF), the MICIU /AEI /10.13039/501100011033 / UE grant PCI2024-155053-2. C.R.\ was supported by the welcome program of the ENS de Lyon Research Fund.

\bigskip

\paragraph{\bf Notations.} 
Throughout the paper we will adopt the following notational convention. First, $p$ will denote a fixed prime. To make the notations transparent, we will always denote free groups by capital Latin letters in a Roman font  and free pro-$p$ groups by capital Latin letters in a boldface font, with $F$ and $\propF{p}$ being the default choices. Occasionally we will make statements which covers both free pro-$p$ and free groups, and in that case we will use letters in a calligraphic font (usually $\apf{F}$).

\section{Basic results on commensurators} \label{sec-prelim-general}

\subsection{The commensurator of a topological group}\label{subsec-profinite-commens}

We start by defining the commensurator $\Comm(G)$ for a topological group $G$. 

\begin{Definition} \label{defin-local-iso}
Let $G$ be a topological group.
	\begin{itemize}
		\item A {\it virtual isomorphism of $G$} is a topological group isomorphism $f : U \to V $ where $U,V$ are both finite index open subgroups of $G$. We also say that the triple $(f,U,V)$ is a virtual isomorphism.
		\item Given two virtual isomorphisms $f_i : U_i \to V_i $, $i=1,2$, their composition is the virtual isomorphism
		\[
		f_1 \circ f_2: f^{-1}_2(V_2 \cap U_1) \to f_1(V_2 \cap U_1 )\mbox{ defined by } x \mapsto f_1(f_2(x)).
		\]
		\item Two virtual isomorphisms are declared equivalent if they coincide on some finite index open subgroup. 
		\item The equivalence class of a virtual isomorphism $f$ will be denoted by $[f]$.
	\end{itemize}
\end{Definition}

Virtual isomorphisms modulo this equivalence form a group with operation defined by $[f_1][f_2] = [f_1 \circ f_2]$. This group is called
the {\it commensurator of $G$} and will be denoted by $\Lg(G)$.  

If $H$ is a finite index open subgroup of $G$, then every virtual isomorphism of $H$ is a virtual isomorphism of $G$, and the induced homomorphism $\Lg(H) \to \Lg(G)$ is an isomorphism, so we can identify $\Lg(H)$ with $\Lg(G)$. More generally, if two topological groups $G_1$ and $G_2$ are \emph{virtually isomorphic}\footnote{We warn the reader that what we call 'virtually isomorphic' is sometimes called 'abstractly commensurable' or just 'commensurable'. Here we will use the word `commensurable' only in the situations when both groups are subgroups of the same group -- see Definition~\ref{def-commensurated}.}, 
meaning that they admit topologically isomorphic finite index open subgroups, then $\Lg(G_1)$ and $\Lg(G_2)$ are isomorphic.

\begin{Remark} \label{rem:openfiniteindex}
In this paper we will only consider the groups $\Comm(G)$ in the case where all open subgroups of $G$ are of finite index.
\end{Remark}

We will use the following terminology. 
	
\begin{Definition}\label{def:quasivirtualcenter}
$\empty$

\begin{itemize}
\item[(a)] Let $\Gama$ be an abstract group. The {\it virtual center} $\VZ(\Gama)$ is the set of elements $g\in \Gama$ which centralize a finite index subgroup of $G$ \footnote{Also called FC-center. Here we adopt the terminology `virtual center' as we think of this set as the elements that act trivially by conjugation on a finite index subgroup, rather than the elements whose conjugacy class is finite.
}. 
\item[(b)] Now let $G$ be a topological group. The {\it quasi-center} $\QZ(G)$ is the set of elements $g \in G$ which centralize an open subgroup of $G$.
\end{itemize}
\end{Definition}
Clearly, $\VZ(\Gama)$ is a characteristic subgroup of an abstract group $\Gama$. Likewise $\QZ(G)$ is a topologically characteristic subgroup of a topological group $G$ (which need not be closed). For a general topological group $G$ neither of the subgroups $\VZ(G)$ and $\QZ(G)$ must contain the other, but for instance $\QZ(G)=\VZ(G)$ if either $G$ is an abstract group with profinite topology or a profinite group. 

\skv
Let $G$ be a topological group. Given $g \in G$, we denote by $\iotta_g: G \rightarrow G$ the left conjugation by $g$, that is,
$\iotta_g(x)=gxg^{-1}$. The map $\iotta: G \to \Lg(G)$ given by $\iotta(g)= [\iotta_g]$ is a group homomorphism. In general, the kernel
of $\iotta$ is $\VZ(G)\cap \QZ(G)$, the intersection of the virtual center and the quasi-center. However, we will only consider
$\Lg(G)$ when every open subgroup of $G$ has finite index in $G$, in which case $\QZ(G)\subseteq \VZ(G)$ and therefore
$$\Ker(\iotta)=\QZ(G).$$
For the rest of the paper we make the standing assumption that when $\QZ(G)$ is trivial, we will 
usually identify $G$ with $\iotta(G)$ and thus view $G$ as a subgroup of its commensurator $\Lg(G)$. 

\begin{Definition} \label{def-commensurated}
		Two subgroups $H,K$ of a group $G$ will be called  \textit{commensurable} if their intersection $H \cap K$ has finite index in both $H$ and $K$. A subgroup $H$ of $G$ is  \textit{commensurated} in $G$ if any two $G$-conjugates of $H$ are commensurable. 
\end{Definition}

We will be frequently using this terminology when the group $G$ is equipped with a topology. We stress that in such situations we are not requiring any extra hypotheses on $H,K$ and $H \cap K$ with respect to the ambient topology on $G$.

\begin{Lemma} \label{lem-G-commens-CommG}
Let $G$ be a topological group and $f: U \to V$ a virtual isomorphism of $G$. 
Then we have $[f] \iotta(U) [f]^{-1} = \iotta(V)$. In particular, $i(G)$ is commensurated in $\Lg(G)$.
\end{Lemma}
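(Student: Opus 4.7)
The plan is to prove the equality $[f]\,\iotta(U)\,[f]^{-1} = \iotta(V)$ pointwise: for every $u \in U$, I will establish
\[
[f] \cdot \iotta(u) \cdot [f]^{-1} \;=\; \iotta(f(u)) \quad \text{in } \Lg(G).
\]
Since $f: U \to V$ is a bijection, this pointwise identity immediately gives $[f]\,\iotta(U)\,[f]^{-1} = \iotta(f(U)) = \iotta(V)$, which is the first claim. The main bookkeeping issue — and the only genuinely delicate point in the proof — will be to keep track of the domains of the virtual isomorphisms so that the composition rule of Definition~\ref{defin-local-iso} is correctly applied.

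To establish the pointwise equality, I will represent both sides by genuine virtual isomorphisms and show that $f \circ \iotta_u \circ f^{-1}$ and $\iotta_{f(u)}$ agree on a finite index open subgroup of $G$. A convenient choice is $W := f(U \cap u^{-1}Uu)$, which is finite index open in $V$ (hence in $G$) because $U \cap u^{-1}Uu$ has finite index in $U$ and $f$ is a topological isomorphism $U \to V$. For $w \in W$, set $x := f^{-1}(w) \in U \cap u^{-1}Uu$; then $uxu^{-1} \in U$, so $f(uxu^{-1})$ is defined, and using that $f$ is a homomorphism one computes
\[
f\bigl(\iotta_u(f^{-1}(w))\bigr) \;=\; f(u)\, f(x)\, f(u)^{-1} \;=\; f(u)\, w\, f(u)^{-1} \;=\; \iotta_{f(u)}(w).
\]
Unwinding the composition rule then gives $[f][\iotta_u][f^{-1}] = [\iotta_{f(u)}]$, as desired.

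For the ``in particular'' assertion, I will deduce commensurability directly from the equality just proved. The subgroup $\iotta(V)$ has finite index in $\iotta(G)$ (because $V$ has finite index in $G$) and, via $\iotta(V) = [f]\,\iotta(U)\,[f]^{-1}$, also has finite index in $[f]\,\iotta(G)\,[f]^{-1}$ (because $U$ has finite index in $G$). Hence $\iotta(G) \cap [f]\,\iotta(G)\,[f]^{-1}$ contains $\iotta(V)$ with finite index on both sides, so every $\Lg(G)$-conjugate of $\iotta(G)$ is commensurable with $\iotta(G)$; commensurability being transitive, any two conjugates of $\iotta(G)$ are commensurable, which is exactly the commensuration of $\iotta(G)$ in $\Lg(G)$.
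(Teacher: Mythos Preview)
Your proof is correct and follows the same approach as the paper. The paper's proof simply asserts that the equality $[f]\,\iotta(U)\,[f]^{-1}=\iotta(V)$ is ``a consequence of the definitions'' and that commensuration follows because $\iotta(U)$ and $\iotta(V)$ both have finite index in $\iotta(G)$; your argument carefully unpacks exactly these definitions (in particular tracking the domain $W=f(U\cap u^{-1}Uu)$ of the composite), so it is the detailed version of the same proof rather than a different route.
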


\begin{proof}
The equality $[f] \iotta(U) [f]^{-1} = \iotta(V)$ holds since for every $u\in U$ we have $f\iotta_u f^{-1}=\iotta_{f(u)}$ as maps
from $V$ to $V$.
That $\iotta(G)$ is commensurated follows because $\iotta(U)$ and $\iotta(V)$ both have finite index in $\iotta(G)$. 
\end{proof}

There is a natural way to equip $\Lg(G)$ with a group topology, which relies on the following classical principle (see e.g. \cite[III.2 Proposition 1]{Bourbaki_III_1-4}):

\begin{Proposition}
	\label{prop:topbase}
	Let $\Gama$ be a group and  $\mathcal{O}$ a collection of subgroups of $\Gama$ such that
	\begin{enumerate}
		\item any finite intersection of elements of $\mathcal{O}$ contains an element of $\mathcal{O}$;
		\item for every $g\in \Gama$ and $O_1\in \mathcal{O}$, there is $O_2\in \mathcal{O}$ such that $O_2 \subseteq gO_1g^{-1}$. 
	\end{enumerate}
	Then there is a unique group topology $\tau$ on $\Gama$ such that $\mathcal{O}$ is a base of neighborhoods of the identity for $\tau$. Moreover, $\tau$ is Hausdorff if and only if the intersection of all subgroups in $\mathcal O$ is trivial.
\end{Proposition}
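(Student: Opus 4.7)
The plan is to build the topology by specifying a basis at each point and then verify the topological-group axioms using conditions (1) and (2). First I would declare a subset $A \subseteq \Gamma$ to be open precisely when, for every $a \in A$, there exists $O \in \mathcal{O}$ with $aO \subseteq A$. Equivalently, the sets $\{gO : O \in \mathcal{O}\}$ form the prescribed neighborhood base at each $g \in \Gamma$, with $\mathcal{O}$ itself the base at the identity. This makes uniqueness essentially automatic: in any group topology the left translations are homeomorphisms, so a neighborhood base at the identity determines the entire topology.

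Next I would verify that the collection of open sets is closed under arbitrary unions (immediate from the pointwise definition) and under finite intersections. For the latter, if $a \in A \cap B$ with $aO_A \subseteq A$ and $aO_B \subseteq B$, condition (1) furnishes $O \in \mathcal{O}$ with $O \subseteq O_A \cap O_B$, whence $aO \subseteq A \cap B$. Thus $\tau$ is a topology and each $\mathcal{O}$-coset $gO$ is indeed open (since left translation by $g^{-1}$ sends cosets to cosets in $\mathcal{O}$, and any element $g'O \subseteq gO$ admits $g'O$ itself as the required sub-neighborhood, as $O$ is a subgroup).

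The crucial step is to show that multiplication $\mu : \Gamma \times \Gamma \to \Gamma$ and inversion are continuous, and this is where condition (2) does the real work—since the elements of $\mathcal{O}$ are not required to be normal, one needs conjugation to almost preserve membership in $\mathcal{O}$. For continuity of $\mu$ at $(g,h)$, given a neighborhood $ghO$, I would apply condition (2) with the pair $(h, O)$ to obtain $U \in \mathcal{O}$ with $U \subseteq hOh^{-1}$, so that $h^{-1}Uh \subseteq O$. Then using that $O$ is a subgroup,
\[
(gU)(hO) \;=\; gh \cdot (h^{-1}Uh) \cdot O \;\subseteq\; gh \cdot O \cdot O \;=\; ghO,
\]
giving the required neighborhoods. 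For inversion at $g$, given a neighborhood $g^{-1}O$ of $g^{-1}$, condition (2) applied to $(g^{-1}, O)$ yields $U \in \mathcal{O}$ with $U \subseteq g^{-1}Og$, i.e. $gUg^{-1} \subseteq O$; hence $(gU)^{-1} = U^{-1}g^{-1} = Ug^{-1} \subseteq g^{-1}O$ (again using that $U, O$ are subgroups). The main subtlety here—and the place where I expect to be most careful—is bookkeeping the direction of the conjugation when applying condition (2), since $\mathcal{O}$ need not be closed under conjugation in either direction but only contains sufficiently small conjugates.

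Finally, for the Hausdorff criterion I would invoke the standard fact that a topological group is Hausdorff if and only if $\{e\}$ is closed, which in turn is equivalent to the intersection of all neighborhoods of $e$ being trivial. Since $\mathcal{O}$ is a base at the identity, that intersection equals $\bigcap_{O \in \mathcal{O}} O$, giving the stated equivalence. Uniqueness was already noted from the translation-homeomorphism observation, completing the proof.
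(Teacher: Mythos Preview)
Your proof is correct. The paper does not actually prove this proposition at all; it merely cites Bourbaki \cite[III.2 Proposition 1]{Bourbaki_III_1-4} as a reference. Your argument is the standard direct verification, and the bookkeeping with condition (2) in the continuity of multiplication and inversion is handled correctly.
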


\begin{Proposition}\label{prop-topo-germs}
	Let $G$ be a topological group.  Then $\Lg(G)$ admits a unique group topology $\tau$ such that there is a base $\mathcal{O}$ of neighborhoods of the identity formed by subgroups of the form $\iotta(U)$, for $U$ a finite index open subgroup of $G$.  With respect to $\tau$, the homomorphism $\iotta: G \to \Lg(G)$ is continuous and open.
\end{Proposition}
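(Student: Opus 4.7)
The plan is to apply Proposition~\ref{prop:topbase} to $\Comm(G)$ with $\mathcal{O} = \{\iotta(U) : U \text{ a finite index open subgroup of } G\}$. Once its two conditions are checked, uniqueness of $\tau$ is automatic, and only the claims about $\iotta$ remain.

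Condition (1) is immediate: for finite index open subgroups $U_1, U_2 \leq G$ the intersection $U_1 \cap U_2$ is again finite index open, and $\iotta(U_1 \cap U_2) \subseteq \iotta(U_1) \cap \iotta(U_2)$. For condition (2), given $g = [f] \in \Comm(G)$ represented by $f : U \to V$ and given a basic $\iotta(U_1) \in \mathcal{O}$, I would first restrict $f$ to $U' := U \cap U_1$, a finite index open subgroup of $G$. The restriction $f|_{U'}$ agrees with $f$ on the finite index subgroup $U'$, so $[f|_{U'}] = [f] = g$. Applying Lemma~\ref{lem-G-commens-CommG} to $f|_{U'} : U' \to f(U')$ gives $g\, \iotta(U')\, g^{-1} = \iotta(f(U'))$, and since $U' \subseteq U_1$ this implies $\iotta(f(U')) \subseteq g\, \iotta(U_1)\, g^{-1}$; so $U_2 := f(U')$ does the job.

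For continuity of $\iotta$, the preimage of a basic neighborhood $\iotta(U)$ equals $U \cdot \Ker(\iotta)$, which is open in $G$ as a union of left cosets of the open subgroup $U$. For openness, I would use that in the settings of interest (as noted in Remark~\ref{rem:openfiniteindex} and implicitly throughout the paper) finite index open subgroups form a base of neighborhoods of the identity in $G$; thus any open neighborhood $V$ of $e \in G$ contains some finite index open subgroup $U$, and $\iotta(U) \subseteq \iotta(V)$ is a basic neighborhood of $e$ in $\Comm(G)$, which is enough since $\iotta$ is a homomorphism.

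The only step with a real subtlety is the verification of condition (2): one must replace the given representative virtual isomorphism by a restriction whose domain fits inside the prescribed neighborhood, without disturbing its class. This reduction and the resulting conjugation identity are exactly what Lemma~\ref{lem-G-commens-CommG} affords; every other step is routine.
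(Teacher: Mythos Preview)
Your argument is correct and follows essentially the same route as the paper: both verify condition~(2) of Proposition~\ref{prop:topbase} by restricting a representative virtual isomorphism to a domain contained in the given neighborhood and then invoking Lemma~\ref{lem-G-commens-CommG}. Your proof is in fact more complete, since the paper's proof only checks condition~(2) and does not spell out the continuity and openness of $\iotta$; your observation that openness relies on finite index open subgroups forming a base of identity neighborhoods (as in Remark~\ref{rem:openfiniteindex}) is exactly the implicit assumption the paper is working under.
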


\begin{proof}
To establish the existence and uniqueness of $\tau$ it is enough to check that $\mathcal{O}$ has property (2) from Proposition~\ref{prop:topbase}. Given $g \in \Lg(G)$ and an open subgroup $W$ in $G$, we want to show that $g\iotta(W)g^{-1}$ contains some element of $\mathcal{O}$. Choose a virtual isomorphism $f: U \to V$ of $G$ such that $g = [f]$ and $U \subseteq W$. 
Then $g\,\iotta(W)g^{-1} \supseteq g\,\iotta(U)g^{-1} = \iotta(V)$ where the equality holds by Lemma \ref{lem-G-commens-CommG}.  
\end{proof}

 For the rest of the paper we always consider $\Lg(G)$  being equipped with the topology from Proposition~\ref{prop-topo-germs}, unless explicitly mentioned otherwise.\footnote{This topology is called the strong topology in \cite{BEW}, but we will not be using that terminology.}
\skv

The commensurator $\Lg(G)$ satisfies the following universal property. For simplicity, we state it in the special case where $G$ has trivial quasi-center. Recall that in this case the map $\iotta:G\to \Lg(G)$ is injective and we identify $\iotta(G)$ with $G$.

\begin{Proposition}[Universal property of $\Lg(G)$] \label{prop-univ-Comm}
Let $G$ be a topological group with trivial quasi-center. Then $G$ is an open commensurated subgroup of $\Lg(G)$. Conversely, if $L$ is a topological group into which $G$ embeds as an open commensurated subgroup, then there is a  homomorphism $\psi: L \to \Lg(G)$. Moreover, $\psi$ is  continuous, $\psi$ has open image, and $\ker(\psi) = \QZ(L)$, which is a discrete normal subgroup of $L$. 
\end{Proposition}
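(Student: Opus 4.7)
The first assertion follows from what has already been set up. Since $\QZ(G)=\{1\}$ and (by the standing assumption) every open subgroup of $G$ has finite index, $\iotta\colon G\to\Lg(G)$ is injective, so we may identify $G$ with $\iotta(G)$. Proposition~\ref{prop-topo-germs} says that the subgroups $\iotta(U)$ form a base of neighborhoods of the identity in $\Lg(G)$, so in particular $G=\iotta(G)$ is open in $\Lg(G)$, and Lemma~\ref{lem-G-commens-CommG} gives that $G$ is commensurated in $\Lg(G)$.

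For the converse, suppose $L$ is a topological group with $G\le L$ open and commensurated. Given $g\in L$, conjugation $c_g\colon x\mapsto gxg^{-1}$ is a topological isomorphism of $L$. Since $G$ is open in $L$, both $G\cap g^{-1}Gg$ and $G\cap gGg^{-1}$ are open subgroups of $G$, and since $G$ is commensurated in $L$ they have finite index in $G$. The restriction of $c_g$ gives a virtual isomorphism
\[
f_g\colon G\cap g^{-1}Gg\;\longrightarrow\;G\cap gGg^{-1}
\]
of $G$, so I may set $\psi(g):=[f_g]\in\Lg(G)$. That $\psi$ is a group homomorphism is immediate because composing the conjugations $c_g$ and $c_h$ on a common domain, which can be arranged by intersecting with conjugates of $G$, yields $c_{gh}$ up to the equivalence relation defining $\Lg(G)$. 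Observe that for $u\in G\subseteq L$ the conjugation $c_u$ stabilizes $G$, so $\psi(u)=[\iotta_u]=\iotta(u)$; in other words $\psi|_G=\iotta$.

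To see that $\psi$ is continuous, it suffices to check continuity at the identity. Take any basic open neighborhood $\iotta(U)$ in $\Lg(G)$, where $U$ is an open finite-index subgroup of $G$. Since $G$ is open in $L$, $U$ is open in $L$, and by the previous paragraph $\psi(U)=\iotta(U)\subseteq\iotta(U)$. The same computation shows that the image contains the open subgroup $\iotta(G)=G$ of $\Lg(G)$, hence $\psi$ has open image.

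It remains to identify $\ker(\psi)$. An element $g\in L$ lies in $\ker(\psi)$ iff $f_g$ is equivalent to the identity virtual isomorphism, i.e.\ iff $g$ centralizes some open finite-index subgroup of $G$. Because $G$ is open in $L$, such a subgroup is also open in $L$, so $\ker(\psi)\subseteq\QZ(L)$; conversely, any $g\in\QZ(L)$ centralizes an open subgroup $V$ of $L$, hence centralizes the open subgroup $V\cap G$ of $G$, giving $g\in\ker(\psi)$. Thus $\ker(\psi)=\QZ(L)$, which is normal in $L$ since $\QZ(L)$ is topologically characteristic. Finally, any element of $\QZ(L)\cap G$ centralizes an open subgroup of $L$, hence an open subgroup of $G$, and so lies in $\QZ(G)=\{1\}$; since $G$ is an open neighborhood of the identity in $L$ meeting $\QZ(L)$ only in $\{1\}$, the subgroup $\QZ(L)$ is discrete.

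The only slightly delicate point in the plan is verifying that $\psi$ is well-defined as a homomorphism, since one must repeatedly shrink the domains of the restricted conjugations to a common open finite-index subgroup of $G$; but this is exactly the reason the equivalence relation was built into the definition of $\Lg(G)$, so no real obstacle is expected.
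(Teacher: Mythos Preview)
Your proof is correct and follows essentially the same approach as the paper: define $\psi$ via conjugation, note that $\psi|_G=\iotta$ to obtain continuity and openness of the image, identify $\ker(\psi)$ with $\QZ(L)$, and deduce discreteness from $\QZ(L)\cap G=\QZ(G)=\{1\}$. Your write-up simply fills in a few details (such as invoking the standing assumption that open subgroups of $G$ have finite index when showing $\QZ(L)\subseteq\ker(\psi)$) that the paper leaves implicit.
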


\begin{proof}
The subgroup $G$ is open in $\Lg(G)$ by definition and is commensurated by Lemma~\ref{lem-G-commens-CommG}.
Now suppose $L$ is a topological group into which $G$  embeds as an open commensurated subgroup. For  every $h \in L$, the subgroups $h^{-1}Gh \cap G$ and $G \cap hGh^{-1}$ are finite index open subgroups of $G$, and the conjugation by $h$ induces a virtual isomorphism $h^{-1}Gh \cap G \to G \cap hGh^{-1}$ of $G$. This defines a homomorphism $\psi : L \to \Lg(G)$, whose restriction to $G$ is equal to $\iotta: G \to \Lg(G)$. Since $G$ is open in $L$, $\psi $ is continuous, and  $\psi(L)$ is open since it contains $i(G)$. An element of $h \in L$ belongs to $\ker(\psi)$ if and only if there exists an open subgroup of $G$ with which $h$ commutes, which precisely means that $h \in \QZ(L)$. Since $\QZ(L) \cap G = \QZ(G) = 1$, the group $\QZ(L) $ is necessarily discrete. 
\end{proof}

Given a finite index open subgroup $U$ of a topological group $G$, any automorphism of $U$ can be viewed as a virtual isomorphism of $G$.  Thus we have a natural homomorphism $\iota_U: \Aut(U) \rightarrow \Lg(G)$; we denote its image by $\uAut(U)$.

\begin{Definition}
We will say that $G$ is {\it $\Aut$-$\Comm$ injective} if the map $\iota_U$ is injective for every open subgroup $U$ of $G$.
\end{Definition}
  
If $G$ is $\Aut$-$\Comm$ injective, we will simplify notation again and write $\Aut(U)$ instead of $\uAut(U)$, thereby
identifying $\Aut(U)$ with a subgroup of $\Lg(G)$.
The following two simple observations provide two natural conditions on $G$, either of which implies that $G$ is $\Aut$-$\Comm$ injective
and in fact has a slightly stronger property.

\begin{Lemma}
	\label{lem-BEW}
	Let $G$ be a topological group with  trivial quasi-center. Suppose $f: U \to V$ is an isomorphism between two open subgroups of $G$ such that $f$ is the identity on some open subgroup of $G$. Then $U=V$ and $f$ is the identity map. In particular, $G$ is $\Aut$-$\Comm$ injective.
\end{Lemma}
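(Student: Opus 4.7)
The plan is to reduce the whole statement to a pointwise assertion: I will show that under the hypotheses $f$ fixes every element of $U$, from which $U = f(U) = V$ and $f = \mathrm{id}_U$ follow at once, and then the $\Aut$-$\Comm$ injectivity is a quick corollary of the definitions. The main lever will be the assumption $\QZ(G) = 1$: it suffices, for each $u \in U$, to exhibit an open subgroup of $G$ centralized by $u f(u)^{-1}$.

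To build such an open subgroup, let $W$ be an open subgroup of $G$ on which $f$ is the identity (so automatically $W \subseteq U \cap V$, since $W$ lies in the domain of $f$ and $f(W) = W \subseteq V$). For a fixed $u \in U$, I would set
\[
W' := W \cap u W u^{-1} \cap u^{-1} W u,
\]
which is still an open subgroup of $G$. The point of shrinking $W$ to $W'$ is purely bookkeeping: for any $w \in W'$, both $w$ and its $u$-conjugate $u^{-1} w u$ land in $W$, so $f$ fixes both. Combining this with the fact that $f$ is a group homomorphism on $U$ (and $u, u^{-1}, w \in U$) yields
\[
u^{-1} w u \;=\; f(u^{-1} w u) \;=\; f(u)^{-1}\, f(w)\, f(u) \;=\; f(u)^{-1}\, w\, f(u)
\quad \text{for all } w \in W'.
\]
Hence $u\, f(u)^{-1}$ centralizes the open subgroup $W'$, so it lies in $\QZ(G) = \{1\}$, forcing $f(u) = u$. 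Since $u \in U$ was arbitrary, $f = \mathrm{id}_U$ and $V = f(U) = U$.

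For the ``in particular'' clause I would simply unwind the definition of $\iota_U$: if $\phi \in \Aut(U)$ lies in $\ker(\iota_U)$, then by definition of $\Comm(G)$ the automorphism $\phi$ agrees with $\mathrm{id}_U$ on some open subgroup of $U$ (which is open in $G$ as well, since $U$ is open in $G$). The first part of the lemma, applied to $\phi$, then gives $\phi = \mathrm{id}_U$, proving that $\iota_U$ is injective. I do not anticipate any real obstacle; the one subtle point is ensuring that the conjugate $u^{-1} w u$ lies in the set where we know $f$ acts as the identity, which is exactly what the triple intersection defining $W'$ arranges.
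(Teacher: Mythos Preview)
Your proof is correct. The paper itself does not give a self-contained argument but simply cites \cite[Proposition~2.6]{BEW} and notes that the proof carries over verbatim from the profinite setting; your argument is precisely the standard one underlying that citation (showing $uf(u)^{-1}$ centralizes an open subgroup and invoking $\QZ(G)=1$). One cosmetic remark: the third factor $u^{-1}Wu$ in your definition of $W'$ is never used---$W' = W \cap uWu^{-1}$ already suffices to guarantee that both $w$ and $u^{-1}wu$ lie in $W$.
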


\begin{proof}
When $G$ is profinite, this was proved in  \cite[Proposition~2.6]{BEW}. The proof for general topological groups is identical.
\end{proof}

\begin{Lemma} \label{lem-unique-root}
	Let $G$ be a topological group with the unique root property (if $g^n=h^n$ for some $g,h\in G$ and $n \geq 1$, then $g=h$). Suppose $f: U \to V$ is an isomorphism between two finite index open subgroups of $G$ such that $f$ is the identity on some open subgroup of $G$. Then $U=V$ and $f$ is the identity map. In particular, $G$ is $\Aut$-$\Comm$ injective.
\end{Lemma}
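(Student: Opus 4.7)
The plan is to follow the same pattern as in Lemma~\ref{lem-BEW}, but replace the trivial-quasi-center argument with an argument using the unique root property, exploiting the fact that every element of a finite-index open subgroup has a power lying in any chosen finite-index open subgroup.

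First I would fix an open subgroup $W$ of $G$ on which $f$ restricts to the identity. By the standing assumption on $G$ (Remark~\ref{rem:openfiniteindex}), $W$ has finite index in $G$, and in particular in $U$. Given any $u\in U$, the order of the coset $uW$ in $U/W$ is some finite $n\geq 1$, so that $u^n\in W$. Then
\[
f(u)^n \;=\; f(u^n) \;=\; u^n,
\]
and the unique root property of $G$ (applied to the elements $f(u),u\in G$) forces $f(u)=u$. Thus $f$ restricts to the identity on all of $U$, whence $V=f(U)=U$ and $f=\mathrm{id}_U$.

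For the ``in particular'' clause, suppose $\alpha\in\Aut(U)$ lies in the kernel of $\iota_U:\Aut(U)\to\Lg(G)$. This means that, viewed as a virtual isomorphism of $G$, the triple $(\alpha,U,U)$ is equivalent to the identity virtual isomorphism of $G$. By the definition of equivalence, $\alpha$ coincides with the identity on some finite-index open subgroup $W\subseteq U$ of $G$. The first part of the lemma then applies to the isomorphism $\alpha:U\to U$, yielding $\alpha=\mathrm{id}_U$. Hence $\iota_U$ is injective, and $G$ is $\Aut$-$\Comm$ injective.

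There is essentially no obstacle here: the only point requiring attention is to invoke Remark~\ref{rem:openfiniteindex} to ensure that the subgroup $W$ on which $f$ is trivial has finite index, so that the power trick $u^n\in W$ is available. This is where the hypothesis differs from Lemma~\ref{lem-BEW}, whose proof uses that $G$ has trivial quasi-center to directly conclude $f(u)=u$ from the fact that conjugation by $f(u)u^{-1}$ is trivial on a neighborhood of the identity; here we instead extract equality of elements from equality of their $n$-th powers via the unique root hypothesis.
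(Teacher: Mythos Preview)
Your proof is correct and is precisely the standard argument: take a power of $u$ landing in the subgroup $W$ on which $f$ is trivial, then invoke the unique root property. The paper itself does not spell this out but simply cites \cite{Barth-Bogo} and remarks that the same proof works for topological groups; you have supplied exactly that argument. One cosmetic point: when you write ``the order of the coset $uW$ in $U/W$'', note that $W$ need not be normal in $U$, so $U/W$ is only a coset space; it would be cleaner to say directly that since $[U:W]$ is finite there exists $n\geq 1$ with $u^n\in W$.
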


\begin{proof}
This was proved for abstract groups in  \cite{Barth-Bogo}. The same proof applies to topological groups without any changes.
\end{proof}

\begin{Remark}\label{rem:valueofvirtulauto}
If $G$ satisfies the conclusion of Lemma~\ref{lem-BEW}, the following more general property automatically holds: if $f: U \to V$ is a virtual isomorphism of $G$ and $f': U' \to V'$ is a virtual isomorphism of $G$ that is equivalent to $f$, then the value of $f'$ at any point is determined by $f$; in particular, $f(g) = f'(g)$ for all $g \in U \cap U'$.
\end{Remark}

Our next lemma provides some useful information on the subgroup structure of $\Lg(G)$ in the case where $G$ has trivial quasi-center. Of particular importance to us is part (\ref{item-Comm-local-normal}) which will be the starting observation of many of our simplicity theorems.

\begin{Lemma} \label{lem-Comm-trivialQZ}
	Let $G$ be a topological group with trivial quasi-center. The following hold: 
\begin{enumerate}
		\item  \label{item-Comm-trivial-QZ} Any open subgroup of $\Lg(G)$ has trivial  quasi-center.
		\item \label{item-Comm-local-normal} Let $N$ be a non-trivial subgroup  of $\Lg(G)$ whose normalizer in $\Lg(G)$ is open. 
Then $N \cap G$ is non-trivial. 
	\end{enumerate} 
\end{Lemma}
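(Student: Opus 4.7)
My plan hinges on one key computation: if $h = [f] \in \Lg(G)$ is represented by a virtual isomorphism $f: U_1 \to V_1$, then for every $u \in U_1$ one has
\[
h \, \iotta(u) \, h^{-1} = \iotta(f(u)).
\]
This is a straightforward consequence of the composition rule from Definition~\ref{defin-local-iso}, since $f \circ \iotta_u \circ f^{-1}$ and $\iotta_{f(u)}$ agree on the open subgroup $f(U_1 \cap u^{-1} U_1 u)$. Both parts of the lemma will follow by exploiting this identity together with Lemma~\ref{lem-BEW}, which is available because $\QZ(G) = 1$.

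For part (1), let $W$ be an open subgroup of $\Lg(G)$ and let $h \in \QZ(W)$. Then $h$ centralizes some open subgroup of $W$, which by Proposition~\ref{prop-topo-germs} contains $\iotta(U)$ for some finite index open subgroup $U$ of $G$. Writing $h = [f]$ with $f: U_1 \to V_1$ and shrinking $U$ so that $U \subseteq U_1$, the centralization condition combined with the displayed formula and the injectivity of $\iotta$ (which holds since $\QZ(G) = 1$) yields $f(u) = u$ for every $u \in U$. Lemma~\ref{lem-BEW} then forces $U_1 = V_1$ and $f = \id$, so $h = 1$.

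For part (2), pick $n \in N \setminus \{1\}$ and write $n = [f]$ with $f: U_1 \to V_1$. The openness of $N_{\Lg(G)}(N)$ provides a finite index open subgroup $U \subseteq U_1$ of $G$ with $\iotta(U) \subseteq N_{\Lg(G)}(N)$, so $[\iotta(u), n] \in N$ for every $u \in U$. Using the displayed identity to conjugate, a short calculation gives
\[
[\iotta(u), n] = \iotta(u) \cdot \iotta(f(u))^{-1} = \iotta\bigl(u f(u)^{-1}\bigr)
\]
for $u \in U$. Since $n \neq 1$, the restriction $f|_U$ cannot be the identity (otherwise $[f] = 1$ directly from the definition of the equivalence relation on virtual isomorphisms), so we may choose $u \in U$ with $u f(u)^{-1} \neq 1$, producing a non-trivial element of $N \cap \iotta(G) = N \cap G$. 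The only delicate point is that the conjugation identity must be interpreted as an equality of germs rather than of representative maps, but this is automatic from Remark~\ref{rem:valueofvirtulauto}; I expect this bookkeeping between representatives and equivalence classes to be the main (minor) obstacle.
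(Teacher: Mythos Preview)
Your proof is correct and essentially matches the paper's argument. For part~(\ref{item-Comm-local-normal}) the computation is identical (the paper writes the commutator in the order $[\psi]\,\iotta_g\,[\psi]^{-1}\,\iotta_g^{-1}$ rather than your $[\iotta(u),n]$, but these are inverses of one another). For part~(\ref{item-Comm-trivial-QZ}) the paper takes a shortcut by citing \cite[Proposition~3.2(c)]{BEW} for $\QZ(\Lg(G))=1$ and then observing $\QZ(W)=\QZ(\Lg(G))\cap W$ for open $W$; your direct argument via Lemma~\ref{lem-BEW} is precisely the computation behind that cited result, so the content is the same and your version has the minor advantage of being self-contained.
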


\begin{proof}
(\ref{item-Comm-trivial-QZ}) The entire $\Lg(G)$ has trivial quasi-center by \cite[Proposition 3.2(c)]{BEW}. On the other hand,
if $U$ is an open subgroup of a topological group $L$, then $\QZ(U)=\QZ(L)\cap U$, so (\ref{item-Comm-trivial-QZ}) follows.
\skv

 (\ref{item-Comm-local-normal}) In this proof, to avoid confusion, we will not identify $G$ with $\iotta(G)$, so our goal is to show
 that $N\cap \iotta(G)$ is non-trivial. Let $[\psi]$ be a non-trivial element of $N$. By our hypotheses there exists an open subgroup $U$ of $G$ such that
$\psi$ is defined on $U$ and $\iotta(U)$ normalizes $N$. Since $\QZ(G)=\{1\}$, the restriction of $\psi$ to $U$ is non-trivial
by Lemma~\ref{lem-BEW}, so we can find $g\in U$ such that $\psi(g)\neq g$.
 Then $\iotta_{\psi(g)g^{-1}}=\iotta_{\psi(g)}\iotta_g^{-1}=\psi \iotta_g \psi^{-1}\iotta_g^{-1}=\psi\cdot (\iotta_g \psi^{-1}\iotta_g^{-1})$ is a non-trivial element which lies in both $N$ and $\iotta(G)$.
\end{proof}

Now suppose that $\tau_1$ and $\tau_2$ are two group topologies on $G$ such that $\tau_1 \subseteq \tau_2$. Every virtual isomorphism $f$ of
the topological group $(G,\tau_1)$ is also a virtual isomorphism of $(G,\tau_2)$, and $f_1 \sim f_2$ for $\tau_1$ 
implies $f_1 \sim f_2$ for $\tau_2$. Thus there is an induced map $\Lg((G,\tau_1)) \to \Lg((G,\tau_2))$, which is clearly a  homomorphism.

\begin{Lemma} \label{lem-germ-compare-topologies}
Suppose that $(G,\tau_2)$ has trivial quasi-center. Then the homomorphism $\Lg((G,\tau_1)) \to \Lg((G,\tau_2))$ is injective.
\end{Lemma}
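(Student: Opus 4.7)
The plan is to show the map is injective by showing its kernel is trivial. So suppose $[f]$ lies in the kernel, represented by a virtual isomorphism $f: U \to V$ of $(G,\tau_1)$. By definition, $[f]$ being trivial in $\Lg((G,\tau_2))$ means that $f$ is equivalent to $\id_G$ as a virtual isomorphism of $(G,\tau_2)$, i.e., there exists a finite index $\tau_2$-open subgroup $W \subseteq U \cap V$ on which $f$ agrees with the inclusion $W \hookrightarrow G$.

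The key step is then to invoke Lemma~\ref{lem-BEW}. Since $\tau_1 \subseteq \tau_2$, the subgroups $U$ and $V$ are in particular $\tau_2$-open, and by the discussion preceding the lemma, $f: U \to V$ is also an isomorphism of topological groups with respect to the restricted $\tau_2$-topologies. Moreover $W$ is $\tau_2$-open. Because $(G,\tau_2)$ has trivial quasi-center, Lemma~\ref{lem-BEW} applies and forces $U = V$ and $f = \id_U$.

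Having obtained that $f$ is literally the identity map on $U$, which is $\tau_1$-open, we conclude that $f$ is equivalent to $\id_G$ as a virtual isomorphism of $(G,\tau_1)$ as well, so $[f]$ is trivial in $\Lg((G,\tau_1))$, as desired. There is essentially no obstacle in carrying out this argument: the entire content of the lemma is the conversion of information about $\tau_2$-equivalence into information about $\tau_1$-equivalence, and Lemma~\ref{lem-BEW} accomplishes precisely this by upgrading ``coincides with the identity on some open subgroup'' to ``equals the identity on the whole domain''. The only subtlety worth double-checking is that the $\tau_1$-continuous isomorphism $f: U \to V$ is genuinely a $\tau_2$-continuous isomorphism --- but this is exactly the assertion in the paragraph preceding the statement of the lemma, which we take as given.
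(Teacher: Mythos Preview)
Your proof is correct and follows essentially the same approach as the paper: both arguments apply Lemma~\ref{lem-BEW} to the topological group $(G,\tau_2)$ to upgrade ``$f$ agrees with the identity on some $\tau_2$-open subgroup'' to ``$f$ is literally the identity on its domain $U$'', which then forces $[f]$ to be trivial in $\Lg((G,\tau_1))$. You have simply spelled out the details that the paper's one-sentence proof leaves implicit.
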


\begin{proof}
	If $f : U  \to V$ is a virtual isomorphism of $(G,\tau_1)$ whose image in $\Lg((G,\tau_2))$ is trivial, then Lemma \ref{lem-BEW} asserts 
that $U=V$ and $f$ is the identity map, so $f$ has trivial image in $\Lg((G,\tau_1))$ as well. 
\end{proof}

\subsection{The commensurator and the $p$-commensurator of an abstract group}
Let $\Gama$ be an abstract group. We start by recalling the definitions of the profinite and pro-$p$ topologies on $\Gama$. 
Each of these turns $\Gama$ into a topological group. 

\begin{itemize}
\item The profinite topology $\tau_{\prof}$ on $\Gama$  is defined by declaring that the finite index normal subgroups of $\Gama$ form a base of neighborhoods of the identity. Since every finite index subgroup contains a finite index normal subgroup, the open subgroups
in the profinite topology are precisely the finite index subgroups.

\item If $p$  is a prime number, the  pro-$p$ topology $\tau_p$ on $\Gama$ is defined by declaring that the normal subgroups of $\Gama$ of finite $p$-power index form a base of neighborhoods of the identity. We say that a subgroup of $\Gama$ is {\it $p$-open} or {\it $p$-closed} if it is open or closed in the pro-$p$ topology, respectively. It is well known that $p$-open subgroups are precisely subnormal subgroups of $p$-power index.
\end{itemize}

\begin{Definition}  \label{def-Comm-Commp}
	Let $\Gama$ be an abstract group.
\begin{itemize}
\item The {\it commensurator} of $\Gama$ denoted by $\Comm(\Gama)$ is the commensurator of the topological group $(\Gama, \tau_{prof})$, that is, $\Comm(\Gama)= \Lg((\Gama,\tau_{\prof}))$. 
\item The {\it $p$-commensurator} of $\Gama$ denoted by $\Comm_p(\Gama)$ is the commensurator of the topological group $(\Gama, \tau_{p})$, that is,  $\Comm_p(\Gama)= \Lg((\Gama,\tau_p))$.
\end{itemize}
\end{Definition}

	More explicitly, \begin{itemize}
		\item 	elements of $\Comm(\Gama)$ are  isomorphisms between finite index subgroups of $\Gama$,  modulo being equal on a finite index subgroup. The kernel of $i: \Gama \to \Comm(\Gama)$ is $\QZ((\Gama,\tau_{prof}))=\VZ(\Gama)$, the virtual center of $\Gama$. 

		\item elements of $\Comm_p(\Gama)$ are isomorphisms between $p$-open subgroups of $\Gama$,  modulo being equal on a $p$-open subgroup. The kernel of $i: \Gama \to \Comm_p(\Gama)$ is the set of elements which centralize a $p$-open subgroup of $\Gama$. 
	\end{itemize}
	
In what follows we always endow the groups $\Comm(\Gama)$ and $\Comm_p(\Gama)$ with the topology from Proposition \ref{prop-topo-germs}.  By definition the pro-$p$ topology is coarser than the profinite topology, so applying Lemma \ref{lem-germ-compare-topologies} 
to the topological group $(\Gama,\tau_{prof})$ we obtain the following:

\begin{Lemma}
	Suppose that an abstract group $\Gama$ has trivial virtual center. Then the natural homomorphism $\Comm_p(\Gama) \to \Comm(\Gama)$ is injective. 
\end{Lemma}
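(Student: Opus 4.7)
The plan is to apply Lemma~\ref{lem-germ-compare-topologies} directly to the abstract group $\Gama$ equipped with its pro-$p$ topology $\tau_p$ and its profinite topology $\tau_{\prof}$. By definition of these topologies, every normal subgroup of $\Gama$ of finite $p$-power index is of finite index, so every $\tau_p$-open subgroup is $\tau_{\prof}$-open; hence $\tau_p \subseteq \tau_{\prof}$. This provides the containment of topologies needed to invoke Lemma~\ref{lem-germ-compare-topologies} with $\tau_1 = \tau_p$ and $\tau_2 = \tau_{\prof}$, yielding the induced homomorphism $\Comm_p(\Gama) = \Lg((\Gama,\tau_p)) \to \Lg((\Gama,\tau_{\prof})) = \Comm(\Gama)$ whose description matches the natural homomorphism in the statement.

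The remaining (and only nontrivial) point is to verify the hypothesis of Lemma~\ref{lem-germ-compare-topologies}, namely that $(\Gama,\tau_{\prof})$ has trivial quasi-center. I would argue this as follows. Open subgroups of $(\Gama,\tau_{\prof})$ are precisely the finite index subgroups of $\Gama$, so by Definition~\ref{def:quasivirtualcenter} the quasi-center $\QZ((\Gama,\tau_{\prof}))$ consists of the elements of $\Gama$ centralizing some finite index subgroup, which is exactly the virtual center $\VZ(\Gama)$. By hypothesis $\VZ(\Gama) = \{1\}$, so $\QZ((\Gama,\tau_{\prof})) = \{1\}$ as required.

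With both inputs in place, Lemma~\ref{lem-germ-compare-topologies} immediately yields injectivity of $\Comm_p(\Gama) \to \Comm(\Gama)$. I do not anticipate any real obstacle: the statement is a formal consequence of the identification $\QZ((\Gama,\tau_{\prof})) = \VZ(\Gama)$ and the already-established Lemma~\ref{lem-germ-compare-topologies}. The only care needed is to match the conventions of Definition~\ref{defin-local-iso} (virtual isomorphisms are topological isomorphisms between open subgroups), but since any abstract group isomorphism between finite index subgroups is automatically continuous in both $\tau_p$ and $\tau_{\prof}$, this causes no issue.
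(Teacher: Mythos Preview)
Your proposal is correct and follows exactly the paper's approach: the paper derives the lemma directly from Lemma~\ref{lem-germ-compare-topologies} applied with $\tau_1=\tau_p$ and $\tau_2=\tau_{\prof}$, using that the pro-$p$ topology is coarser than the profinite topology and that $\QZ((\Gama,\tau_{\prof}))=\VZ(\Gama)$. Your write-up simply makes the verification of the quasi-center hypothesis more explicit than the paper does.
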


Again let  $\Gama$ be an abstract group and $p$ a prime number. We denote by $\widehat{\Gama}_p$  the pro-$p$ completion of $\Gama$; thus we have a  completion map $\Gama \rightarrow \widehat{\Gama}_p$, which is injective precisely when $\Gama$ is residually-$p$ 
(by definition this holds if elements of $\Gama$ can be separated by homomorphisms to finite $p$-groups).

 Part (3) of the following lemma establishes a natural map from the $p$-commensurator of an abstract group to the commensurator of its pro-$p$ completion, which will be used repeatedly in the paper. 
 
\begin{Lemma} \label{lem-localiso-extends-completion}
Let $\Gama$ be an abstract group.
\begin{enumerate}
\item  If $H$ is a $p$-open subgroup of $\Gama$, then the restriction to $H$ of the pro-$p$ topology on $\Gama$ is equal to the pro-$p$ topology on $H$. 
\item Given an injective  homomorphism $\theta: H \to \Gama$ with $p$-open image,  $\theta$ extends to a continuous injective open homomorphism $\widehat{\theta}_p:  \widehat{H}_p \to  \widehat{\Gama}_p$. In particular, if $H$ is a $p$-open subgroup of $\Gama$ and $\theta: H \to \Gama$ is the inclusion map, then we may regard $\widehat{H}_p$ as an open subgroup of $\widehat{\Gama}_p$ via the map $\widehat{\theta}_p$.

\item  There is a continuous  homomorphism $\phi: \Comm_p(\Gama) \to \Comm(\widehat{\Gama}_p)$ given by $\phi([f]) = [\widehat{f}_p]$.
\end{enumerate}
\end{Lemma}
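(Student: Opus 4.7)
The plan is to prove the three parts in order, with part (1) doing the main technical work and parts (2)--(3) following by universal-property arguments.

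For part (1), I would show the two inclusions between the induced pro-$p$ topology on $H$ and its intrinsic pro-$p$ topology. The easy direction is that if $N$ is a normal subgroup of $\Gama$ of $p$-power index, then $N \cap H$ is normal in $H$ with $[H: N\cap H] \le [\Gama:N]$, hence $p$-open in $H$. The harder direction, which I expect to be the main obstacle, is producing, for each normal subgroup $M \trianglelefteq H$ of $p$-power index, some $K \trianglelefteq \Gama$ of $p$-power index with $K \cap H \subseteq M$. Here I would use the characterization stated just before the lemma that $p$-open subgroups are exactly the subnormal subgroups of $p$-power index, giving a chain $H = H_0 \trianglelefteq H_1 \trianglelefteq \cdots \trianglelefteq H_m = \Gama$ with each $[H_{i+1}:H_i]$ a power of $p$. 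Then by induction on $i$, I would construct $M_i \trianglelefteq H_i$ contained in $M$ with $[H_i:M_i]$ a power of $p$, taking $M_0 = M$ and defining $M_{i+1}$ to be the normal core of $M_i$ in $H_{i+1}$. The crucial check at each inductive step is that $[H_{i+1}:M_{i+1}]$ is a $p$-power: the $H_{i+1}$-conjugates of $M_i$ all lie in $H_i$ (as $H_i \trianglelefteq H_{i+1}$) and their number divides $[H_{i+1} : N_{H_{i+1}}(M_i)]$, which divides $[H_{i+1}:H_i]$, a $p$-power; combined with the embedding of $H_i/M_{i+1}$ into a finite direct product of $p$-groups $H_i/gM_i g^{-1}$ and with $[H_{i+1}:H_i]$ being a $p$-power, this yields the claim. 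Then $K := M_m$ works.

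For part (2), once part (1) is available, the map $\theta: H \to \Gama \hookrightarrow \widehat{\Gama}_p$ is continuous from the pro-$p$ topology on $H$ to $\widehat{\Gama}_p$, since part (1) tells us that the pro-$p$ topology on $H$ coincides with the subspace topology coming from $\Gama$. The universal property of the pro-$p$ completion then yields a unique continuous extension $\widehat{\theta}_p : \widehat{H}_p \to \widehat{\Gama}_p$. For injectivity and openness I would apply part (1) once more, but to $\theta(H)$ inside $\Gama$: the pro-$p$ topology on $\theta(H)$ coincides with the subspace topology, so $\widehat{\theta(H)}_p$ is canonically identified with the closure of $\theta(H)$ in $\widehat{\Gama}_p$, which is open because $\theta(H)$ is $p$-open in $\Gama$. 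Since $\theta : H \to \theta(H)$ is a topological isomorphism (with $H$ and $\theta(H)$ both in their pro-$p$ topologies), its extension $\widehat{H}_p \to \widehat{\theta(H)}_p$ is a topological isomorphism; composing with the inclusion of $\widehat{\theta(H)}_p$ as an open subgroup of $\widehat{\Gama}_p$ shows that $\widehat{\theta}_p$ is injective and open.

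For part (3), given a virtual isomorphism $f: U \to V$ representing an element $[f]$ of $\Comm_p(\Gama)$, I would apply part (2) to both the inclusion $U \hookrightarrow \Gama$ and to $f$ itself (composed with $V \hookrightarrow \Gama$) to obtain a virtual isomorphism $\widehat{f}_p: \widehat{U}_p \to \widehat{V}_p$ of $\widehat{\Gama}_p$, where $\widehat{U}_p$ and $\widehat{V}_p$ are open subgroups of $\widehat{\Gama}_p$. Three things remain to check. First, $[\widehat{f}_p]$ depends only on $[f]$: if $f$ and $f'$ agree on a $p$-open subgroup $W \subseteq U \cap U'$, then by density their extensions agree on the open subgroup $\widehat{W}_p$ of $\widehat{\Gama}_p$, so $[\widehat{f}_p] = [\widehat{f'}_p]$. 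Second, $\phi$ is a homomorphism: functoriality of the pro-$p$ completion (and its compatibility with restriction) implies $\widehat{f_1 \circ f_2}_p = \widehat{f_1}_p \circ \widehat{f_2}_p$ on a suitable domain, giving $\phi([f_1][f_2]) = \phi([f_1])\phi([f_2])$. Third, continuity: since $\phi \circ \iotta_\Gama$ agrees with $\iotta_{\widehat{\Gama}_p}$ restricted to $\Gama$, we have $\phi(\iotta(U)) \subseteq \iotta(\widehat{U}_p)$ for every $p$-open $U \subseteq \Gama$, and these sets form a base of neighborhoods of the identity on each side by Proposition~\ref{prop-topo-germs}.
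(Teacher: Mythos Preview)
Your proposal is correct and follows essentially the same approach as the paper. The only difference is one of presentation: the paper dispatches parts (1) and (2) by citing the relevant lemmas in Ribes--Zalesskii, whereas you unpack those arguments explicitly (the subnormal-chain-and-normal-core construction for (1), and the universal property plus identification of $\widehat{\theta(H)}_p$ with the closure of $\theta(H)$ for (2)); your part (3) matches the paper's argument almost verbatim, including the density-plus-continuity verification of the homomorphism property and the continuity check via the base $\{\widehat{H}_p\}$.
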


\begin{proof}
For (1) see \cite[Lemma 3.1.4(a)]{RZ-book}. Part (2) is a standard construction -- see \cite[\S~3.2]{RZ-book}. The fact that $\widehat{\theta}_p$ is injective follows from part (1) -- see \cite[Lemma 3.2.6]{RZ-book}.  \skv

(3) First let us explain why the map $\phi$ given by $\phi([f]) = [\widehat{f}_p]$ is well defined. For this,
it is enough to note that when $f: H \rightarrow K$ is a virtual isomorphism of $\Gama$ in the pro-$p$ topology and $g$ is the restriction of $f$ to some $p$-open subgroup $L$ of $H$, then $\widehat{g}_p$ is given by the restriction of $\widehat{f}_p$ to $\widehat{L}_p$ (regarding $\widehat{L}_p$ as an open subgroup of $\widehat{H}_p$, as in (2)).  

Next we show that $\phi$ is a  homomorphism.
Given $f,g \in \Comm_p(\Gama)$, there is an identity neighborhood $\propU{p}$ in $\widehat{\Gama}_p$ on which the composition $\widehat{f}_p\widehat{g}^{-1}_p$ is defined, and for all $x \in \Gama \cap \propU{p}$ we have $\widehat{f}_p\widehat{g}^{-1}_p(x) = \widehat{(fg^{-1})}_p(x)$; by continuity, the same equation holds for all $x \in U$, and hence $[\widehat{f}_p][\widehat{g}_p]^{-1} = [\widehat{(fg^{-1})}_p]$, showing that $\phi$ is a  homomorphism.  

Finally, note that $\{ \widehat{H}_p : H \text{ is a $p$-open subgroup of }G\}$ is a base of neighborhoods of the identity in $\Comm(\widehat{\Gama}_p)$, and for each $p$-open subgroup $H$ of $\Gama$, the preimage $\phi^{-1}(\widehat{H}_p)$ is an open subgroup of $\Comm_p(\Gama)$ since it contains $H$. It follows that $\phi$ is continuous.
\end{proof}

In particular, Lemma~\ref{lem-localiso-extends-completion}(1) ensures that for a $p$-open subgroup $H$ of $\Gama$, we have $\Comm_p(H) = \Comm_p(\Gama)$. The same is of course true for the profinite topology: if $H$ is a finite index subgroup of $\Gama$, then the profinite topology on $H$ agrees with the restriction to $H$ of the profinite topology on $\Gama$.  We will use these facts frequently without further mention.
 
 We remark that an equivalent way to say that two subgroups $H, K$ of a group $\Gama$ are commensurable (Definition \ref{def-commensurated}) is that $H \cap K$ is open in the profinite topology on $H$ and on $K$. Note that this reformulation involves the profinite topologies on $H$ and $K$, and not the profinite topology on $G$. We make an analogous definition for the pro-$p$ topology. 
 
 \begin{Definition}
	Two subgroups $H, K$ of a group $\Gama$ are {\it $p$-commensurable} if $H \cap K$ is a $p$-open subgroup in  $H$ and $K$. This is an equivalence relation. A subgroup $L$ of $\Gama$ is {\it $p$-commensurated} if all the $\Gama$-conjugates of $L$ are $p$-commensurable with each other. 
\end{Definition}
 
\begin{Lemma} \label{lem-im-p-commens}
The image $i(\Gama)$ of $\Gama$ in $\Comm_p(\Gama)$ is a $p$-commensurated subgroup of $\Comm_p(\Gama)$. 
\end{Lemma}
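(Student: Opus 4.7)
The plan is to leverage Lemma~\ref{lem-G-commens-CommG}, which gives the finite-index version of the statement, and upgrade it to the pro-$p$ setting. I would fix $g \in \Comm_p(\Gama)$ and choose a representing virtual isomorphism $f: U \to V$, where $U$ and $V$ are $p$-open subgroups of $\Gama$. Applying Lemma~\ref{lem-G-commens-CommG} to the topological group $(\Gama, \tau_p)$ yields $g\,\iotta(U)\,g^{-1} = \iotta(V)$, so both $\iotta(V)$ and $g\,\iotta(U)\,g^{-1}$ lie inside $\iotta(\Gama) \cap g\,\iotta(\Gama)\,g^{-1}$. The task then reduces to showing that these witnesses are large enough in the intrinsic pro-$p$ sense on each of the two conjugate subgroups.

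For one side, the surjective group homomorphism $\iotta:\Gama \twoheadrightarrow \iotta(\Gama)$ identifies the pro-$p$ topology on $\Gama$ with the intrinsic pro-$p$ topology on $\iotta(\Gama)$ as an abstract group: any normal subgroup of $p$-power index in $\iotta(\Gama)$ pulls back to a normal subgroup of $p$-power index in $\Gama$, and conversely the image of a normal subgroup of $p$-power index in $\Gama$ is normal of $p$-power index in $\iotta(\Gama)$. In particular, $\iotta(V)$ is $p$-open in $\iotta(\Gama)$, and hence so is the larger subgroup $\iotta(\Gama) \cap g\,\iotta(\Gama)\,g^{-1}$. For the other side, conjugation by $g$ is an abstract group isomorphism $\iotta(\Gama) \to g\,\iotta(\Gama)\,g^{-1}$, and any abstract group isomorphism carries $p$-open subgroups to $p$-open subgroups; since $\iotta(U)$ is $p$-open in $\iotta(\Gama)$ by the same argument, its image $g\,\iotta(U)\,g^{-1}$ is $p$-open in $g\,\iotta(\Gama)\,g^{-1}$, hence so is $\iotta(\Gama) \cap g\,\iotta(\Gama)\,g^{-1}$.

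Combining the two, $\iotta(\Gama)$ and $g\,\iotta(\Gama)\,g^{-1}$ are $p$-commensurable for every $g \in \Comm_p(\Gama)$, and by transitivity of $p$-commensurability this suffices to conclude that any two $\Comm_p(\Gama)$-conjugates of $\iotta(\Gama)$ are $p$-commensurable. I do not anticipate any serious obstacle: the only subtlety is keeping track of whether ``$p$-open'' is meant inside $\Gama$, inside $\iotta(\Gama)$, or inside a conjugate thereof, and this is resolved by the standard fact that both surjective homomorphisms and abstract group isomorphisms respect the intrinsic pro-$p$ topology.
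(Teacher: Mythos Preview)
Your proposal is correct and follows essentially the same route as the paper's proof: both invoke Lemma~\ref{lem-G-commens-CommG} to get $g\,\iotta(U)\,g^{-1}=\iotta(V)$ and then use that $\iotta(U)$ and $\iotta(V)$ are $p$-open in $\iotta(\Gama)$. The paper compresses your explicit verification (that surjections and conjugations respect the intrinsic pro-$p$ topology) into a single sentence, but the underlying argument is the same.
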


\begin{proof}
Let $f: U \to V$ be an isomorphism between two $p$-open subgroups of $G$. By Lemma \ref{lem-G-commens-CommG} we have $[f] \iotta(U) [f]^{-1} = \iotta(V)$, and the subgroups $\iotta(U)$ are $\iotta(V)$ are $p$-commensurable since they are both $p$-open in $i(\Gama)$. 
\end{proof}

\subsection{Subgroups of the commensurator generated by automorphisms} \label{subsec-def-AComm}

Let $G$ be a topological group. Recall that for any finite index open subgroup $U$ of $G$ we denote by $\uAut(U)$
the canonical image of $\Aut(U)$ in $\Lg(G)$. We define $\AComm(G)$ to be the subgroup of $\Comm(G)$ generated
by all subgroups $\uAut(U)$.

If $\Gama$ is an abstract group,  we define 
$$\AComm(\Gama)=\AComm((\Gama,\tau_{prof})) \mbox{ and }\AComm_p(\Gama)=\AComm((\Gama,\tau_p)).$$
More explicitly, 
\begin{itemize}
\item $\AComm(\Gama)$ is the subgroup of $\Comm(\Gama)$ generated by $\uAut(H)$ where $H$ ranges over all finite index subgroups of $\Gama$;
\item $\AComm_p(\Gama)$ is the subgroup of $\Comm_p(\Gama)$ generated by $\uAut(H)$ where $H$ ranges over all $p$-open subgroups of $\Gama$.
\end{itemize}

As we will show shortly (see Lemma~\ref{lemma:Aut_1} below and the remark after it), under mild additional assumptions on $G$, the group $\AComm(G)$ is normal in $\Comm(G)$ and only depends on the commensurability class of $G$.

\begin{Definition}
	A topological group $G$ is {\it characteristically based} if every finite index open subgroup of $G$ contains a finite index open (topologically) characteristic subgroup.  We say that $G$ is {\it hereditarily  characteristically based} (h.c.b.) if every finite index open subgroup of $G$ is  characteristically based.
\end{Definition}

\begin{Lemma}
\label{lem:hcb}
Any topological group that is topologically finitely generated is h.c.b.. In particular, the following groups are h.c.b.:
	\begin{enumerate}
		\item A finitely generated abstract group equipped with the profinite topology or pro-$p$ topology.
		\item A finitely generated profinite group.\footnote{As we will explain in section~\ref{sec-prelim-freegroups},
by a finitely generated profinite group, we mean a profinite group which is
topologically finitely generated.}
	\end{enumerate}
\end{Lemma}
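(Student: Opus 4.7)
The plan is to prove the general claim that any topologically finitely generated topological group $G$ is h.c.b., and then note that both (1) and (2) are immediate corollaries. My strategy is to exhibit, for every $n \ge 1$, a canonical open characteristic subgroup of $G$ of index bounded in terms of $n$ — this will handle the ``characteristically based'' part uniformly — and then use a Schreier-style argument to propagate topological finite generation to finite index open subgroups, giving heredity.

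First I would argue that a topologically finitely generated group $G$ has only finitely many open subgroups of each given index $n$. The reason is that an open subgroup $U$ of index $n$ corresponds to a continuous action of $G$ on the discrete coset space $G/U$, hence to a continuous homomorphism $G \to \Sym(G/U) \cong S_n$; since $G$ is topologically generated by a finite set $S$ and $S_n$ is finite, there are at most $(n!)^{|S|}$ such homomorphisms, hence only finitely many such $U$. From this I would form $K_n(G)$, the intersection of all open subgroups of $G$ of index at most $n$; this is a finite intersection of open subgroups, so open and of finite index, and it is invariant under any topological automorphism (since automorphisms permute open subgroups of a given index), hence topologically characteristic. For any finite index open $U \le G$, setting $n = [G:U]$ gives $K_n(G) \subseteq U$, showing $G$ is characteristically based.

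For heredity, I would show that any finite index open subgroup $U$ of a topologically finitely generated $G$ is itself topologically finitely generated, after which the previous step applied to $U$ concludes the proof. Let $\Gama_0 = \langle S \rangle$ be the abstract subgroup generated by a finite topological generating set $S$; then $\Gama_0$ is dense in $G$ and, since $U$ is open, $\Gama_0 U = G$, so $\Gama_0 \cap U$ has index $[G:U]$ in $\Gama_0$ and is abstractly finitely generated (Schreier). Density of $\Gama_0$ in $G$ combined with openness of $U$ gives density of $\Gama_0 \cap U$ in $U$, so $U$ is topologically generated by the same finite set.

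Finally, (1) holds because any abstract generating set of a finitely generated group $\Gama$ is trivially dense in $\Gama$ equipped with either $\tau_{\prof}$ or $\tau_p$, and (2) is the very definition of a finitely generated profinite group. The main conceptual point — and the step I expect to need the most care — is the finiteness of open subgroups of a given index, which rests on the discreteness of $G/U$ for open $U$ and the reduction to continuous homomorphisms into a fixed finite symmetric group; everything else is a routine consequence.
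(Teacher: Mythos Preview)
Your proof is correct and follows essentially the same approach as the paper: both intersect the finitely many open subgroups of bounded index to produce an open characteristic subgroup, and both invoke a Schreier-type argument to pass topological finite generation to finite index open subgroups. The only difference is that you spell out in detail what the paper dismisses as ``a standard argument'' and ``Schreier's subgroup lemma'', and you intersect subgroups of index at most $n$ rather than exactly $n$, which is an inessential variant.
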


\begin{proof}
Let $G$ be a topologically finitely generated group. By a standard argument $G$ contains only finitely many open subgroups of any given finite index. The intersection $G_n$ of all open subgroups of index $n$ is then a characteristic open subgroup, and by definition every finite index open subgroup contains $G_n$ for some $n$. Thus $G$ is characteristically based.
Further, by Schreier's subgroup lemma every finite index open subgroup of $G$ satisfies the same hypothesis, so in fact $G$ is h.c.b.
\end{proof}

\begin{Lemma}
	\label{lemma:Aut_1}
Let $G$ be a h.c.b. topological group. The following hold:
	\begin{itemize}
		\item[(a)] Let $U$ be any finite index open subgroup of $G$. Then $\AComm(U)=\AComm(G)$.
		\item[(b)] For any finite index open subgroup $U$ of $G$ and any $f\in \Comm(G)$ there exists a finite index open subgroup $Z$ of $G$ such that $$f\uAut(U)f^{-1}\subseteq \uAut(Z).$$ In particular, 
		$\AComm(G)$ is normal in $\Comm(G)$. 
	\end{itemize}

In particular, these conditions hold when $G$ is a finitely generated abstract group equipped with the profinite topology or pro-$p$ topology or when $G$ is a finitely generated profinite group.
\end{Lemma}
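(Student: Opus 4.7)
The plan is to prove both parts by using the h.c.b. hypothesis to locate, inside any specified finite index open subgroup, a finite index open subgroup that is characteristic in a chosen ambient group. The value of characteristic subgroups here is that an automorphism of the ambient group restricts to an automorphism of any characteristic subgroup, and two virtual isomorphisms that agree on a finite index open subgroup represent the same class in $\Comm(G)$. With this device, any automorphism can be replaced (up to equivalence) by its restriction to a controlled codomain that does not depend on the automorphism.

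For part (a), the inclusion $\AComm(U) \subseteq \AComm(G)$ is immediate since any finite index open subgroup of $U$ is one of $G$. For the reverse, pick a finite index open subgroup $W$ of $G$ and $\alpha \in \Aut(W)$. Because $G$ is h.c.b., $W$ is characteristically based, so the finite index open subgroup $W \cap U$ of $W$ contains a finite index open subgroup $D$ of $W$ that is characteristic in $W$. Then $\alpha(D)=D$, so $\alpha|_D \in \Aut(D)$, and since $\alpha$ and $\alpha|_D$ agree on $D$ we have $[\alpha]=[\alpha|_D]$ in $\Comm(G)$. As $D \subseteq U$ is finite index open in $U$, this shows $[\alpha] \in \uAut(D) \subseteq \AComm(U)$, establishing the equality.

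For part (b), represent $f\in \Comm(G)$ by a virtual isomorphism $\phi: V \to V'$. Since $U$ is h.c.b., the finite index open subgroup $V \cap U$ of $U$ contains a finite index open subgroup $C$ of $V\cap U$ that is characteristic in $U$. For any $\alpha \in \Aut(U)$ we then have $\alpha(C)=C$, so $\alpha|_C \in \Aut(C)$ and $[\alpha]=[\alpha|_C]$. Setting $Z = \phi(C)$, which is a finite index open subgroup of $G$ depending only on $f$ and $U$, the composition $\phi \circ \alpha|_C \circ \phi^{-1}$ belongs to $\Aut(Z)$ and represents $f[\alpha]f^{-1}$. Thus $f\,\uAut(U)\,f^{-1}\subseteq \uAut(Z)$. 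Since $\AComm(G)$ is generated by the subgroups $\uAut(U)$ as $U$ ranges over finite index open subgroups of $G$, and each such $\uAut(U)$ has all of its $\Comm(G)$-conjugates contained in some $\uAut(Z) \subseteq \AComm(G)$, normality of $\AComm(G)$ in $\Comm(G)$ follows. The final claims for finitely generated abstract or profinite groups follow from Lemma \ref{lem:hcb}.

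There is no substantial obstacle; the h.c.b. hypothesis is calibrated precisely to supply the characteristic subgroup required to shrink an automorphism to a prescribed codomain without changing its class in $\Comm(G)$. The only subtle point is choosing the characteristic subgroup in the correct ambient group -- in $W$ for part (a) so that the restriction lands in $\uAut(D)\subseteq \AComm(U)$, and in $U$ for part (b) so that the codomain $Z=\phi(C)$ is independent of $\alpha$.
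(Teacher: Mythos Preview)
Your proof is correct and follows essentially the same approach as the paper's. In both parts you use the h.c.b.\ hypothesis to find a characteristic subgroup (your $D$ and $C$, the paper's $W$) inside the relevant intersection, then restrict the automorphism to it; the only differences are in notation and minor phrasing (e.g.\ you invoke that $U$ is h.c.b.\ where only ``$U$ is characteristically based'' is needed, but this follows from $G$ being h.c.b.).
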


\begin{proof} 
(a) The inclusion $\AComm(U)\subseteq \AComm(G)$ holds simply because every finite index open subgroup of $U$ is also a finite index open subgroup of $G$. To prove the reverse inclusion we need to show that for every finite index open subgroup $V$ of $G$, the group
$\uAut(V)$ lies in $\AComm(U)$. Since $G$ is h.c.b., it has a finite index open subgroup $W$ such that $W\subseteq U\cap V$ and
$W$ is characteristic in $V$. The latter implies that $\uAut(V)\subseteq \uAut(W)$, and since $W\subseteq U$, we have 
$\uAut(W)\subseteq \AComm(U)$, so $\uAut(V)\subseteq \AComm(U)$, as desired.
\skv

(b) Take any finite index open subgroup $U$ of $G$ and $[\phi]\in \Comm(G)$. Since $G$ is h.c.b., it has a finite index open subgroup $W$ 
which is characteristic in $U$ and such that $\phi$ is defined on $W$. Let $Z=\phi(W)$. Then for any $\psi\in \Aut(U)$ we have
$$\phi\,\psi\phi^{-1}(Z)=(\phi\,\psi)(W)=\phi(W)=Z,$$
so $[\phi]\,\uAut(U)[\phi]^{-1}\subseteq \uAut(Z)$, as desired.
\end{proof}

\section{Preliminaries on free groups and their automorphisms.} \label{sec-prelim-freegroups}

This section is organized as follows. We will start with some basic properties of the automorphism groups
of free groups in subsection~\ref{subsec-free-groups-gen-aut}, followed by the discussion of some results on finite index subgroups
in free groups and their generating sets in subsection~\ref{subsect-free-abstract-subgroups}. After reviewing a few general facts about generation in profinite groups in 
subsection~\ref{subsec-prelim-pro-p}, in subsection~\ref{subsec-prelim-freeprop} we will state the analogues of some of the results from 
subsection~\ref{subsect-free-abstract-subgroups} for free pro-$p$ groups.  Finally, in subsection~\ref{subsec-prelim-auto} we will introduce a natural topology on the automorphism groups of profinite groups.
The discussion of the automorphism groups of free pro-$p$ groups will be deferred till subsection~\ref{subsec-generation-commfreeprop}.

\subsection{The subgroup $\SAut(F)$ and Nielsen generators} \label{subsec-free-groups-gen-aut}

Let $\absF{}$ be a free group of rank $d>1$. The abelianization map $\absF{}\to \absF{}/[\absF{},\absF{}]\cong \dbZ^d$ induces a homomorphism $\pi:\Aut(\absF{})\to \Aut(\dbZ^d)\cong\GL_d(\dbZ)$, which is known to be surjective \cite[Proposition 4.4]{LS}. The special automorphism group $\SAut(\absF{})$ is defined by $$\SAut(\absF{})=\pi^{-1}(\SL_d(\dbZ)).$$
Since $\pi$ is surjective, we have $[\Aut(\absF{}):\SAut(\absF{})]=[\GL_d(\dbZ)):\SL_d(\dbZ))]=2$.

It is well known that a subset $X$ of $F$ is a free generating set if and only
if $X$ generates $F$ and $|X|=d$. For brevity, we will refer to free generating sets as {\it bases} of $F$. Let $X = \left\lbrace x_1,\ldots,x_d \right\rbrace $ be a fixed basis of $\absF{}$. For $1\leq i\neq j\leq d$ define
$R_{ij},L_{ij}\in \Aut(F)$ by $R_{ij}(x_i)=x_i x_j$ and $R_{ij}(x_k)=x_k$ for $k \neq i$; and $L_{ij}(x_i)=x_j x_i$ and $L_{ij}(x_k)=x_k$ for $k \neq i$. The maps $R_{ij}$ and $L_{ij}$ are called {\it Nielsen transformations}. Clearly, $R_{ij},L_{ij}\in \SAut(F)$, and
it is a classical theorem of Nielsen~\cite{Ni} (see also \cite[\S~I.4]{LS})  that $\SAut(\absF{})$ is generated by the maps $R_{ij}$ and $L_{ij}$.

Every permutation of $X$ defines an element of $\Aut(F)$. We denote by $\Sym(X)$ the corresponding subgroup of $\Aut(F)$ and by 
$\Alt(X) \subseteq \Sym(X)$ the subgroup consisting of even permutations. 

The following lemma collects some basic properties of $\SAut(\absF{})$.

\begin{Lemma} 
	\label{lem:absperfect}
	The following hold:	
	\begin{itemize}
		\item[(a)] $\Sym(X) \cap \SAut(\absF{}) = \Alt(X)$ . 
		\item[(b)] Assume that $\rk(F)\geq 3$. Then $\SAut(\absF{})$ is perfect and $[\Aut(\absF{}),\Aut(\absF{})]=\SAut(\absF{})$. 
	\end{itemize}
\end{Lemma}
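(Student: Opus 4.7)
The image of $\Sym(X)$ under the abelianization homomorphism $\pi:\Aut(F)\to\GL_d(\dbZ)$ consists of permutation matrices, and the determinant of the permutation matrix associated to $\sigma\in\Sym(X)$ equals the sign of $\sigma$. Hence $\sigma\in \SAut(F)=\pi^{-1}(\SL_d(\dbZ))$ iff $\sigma$ is an even permutation, giving $\Sym(X)\cap\SAut(F)=\Alt(X)$.

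\textbf{Plan for part (b).} The plan is to show that every Nielsen generator of $\SAut(F)$ is itself a commutator of Nielsen generators; the hypothesis $d\geq 3$ enters because this requires a third index distinct from the two involved in the given Nielsen transformation. Concretely, I will verify the identity
\[
R_{ik} \;=\; [L_{jk},R_{ij}], \qquad L_{ik} \;=\; [R_{jk},L_{ij}],
\]
for any triple of pairwise distinct indices $i,j,k$. Each identity is checked by computing the action of both sides on every element of the basis $X$: the only nontrivial effect occurs on $x_i$, where a direct calculation (using that conjugation by $L_{jk}$ sends $x_i\mapsto x_i$, $x_j\mapsto x_j$ and that $R_{ij}$ turns $x_i$ into $x_ix_j$) produces precisely the desired right or left multiplication by $x_k$, while all other basis elements are fixed.

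Once these identities are in hand, the conclusion follows quickly. Since $\SAut(F)$ is generated by the $R_{ij}$ and $L_{ij}$ and each of them lies in $[\SAut(F),\SAut(F)]$, the group $\SAut(F)$ is perfect. For the second assertion, the quotient $\Aut(F)/\SAut(F)$ has order $2$ and is therefore abelian, which gives $[\Aut(F),\Aut(F)]\subseteq \SAut(F)$; combining this with perfectness yields
\[
\SAut(F) \;=\; [\SAut(F),\SAut(F)] \;\subseteq\; [\Aut(F),\Aut(F)] \;\subseteq\; \SAut(F),
\]
so equality holds throughout.

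\textbf{Main obstacle.} The only real content is the commutator identity $R_{ik}=[L_{jk},R_{ij}]$ (and its analogue for $L_{ik}$). This is a direct but slightly fiddly computation in the free group, and it is the one place where the hypothesis $d\geq 3$ is actually used; for $d=2$ no such third index $j$ exists, and indeed $\SAut(F_2)$ is well known \emph{not} to be perfect. Beyond this, the argument is purely formal.
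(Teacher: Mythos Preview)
Your proof is correct and follows essentially the same approach as the paper: part (a) is identical, and for part (b) both you and the paper express each Nielsen generator as a commutator of two Nielsen generators using a third index available when $d\geq 3$, then use that $\Aut(F)/\SAut(F)$ is abelian. The only cosmetic difference is the particular identity used---the paper writes $R_{ij}=[R_{mj},R_{im}]$ and $L_{ij}=[L_{mj},L_{im}]$, whereas you mix $L$'s and $R$'s via $R_{ik}=[L_{jk},R_{ij}]$ and $L_{ik}=[R_{jk},L_{ij}]$---but this is an inessential variation.
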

\begin{proof} 
	(a) holds since the image
	of an element $\sigma$ of $ \Sym(X)$ in $\Aut(\absF{}/[\absF{},\absF{}])\cong \GL_d(\dbZ)$ is the permutation matrix $P_{\sigma}$ and
	$\det(P_{\sigma})=\mathrm{sgn}(\sigma)$.
	\skv
	(b) Let $d=\rk(\absF{})$. Since $d\geq 3$, for any indices $1\leq i\neq j\leq d$ there exists $1\leq m\leq d$ with $m\neq i,j$,
	and by direct computation $R_{ij}=[R_{mj},R_{im}]$ and $L_{ij}=[L_{mj},L_{im}]$. Since $\SAut(F)$ is generated by $R_{ij}$ and $L_{ij}$,
	it follows that $\SAut(F)$ is perfect.
	Finally, $[\Aut(\absF{}),\Aut(\absF{})]\subseteq \SAut(\absF{})$ simply because $[\GL_d(\dbZ),\GL_d(\dbZ)]\subseteq \SL_d(\dbZ)$.
\end{proof}

\subsection{On the subgroup structure of free groups}\label{subsect-free-abstract-subgroups}

Let $F$ be a non-abelian free group of finite rank. 

\skv
\paragraph{\bf Normal subgroups with finite cyclic quotients.} Throughout the paper we will frequently deal with automorphisms of finite index subgroups of $F$. The following situation  will play a special role. 

\begin{Definition}
Let  $m \geq 2$. We denote by $SCQ(F,m)$ the set of normal subgroups of $F$ such that $F/H$ is cyclic of order $m$.
\end{Definition}

For a general $m$, among the normal subgroups of index $m$, the ones in $SCQ(F,m)$ are very special. However when $m=p$ is prime, every normal subgroup of index $p$ is in $SCQ(F,p)$.

	\begin{Definition}
	Let  $m \geq 2$. Let $X$ be a basis of $F$ and let $x \in X$. We denote by $ F(X,x,m) $ the subgroup of $\absF{}$ normally generated by $x^m$ and $X \setminus \left\lbrace  x\right\rbrace $. Equivalently, $F(X,x,m)$ is the unique normal subgroup of index $m$ in $F$	
which contains $X\setminus\{x\}$.
	\end{Definition}
	
The following fact is well known, but we are not aware of an explicit reference in the literature, so we will provide a proof.

\begin{Lemma} \label{lem-description-SCQ}
Fix $m \geq 2$. Then  any subgroup in $SCQ(F,m)$ is equal
to $F(X,x,m)$ for some basis $X$ of $F$ and $x\in X$. Moreover, $\SAut(F)$ acts transitively on $SCQ(F,m)$.
\end{Lemma}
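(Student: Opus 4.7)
The plan is to classify the subgroups in $SCQ(F,m)$ via the abelianization of $F$, and reduce both assertions to the standard fact that $\SL_d(\dbZ)$ acts transitively on unimodular vectors in $(\dbZ/m\dbZ)^d$, where $d=\rk(F)\ge 2$.

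Fix once and for all a basis $X_0=\{x_1,\ldots,x_d\}$ of $F$, so that the abelianization $F/[F,F]$ is identified with $\dbZ^d$. Since $\dbZ/m\dbZ$ is abelian, any $H\in SCQ(F,m)$ contains $[F,F]$, and therefore corresponds to a surjection $\phi_H \colon \dbZ^d \to \dbZ/m\dbZ$; equivalently, to the vector $\mathbf{a}_H = (\phi_H(x_1),\ldots,\phi_H(x_d)) \in (\dbZ/m\dbZ)^d$. Surjectivity of $\phi_H$ is equivalent to $\mathbf{a}_H$ being \emph{unimodular}, meaning its entries generate $\dbZ/m\dbZ$. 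Note that $F(X_0,x_1,m)$ corresponds to the standard unimodular vector $(1,0,\ldots,0)$. The natural action of $\Aut(F)$ on $SCQ(F,m)$ factors through the action of $\GL_d(\dbZ)$ on $(\dbZ/m\dbZ)^d$, so $\SAut(F)$ acts through $\SL_d(\dbZ)$, and this last map $\SAut(F)\to\SL_d(\dbZ)$ is surjective because $\Aut(F)\to\GL_d(\dbZ)$ is (as recalled at the start of subsection~\ref{subsec-free-groups-gen-aut}).

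It therefore suffices to show that $\SL_d(\dbZ)$ acts transitively on unimodular vectors in $(\dbZ/m\dbZ)^d$. I would split this into two standard steps: first, that the reduction $\SL_d(\dbZ)\to\SL_d(\dbZ/m\dbZ)$ is surjective --- which follows because both groups are generated by elementary matrices $I+E_{ij}$ ($i\ne j$) and the latter lift --- and second, that $\SL_d(\dbZ/m\dbZ)$ acts transitively on unimodular vectors. For the second step one builds, for a given unimodular $\mathbf{a}$, an element of $\SL_d(\dbZ/m\dbZ)$ whose first row is $\mathbf{a}$; for $d=2$ this says that when $\mathbf{a}=(a,b)$ one can find $c,e\in\dbZ/m\dbZ$ with $ae-bc=1$, which is precisely the unimodularity condition, and for $d\ge 3$ one can either argue directly by extending $\mathbf{a}$ to a basis of $(\dbZ/m\dbZ)^d$ or reduce to the $d=2$ case after a single row operation.

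Given the transitivity of $\SAut(F)$ on $SCQ(F,m)$, the first assertion of the lemma is immediate: if $\sigma \in \SAut(F)$ satisfies $\sigma(F(X_0,x_1,m)) = H$, then $H = F(\sigma(X_0),\sigma(x_1),m)$, and $\sigma(X_0)$ is a basis of $F$ since $\sigma$ is an automorphism. There is no serious obstacle here; the only point requiring a small amount of care is verifying the transitivity of $\SL_d(\dbZ/m\dbZ)$ on unimodular vectors in low rank, which is handled as above.
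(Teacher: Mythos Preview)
Your proof is correct, but it is organized differently from the paper's and relies on slightly different ingredients. The paper proves the first assertion first: it fixes $H\in SCQ(F,m)$, considers the tuple $t(X)=(\phi(x_1),\ldots,\phi(x_d))\in(\dbZ/m\dbZ)^d$ for a varying ordered basis $X$, and applies Nielsen maps $R_{ij}^{\pm1}$ (which act as elementary operations on $t(X)$) to reduce $t(X)$ to a vector with a single nonzero entry; this directly yields $H=F(X,x,m)$. For transitivity the paper then uses transitivity of $\Aut(F)$ on ordered bases and adjusts by the sign automorphism $\eps$ (inverting $x$) to land in $\SAut(F)$. By contrast, you reverse the order: you first prove $\SAut(F)$-transitivity by pushing everything down to $\SL_d(\dbZ)$ and invoking surjectivity of $\SL_d(\dbZ)\to\SL_d(\dbZ/m\dbZ)$ together with transitivity of the latter on unimodular vectors, and then read off the first assertion as a corollary. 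Your route is a bit more abstract and avoids the $\eps$-trick, at the cost of appealing to the standard (but not entirely trivial) facts about $\SL_d$ over $\dbZ/m\dbZ$; the paper's route is more self-contained and hands-on inside $F$. One minor imprecision worth tightening: $H$ corresponds to a unimodular vector only up to multiplication by a unit of $\dbZ/m\dbZ$, so the $\SAut(F)$-action on $SCQ(F,m)$ factors through the $\SL_d(\dbZ)$-action on unimodular vectors \emph{modulo units}; this does not affect your argument, since transitivity on vectors implies transitivity on the quotient.
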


\begin{proof}
Let $H$ be any subgroup in $SCQ(F,m)$ and choose an epimorphism $\phi:F\to \dbZ/m\dbZ$ with $\Ker(\phi)=H$.
Given an ordered basis $X=(x_1,\ldots, x_d)$, let $t(X)\in (\dbZ/m\dbZ)^d$ be the vector
$(\phi(x_1),\ldots, \phi(x_d))$. Then for any indices $i\neq j$, the vector $t(R_{ij}^{\pm 1}X)$ is obtained
from $t(X)$ by adding (resp. subtracting) the $j^{\rm th}$ coordinate to (resp. from) the $i^{\rm th}$ coordinate.

Using such operations, any nonzero vector in $(\dbZ/m\dbZ)^d$ can be reduced to a vector with only one nonzero coordinate
(equal to the greatest common divisor of the coordinates of the original vector).
Therefore, starting with any basis $X_0$ and applying suitable Nielsen maps to it, we obtain a basis
$X$ such that $t(X)$ has at most one (in fact, exactly one) nonzero coordinate; equivalently,
$H=\Ker(\phi)$ contains $X\setminus\{x\}$. Since $H$ is normal of index $m$, it follows that $H=F(X,x,m)$, so we proved
the first assertion of Lemma~\ref{lem-description-SCQ}.
\skv

To deduce the second assertion from the first one, note that $\Aut(F)$ acts transitively on the set of ordered bases, so
for any bases $X,X'$ of $F$ and elements $x\in X$ and $x'\in X'$ there exists $\phi\in \Aut(F)$ which sends
$F(X,x,m)$ to $F(X',x',m)$. Moreover, if $\eps$ is the automorphism of $F$ which sends $x$ to $x^{-1}$ and
fixes other elements of $X$, then $\phi\, \eps$ also sends $F(X,x,m)$ to $F(X',x',m)$, and one of the maps $\phi$ and $\phi\,\eps$
lies in $\SAut(F)$.
\end{proof}

\skv
\paragraph{\bf The Schreier index formula.}
\skv

Any subgroup of $\absF{}$ is free, and the rank of a finite index subgroup $H$
of $\absF{}$ can be computed by the Schreier index formula:
$$\rk(H)=1+[\absF{}:H]\cdot\rk(\absF{}).$$
Moreover, suppose we are given a basis $X$ for $\absF{}$ and 
a right Schreier transversal $T$ for $H$ and $\absF{}$ with respect to $X$
(that is, a set of right coset representatives closed under taking suffixes,
where elements of $\absF{}$ are viewed as reduced words in $X\sqcup X^{-1}$).
Then $H$ has a basis consisting of non-identity elements of the form
$$\{tx{\overline{tx}\,}^{-1}: t\in T, x\in X\}\eqno (***)$$ where $\overline g$ is the unique
element of $T$ with $H\overline{g}=Hg$. 

\begin{Definition}
We adopt the following convention for commutators: $[x,y] = xyx^{-1}y^{-1}$, and a commutator of length more than two should be read as a {\it left-normed commutator}:
$[x_1,\dots,x_{n+1}] = [[x_1,\dots,x_n],x_{n+1}]$.
\end{Definition}

The following will be used repeatedly, sometimes without further mention, later in the paper.

\begin{Lemma}
	\label{lem:Schreier}
	 Let $X$ be a basis of $\absF{}$, let $x \in X$, and let $m \geq 2$. Then each of the following sets is a free generating set for $ F(X,x,m)$:
	\begin{itemize}
		\item[(a)] $Y=\{x^m\}\cup\{ x^j y x^{-j}: y\in X\setminus\{x\}, 0\leq j\leq m-1\}.$
		\item[(b)] $Z=\{x^m\}\cup\{[y,{}_j x]: y\in X\setminus\{x\}, 0\leq j\leq m-1\}$
		where $[y,{}_j x]$ is the left-normed commutator 
		$[y,\underbrace{x,\ldots,x}_{j\mbox{ times} }]$.
	\end{itemize}
\end{Lemma}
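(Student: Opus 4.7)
The plan is to deduce both statements from a single application of the Schreier construction recalled in equation $(\ast\ast\ast)$, and then to pass from part (a) to part (b) by an elementary triangular change of basis combined with the Hopf property of finitely generated free groups.

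For (a), I would take the ordered coset representatives $T=\{1,x,x^2,\ldots,x^{m-1}\}$. Since $F/F(X,x,m)$ is cyclic of order $m$ generated by the image of $x$, the set $T$ is a right transversal for $F(X,x,m)$ in $F$; and since every suffix of $x^j$ is some $x^i$ with $0\le i\le j$, it is in fact a right Schreier transversal with respect to $X$. I would then compute $ty\,\overline{ty}^{-1}$ for $t=x^j\in T$ and $y\in X$. If $y\neq x$, then $y\in F(X,x,m)$ (since $X\setminus\{x\}\subseteq F(X,x,m)$), so $F(X,x,m)\cdot x^j y=F(X,x,m)\cdot x^j$, hence $\overline{ty}=x^j$ and the Schreier generator is $x^jyx^{-j}$. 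If $y=x$ and $j<m-1$, then $\overline{tx}=x^{j+1}$ and the generator is trivial; while for $j=m-1$ we get $x^m$. Thus the Schreier generating set is exactly $Y$, and $(\ast\ast\ast)$ gives that $Y$ is a free basis of $F(X,x,m)$.

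For (b), observe first that $|Z|=1+m(d-1)$, which by the Schreier index formula equals $\operatorname{rk}(F(X,x,m))$. Because a generating set of a finitely generated free group $\Phi$ whose cardinality equals $\operatorname{rk}(\Phi)$ is automatically a free basis (by the Hopf property applied to the induced surjection from a free group on $|Z|$ generators), it will suffice to prove that $Z$ generates $F(X,x,m)$. By (a) this amounts to showing $Y\subseteq \langle Z\rangle$; the element $x^m$ is already in $Z$, so I only need $a_j:=x^jyx^{-j}\in\langle Z\rangle$ for each fixed $y\in X\setminus\{x\}$ and each $0\le j\le m-1$.

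Fix such a $y$ and write $b_j:=[y,{}_jx]$, so $b_0=y=a_0$. From $b_j=b_{j-1}xb_{j-1}^{-1}x^{-1}$ I rearrange to obtain the key identity
\[
xb_{j-1}x^{-1}=b_j^{-1}b_{j-1},
\]
which expresses the $x$-conjugate of $b_{j-1}$ as a word in $b_{j-1}$ and $b_j$. I would then prove $a_j\in\langle b_0,\ldots,b_j\rangle$ by induction on $j$: the base $a_0=b_0$ is immediate, and if $a_{j-1}=w(b_0,\ldots,b_{j-1})$, then
\[
a_j=xa_{j-1}x^{-1}=w\bigl(xb_0x^{-1},\ldots,xb_{j-1}x^{-1}\bigr)=w\bigl(b_1^{-1}b_0,\ldots,b_j^{-1}b_{j-1}\bigr)\in\langle b_0,\ldots,b_j\rangle.
\]
This completes the induction and hence the proof. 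There is no real obstacle in this argument; the only points to be careful about are the verification that $T$ is genuinely a Schreier transversal (so that $(\ast\ast\ast)$ applies) and the triangular structure of the change of basis $Y\leftrightarrow Z$, which is exactly what makes the inductive step go through.
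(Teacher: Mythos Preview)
Your proof is correct and follows essentially the same route as the paper's: for (a) you use the Schreier transversal $T=\{1,x,\ldots,x^{m-1}\}$ and compute the generators from $(\ast\ast\ast)$, and for (b) you reduce to showing $Z$ generates (via $|Z|=|Y|$) using the identity $xb_{j-1}x^{-1}=b_j^{-1}b_{j-1}$, which is exactly the paper's observation $[y,{}_k x]=[y,{}_{k-1}x](x[y,{}_{k-1}x]x^{-1})^{-1}$ rearranged. The only cosmetic difference is that the paper phrases the induction as the equality $\langle a_0,\ldots,a_k\rangle=\langle b_0,\ldots,b_k\rangle$ of generated subgroups, whereas you prove the one inclusion $a_j\in\langle b_0,\ldots,b_j\rangle$ needed.
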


\begin{proof} (a) $Y$ is precisely the generating set given by (***) if we set
	$T=\{1,x,\ldots, x^{m-1}\}$. 
	
	(b) Since $|Z|=|Y|$, it suffices to show that $Z$ is a generating set for $ F(X,x,m)$.
	To deduce this from (a) it suffices to show that 
	$\la y,x y x^{-1},\ldots, x^k y x^{-k} \ra=\la y,[y,x],\ldots, [y,{}_k x]\ra$ for all
	$y\in \absF{}$ and $k\in \dbN$. The latter can be proved by routine induction
	using the observation that $[y,{}_k x]=[y,{}_{k-1} x](x[y,{}_{k-1} x]x^{-1})^{-1}$.
\end{proof}
\skv
\paragraph{\bf Primitive elements and free factors.} An element $g\in F$ is called primitive (for $F$) if $g$ belongs to some basis of $F$.
Every non-trivial element of $F$ is primitive for some finite index subgroup. This is a special case of the famous free factor theorem of M. Hall (see \cite[Theorem~5.1]{Ha}~and~\cite[Theorem~1]{Bu}\footnote{This result does not follow from the statement of Theorem~5.1 in 
\cite{Ha}, but it follows from its proof as explained in \cite{Bu}.}):

\begin{Theorem}\label{thm:Hallfreefactor}
Let $H$ a finitely generated subgroup of $F$. Then $H$ is a free factor for some finite index subgroup of $F$. 
\end{Theorem}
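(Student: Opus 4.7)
The plan is to prove this classical result of M.\ Hall using the Stallings graph / Schreier coset graph technique. Fix a basis $X$ of $F$ and identify $F$ with the fundamental group of the rose $R_X$ (one vertex, $|X|$ oriented loops labeled by elements of $X$). Subgroups of $F$ then correspond to based connected coverings of $R_X$.

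First I would associate to $H$ its \emph{Stallings core graph} $\Gamma_H$: the minimal connected $X$-labeled folded graph with basepoint $v_0$ such that the natural immersion $\Gamma_H \to R_X$ induces an isomorphism from $\pi_1(\Gamma_H, v_0)$ onto $H$. Since $H$ is finitely generated, $\Gamma_H$ is finite. The graph $\Gamma_H$ is folded (at each vertex, no two edges with the same label leave, and no two enter), but it is generally not a cover of $R_X$: some vertices lack either an outgoing or incoming edge for some labels.

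Next I would perform the \emph{Hall completion}: process the letters of $X$ one at a time, and for each $x\in X$, pair up the vertices missing an outgoing $x$-edge with those missing an incoming $x$-edge (adding a small finite number of auxiliary vertices to balance the counts when necessary), then insert the corresponding $x$-labeled edges. After completing all labels in $X$ we obtain a finite folded $X$-labeled graph $\Gamma$ containing $\Gamma_H$ as a subgraph in which every vertex has exactly one outgoing and one incoming edge for each label. Thus $\Gamma$ is a finite connected cover of $R_X$, and the corresponding subgroup $K = \pi_1(\Gamma, v_0)$ is a finite-index subgroup of $F$ containing $H$.

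Finally I would derive the free-factor conclusion from the geometry: choose a spanning tree $T_H$ of $\Gamma_H$ and extend it to a spanning tree $T_\Gamma$ of $\Gamma$. The non-tree edges of $\Gamma_H$ form a free basis of $H$, and since every non-tree edge of $\Gamma_H$ is also a non-tree edge of $\Gamma$, this basis is a subset of the basis of $K$ given by all non-tree edges of $\Gamma$. Hence $H$ is a free factor of $K$. The main technical obstacle is the Hall completion step: one must add edges without destroying the folding property and make sure the process terminates after finitely many steps. This is handled by completing one label at a time and observing that at each stage only finitely many half-edges need to be matched, so any additional vertices introduced are finite in number and the iteration stabilizes after $|X|$ rounds.
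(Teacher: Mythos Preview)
Your proof is correct and follows the standard modern approach via Stallings graphs. One minor remark: the auxiliary vertices you mention are never actually needed. For each label $x$, the number of vertices of $\Gamma_H$ lacking an outgoing $x$-edge equals the number lacking an incoming $x$-edge (both equal $|V(\Gamma_H)| - |E_x(\Gamma_H)|$), so you can pair them directly; the completion can be done without enlarging the vertex set, and it terminates in one pass through $X$ rather than requiring iteration.

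As for comparison with the paper: the paper does not supply a proof of this theorem at all. It is quoted as a classical result of M.~Hall, with references to \cite[Theorem~5.1]{Ha} and \cite[Theorem~1]{Bu}, and is used as a black box (for instance in Lemma~\ref{lem-Hall-improved} and Proposition~\ref{prop-commens-fg-freegroup}). Hall's original 1949 argument is combinatorial in the language of coset representatives and Schreier transversals rather than graphs; your Stallings-graph formulation is the now-standard repackaging of that argument, and is exactly the kind of proof one would expect a reader to supply if asked. So there is no divergence in substance to discuss, only that you have filled in what the paper left as a citation.
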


We will revisit Theorem~\ref{thm:Hallfreefactor} and discuss further generalizations in section~\ref{sec-conjugacy}.

\subsection{Generation in profinite groups and the Frattini subgroup} \label{subsec-prelim-pro-p}

Before turning to free pro-$p$ groups, we make a short digression and discuss some basic facts about generation in arbitrary profinite groups, with emphasis on pro-$p$ groups. Following a standard convention, by a generating set of a profinite group we will mean a topological
generating set unless explicitly mentioned otherwise. In particular, we will say that a profinite group is {\it finitely generated} if it has a finite topological generating set. However, we will not follow this convention whenever discussing generation of abstract and profinite groups in the same setting, most notably in section~\ref{sec-conjugacy}.

\begin{Definition}
	Let $G$ be a profinite group. The {\it Frattini subgroup} $\Phi (G)$ is the intersection of all maximal proper open subgroups of $G$. 
\end{Definition}

The  Frattini subgroup $\Phi(G)$ has the following key properties (see \cite[2.8.5]{RZ-book}):

\begin{Lemma}
	\label{lem:Frattinibasic}
	Let $G$ be a profinite group.  The following hold:
	\begin{itemize}
		\item[(i)] A subset $S$ of $G$ is a generating set if and of only if
		$G/\Phi(G)$ is generated by the image of $S$.
		\item[(ii)] An element $g\in G$ lies in $\Phi(G)$ if and only if $g$ is a non-generator,
		that is, for any generating set $S$ of $G$, the set $S\setminus\{g\}$ also generates $G$.
	\end{itemize}
\end{Lemma}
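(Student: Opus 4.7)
The plan is to deduce both parts from the definition of $\Phi(G)$ together with one key structural fact about profinite groups: every closed proper subgroup of a profinite group $G$ is contained in some maximal proper open subgroup of $G$. Granting this fact, both parts reduce to routine arguments.

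For (i), the ``only if'' direction is immediate since the projection $G \to G/\Phi(G)$ is continuous and surjective. For the ``if'' direction, let $H$ be the closed subgroup topologically generated by $S$. The hypothesis says that $H \Phi(G) = G$. Suppose for contradiction that $H \neq G$. By the key fact, there is a maximal proper open subgroup $M$ with $H \subseteq M$. But $\Phi(G) \subseteq M$ by the definition of $\Phi(G)$, hence $G = H\Phi(G) \subseteq M$, a contradiction.

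For (ii), suppose first that $g \in \Phi(G)$, and let $S$ be any generating set of $G$. Let $H$ be the closed subgroup generated by $S \setminus \{g\}$. If $H \neq G$, choose a maximal proper open $M \supseteq H$; then $g \in \Phi(G) \subseteq M$ gives $S \subseteq M$, contradicting that $S$ generates $G$. Hence $g$ is a non-generator. Conversely, if $g \notin \Phi(G)$, pick a maximal proper open subgroup $M$ with $g \notin M$; then $M$ is a topological generating set of itself, $M \cup \{g\}$ topologically generates $G$ (by maximality of $M$), yet removing $g$ leaves only $M$, which generates $M \neq G$; thus $g$ is not a non-generator.

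The only nontrivial input is the key fact, which I would verify as follows. Write $H$ as the intersection of the open subgroups $HN$, where $N$ ranges over open normal subgroups of $G$ (this uses that $H$ is closed in a profinite group). Since $H \neq G$, some $HN$ is a proper open subgroup of $G$, hence of finite index. The finitely many subgroups of $G$ containing $HN$ are all open, so by finiteness one may select a maximal proper open subgroup among them; this one contains $H$, as required. I expect no real obstacle: the argument is standard and the statement is recorded in \cite[2.8.5]{RZ-book}, which could alternatively be cited directly.
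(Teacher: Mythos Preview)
Your argument is correct and is the standard proof of these Frattini-subgroup facts. The paper does not supply its own proof of this lemma at all; it simply records the statement and cites \cite[2.8.5]{RZ-book}, the same reference you mention at the end, so your write-up is effectively an expansion of that citation rather than a different approach.
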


If $G$ is pro-$p$, there is a simple explicit formula for the Frattini subgroup:
$\Phi(G)=\overline{[G,G]G^p}$, where $G^p = \la x^p : x \in G\ra $ is the subgroup generated by the $p^{\rm th}$ powers
(see \cite[2.8.7]{RZ-book}). Thus the Frattini quotient $G/\Phi(G)$ is an elementary abelian $p$-group. Things become even nicer if in addition $G$ is finitely
generated \cite[2.8.10-2.8.13]{RZ-book}:

\begin{Proposition}
	\label{prop:Frattini}
	Let $G$ be a finitely generated pro-$p$ group. The following hold:
	\begin{enumerate}[resume]
		\item[(a)] $\Phi(G)=[G,G]G^p$ and $\Phi(G)$ is open in $G$.
		\item[(b)] Define the Frattini series $(\Phi^{\ell} (G))_{\ell\geq 0}$
inductively by $\Phi^0(G) =G$ and $\Phi^{\ell + 1} (G) = \Phi (\Phi^{\ell}(G))$ for every $\ell \geq 0$. Then 
$(\Phi^{\ell} (G))_{\ell\geq 0}$ is a base of neighborhoods of $1$ in $G$.
	\end{enumerate}
\end{Proposition}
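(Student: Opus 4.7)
The plan is to establish (a) first and then invoke it repeatedly to handle (b).

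For (a), I would start from the general pro-$p$ identity $\Phi(G)=\overline{[G,G]G^p}$ recalled just above the statement. Fix topological generators $g_1,\dots,g_d$ of $G$. By Lemma~\ref{lem:Frattinibasic}(i) their images topologically generate the Frattini quotient $G/\Phi(G)$, which is an abelian pro-$p$ group of exponent $p$, i.e., a profinite $\dbF_p$-vector space. In such a group any abstract subgroup generated by $d$ elements has order at most $p^d$, and being finite it is automatically closed; hence $G/\Phi(G)$ itself has order at most $p^d$, and in particular $\Phi(G)$ is open.

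The delicate point is to drop the closure bar and prove that $[G,G]G^p=\Phi(G)$ as abstract subgroups; this is the main obstacle of the proof. The aim is to show that the abstract subgroup $[G,G]G^p$ is already closed in $G$. The classical route is a finite verbal width estimate: one proves the existence of an integer $k=k(d)$ such that every element of $[G,G]G^p$ is a product of at most $k$ commutators and $p$-th powers of elements of $G$. Once this width bound is available, $[G,G]G^p$ is the continuous image of the compact set $G^{2k}$, hence compact and therefore closed. Such bounds for finitely generated pro-$p$ groups are classical, proved by explicit manipulations in the lower central or Zassenhaus series (see, e.g., Dixon--du~Sautoy--Mann--Segal, Chapter~1). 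Alternatively, one may invoke the theorem that every finite-index subgroup of a topologically finitely generated pro-$p$ group is automatically open; since $[G,G]G^p$ has the same (finite) index in $G$ as its closure $\Phi(G)$, this directly gives openness and hence closedness.

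For (b), I proceed by induction on $\ell$. Trivially $\Phi^0(G)=G$ is open. Assuming $\Phi^\ell(G)$ is open, the profinite Nielsen--Schreier formula ensures that $\Phi^\ell(G)$ is itself topologically finitely generated pro-$p$, so applying part (a) to $\Phi^\ell(G)$ in place of $G$ yields that $\Phi^{\ell+1}(G)=\Phi(\Phi^\ell(G))$ is open in $\Phi^\ell(G)$, and hence open in $G$. Thus $\{\Phi^\ell(G)\}_{\ell\ge 0}$ is a descending chain of open normal subgroups of $G$.

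It remains to show that they form a base of neighborhoods of the identity. Let $N$ be any open normal subgroup of $G$; then $G/N$ is a finite $p$-group, whose Frattini series reaches $\{1\}$ after finitely many steps. I would prove inductively that $\Phi^\ell(G)\cdot N/N=\Phi^\ell(G/N)$ for every $\ell$. The base case $\ell=0$ is trivial, and for the inductive step one uses the fact that under a continuous surjection $H\twoheadrightarrow H'$ between topologically finitely generated pro-$p$ groups the Frattini subgroup is preserved: by part (a) it equals $[H,H]H^p$, which is manifestly preserved by surjective homomorphisms. Consequently $\Phi^\ell(G)\subseteq N$ for all sufficiently large $\ell$, and since the open normal subgroups of $G$ form a base of neighborhoods of $1$, so do the $\Phi^\ell(G)$.
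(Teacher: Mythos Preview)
The paper does not prove this proposition; it simply records it with a citation to \cite[2.8.10--2.8.13]{RZ-book}. So there is no ``paper's own proof'' to compare against, and your write-up is in fact supplying an argument where the paper chose to defer to the literature.

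Your primary argument for (a), via a verbal width bound showing that $[G,G]G^p$ is the continuous image of a compact set and hence closed, is the classical route (as in \cite{DDMS}) and is correct. Your proof of (b) is also correct: openness of each $\Phi^\ell(G)$ follows by induction from (a), and the neighborhood-base claim reduces to the corresponding fact for finite $p$-groups via the compatibility $\Phi^\ell(G)N/N=\Phi^\ell(G/N)$, which you justify using the formula $\Phi(H)=[H,H]H^p$.

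One caution: your \emph{alternative} route for (a) --- invoking the theorem that finite-index subgroups of finitely generated pro-$p$ groups are open --- has a gap as written. You assert that ``$[G,G]G^p$ has the same (finite) index in $G$ as its closure $\Phi(G)$'', but knowing that the closure has finite index does not by itself tell you that the abstract subgroup has finite index; that is precisely what needs proving. Moreover, the standard proof of Serre's theorem for pro-$p$ groups in \cite{DDMS} actually \emph{uses} the openness of $[G,G]G^p$, so invoking it here risks circularity. Your width argument avoids this issue, so simply drop the alternative or make explicit that it requires an independent proof of Serre's theorem.
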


\subsection{Free pro-$p$ groups}\label{subsec-prelim-freeprop}

Let $X$ be a finite set and $F=F(X)$, the free group on $X$.
The pro-$p$ completion $\propF{p}=\widehat{F}_p$ is called a free pro-$p$ group on $X$, and $X$ will be referred to as a basis of 
$\propF{p}$. We will also say that $\propF{p}$ is free pro-$p$ of finite rank, and the number $d=|X|$ (which is determined
by the isomorphism class of $\propF{p}$) will be called the rank of $\propF{p}$.

More generally, a subset $Y$ of $\propF{p}$ will be called a {\it basis} if the inclusion map $Y\to\propF{p}$ induces an isomorphism
from $\widehat{F(Y)}_p$ to $\propF{p}$. In complete analogy with free groups, bases of $\propF{p}$ are precisely the (topological) generating sets of the smallest cardinality~\cite[Lemma 3.3.5.b]{RZ-book}.

The Schreier index formula remains valid in pro-$p$ groups. Indeed, if $F$ is a free group of finite rank $d$
and $\propF{p} = \widehat{F}_p$ is its pro-$p$ completion, then the map $\propU{p} \to \propU{p}\cap F$ establishes an index-preserving
bijection between open subgroups of $\propF{p}$ and $p$-open subgroups of $F$, and any such $\propU{p}$ is isomorphic to the pro-$p$ completion
of $\propU{p}\cap F$.

The category of pro-$p$ groups admits free products; see \cite[\S~9.1]{RZ-book} for basic features of free pro-$\mathcal{C}$ products of finitely many groups, which is the only case we will need.  When discussing pro-$p$ groups, $A * B$ will denote the free pro-$p$ product, rather than the abstract free product.  Free pro-$p$ factors of a pro-$p$ group are always closed. We will need the following pro-$p$ analogue of Hall's free factor theorem:

\begin{Theorem}[See {\cite[Theorem~3.2]{Lubotzky}} or {\cite[Theorem~9.1.19]{RZ-book}}]\label{thm-free-factor}
Let $\propF{p}$ be a free pro-$p$ group of finite rank and let $H$ be a closed subgroup of $\propF{p}$ that is topologically finitely generated.  Then there is an open subgroup $K$ of $\propF{p}$ such that $H$ is a free pro-$p$ factor of $K$.
\end{Theorem}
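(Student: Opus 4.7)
My plan is as follows. Since $H$ is closed and topologically finitely generated in $\propF{p}$, Tate's theorem (the pro-$p$ analogue of Nielsen--Schreier) ensures that $H$ itself is free pro-$p$ of some finite rank $r$; fix a basis $h_1,\ldots,h_r$ of $H$. The strategy will be to construct an open subgroup $K$ of $\propF{p}$ containing $H$ such that $\{h_1,\ldots,h_r\}$ extends to a basis of $K$. Once this is achieved, letting $L$ denote the closed subgroup of $K$ generated by the additional basis elements yields a free pro-$p$ product decomposition $K=H*L$, realising $H$ as a free pro-$p$ factor of $K$.

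Any open subgroup $K$ of $\propF{p}$ is itself free pro-$p$, so by Lemma~\ref{lem:Frattinibasic} a subset $\{h_1,\ldots,h_r\}\subseteq K$ extends to a basis of $K$ if and only if the images of the $h_i$ in $K/\Phi(K)$ are $\dbF_p$-linearly independent. Since $h_1,\ldots,h_r$ is already a basis of $H$, this is equivalent to the inclusion $H\hookrightarrow K$ inducing an injection $H/\Phi(H)\to K/\Phi(K)$, which in turn is equivalent to $\Phi(K)\cap H=\Phi(H)$ (the inclusion $\Phi(H)\subseteq \Phi(K)$ being automatic, since $[H,H]H^p\subseteq [K,K]K^p$).

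For the construction I would use the Frattini series: by Proposition~\ref{prop:Frattini}(b), the subgroups $\Phi^n(\propF{p})$ form a neighbourhood base of $1$ in $\propF{p}$. Set $K_n:=H\cdot \Phi^n(\propF{p})$; since $\Phi^n(\propF{p})$ is normal in $\propF{p}$, this is a subgroup, and it is open because it contains $\Phi^n(\propF{p})$. The family $(K_n)$ is decreasing with $\bigcap_n K_n=H$, and hence $(\Phi(K_n))$ is decreasing as well. My claim will be that $\Phi(K_n)\cap H=\Phi(H)$ for all sufficiently large $n$; by the previous paragraph, this completes the proof upon taking $K:=K_n$ for such $n$.

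The main obstacle, and the only step that requires care, is to show that $\bigcap_n \Phi(K_n)=\Phi(H)$. Granting this, the decreasing sequence of subgroups $(H\cap \Phi(K_n))/\Phi(H)$ of the finite $p$-group $H/\Phi(H)$ has trivial intersection and must therefore stabilise at $\{1\}$, giving $H\cap \Phi(K_n)=\Phi(H)$ for $n$ large. To prove $\bigcap_n \Phi(K_n)\subseteq \Phi(H)$, fix $m\geq 1$: for $n\geq m$ one has $\Phi^n(\propF{p})\subseteq \Phi^m(\propF{p})$, so $K_n$ and $H$ have the same image in the finite $p$-group $\propF{p}/\Phi^m(\propF{p})$; since $\Phi$ commutes with quotients by closed normal subgroups in the pro-$p$ setting, this yields $\Phi(K_n)\cdot \Phi^m(\propF{p})=\Phi(H)\cdot \Phi^m(\propF{p})$. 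Hence $\bigcap_n \Phi(K_n)\subseteq \Phi(H)\cdot \Phi^m(\propF{p})$ for every $m$, and the standard compactness fact $\bigcap_m \Phi(H)\Phi^m(\propF{p})=\Phi(H)$ (valid because $\Phi(H)$ is closed in $\propF{p}$ and $\Phi^m(\propF{p})\to\{1\}$) concludes the argument.
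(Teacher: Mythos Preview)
Your proof is correct. The paper does not give its own proof of this statement: it is quoted with attribution to \cite[Theorem~3.2]{Lubotzky} and \cite[Theorem~9.1.19]{RZ-book}, so there is no in-paper argument to compare against. Your argument is essentially the standard one (close to Lubotzky's original): reduce to the Frattini condition $H\cap\Phi(K)=\Phi(H)$ and obtain it by taking $K=K_n:=H\cdot\Phi^n(\propF{p})$ for $n$ large, using that the images of $K_n$ and $H$ in each finite quotient $\propF{p}/\Phi^m(\propF{p})$ coincide once $n\ge m$. All the steps check out; in particular, monotonicity of $\Phi$ on pro-$p$ subgroups (used implicitly when you say $(\Phi(K_n))$ is decreasing) holds because $\Phi(G)=\overline{[G,G]G^p}$, and the compactness identity $\bigcap_m \Phi(H)\Phi^m(\propF{p})=\Phi(H)$ is exactly the standard closed-subgroup intersection lemma in profinite groups.
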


\subsection{The $A$-topology on the automorphism group of a profinite group}\label{subsec-prelim-auto}
\label{subsec-prelim-auto-general}
A detailed discussion of the automorphism groups of free pro-$p$ groups will be deferred till subsection~\ref{subsec-generation-commfreeprop}. We finish this section by introducing a natural topology on $\Aut(G)$ for a profinite group $G$, which we will call the $A$-topology. We warn the reader that the canonical
map $\Aut(G)\to\Comm(G)$ is not continuous with respect to the $A$-topology unless $\Aut(G)$ is a finite extension of $\Inn(G)$,
the subgroup of inner automorphisms.

Let $G$ be a profinite group.  For every open subgroup $U$ of $G$ define $\Aut(G;U)\subseteq \Aut(G)$ by $$\Aut(G;U)=\{\phi\in\Aut(G): \phi(g)\equiv g\mod U\mbox{ for all }g\in G\}.$$ 
Each $\Aut(G;U)$ is a subgroup of $\Aut(G)$, and the family of all subgroups $\Aut(G;U)$
 satisfies the hypotheses of Proposition~\ref{prop:topbase} and has trivial intersection, so
$\Aut(G)$ has the structure of a Hausdorff topological group where the subgroups
$\Aut(G;U)$ form a base of neighborhoods of the identity.  We will call this the {\it $A$-topology} on $\Aut(G)$.

When $G$ is a finitely generated profinite group, $\Aut(G)$ with the $A$-topology is also compact and hence profinite \cite[Corollary 4.4.4]{RZ-book}. More is true in the case where $G$ is a finitely generated pro-$p$ group \cite[Lemma 4.5.5]{RZ-book}:

\begin{Proposition}
	\label{Aut:virtprop}
	Let $G$ be a finitely generated pro-$p$ group. Then the subgroup
	$$\Aut(G;\Phi(G))=\Ker(\Aut(G)\to\Aut(G/\Phi(G)))$$ 
	(which is open by Proposition~\ref{prop:Frattini}(a)) is pro-$p$.  Thus $\Aut(G)$ is virtually pro-$p$.
\end{Proposition}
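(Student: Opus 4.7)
The goal is to prove that $\Aut(G;\Phi(G))$ is pro-$p$, and for this I will argue finite quotient by finite quotient. Since $\Aut(G)$ is already known to be profinite in the $A$-topology (the cited Corollary 4.4.4 of \cite{RZ-book}) and $\Aut(G;\Phi(G))$ is open, the subgroup $\Aut(G;\Phi(G))$ is itself profinite, so it suffices to show every finite continuous quotient of it is a $p$-group.

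The natural family of open normal subgroups to use is $\Aut(G;\Phi^\ell(G))$ for $\ell\geq 1$. Since each $\Phi^\ell(G)$ is characteristic in $G$, the subgroup $\Aut(G;\Phi^\ell(G))$ is normal in $\Aut(G)$; moreover, by Proposition~\ref{prop:Frattini}(b) every open subgroup of $G$ contains some $\Phi^\ell(G)$, so the subgroups $\Aut(G;\Phi^\ell(G))$ form a base of neighborhoods of $1$ in $\Aut(G)$. Consequently, every continuous finite quotient of $\Aut(G;\Phi(G))$ is a quotient of $\Aut(G;\Phi(G))/\Aut(G;\Phi^\ell(G))$ for some $\ell$, and it suffices to show the latter is a $p$-group.

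The restriction map $\Aut(G)\to \Aut(G/\Phi^\ell(G))$ has kernel $\Aut(G;\Phi^\ell(G))$, and since $\Phi(G)$ is characteristic, $\Aut(G;\Phi(G))$ maps into the subgroup of $\Aut(G/\Phi^\ell(G))$ consisting of automorphisms that act trivially on $(G/\Phi^\ell(G))/(\Phi(G)/\Phi^\ell(G))$. Using the explicit formula $\Phi(G)=[G,G]G^p$ from Proposition~\ref{prop:Frattini}(a), one verifies the identity $\Phi(G/\Phi^\ell(G))=\Phi(G)/\Phi^\ell(G)$, so the finite group $\Aut(G;\Phi(G))/\Aut(G;\Phi^\ell(G))$ embeds into
\[
K_\ell \;=\; \Ker\bigl(\Aut(P_\ell)\to \Aut(P_\ell/\Phi(P_\ell))\bigr), \qquad P_\ell:=G/\Phi^\ell(G),
\]
where $P_\ell$ is a finite $p$-group.

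It therefore remains to invoke the classical fact that $K_\ell$ is a $p$-group for any finite $p$-group $P_\ell$: if $\psi\in K_\ell$ has order coprime to $p$ and acts trivially on $P_\ell/\Phi(P_\ell)$, then by Burnside's basis theorem $\psi$ fixes a generating set of $P_\ell$ and hence $\psi=1$; writing an arbitrary $\psi\in K_\ell$ as $\psi=\psi_p\psi_{p'}$ with commuting $p$- and $p'$-parts, the $p'$-part is trivial by the preceding sentence, so $\psi$ has $p$-power order. This is the one point where we lean on a non-trivial input, but it is entirely standard. The virtual pro-$p$-ness of $\Aut(G)$ then follows because $\Aut(G;\Phi(G))$ is an open subgroup of the profinite group $\Aut(G)$.
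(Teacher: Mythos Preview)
The paper does not give its own proof of this proposition; it simply cites \cite[Lemma 4.5.5]{RZ-book}. Your argument is the standard elaboration of that result and is correct in outline: reduce to showing that each $\Aut(G;\Phi(G))/\Aut(G;\Phi^\ell(G))$ is a finite $p$-group, embed this quotient into $K_\ell=\Ker(\Aut(P_\ell)\to\Aut(P_\ell/\Phi(P_\ell)))$ for the finite $p$-group $P_\ell=G/\Phi^\ell(G)$, and then invoke the classical fact that $K_\ell$ is a $p$-group.

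There is one inaccuracy in your justification of that last fact. You write that if $\psi$ has order coprime to $p$ and acts trivially on $P_\ell/\Phi(P_\ell)$, then ``by Burnside's basis theorem $\psi$ fixes a generating set of $P_\ell$ and hence $\psi=1$''. Burnside's basis theorem says nothing of the sort: acting trivially on $P_\ell/\Phi(P_\ell)$ only means $\psi(g)\equiv g\bmod\Phi(P_\ell)$ for all $g$, not that $\psi$ fixes any particular element. The result you need is the theorem of P.~Hall (often stated alongside Burnside's basis theorem, e.g.\ Gorenstein, \emph{Finite Groups}, Theorem 5.1.4) that a $p'$-automorphism of a finite $p$-group acting trivially on the Frattini quotient is trivial; its proof goes via the descending series $[P_\ell,\psi]\supseteq[P_\ell,\psi,\psi]\supseteq\cdots$, not via fixing a generating set. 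Once you cite the correct statement, your decomposition $\psi=\psi_p\psi_{p'}$ finishes the argument cleanly.
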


Later in the paper, we will apply Proposition~\ref{Aut:virtprop} in two separate proofs, both times in the case where $G$ is free pro-$p$.
In one of the instances the specific topology on $\Aut(G)$ will not play a role; we will just need to know that
$\Aut(G;\Phi(G))$ is pro-$p$ with respect to some topology.

\section{Some precursors to the simplicity theorems} \label{sec-prelim-simpl}

Recall that throughout the paper $F$ denotes a non-abelian free group of finite rank.

\subsection{Stability and instability of $\det$ under passing to invariant subgroups}

In this subsection we  establish  basic results relating the groups $\SAut(F)$ and $\SAut(H)$
where $H$ is a suitable finite index subgroup of $F$. The first result of this kind follows immediately 
from Lemma~\ref{lem:absperfect}(b):

\begin{Corollary}
\label{cor:SAut_char}
Suppose that $\rk(\absF{})\geq 3$. Let $H$ be a finite index characteristic subgroup of $\absF{}$, so that 
$\Aut(\absF{}) \subseteq \Aut(H)$ if we view both $\Aut(F)$ and $\Aut(H)$ as subgroups of $\Comm(F)$. Then $\SAut(\absF{}) \subseteq \SAut(H)$.
\end{Corollary}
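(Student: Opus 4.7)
The strategy is to exploit Lemma~\ref{lem:absperfect}(b), which identifies $\SAut$ with the commutator subgroup of $\Aut$ in rank at least $3$. Since this characterization is intrinsic to the abstract group structure and does not depend on the enveloping free group, it transports well under the inclusion $\Aut(F) \subseteq \Aut(H)$.

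First, I would verify that Lemma~\ref{lem:absperfect}(b) is applicable to $H$ as well. By the Schreier index formula,
\[
\rk(H) = 1 + [F:H]\bigl(\rk(F) - 1\bigr) \geq 1 + 1 \cdot 2 = 3,
\]
using $[F:H] \geq 1$ and $\rk(F) \geq 3$. So both $F$ and $H$ are free of rank at least~$3$.

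Next, I would apply Lemma~\ref{lem:absperfect}(b) to $F$ to conclude that $\SAut(F) = [\Aut(F),\Aut(F)]$. By hypothesis, $\Aut(F) \subseteq \Aut(H)$ inside $\Comm(F)$, and the commutator subgroup of a subgroup is contained in the commutator subgroup of the ambient group, so
\[
\SAut(F) = [\Aut(F),\Aut(F)] \subseteq [\Aut(H),\Aut(H)].
\]
A second application of Lemma~\ref{lem:absperfect}(b), this time to $H$, gives $[\Aut(H),\Aut(H)] = \SAut(H)$, which yields $\SAut(F) \subseteq \SAut(H)$.

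There is no real obstacle here: the proof is essentially immediate once one recognizes that $\SAut$ is characterized as the commutator subgroup of $\Aut$ in rank $\geq 3$, together with the Schreier formula guaranteeing the rank hypothesis is inherited by $H$. One small cosmetic point worth checking when writing up is that the inclusion $\Aut(F) \hookrightarrow \Aut(H)$ in $\Comm(F)$ is well-defined: this relies on $F$ being $\Aut$-$\Comm$ injective (Lemma~\ref{lem-unique-root}) together with the fact that any $\phi \in \Aut(F)$ preserves the characteristic subgroup $H$ and so represents a virtual isomorphism of $F$ both via its original domain $F$ and via its restriction to $H$.
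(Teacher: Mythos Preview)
Your proof is correct and follows exactly the same approach as the paper: both arguments observe $\rk(H)\geq 3$, then use Lemma~\ref{lem:absperfect}(b) to identify $\SAut$ with the commutator subgroup of $\Aut$ on each side, giving $\SAut(F)=[\Aut(F),\Aut(F)]\subseteq[\Aut(H),\Aut(H)]=\SAut(H)$. Your version simply spells out the Schreier bound and the $\Aut$-$\Comm$ injectivity point in more detail.
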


\begin{proof} Note that $\rk(H)\geq \rk(\absF{})\geq 3$.
Hence by Lemma~\ref{lem:absperfect}(b) we have 
$$\SAut(\absF{})=[\Aut(\absF{}),\Aut(\absF{})] \subseteq [\Aut(H),\Aut(H)]=\SAut(H).\qedhere$$
\end{proof}

\begin{Notation}
If $H$ is a finite index subgroup of $F$, we denote by  $\Aut(F)_H$  the stabilizer of $H$ in $\Aut(F)$. Hence inside the ambient group $\Comm(\absF{})$ we have $\Aut(F)_H \subseteq \Aut(H)$. 
\end{Notation}

Given $\alpha\in \Aut(F)$, we set $\det(\alpha)=\det(\pi(\alpha))$ where $\pi:\Aut(\absF{})\to \GL_d(\dbZ)$ is the canonical map.
Thus, $\alpha\in \SAut(F)$ if and only if $\det(\alpha)=1$. If $H$ is a finite index subgroup of $F$ such that $\alpha \in \Aut(F)_H$, we set $\det_H(\alpha)=\det(\alpha_{|H})$ where $\alpha_{|H}$ is the automorphism of $H$ induced by $\alpha$. 
Corollary~\ref{cor:SAut_char} can be rephrased by saying that if $\rk(F)\geq 3$, $\det_F(\alpha)=1$ and $H$ is characteristic in $F$,
then $\det_H(\alpha)=1$ as well. As we will see shortly, such implication does not hold if we only assume that $H$ is $\alpha$-invariant.
\skv

\begin{Lemma} \label{lem-stab-not-contained-special-aut}
Let $m \geq 2$ and $H \in SCQ(F,m)$. The following hold:
\begin{itemize}
\item[(a)] There exists $\alpha\in\Aut(F)_H$ such that  $\alpha_{|H} \not\in \SAut(H)$. Hence if we view $\Aut(F)$ and $\Aut(H)$
as subgroups of $\Comm(F)$, then $\Aut(H)\subseteq \la\SAut(H),\Aut(F)\ra$.
\item[(b)] If $m=2$, or if $m \equiv 3 \mod 4$ and $\rk(F)$ is even, then there exists $\alpha\in\Aut(F)_H$ such that
$\alpha\not\in \SAut(F)$, but $\alpha_{|H}\in \SAut(H)$. Hence inside $\Comm(F)$ we have $\Aut(F)\subseteq \la\SAut(F),\SAut(H)\ra$.
\end{itemize}
\end{Lemma}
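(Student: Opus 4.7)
By Lemma~\ref{lem-description-SCQ} I may assume $H = F(X, x_1, m)$ for some basis $X = \{x_1,\ldots,x_d\}$ of $F$, and by Lemma~\ref{lem:Schreier}(a) the set $Y = \{y_0\} \cup \{y_{i,j}\}$ with $y_0 = x_1^m$ and $y_{i,j} = x_1^j x_i x_1^{-j}$ (for $2 \leq i \leq d$ and $0 \leq j \leq m-1$) is a free basis of $H$. In each case the plan is to exhibit an explicit $\alpha \in \Aut(F)_H$, track its action on the $\dbZ$-basis $\overline{Y}$ of $H^{\mathrm{ab}}$, and read off $\det_H(\alpha)$ from the resulting signed permutation matrix. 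Invariance of $H$ under $\alpha$ will always follow from checking that $\alpha$ descends to an automorphism of $F/H \cong \dbZ/m$.

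For part (a), I plan to exhibit a single $\alpha$ working uniformly in $m$ and $d$, namely $\alpha(x_1) = x_1$, $\alpha(x_2) = x_1 x_2^{-1} x_1^{-1}$ and $\alpha(x_i) = x_i$ for $i \geq 3$. This is clearly an automorphism of $F$ (one recovers $x_2^{-1}$ by conjugating the image of $x_2$ back by $x_1^{-1}$) and fixes $H$ setwise since the induced map on $F/H$ is the identity. A direct computation gives $\alpha(y_0) = y_0$, $\alpha(y_{i,j}) = y_{i,j}$ for $i \geq 3$, and on $\{y_{2,j}\}_{j=0}^{m-1}$ the signed cyclic shift $y_{2,j} \mapsto y_{2,j+1}^{-1}$ for $0 \leq j \leq m-2$ together with the wrap-around $y_{2,m-1} \mapsto y_0\, y_{2,0}^{-1}\, y_0^{-1}$. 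Modulo $[H,H]$ the conjugation by $y_0$ disappears, so this block has matrix $-P$ where $P$ is the permutation matrix of an $m$-cycle, giving $\det_H(\alpha) = (-1)^m \cdot (-1)^{m-1} = -1$. The ``hence'' clause is then immediate from $\Aut(H)/\SAut(H) \cong \dbZ/2$: the quotient is generated by the image of $\alpha$, so $\Aut(H) = \langle \SAut(H), \alpha \rangle \subseteq \langle \SAut(H), \Aut(F) \rangle$.

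For part (b) I would treat the two cases separately. If $m = 2$, take $\alpha(x_2) = x_2^{-1}$ fixing the remaining $x_i$: this lies outside $\SAut(F)$, preserves $H$ (the induced map on $F/H$ is trivial), and negates each $y_{2,j}$ while fixing $y_0$ and all $y_{i,j}$ with $i \geq 3$, so $\det_H(\alpha) = (-1)^m = 1$. If $m \equiv 3 \mod 4$ and $d$ is even, take $\alpha(x_1) = x_1^{-1}$ fixing the other $x_i$: here $\det_F(\alpha) = -1$, $\alpha$ preserves $H$ via the automorphism $t \mapsto t^{-1}$ of $F/H$, and a short calculation shows that on $\overline{Y}$ it sends $\bar y_0 \mapsto -\bar y_0$ and, for each $i \geq 2$, acts on $\{\bar y_{i,j}\}$ by the involution $j \leftrightarrow m-j$ fixing $j=0$. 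This involution has $(m-1)/2$ transpositions, which is odd precisely when $m \equiv 3 \mod 4$, so its sign is $-1$; hence $\det_H(\alpha) = (-1) \cdot (-1)^{d-1} = (-1)^d = 1$ when $d$ is even. In either case the ``hence'' clause is the analogous two-coset argument: every $\beta \in \Aut(F)$ either lies in $\SAut(F)$ or satisfies $\beta \alpha^{-1} \in \SAut(F)$, and in $\Comm(F)$ the element $\alpha$ equals $\alpha_{|H} \in \SAut(H)$, so $\Aut(F) \subseteq \langle \SAut(F), \SAut(H) \rangle$. The only delicate step throughout is keeping the sign of the permutation and the signs of the individual entries straight in the determinant computation, most notably the wrap-around term in part~(a), where one must observe that the extra conjugation by $y_0$ disappears in $H^{\mathrm{ab}}$.
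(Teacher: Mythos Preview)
Your proof is correct and follows essentially the same approach as the paper. The only noteworthy difference is in part~(a): the paper splits into two cases according to the parity of $m$, using $x_2 \mapsto x_2^{-1}$ for $m$ odd (which negates $m$ basis elements, giving $\det_H = (-1)^m$) and $x_2 \mapsto x_1 x_2 x_1^{-1}$ for $m$ even (which cyclically permutes them, giving $\det_H = (-1)^{m-1}$), whereas your single map $x_2 \mapsto x_1 x_2^{-1} x_1^{-1}$ is precisely the composition of these two and works uniformly for all $m$; this is a modest streamlining. For part~(b) you use the very same automorphisms as the paper; the paper switches to the symmetric basis $Z = \{x_1^m\} \cup \{x_1^k x_i x_1^{-k} : |k| \leq (m-1)/2\}$ to make the $x_1 \mapsto x_1^{-1}$ computation slightly cleaner, but your computation with the standard basis $Y$ (where the swaps $j \leftrightarrow m-j$ appear conjugated by $y_0$, which vanishes in $H^{\mathrm{ab}}$) reaches the same conclusion.
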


\begin{proof} Let $d=\rk(F)$. By Lemma \ref{lem-description-SCQ} we have $H=F(X,x_1,m)$ for some basis $X = \left\lbrace x_1,\ldots,x_d \right\rbrace $ of $\absF{}$, 
and then by Lemma~\ref{lem:Schreier}(a) $H$ has a basis $$Y=\left\{x_1^p,   x_1^k x_i x_1^{-k}: 2\leq i\leq d, 0\leq k\leq m-1\right\}.$$
\skv

(a) Let $\alpha$ be the automorphism of $\absF{}$ given by $\alpha(x_2)=x_2^{-1}$ and $\alpha(x_i)=x_i$ for $i\neq 2$.
Then $\alpha$ preserves $H$; in fact, it inverts $m$ generators from $Y$, namely the elements $x_1^k x_2 x_1^{-k}$,
and fixes the remaining generators. Hence the matrix of the induced automorphism of $H/[H,H]$ is diagonal, with
all diagonal entries equal to $\pm 1$ and exactly $m$ entries equal to $-1$. Thus $\det_H(\alpha)=(-1)^m$, so $\alpha_{|H} \not\in \SAut(H)$ if $m$ is odd.

For $m$ even, we consider the automorphism $\gamma$ of $\absF{}$ that maps $x_2$ to $x_1  x_2 x_1^{-1}$ and fixes all other elements of $X$. 
In this case the matrix of the induced automorphism of $H/[H,H]$ is a permutation matrix associated to a cycle of length $m$,
so $\det_H(\gamma)=-1$. This finishes the proof of the first assertion of (a).

The second assertion of (a) follows from
the first one and the fact that $\SAut(H)$ has index $2$ in $\Aut(H)$, so there are no subgroups strictly between
$\Aut(H)$ and $\SAut(H)$.
\skv

(b) As in (a), it suffices to prove the first assertion.
First suppose that $m=2$ and consider the automorphism $\alpha$ used in the proof of (a).
Clearly $\det_{F}(\alpha)= -1$, and $\det_{H}(\alpha)=(-1)^2=1$ by the computation in (a), so $\alpha\not\in \SAut(F)$
while $\alpha_{|H} \in \SAut(H)$, as desired.
\skv
	
Now suppose that $m \geq 3$ and $d=\rk(F)$ is even. Define $\beta\in \Aut(F)$ by $\beta(x_1)=x_1^{-1}$ and $\beta(x_i)=x_i$ for $i\neq 1$,
so that $\det_{F}(\beta) = -1$. To show that $\beta$ preserves $H$ and compute $\det_{H}(\beta)$, it is more convenient
to use a slightly different basis $Z$ for $H$: $$Z=\left\{x_1^m, x_1^k x_i x_1^{-k}: 2\leq i\leq d, |k|\leq \frac{m-1}{2}\right\}.$$
Note that $\beta$ inverts one element of $Z$, namely $x_1^m$, fixes $d-1$ elements, namely $x_i$ for $2\leq i\leq d$, and
performs $(d-1)(m-1)/2$ transpositions on the remaining generators
(it swaps $x_1^k x_i x_1^{-k}$ with $x_1^{-k} x_i x_1^{k}$ whenever $k\neq 0$). Hence $\det_{H}(\alpha) = (-1)^{\frac{(d-1)(m-1)}{2}+1}=1$
since $m\equiv 3 \mod 4$ and $d$ is even. 
\end{proof}

\subsection{Preliminary results on the subgroups of $\Comm(F)$}

In this subsection we will establish two technical results on the subgroup structure of $\Comm(F)$. In both cases our goal is to show
that the subgroup of $\Comm(F)$ generated by a certain set contains $\SAut(H)$ where $H=F$ or $H\in SCQ(F,m)$.

\skv

We will need the following lemma, which can be extracted from the proof of \cite[Proposition~1]{Bri-Vog-hom}.  The proposition is originally stated for normal subgroups of $\Aut(F)$, but the argument works just as well for subgroups normalized by $\SAut(F)$.

\begin{Lemma} 
	\label{lem-alt-normally-generates}
	Suppose that $\rk(F)\geq 3$, and let $N$ be a subgroup of $\Aut(F)$ normalized by $\SAut(F)$.  Suppose that some element of $N$ acts as a non-trivial permutation of a basis for $F$.  Then $N$ contains $\SAut(F)$. 
\end{Lemma}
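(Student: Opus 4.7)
The plan is to produce a generating set of $\SAut(F)$ inside $N$, whence by Nielsen's theorem (recorded in Section~\ref{subsec-free-groups-gen-aut}) $\SAut(F) \subseteq N$. The key tool is that, since $\SAut(F)$ normalizes $N$ and $\sigma \in N$, for every $\tau \in \SAut(F)$ both $\sigma$ and $\tau \sigma^{-1} \tau^{-1}$ belong to $N$, so the commutator
\[
[\sigma,\tau] \;=\; \sigma \cdot (\tau \sigma^{-1} \tau^{-1})
\]
also lies in $N$. Applying this with Nielsen transformations in place of $\tau$ will be the source of all the elements of $N$ we need.

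Let $X = \{x_1,\ldots,x_d\}$ be the basis that $\sigma$ permutes and let $\pi \in \Sym(\{1,\ldots,d\})$ be the induced permutation. A direct computation on generators shows $\sigma R_{ij} \sigma^{-1} = R_{\pi(i),\pi(j)}$, and hence
\[
[\sigma,\, R_{ij}] \;=\; R_{\pi(i),\pi(j)}\, R_{ij}^{-1} \;\in\; N
\]
for every $i \neq j$. Since $\pi \neq \mathrm{id}$ and $d \geq 3$, one can choose $(i,j)$ with $(\pi(i),\pi(j)) \neq (i,j)$ as ordered pairs, producing a non-trivial element of $N \cap \SAut(F)$ that is an explicit two-term product of Nielsen transformations.

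The main step is to promote these two-term products to a single Nielsen transformation inside $N$, by combining them with their $\SAut(F)$-conjugates (which also lie in $N$) and iterating commutators, exploiting the commutation relations among Nielsen transformations (notably, $R_{ij}$ and $R_{kj}$ commute while $R_{ij}$ and $R_{jk}$ do not, with the latter commutator having non-trivial image in $\GL_d(\dbZ)$). Once $N$ contains one Nielsen transformation, further $\SAut(F)$-conjugation propagates to all Nielsen transformations, and Nielsen's theorem gives $\SAut(F) \subseteq N$. This combinatorial heart of the argument is precisely what is carried out in the proof of Proposition~1 of \cite{Bri-Vog-hom}: while that proposition is formally stated for $N$ normal in $\Aut(F)$, inspection of its proof reveals that only the normalization of $N$ by $\SAut(F)$ is used, so the argument applies verbatim in our setting. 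The main obstacle is this middle combinatorial step, which requires a case analysis on the cycle structure of $\pi$ together with careful manipulation of the Nielsen relations.
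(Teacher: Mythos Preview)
Your proposal is correct and follows essentially the same approach as the paper: the paper does not give an independent proof but simply notes that the argument in the proof of \cite[Proposition~1]{Bri-Vog-hom} goes through verbatim once one observes that only normalization by $\SAut(F)$ is used, which is precisely what you do (with the added bonus of spelling out the initial commutator computation $[\sigma,R_{ij}]=R_{\pi(i),\pi(j)}R_{ij}^{-1}\in N$).
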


As in subsection~\ref{subsec-free-groups-gen-aut}, for any basis $X$ of $F$, we will view the symmetric group $\Sym(X)$
and the alternating group $\Alt(X)$ as subgroups of $\Aut(F)$. 

\begin{Proposition} 
	\label{prop-int-implies-saut}
Let $d=\rk(F)$ and  $X = \left\lbrace x_1,\ldots,x_d \right\rbrace $ a basis of $F$. Let $m \geq 2$, let  $H = F(X,x_1,m)$, and let $s \geq 1$ be such that $m$ does not divide $s$. Assume that $(d,m) \neq (2,2)$. Then the group generated by the $\SAut(H)$-conjugates of $x_1^s$ contains $\SAut(H)$. In particular, the  group generated by the $\SAut(H)$-conjugates of $F$ contains $\SAut(H)$.
\end{Proposition}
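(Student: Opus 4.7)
The plan is to apply Lemma \ref{lem-alt-normally-generates} to $H$ and the subgroup $N := \la \phi\, c_{x_1^s}\, \phi^{-1} : \phi \in \SAut(H)\ra \leq \Aut(H)$, which is normalized by $\SAut(H)$ by construction. A direct check shows that $\rk(H) = 1 + m(d-1) \geq 3$ for every $(d,m)$ in the range of the proposition, so Lemma \ref{lem-alt-normally-generates} is applicable and it suffices to exhibit a non-trivial basis permutation of $H$ inside $N$.

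The first step is to populate $N$ with a large supply of inner automorphisms of $H$. Writing $y_0 = x_1^m \in H$, one has the key identity $c_{x_1^s}^{m/g} = c_{y_0^{s/g}}$ where $g = \gcd(s,m)$, so $c_{y_0^{s/g}}$ lies in $\Inn(H) \cap N$. Conjugating by elements of $\SAut(H)$ and using that $\SAut(H)$ acts transitively on primitive elements of $H$ up to inversion (a consequence of $\Aut(H)$-transitivity on bases, combined with the freedom to adjust the sign of the determinant in $\SAut(H)$ whenever $\rk(H)\geq 3$), we obtain $c_{y^{s/g}} \in N$ for every primitive $y\in H$. When $s/g = 1$ (in particular whenever $s = 1$) this immediately gives $\Inn(H) \subseteq N$; otherwise a similar argument, together with the remaining $\SAut(H)$-conjugates of $c_{x_1^s}$, yields a sufficiently rich subgroup of $\Inn(H)$ for the next step.

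The heart of the proof is the construction of a non-trivial basis permutation of $H$ inside $N$. Relative to the Schreier basis $Y = \{y_0\}\cup\{y_{i,j}=x_1^j x_i x_1^{-j}\}$ from Lemma \ref{lem:Schreier}(a), $c_{x_1^s}$ acts as the cyclic shift $y_{i,j}\mapsto y_{i,j+s\bmod m}$ twisted by a conjugation by $y_0$ whenever the shift wraps around. The idea is to cancel this $y_0$-twist by multiplying $c_{x_1^s}$ by a carefully chosen inner element already in $N$ and, when needed, pre-composing with an even basis permutation $\sigma \in \Alt(Y)\subseteq \SAut(H)$ that redistributes the twist. The main obstacle is that no inner automorphism on its own can kill the $y_0$-twist, because $y_0$ is primitive in $H$ and therefore admits no proper root; the hypothesis $(d,m)\neq(2,2)$ provides precisely the extra room needed to perform the cancellation, either via an additional row when $d\geq 3$ or via the longer cyclic structure within a single row when $m\geq 3$. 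In the excluded case $(d,m)=(2,2)$ neither device is available and in fact $c_{x_1^s}\notin\SAut(H)$ (its induced action on $H/[H,H]$ has determinant $-1$), which reflects a genuine obstruction. Once a non-trivial basis permutation of $H$ is realized in $N$, Lemma \ref{lem-alt-normally-generates} yields $\SAut(H)\subseteq N$, and the final ``in particular'' clause follows at once since $x_1^s\in F$ places the $\SAut(H)$-conjugates of $x_1^s$ among the $\SAut(H)$-conjugates of $F$.
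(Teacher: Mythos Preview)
Your overall strategy---reduce to Lemma~\ref{lem-alt-normally-generates} by producing a non-trivial basis permutation of $H$ inside $N$---is exactly what the paper does, and your computation of how $c_{x_1^s}$ acts on the Schreier basis $Y$ (cyclic shift with a $y_0$-twist on wrap-around) is correct. However, the central paragraph, where you claim to ``cancel this $y_0$-twist by multiplying $c_{x_1^s}$ by a carefully chosen inner element already in $N$'', is where the argument breaks down, and this is not a detail that can be filled in along the lines you suggest.

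The problem is that any inner automorphism $c_h$ with $h\in H$ that fixes $y_0$ must have $h$ centralizing $y_0$, hence $h\in\langle y_0\rangle$; but then $c_{x_1^s}\cdot c_{y_0^k}=c_{x_1^{s+mk}}$ is again of the same shape and still carries a $y_0$-twist on wrap-around. So no product of $c_{x_1^s}$ with an inner automorphism of $H$ is a pure basis permutation. Your fallback of ``pre-composing with $\sigma\in\Alt(Y)$'' does not help either, since $\sigma$ is not known to lie in $N$, and $c_{x_1^s}\sigma\notin N$. The paper circumvents this obstruction by a \emph{double commutator} trick: one first forms $\beta=[\alpha,\tau]\in N$ for a specific signed transposition $\tau\in\SAut(H)$ supported on two basis elements; this $\beta$ is a signed permutation $\varepsilon\sigma$ with very small support. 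One then takes a further commutator of $\beta$ with a $3$-cycle $\gamma\in\Alt(Y)$ chosen to commute with the sign part $\varepsilon$, which kills $\varepsilon$ and leaves a non-trivial element of $N\cap\Alt(Y)$. This requires separate bookkeeping in the cases $d\ge 3$, $(d,m)=(2,3)$, and $d=2,\,m\ge 4$, and the hypothesis $(d,m)\neq(2,2)$ is used precisely to guarantee enough room for the supports of $\tau$ and $\gamma$ to be arranged correctly. Your first step (getting $c_{y_0^{s/g}}$ and many inner automorphisms into $N$) is not actually needed for this and does not by itself produce any basis permutation, since non-trivial inner automorphisms of a free group never permute a basis.
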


\begin{proof}
We write $y = x_1^m$ and $z_{k,i} = x_1^i x_k x_1^{-i}$. Recall from Lemma~\ref{lem:Schreier}(a)  that
$$Z=\{y, z_{k,i}: 2\leq k\leq d, 0\leq i\leq m-1\}$$ is a basis of $H$. Note that $|Z| = 1 + m(d-1) \ge 4$. Let $\alpha$ denote the automorphism of $H$ induced by the conjugation by $x_1^s$, and let $N$ denote the subgroup of $\Aut(H)$ generated by the $\SAut(H)$-conjugates of $x_1^s$.
By Lemma \ref{lem-alt-normally-generates}, it is enough to show that $N \cap \Alt(Z)$ is non-trivial. The remainder of the proof consists of constructing a non-trivial element in $N \cap \Alt(Z)$.

Since $m\nmid s$, one can write $s=mq+r$ with $ 1 \leq r \leq m-1$, so that $x_1^s = y^q x_1^r$. We have $\alpha(y)=y$, $\alpha(z_{k,i} ) = y^q \, z_{k,i+r} \, y^{-q}$ for $0\leq i \leq m-1-r$ and 
$\alpha(z_{k,i} ) =y^{q+1} \, z_{k,i+r-m} \, y^{-q-1}$ for $m-r\leq i \leq m-1$ . 
\skv

{\it Case 1:  $d \ge 3$}.  Define $\tau\in \SAut(H)$ by $\tau(z_{2,0})=z_{3,0}^{-1}$, $\tau(z_{3,0})= z_{2,0}$, and $\tau(z)=z$
for all $z\in Z\setminus\{z_{2,0},z_{3,0}\}$. A direct computation shows that $\alpha \tau \alpha^{-1}$ maps 
$z_{2,r}$ to $z_{3,r}^{-1}$, maps $z_{3,r}$ to $z_{2,r}$ and fixes the remaining elements of $Z$. 
Consider the commutator 
$\beta= \alpha \tau \alpha^{-1} \tau^{-1}$. 

For any $z\in Z$ denote by $\eps_z$ the automorphism of $H$ which  sends $z$ to $z^{-1}$ and fixes other elements of $Z$.
Then we can write $\beta=\varepsilon \sigma$ where $\varepsilon = \varepsilon_{z_{2,0}} \varepsilon_{z_{3,r}}$ and $\sigma$ is the product of two transpositions $(z_{2,0},z_{3,0}) (z_{2,r},z_{3,r}) \in \Alt(Z)$. 

Note that $\beta \in N$ since $\alpha \in N$, $\tau \in \SAut(H)$ and $N$ is normalized by $\SAut(H)$. Let $\gamma$ be the $3$-cycle $(y, z_{3,0}, z_{2,r}) \in \Alt(Z)\subset \SAut(H)$.  Then  $\beta^{-1}\gamma \beta \gamma^{-1}\in N$ as well; on the other hand,
$\gamma$ commutes with $\varepsilon$, whence $\beta^{-1}\gamma \beta \gamma^{-1} = \sigma^{-1}\gamma \sigma \gamma^{-1}\in \Alt(Z)$. It follows that  $\sigma^{-1}\gamma \sigma \gamma^{-1}  \in N \cap \Alt(Z)$, and this element is non-trivial. Hence the proof is complete in this case.
\skv

{\it Case 2: $d=2$ and $m = 3$}. Then $r=1$ or $r=2$, and replacing $x_1^s$ by $x_1^{2s}$ if needed, we can assume that $r=1$. 
Define $\tau\in \SAut(H)$ by $\tau(z_{2,0})= z_{2,1}^{-1}$,  $\tau(z_{2,1})= z_{2,0}$ and $\tau(z)=z$ for $z\in Z\setminus\{z_{2,0},z_{2,1}\}$,
and as in Case~1 define $\beta=\alpha \tau \alpha^{-1} \tau^{-1}$. Then $\beta=\varepsilon \sigma$ where $\varepsilon=\eps_{z_{2,0}}\eps_{z_{2,2}}$
and $\sigma=(z_{2,0},z_{2,2},z_{2,1})$.  The equality $\sigma = \beta^2 \sigma \beta^{-1} \sigma^{-1}$ shows $\sigma \in N$.

\skv

{\it Case 3: $d=2$ and $m \geq 4$}. Again replacing $x_1^s$ by $x_1^{2s}$ if needed, we can assume $ 2 \leq r \leq m-2$. This ensures that $\left\lbrace z_{2,r}, z_{2,r+1}  \right\rbrace $ is disjoint from $\left\lbrace z_{2,0}, z_{2,1}  \right\rbrace $. Define 
$\tau\in \SAut(H)$ by $\tau(z_{2,0})= z_{2,1}^{-1}$,  $\tau(z_{2,1})=z_{2,0}$ and $\tau(z)=z$
for all $z\in Z\setminus\{z_{2,0},z_{2,1}\}$, and define $\beta$ as in previous cases. Then $\beta = \varepsilon \sigma$ where $\varepsilon = \varepsilon_{z_{2,0}} \varepsilon_{ z_{2,r+1}}$ and $\sigma=(z_{2,0},z_{2,1}) (z_{2,r},z_{2,r+1}) \in \Alt(Z)$. As in Case~1, 
if $\gamma$ is the $3$-cycle $(y, z_{2,1}, z_{2,r})$, then $\beta^{-1}\gamma \beta \gamma^{-1}$ is a non-trivial element in $N \cap \Alt(Z)$. This concludes the proof.
\end{proof}

\begin{Lemma} 
\label{lemma-generation-indexp}
Let $m \geq 2$. Then the subgroup of $\Comm(\absF{})$ generated by $\Aut(H)$ (resp. $\SAut(H)$), where $H$ ranges over $SCQ(F,m)$, contains $ \Aut(F)$  (resp $\SAut(\absF{})$).
\end{Lemma}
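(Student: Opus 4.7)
The plan is to exhibit explicit generators of $\Aut(F)$ (resp.\ $\SAut(F)$) that individually lie in $\Aut(H_i)$ (resp.\ $\SAut(H_i)$) for some $H_i\in SCQ(F,m)$, viewed inside $\Comm(F)$. Fix a basis $X=\{x_1,\ldots,x_d\}$ of $F$ and let $H_i:=F(X,x_i,m)$, which is the kernel of the epimorphism $F \to \Z/m\Z$ with $x_i\mapsto 1$ and $x_k\mapsto 0$ for $k\ne i$. The key observation is that each Nielsen transformation $R_{ij}$ (and $L_{ij}$) with $i\ne j$ preserves $H_i$: it fixes $x_k$ for $k\ne i$, and $R_{ij}(x_i)=x_ix_j\equiv x_i\pmod{H_i}$ since $x_j\in H_i$; similarly, the inversion $\eps:x_1\mapsto x_1^{-1}$ (fixing the other $x_k$) preserves $H_1$.

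For the statement about $\Aut(F)$, I combine the above with Nielsen's theorem: $\SAut(F)$ is generated by the $R_{ij}$ and $L_{ij}$, and $\Aut(F)$ is generated by $\SAut(F)$ together with $\eps$, since $\det(\eps)=-1$ and $[\Aut(F):\SAut(F)]=2$. Each of these generators lies in $\Aut(F)_{H_i}\subseteq \Aut(H_i)$ for a suitable $i$, yielding $\Aut(F)\subseteq \la\Aut(H):H\in SCQ(F,m)\ra$.

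For the statement about $\SAut(F)$, I would upgrade this by showing that $R_{ij}|_{H_i}$ and $L_{ij}|_{H_i}$ actually lie in $\SAut(H_i)$. Compute $\det_{H_i}(R_{ij})$ using the basis $Y=\{x_i^m\}\cup\{b_{k,\ell}=x_i^k x_\ell x_i^{-k}:\ell\ne i,\ 0\le k\le m-1\}$ of $H_i$ from Lemma~\ref{lem:Schreier}(a). Two observations trivialize the calculation. First, the $F$-conjugation action on $H_i^{\mathrm{ab}}$ factors through $F/H_i\cong\Z/m\Z$; since $x_j\in H_i$, the images of $x_i$ and $x_ix_j$ coincide in $F/H_i$, so $\overline{R_{ij}(b_{k,\ell})}=\overline{(x_ix_j)^k x_\ell (x_ix_j)^{-k}}=\bar b_{k,\ell}$ for all $\ell\ne i$ and all $k$. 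Second, a straightforward telescoping (based on $\prod_{k=1}^{n}b_{k,j}=(x_ix_j)^n x_i^{-n}$) yields $(x_ix_j)^m=b_{1,j}b_{2,j}\cdots b_{m,j}\cdot x_i^m$ in $F$; and since $x_i^m\in H_i$ acts trivially on $H_i^{\mathrm{ab}}$ by conjugation, we have $\bar b_{m,j}=\bar b_{0,j}$, so $\overline{R_{ij}(x_i^m)}=\overline{x_i^m}+\sum_{k=0}^{m-1}\bar b_{k,j}$. The matrix of $R_{ij}|_{H_i}$ on $H_i^{\mathrm{ab}}$ in the basis $Y$ is therefore upper unitriangular with determinant~$1$. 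The same argument handles $L_{ij}|_{H_i}$, and combined with Nielsen's theorem we obtain $\SAut(F)\subseteq\la\SAut(H):H\in SCQ(F,m)\ra$.

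The only genuine computation is the telescoping identity for $(x_ix_j)^m$, and together with the reformulation of $F$-conjugation on $H_i^{\mathrm{ab}}$ as an action of the quotient $F/H_i$, it makes the determinant computation immediate; I do not anticipate any serious obstacle beyond correctly organizing this elementary calculation.
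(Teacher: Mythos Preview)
Your proof is correct. The key idea---that $R_{ij}$ preserves $H_i=F(X,x_i,m)$ and that the induced map on $H_i^{\mathrm{ab}}$ is unipotent---is sound, and your use of the fact that the $F$-conjugation action on $H_i^{\mathrm{ab}}$ factors through the cyclic quotient $F/H_i$ makes the determinant computation clean and conceptual.

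Your approach differs from the paper's in an interesting way. The paper splits into two cases. When $d\ge 3$, it places $R_{12}$ inside $\Aut(H)$ for $H=F(X,x_d,m)$, a subgroup built from a \emph{third} generator uninvolved in $R_{12}$; there $R_{12}|_H$ is visibly a product of $m$ Nielsen maps on the standard basis of $H$, so membership in $\SAut(H)$ is immediate without any abelianization calculation. Only when $d=2$ (where no third generator exists) does the paper resort to $H=F(X,x_1,m)$ and a unipotence argument, carried out there by a direct induction on the basis elements. Your proof is essentially the paper's $d=2$ argument applied uniformly to all ranks, which eliminates the case distinction and replaces the paper's induction by the slicker observation about the conjugation action factoring through $F/H_i$. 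The trade-off is that the paper's $d\ge 3$ argument actually exhibits $R_{12}|_H$ as an explicit product of Nielsen transformations of $H$, a slightly stronger conclusion, whereas you only verify $\det_{H_i}(R_{ij})=1$; for the purposes of this lemma, however, that extra precision is not needed.
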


\begin{proof}
	Let $d=\rk(F)$, and fix a basis $X=\{x_1,\ldots, x_d\}$ of $F$. We first prove the assertion about $\SAut$. Recall that $\SAut(\absF{})$ is generated by the Nielsen maps $R_{ij}$ and $L_{ij}$. By symmetry, it suffices to show that $(R_{12})_{|H}$ lies in $\SAut(H)$ for some $H \in SCQ(F,m)$. 

\skv
{\it Case 1: $d\geq 3$}. Consider the subgroup $H=F(X,x_d,m)$; recall that it has a basis
$Y=\{x_d^m, x_d^{j}x_ix_d^{-j}: 1\leq i\leq d-1, 0\leq j\leq m-1\}$. Since $d\geq 3$, $H$ is invariant under $R_{12}$,
and the induced automorphism $(R_{12})_{| H}$ sends $x_d^j x_1 x_d^{-j}$ to $(x_d^j x_1 x_d^{-j})(x_d^j x_2 x_d^{-j})$ for all $0\leq j\leq m-1$ 
and fixes other elements of $Y$.
In particular, $(R_{12})_{| H}$ is a product of $m$ Nielsen maps (relative to $Y$) and hence lies in $\SAut(H)$, as desired. 
\skv

{\it Case 2: $d=2$}. Now let $H=F(X,x_1,m)$; similarly to Case~1, it has a basis $Y=\{x_1^m, z_{i}: 0\leq i\leq m-1\}$
where $z_i = x_1^i x_2 x_1^{-i}$. 
We will show that $H$ is $R_{12}$-invariant and $(R_{12})_{| H} \in \SAut(H)$. We claim that
\begin{itemize}
\item[(i)] $R_{12}(x_1^m) = z_{1} \cdots z_{m-1} x_1^m z_{0}$;
\item[(ii)] $R_{12}(z_{i}) \in H$ and $R_{12}(z_{i}) = z_{i} \mod [H,H]$ for all $i$.
\end{itemize}
Conditions (i) and (ii) would imply that $R_{12}$ preserves $H$ and the matrix of the induced action of $R_{12}$ on the abelianization of $H$ is unipotent, so $(R_{12})_{| H} \in \SAut(H)$ as desired.

Condition (i) can be checked by direct computation. Let us now prove (ii) by induction on $i$. The base case $i=0$ is clear since $z_{0} = x_2$ and $R_{12}$ fixes $x_2$. For the induction step, we have 
$R_{12}(z_{i+1}) = R_{12}( x_1 z_{i} x_1^{-1}) = x_1 x_2 R_{12}(z_{i}) x_2^{-1} x_1^{-1}$. By the induction hypothesis, 
$R_{12}(z_{i}) = z_{i} \mod [H,H]$. Since $x_2 \in H$ we deduce that $R_{12}(z_{i+1}) = x_1 z_{i} x_1^{-1}  \mod [H,H] = z_{i+1} \mod [H,H]$. This completes the induction step. Combined with {(i)}, we deduce that $R_{12}$ preserves $H$ and the matrix of $R_{12}$ at the level of the abelianization of $H$ is a unipotent matrix. Hence $(R_{12})_{| H} \in \SAut(H)$. 
\skv
	
	Thus we proved the part of Lemma~\ref{lemma-generation-indexp} dealing with $\SAut(F)$. The assertion about $\Aut(F)$ follows from the one about $\SAut(F)$ combined with the fact that the automorphism $\alpha\in\Aut(\absF{})$ given by $\alpha(x_1)=x_1^{-1}$
	and $\alpha(x_k)=x_k$ for $k>1$ does not lie $\SAut(\absF{})$ and stabilizes $H$.
\end{proof}

\section{Normal subgroups of $\Comm(F)$} \label{sec-comm-abstract-free}

\subsection{Simplicity of $\AComm(F)$}

Let $F$ be a non-abelian free group of finite rank.
In this subsection we will prove the first part of Theorem~\ref{thmA} which asserts that $\AComm(F)$ is simple and is equal to $\mon(\Comm(F))$, the monolith of $\Comm(F)$. 

First we will show that all non-trivial elements of $F$
lie in a single orbit under the action of $\AComm(F)$ (see Proposition~\ref{prop-one-conj-class} below). We will establish this result as a straightforward consequence of M. Hall's free factor theorem (Theorem~\ref{thm:Hallfreefactor}).

\begin{Lemma}
\label{lem-Hall-improved}
Let $g \in F$ be a non-trivial element. Then there exists a finite index subgroup $H$ of $F$ and an element $x\in H$ such that $g \in H$, $g$ is primitive in $H$ and $x$ is primitive for both $H$ and $F$.
\end{Lemma}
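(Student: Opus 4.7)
The plan is to reduce the statement to M.\ Hall's free factor theorem applied to the two-generator subgroup $\langle g,x\rangle$ for a suitably chosen primitive element $x$ of $F$. If I can arrange that $\{g,x\}$ is itself a basis of the rank-$2$ subgroup it generates, Theorem~\ref{thm:Hallfreefactor} will produce a finite index subgroup $H$ of $F$ containing $\langle g,x\rangle$ as a free factor, and extending the free basis $\{g,x\}$ of $\langle g,x\rangle$ by a basis of the complementary factor will yield a basis of $H$ that contains both $g$ and $x$. Primitivity of $g$ and $x$ in $H$ and of $x$ in $F$ will then follow immediately.

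The main work is to locate a primitive element $x$ of $F$ that does not share a common power with $g$. For this I will fix any basis $\{y_1,\dots,y_d\}$ of $F$ and use the classical fact that centralizers in free groups are cyclic. If two basis elements $y_i$ and $y_j$ both shared a nontrivial power with $g$, then, since commensurable elements in a torsion-free group commute, both $y_i$ and $y_j$ would lie in the cyclic centralizer $C_F(g)$, contradicting the fact that distinct basis elements generate a free subgroup of rank two. Hence at most one basis element shares a power with $g$; since $d=\operatorname{rk}(F)\geq 2$, I can pick $x$ to be one of the remaining $y_j$, so that $g$ and $x$ do not commute.

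With $x$ chosen this way, $\langle g,x\rangle$ is a free subgroup of rank $2$. The two-element generating set $\{g,x\}$ is then automatically a basis of $\langle g,x\rangle$ by the invariance of rank in finitely generated free groups (equivalently, by the Hopf property: a surjection $F_2\to F_2$ sending a basis to $\{g,x\}$ must be an isomorphism). Now Theorem~\ref{thm:Hallfreefactor} provides a finite index subgroup $H\leq F$ in which $\langle g,x\rangle$ is a free factor, so a basis of $\langle g,x\rangle$ can be extended to a basis of $H$. In particular $g$ and $x$ are primitive in $H$, and $x$ is primitive in $F$ by construction, completing the argument.

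The only delicate step is the centralizer argument ruling out that two basis elements share a common power with $g$; the rest is a direct application of Hall's theorem together with the standard rank-invariance fact for two-generator free subgroups.
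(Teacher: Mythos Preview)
Your proof is correct and follows essentially the same approach as the paper: choose a primitive $x\in F$ not commuting with $g$, observe that $\langle g,x\rangle$ is free of rank $2$ with basis $\{g,x\}$, and apply Hall's free factor theorem. The only difference is that you spell out the existence of such an $x$ via the cyclic-centralizer argument, whereas the paper simply asserts ``let $x$ be any primitive element of $F$ which does not commute with $g$'' and leaves the existence implicit. One small wording issue: the claim ``commensurable elements in a torsion-free group commute'' is not true in that generality; what you actually use (and what suffices) is that in a \emph{free} group two elements sharing a nontrivial power lie in a common cyclic centralizer and hence commute.
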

\begin{proof} Let $x$ be any primitive element of $F$ which does not commute with $g$ and let $C=\la x,g\ra$. Then $C$ is free of rank $2$
(since it is $2$-generated and non-abelian) and hence $\{x,g\}$ is a basis for $C$. By Theorem~\ref{thm:Hallfreefactor}, there exists a finite index subgroup $H$ of $F$ which contains $C$
as a free factor. By construction $x$ and $g$ are both primitive for $C$ and hence also for $H$.
\end{proof}

\begin{Proposition}
\label{prop-one-conj-class}
Let $y_1,y_2$ be non-trivial elements of $F$. Then there is $\psi \in \AComm(F)$ such that $\psi y_1 \psi^{-1} = y_2$.
Moreover, there exist finite index subgroups $H_1,H_2$ of $F$ and $\phi_i \in \Aut(H_i)$ and 
$\phi \in \Aut(F)$ such that $\psi = \phi_2^{-1}\, \phi\, \phi_1$.
\end{Proposition}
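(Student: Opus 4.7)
The plan is to prove this as a direct corollary of Lemma~\ref{lem-Hall-improved}, which gives us a way to move any non-trivial element of $F$ to a primitive element of $F$ via an automorphism of a suitable finite index subgroup. The explicit decomposition $\psi = \phi_2^{-1}\phi\phi_1$ will then fall out of a symmetric construction for $y_1$ and $y_2$ with a ``bridge'' automorphism of $F$ in between.

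First I would apply Lemma~\ref{lem-Hall-improved} to $y_1$ and to $y_2$ separately, obtaining finite index subgroups $H_1, H_2 \le F$ and elements $x_i \in H_i$ (for $i=1,2$) such that $y_i \in H_i$ is primitive in $H_i$, and $x_i$ is primitive in both $H_i$ and in $F$. The key general fact I would invoke next is that in any finitely generated free group, a primitive element can be extended to a basis, and any two bases can be mapped to each other by an automorphism of the ambient free group; hence any two primitive elements of a free group are related by an automorphism of that group. Applying this to $H_1$, I choose $\phi_1 \in \Aut(H_1)$ with $\phi_1(y_1) = x_1$; applying it to $H_2$, I choose $\phi_2 \in \Aut(H_2)$ with $\phi_2(y_2) = x_2$; applying it to $F$, I choose $\phi \in \Aut(F)$ with $\phi(x_1) = x_2$.

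Now I set $\psi = \phi_2^{-1}\phi\phi_1$. Viewed inside $\Comm(F)$, this is a composition of elements of $\uAut(H_1)$, $\uAut(F)$, and $\uAut(H_2)$, hence lies in $\AComm(F)$. To make sense of it as a virtual isomorphism, one restricts to the finite index subgroup $K = \phi_1^{-1}\bigl(H_1 \cap \phi^{-1}(H_2)\bigr)$ of $F$, on which the composition is literally defined; since $y_1 \in K$ (because $\phi_1(y_1) = x_1 \in H_2$ and $x_1 \in H_1 \cap F$), we can evaluate and chase: $\psi(y_1) = \phi_2^{-1}\phi(x_1) = \phi_2^{-1}(x_2) = y_2$.

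Finally, the relation $\psi y_1 \psi^{-1} = y_2$ in $\Comm(F)$ follows from the general identity $[\psi]\,\iotta_g\,[\psi]^{-1} = \iotta_{\psi(g)}$ for any virtual isomorphism $\psi$ defined at $g$ (cf.\ Lemma~\ref{lem-G-commens-CommG}), combined with our identification of $F$ with $\iotta(F) \subseteq \Comm(F)$. I do not anticipate a real obstacle here: Lemma~\ref{lem-Hall-improved} supplies exactly the simultaneous primitivity required to build both ``halves'' of $\psi$, and everything else is bookkeeping about composing virtual automorphisms on a common finite-index domain.
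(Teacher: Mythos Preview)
Your proof is correct and follows essentially the same approach as the paper's: apply Lemma~\ref{lem-Hall-improved} to each $y_i$, use transitivity of the automorphism group on primitive elements to build $\phi_1,\phi_2,\phi$, and set $\psi=\phi_2^{-1}\phi\phi_1$. One tiny slip: in your parenthetical justification that $y_1\in K$, the relevant fact is $\phi(x_1)=x_2\in H_2$, not ``$x_1\in H_2$''; but the conclusion is correct and the paper's proof omits this bookkeeping entirely.
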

\begin{proof} By Lemma~\ref{lem-Hall-improved}, there exist finite index subgroups $H_1$ and $H_2$ of $F$ and elements $x_i\in H_i$ 
such that $H_i$ contains both $y_i$ and $x_i$, $y_i$ is primitive for $H_i$ and $x_i$ is primitive for both $H_i$ and $F$ for $i=1,2$.
Since primitive elements in a finitely generated free group form a single orbit under the action of the automorphism group, 
there exist $\phi_i \in \Aut(H_i)$, $i=1,2$, and $\phi\in \Aut(F)$ such that $\phi_i$ maps $y_i$ to $x_i$ and $\phi$ maps
$x_1$ to $x_2$.
Thus, $\phi_i\, y_i \,\phi_i^{-1}=x_i$ and $\phi\, x_1\,\phi^{-1}=x_2$ (now viewing $x_i$ and $y_i$ as elements of $\Comm(F)$) 
whence $\psi = \phi_2^{-1}\, \phi\, \phi_1$ satisfies the required equality.
\end{proof}

The following result is an immediate consequence of Lemma~\ref{lem-stab-not-contained-special-aut}(b) applied with $m=2$:

\begin{Proposition}
	\label{AComm=SComm}
	Denote by $\SComm(F)$ the subgroup of $\Comm(F)$ generated by $\SAut(H)$ 
where $H$ ranges over all finite index subgroups of $F$. 
 Then $\SComm(\abs{F})=\AComm(\abs{F}).$
\end{Proposition}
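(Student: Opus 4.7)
The plan is to prove the two inclusions separately, with essentially all of the work already concentrated in Lemma~\ref{lem-stab-not-contained-special-aut}(b). First, since $\SAut(H)\subseteq \Aut(H)$ for every finite index subgroup $H$ of $F$, the inclusion $\SComm(F)\subseteq \AComm(F)$ will follow immediately from the definitions.

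For the reverse inclusion $\AComm(F)\subseteq \SComm(F)$, my aim is to show that $\Aut(K)\subseteq \SComm(F)$ for every finite index subgroup $K$ of $F$, which suffices by the definition of $\AComm(F)$. Fix such a $K$. As a subgroup of $F$, it is itself a free group, and by the Schreier index formula $\rk(K)\geq \rk(F)\geq 2$, so $K$ is non-abelian of finite rank. I may therefore apply Lemma~\ref{lem-stab-not-contained-special-aut}(b) with $F$ replaced by $K$ and $m=2$: choosing any $H\in SCQ(K,2)$, i.e.\ any subgroup of index $2$ in $K$, the lemma yields
$$\Aut(K)\subseteq \la \SAut(K),\SAut(H)\ra.$$
Since $H$ and $K$ both have finite index in $F$, the subgroups $\SAut(K)$ and $\SAut(H)$ are among the generators of $\SComm(F)$ by definition, whence $\Aut(K)\subseteq \SComm(F)$, as required.

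I do not expect any substantive obstacle. The heavy lifting is done by Lemma~\ref{lem-stab-not-contained-special-aut}(b); the only additional point to be verified is the mild observation that any finite index subgroup of $F$ is again a non-abelian free group of finite rank, so that the lemma can be applied to it in place of $F$.
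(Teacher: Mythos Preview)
Your proof is correct and takes essentially the same approach as the paper, which simply states that the result is an immediate consequence of Lemma~\ref{lem-stab-not-contained-special-aut}(b) applied with $m=2$. You have spelled out the details the paper leaves implicit, in particular that one applies the lemma to each finite index subgroup $K$ in place of $F$ (using that $\Comm(K)=\Comm(F)$), but the argument is the same.
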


We are now ready to prove the first part of Theorem~\ref{thmA}. In fact, we will prove a slightly stronger statement:

\begin{Theorem}
\label{thmAextended}
Let $N$ be a non-trivial subgroup of $\Comm(F)$ normalized by $\AComm(F)$. Then $N$ contains $\AComm(F)$. Therefore $\AComm(F)$ is simple and equals $\mon(\Comm(F))$, the monolith of $\Comm(F)$. 
\end{Theorem}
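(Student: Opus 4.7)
The plan is to first find a non-trivial element of $F$ lying in $N$, then use Proposition~\ref{prop-one-conj-class} to upgrade this to $F \subseteq N$, and finally bootstrap to $\SAut(H) \subseteq N$ for every finite index subgroup $H$ of $F$.

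First I would show $N \cap F \neq \{1\}$. Since $\AComm(F)$ normalizes $N$, the normalizer $N_{\Comm(F)}(N)$ contains $\AComm(F)$, which in turn contains $\iota(F)$ because every inner automorphism of $F$ lies in $\Aut(F)$. The subgroup $\iota(F)$ is open in $\Comm(F)$ by Proposition~\ref{prop-topo-germs}, so $N_{\Comm(F)}(N)$ is open. Since the non-abelian free group $F$ has trivial virtual center, Lemma~\ref{lem-Comm-trivialQZ}(\ref{item-Comm-local-normal}) applies and yields a non-trivial element $g \in N \cap F$. By Proposition~\ref{prop-one-conj-class}, $\AComm(F)$ acts transitively by conjugation on $F \setminus \{1\}$; combined with the hypothesis that $N$ is normalized by $\AComm(F)$, this forces $F \subseteq N$.

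Next I would show $\SAut(H) \subseteq N$ for every finite index subgroup $H$ of $F$. Fix such $H$: by the Schreier formula it is non-abelian free of finite rank, and $H \subseteq F \subseteq N$. Choose $m \geq 2$ with $(\rk(H), m) \neq (2,2)$. For any basis $X'$ of $H$ and any $x'_1 \in X'$, set $K = H(X', x'_1, m) \in SCQ(H, m)$. Then $K$ has finite index in $F$, so $\SAut(K) \subseteq \AComm(F)$ normalizes $N$. Since $x'_1 \in H \subseteq N$ and $m$ does not divide $1$, Proposition~\ref{prop-int-implies-saut} applied to $H$ in place of $F$ (with $s=1$) asserts that the subgroup generated by the $\SAut(K)$-conjugates of $x'_1$ contains $\SAut(K)$; hence $\SAut(K) \subseteq N$. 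By Lemma~\ref{lem-description-SCQ}, every subgroup in $SCQ(H, m)$ arises as some $H(X', x'_1, m)$, so $\SAut(K) \subseteq N$ for all $K \in SCQ(H, m)$. Applying Lemma~\ref{lemma-generation-indexp} to $H$ in place of $F$ then gives $\SAut(H) \subseteq N$.

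Combining this with Proposition~\ref{AComm=SComm}, which identifies $\AComm(F)$ with the subgroup generated by all $\SAut(H)$ as $H$ ranges over finite index subgroups of $F$, yields $\AComm(F) \subseteq N$. The corollaries follow at once: any non-trivial normal subgroup of $\AComm(F)$ is in particular normalized by $\AComm(F)$, so by what we just proved it contains $\AComm(F)$, proving simplicity; and since $\AComm(F)$ is a non-trivial subgroup of $\Comm(F)$ normal in $\Comm(F)$ (Lemma~\ref{lemma:Aut_1}(b)) that is contained in every non-trivial normal subgroup of $\Comm(F)$, it coincides with $\mon(\Comm(F))$. The main point requiring care is the second step: one must verify that Proposition~\ref{prop-int-implies-saut} and Lemma~\ref{lemma-generation-indexp}, although stated for $F$, apply equally well to any non-abelian free group of finite rank with $(\rk(H), m) \neq (2,2)$, and that each $\SAut(K)$ involved lies inside $\AComm(F)$ via the canonical embedding $\Aut(K) \hookrightarrow \Comm(K) = \Comm(F)$ so that it does normalize $N$.
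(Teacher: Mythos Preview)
Your proof is correct and follows essentially the same approach as the paper's: intersect $N$ with $F$ via Lemma~\ref{lem-Comm-trivialQZ}, use the single $\AComm(F)$-orbit on $F\setminus\{1\}$ (Proposition~\ref{prop-one-conj-class}) to get $F\subseteq N$, then apply Proposition~\ref{prop-int-implies-saut} and Lemma~\ref{lemma-generation-indexp} at the level of each finite index $H$ to obtain $\SAut(H)\subseteq N$, concluding via Proposition~\ref{AComm=SComm}. The only cosmetic differences are that the paper fixes $m=3$ (which automatically avoids the $(2,2)$ exclusion) and first treats the case $H=F$ before invoking ``the same argument'' for general $H$, whereas you handle arbitrary $H$ directly.
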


\begin{proof}
Recall that $\Comm(F)$ does not depend on $\rk(F)$. By Proposition~\ref{AComm=SComm} $\AComm(F)$ is generated by the subgroups $\SAut(H)$ where $H$ ranges over finite index subgroups of $F$. Below we  prove that $N$ contains $\SAut(F)$. The same argument will show that $N$ contains $\SAut(H)$ for any finite index subgroup $H$ of $F$, and hence contains $\AComm(F)$.

By Lemma \ref{lem-Comm-trivialQZ} (applied to $F$ equipped with the profinite topology), the subgroup $N\cap F$ is non-trivial. By Proposition~\ref{prop-one-conj-class}, all non-trivial elements of $F$ lie in a single conjugacy class in $\AComm(F)$. Since $N$ is normalized by 
$\AComm(F)$, we deduce that $N$ contains 
$F$. Applying Proposition \ref{prop-int-implies-saut} (for instance with $m=3$), we deduce that $N$ contains $\SAut(H)$ for every subgroup $H$ of index $3$ in $F$. Hence $N$ contains $\SAut(F)$ by Lemma~\ref{lemma-generation-indexp}. 

We have shown in particular that $\AComm(F)$ is simple. The equality $\AComm(F) = \mon(\Comm(F))$ also follows since $\AComm(F)$ is normal in $\Comm(F)$ by Lemma~\ref{lemma:Aut_1}. 
\end{proof}

\subsection{Non-simplicity of $\Comm(\absF{})$}

In this subsection we will establish the second part of Theorem~\ref{thmA}. It will be derived as a consequence of the following general theorem which 
establishes a sufficient condition for $\AComm(\Gama)$ to be a proper subgroup of $\Comm(\Gama)$: 

\begin{Theorem}
\label{thm:nonsimple}
Let $\Gama$ be a finitely generated abstract group. Assume that
\begin{itemize}
\item[(a)] $\Gama$ has either trivial virtual center or the unique root property;
\item[(b)]  $\Gama$ has two isomorphic normal finite index subgroups $U$ and $V$
such that the quotients $\Gama/U$ and $\Gama/V$ have distinct (multisets of) composition factors. 
\end{itemize}
Then $\AComm(\Gama)$ is a proper subgroup of $\Comm(\Gama)$
and therefore $\Comm(\Gama)$ is not simple.
\end{Theorem}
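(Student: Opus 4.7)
Under hypothesis~(a), Lemmas~\ref{lem-BEW} and~\ref{lem-unique-root} together imply that each map $\iota_U\colon \Aut(U)\to\Comm(\Gamma)$ is injective; in particular, $\AComm(\Gamma)$ contains $\Aut(\Gamma)$ and is nontrivial.  Since $\Gamma$ is finitely generated, it is h.c.b.\ by Lemma~\ref{lem:hcb}, so Lemma~\ref{lemma:Aut_1}(b) gives that $\AComm(\Gamma)$ is normal in $\Comm(\Gamma)$.  Consequently it suffices to exhibit a single element of $\Comm(\Gamma)$ outside $\AComm(\Gamma)$ to conclude that $\Comm(\Gamma)$ is not simple, and the natural candidate is the element $[f]\in\Comm(\Gamma)$ provided by the isomorphism $f\colon U\to V$ of hypothesis~(b).

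My plan is to construct a homomorphism $\lambda\colon\Comm(\Gamma)\to M$, where $M$ is the free abelian group on isomorphism classes of finite simple groups, satisfying $\lambda([f])\ne 0$ while $\lambda$ kills all of $\AComm(\Gamma)$.  Writing $[Q]\in M$ for the multiset of composition factors of a finite group $Q$, the natural guess, for a germ $\phi$ admitting a representative $g\colon C\to D$ between two $\Gamma$-normal finite index subgroups $C,D\triangleleft\Gamma$, is
\[
\lambda(\phi)\;:=\;[\Gamma/D]-[\Gamma/C]\in M.
\]
On $[f]$ itself (which has the normal representative $f\colon U\to V$) this immediately yields $[\Gamma/V]-[\Gamma/U]\ne 0$ thanks to hypothesis~(b), and on any automorphism germ $\alpha\colon H\to H$ with $H\triangleleft\Gamma$ it gives $0$.

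The main obstacle is extending this recipe to a well-defined homomorphism on all of $\Comm(\Gamma)$ that still vanishes on every $\alpha\in\Aut(H)$, including the case where $H$ is not normal in $\Gamma$: the subtlety is that such an $\alpha$ can send $\Gamma$-normal subgroups of $H$ to non-$\Gamma$-normal subgroups, so a general element of $\AComm(\Gamma)$ need not admit a representative between two $\Gamma$-normal subgroups.  The plan to circumvent this is to associate to each germ a canonical value in $M$ by iteratively replacing the domain and codomain with their $\Gamma$-normal cores (pulling back and pushing forward through $g$ and $g^{-1}$ at each step), and then arguing --- using hypothesis~(a), which pins down representatives by Lemma~\ref{lem-BEW} or Lemma~\ref{lem-unique-root}, together with the Jordan--H\"older theorem applied to the ``error terms'' introduced by the core passages --- that the resulting formal difference stabilizes, is independent of the initial representative, is additive under composition, and vanishes on every generator $\alpha\in\Aut(H)$ of $\AComm(\Gamma)$.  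Verifying this last vanishing when $H$ is not normal in $\Gamma$ --- where the cancellation should reflect a compensation between composition factors of $\Gamma/H$ and those contributed by the error terms --- is where I expect the main technical difficulty to lie.
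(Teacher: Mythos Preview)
Your intuition is exactly right --- the invariant that distinguishes $[f]$ from $\AComm(\Gamma)$ is the formal difference of composition-factor multisets --- and your candidate witness $[f]$ is the one the paper uses.  However, the specific mechanism you propose (a homomorphism $\lambda\colon\Comm(\Gamma)\to M$) is stronger than what is needed and runs into precisely the difficulty you flag: a generic germ has no representative between $\Gamma$-normal subgroups, and your iterative core-taking procedure is not obviously well-defined or convergent.  The paper sidesteps this entirely with two simplifications.

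First, rather than a homomorphism, the paper defines only a \emph{subgroup} $\Comm_{SN}(\Gamma)\subseteq\Comm(\Gamma)$: the set of germs admitting a representative $\phi\colon A\to B$ with $A,B$ \emph{subnormal} in $\Gamma$ and $CF(\Gamma/A)=CF(\Gamma/B)$.  A direct Jordan--H\"older computation (essentially the additivity calculation you would need for $\lambda$) shows this set is closed under composition; no well-definedness issue arises because one never asserts a value of $\lambda$, only membership.  Second, the passage from normal to \emph{subnormal} subgroups is what makes the group law work: if $A,B,C,D$ are subnormal then so is $B\cap C$, and $\phi^{-1}(B\cap C)$ is subnormal in $A$ (hence in $\Gamma$) automatically.

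The case you worried about --- $\Aut(H)$ with $H$ not normal --- is then handled by a one-line trick: choose a finite-index subgroup $B\subseteq H$ normal in $\Gamma$, then a finite-index subgroup $C\subseteq B$ characteristic in $H$ (possible since $\Gamma$, hence $H$, is finitely generated).  Then $C$ is subnormal in $\Gamma$ and $\Aut(H)\subseteq\Aut(C)\subseteq\Comm_{SN}(\Gamma)$.  Finally, showing $[f]\notin\Comm_{SN}(\Gamma)$ is exactly the argument you sketch, using hypothesis~(a) via Lemma~\ref{lem-BEW} or~\ref{lem-unique-root} to pin down representatives.  So your plan is morally correct; it just aims for more than the theorem requires, and the paper's weaker target (a subgroup rather than a homomorphism, subnormal rather than normal) dissolves the technical obstacle you identified.
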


\begin{proof}
	Non-simplicity of $\Comm(\Gama)$ follows from the first assertion of the theorem and Lemma~\ref{lemma:Aut_1}(b), so we just need to show that $\AComm(\Gama)\neq \Comm(\Gama)$.

We start by introducing some terminology. Given a group $\Lambda$ and a finite index subnormal
subgroup $H$ of $\Lambda$, denote by $CF(\Lambda/H)$ the multiset of (isomorphism classes of) the composition factors
of any composition series which starts with $\Lambda$ and ends with $H$. This multiset is independent of the choice of such a series, which can be shown by applying the Jordan-H\"older theorem to the group $\Lambda/Core_{\Lambda}(H)$
where $Core_{\Lambda}(H)=\bigcap_{g\in \Lambda}H^g$ is the largest normal subgroup of $\Lambda$ contained in $H$.

Define $\Comm_{SN}(\Gama)$ to be the set of elements of $\Comm(\Gama)$ which can be represented by a virtual
isomorphism $\phi:A\to B$ where $A$ and $B$ are both subnormal in $\Gama$ and $CF(\Gama/A)=CF(\Gama/B)$. We will prove that
\begin{itemize}
\item[(i)] $\Comm_{SN}(\Gama)$ is a subgroup;
\item[(ii)] $\Comm_{SN}(\Gama)$ contains $\AComm(\Gama)$;
\item[(iii)] $\Comm_{SN}(\Gama)\neq \Comm(\Gama)$.
\end{itemize}
Theorem~\ref{thm:nonsimple} follows directly from (ii) and (iii).
\skv
We start with the proof of (i). Clearly, we only need to show that $\Comm_{SN}(\Gama)$ is closed under the group
operation in $\Comm(\Gama)$. So take any two virtual isomorphisms $\phi:A\to B$ and $\psi:C\to D$
where $A,B,C$ and $D$ are subnormal in $\Gama$, $CF(\Gama/A)=CF(\Gama/B)$ and $CF(\Gama/C)=CF(\Gama/D)$, and let 
$[\phi]$ and $[\psi]$ be the corresponding elements of $\Comm(\Gama)$. By definition of the group operation,
$[\psi][\phi]=[\theta]$ where $\theta:\phi^{-1} (B\cap C)\to \psi(B\cap C)$ is the composition of
$\psi$ and $\phi$ restricted to $\phi^{-1} (B\cap C)$.

Since $C$ is subnormal in $\Gama$, $B\cap C$ is subnormal in $B$ (and hence in $\Gama$). Moreover, since $\phi^{-1}$ (resp. $\psi$) is an isomorphism from $B$ to $A$ (resp. from $C$ to $D$) sending $B\cap C$ to $\phi^{-1} (B\cap C)$
(resp. $\psi(B\cap C)$), it follows that $\phi^{-1} (B\cap C)$ is subnormal in $A$, $\psi(B\cap C)$
is subnormal in $D$ (so both are subnormal in $\Gama$), $CF(A/(\phi^{-1} (B\cap C)))=CF(B/B\cap C)$ and $CF(C/B\cap C)=CF(D/\psi(B\cap C))$.
Therefore,
\begin{multline*}
CF(\Gama/(\phi^{-1} (B\cap C)))=CF(\Gama/A)\sqcup CF(A/(\phi^{-1} (B\cap C)))\\
=CF(\Gama/B)\sqcup CF(B/B\cap C)=CF(\Gama/B\cap C)
=CF(\Gama/C)\sqcup CF(C/B\cap C)\\
=
CF(\Gama/D)\sqcup CF(D/\psi(B\cap C))=CF(\Gama/\psi(B\cap C)).
\end{multline*}
Thus, $\theta:\phi^{-1} (B\cap C)\to \psi(B\cap C)$ represents an element of $\Comm_{SN}(\Gama)$, and we proved (i).
\skv
(ii) Since $\Comm_{SN}(\Gama)$ is a subgroup, we just need to check that it contains $\Aut(A)$ for 
every finite index subgroup $A$ of $\Gama$. This is automatic if $A$ is subnormal. In the general case,
choose a finite index subgroup $B$ contained in $A$ which is normal in $\Gama$ and let $C$ be a finite index subgroup of $B$ which is characteristic in $A$ -- such $C$ exists since $\Gama$ (and hence $A$) is finitely generated. Then $C$ is subnormal in $\Gama$ and $\Aut(A)\subseteq \Aut(C)\subseteq \Comm_{SN}(\Gama)$, as desired.
\skv
(iii) Let $U$ and $V$ be isomorphic finite index normal subgroups of $\Gama$ such that
$CF(\Gama/U)\neq CF(\Gama/V)$, and let $\phi:U\to V$ be any isomorphism. We claim that its class $[\phi]$ does not lie in $\Comm_{SN}(\Gama)$. 

Indeed, suppose that there exist subnormal finite index subgroups $C$ and $D$ of $\Gama$ with $CF(\Gama/C)=CF(\Gama/D)$
and an isomorphism $\psi:C\to D$ such that $[\phi]=[\psi]$. Let $C_1$ be a normal subgroup of $C$ contained in 
$U \cap \psi^{-1}(D\cap V)$ and $D_1 = \psi(C_1)$. Then $D_1\subseteq V$, $D_1$ is normal in $D$ and $C/C_1\cong D/D_1$, so
$CF(C/C_1)=CF(D/D_1)$ and therefore $CF(\Gama/C_1)=CF(\Gama/D_1)$.

Note that $\phi$ and $\psi$ are equivalent virtual automorphisms of $\Gama$ which are both defined on $C_1$. By Lemma \ref{lem-BEW} or Lemma \ref{lem-unique-root},
hypothesis (a) in Theorem~\ref{thm:nonsimple} ensures that $\phi_{|C_1}=\psi_{|C_1}$. In particular,
$\phi(C_1)=\psi(C_1)=D_1$ whence $CF(U/C_1)=CF(\phi(U)/\phi(C_1))=CF(V/D_1)$.
On the other hand, $CF(\Gama/U)\neq CF(\Gama/V)$ (by the choice of $U$ and $V$), so
$CF(\Gama/C_1)=CF(U/C_1)\sqcup CF(\Gama/U)\neq CF(V/D_1)\sqcup CF(\Gama/V)=CF(\Gama/D_1)$, contrary to our earlier conclusion.
\end{proof} 

Given a group $\Gama$ satisfying the hypotheses of Theorem~\ref{thm:nonsimple}, a natural problem is to understand the quotient $\Comm(\Gama)/\AComm(\Gama)$.
Theorem~\ref{thm:CommACommquotient} below gives a sufficient condition for this quotient to be infinite.

\begin{Theorem}
\label{thm:CommACommquotient}Let $\Gama$ be a finitely generated abstract group. Assume that
\begin{itemize}
\item[(a)] $\Gama$ has either trivial virtual center or the unique root property;
\item[(b)] for every $n\in\dbN$ there exist pairwise isomorphic finite index normal subgroups $U_1,\ldots, U_n$ of $\Gama$ such that
the quotients $\Gama/U_i$ and $\Gama/U_j$ have distinct composition factors for all $i\neq j$. 
\end{itemize}
Then the quotient
$\Comm(\Gama)/\AComm(\Gama)$ is infinite.
\end{Theorem}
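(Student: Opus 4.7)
The plan is to reuse the subgroup $\Comm_{SN}(\Gama)$ introduced in the proof of Theorem \ref{thm:nonsimple}. Recall that $\Comm_{SN}(\Gama)$ consists of those classes $[\phi]\in \Comm(\Gama)$ admitting a representative $\phi : A \to B$ where $A$ and $B$ are subnormal finite index subgroups of $\Gama$ with $CF(\Gama/A) = CF(\Gama/B)$. It was shown there that $\Comm_{SN}(\Gama)$ is a subgroup of $\Comm(\Gama)$ and that it contains $\AComm(\Gama)$. Since $\AComm(\Gama)$ is normal in $\Comm(\Gama)$ by Lemma \ref{lemma:Aut_1}(b), it suffices to exhibit, for each $n$, a set of $n$ elements of $\Comm(\Gama)$ lying in pairwise distinct left cosets of $\Comm_{SN}(\Gama)$; this forces $[\Comm(\Gama):\Comm_{SN}(\Gama)] \geq n$ for every $n$, and hence also $[\Comm(\Gama):\AComm(\Gama)] = \infty$.

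Fix $n$ and, using hypothesis (b), pick pairwise isomorphic finite index normal subgroups $U_1,\ldots,U_n$ of $\Gama$ such that the multisets $CF(\Gama/U_i)$ are pairwise distinct. Choose isomorphisms $\phi_i : U_1 \to U_i$ (with $\phi_1 = \id$) and set $g_i := [\phi_i] \in \Comm(\Gama)$. The claim is that $g_i$ and $g_j$ lie in distinct left cosets of $\Comm_{SN}(\Gama)$ whenever $i \neq j$.

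Suppose for contradiction that $g_i^{-1} g_j \in \Comm_{SN}(\Gama)$ for some $i \neq j$. The composition $\psi := \phi_i^{-1} \circ \phi_j : U_j \to U_i$ is an isomorphism between normal (hence subnormal) subgroups of $\Gama$ representing $g_i^{-1} g_j$, but by assumption it equals $[\tilde\psi]$ for some $\tilde\psi : C \to D$ with $C,D$ subnormal in $\Gama$ and $CF(\Gama/C) = CF(\Gama/D)$. Following part (iii) of the proof of Theorem \ref{thm:nonsimple} verbatim, choose a finite index normal subgroup $C_1$ of $C$ contained in $U_j \cap \tilde\psi^{-1}(D \cap U_i)$ and set $D_1 := \tilde\psi(C_1)$. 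Hypothesis (a) together with Lemma \ref{lem-BEW} or Lemma \ref{lem-unique-root} gives $\tilde\psi_{|C_1} = \psi_{|C_1}$, so $D_1 = \psi(C_1)$ and therefore $CF(U_j/C_1) = CF(U_i/D_1)$. Combining this with $CF(\Gama/C_1) = CF(\Gama/D_1)$ and the standard factorizations
\[
CF(\Gama/C_1) = CF(\Gama/U_j) \sqcup CF(U_j/C_1),\qquad CF(\Gama/D_1) = CF(\Gama/U_i) \sqcup CF(U_i/D_1)
\]
forces $CF(\Gama/U_i) = CF(\Gama/U_j)$, contradicting the choice of the $U_i$.

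There is no substantial new obstacle beyond parameterizing the argument of Theorem \ref{thm:nonsimple} by the family $U_1,\ldots,U_n$ and carefully tracking cosets; all the essential tools (the subgroup $\Comm_{SN}(\Gama)$, the containment $\AComm(\Gama)\subseteq \Comm_{SN}(\Gama)$, and the rigidity of virtual isomorphisms from Lemmas \ref{lem-BEW} and \ref{lem-unique-root}) are already in place. The main point to check carefully is that the elements $g_i^{-1}g_j$ for $i\neq j$ genuinely force a composition-factor collision between $\Gama/U_i$ and $\Gama/U_j$, which is exactly what the computation above delivers.
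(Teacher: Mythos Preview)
Your proof is correct and follows essentially the same approach as the paper: both pick isomorphisms $\phi_i:U_1\to U_i$ and show that the classes $[\phi_i]$ represent distinct cosets of $\AComm(\Gama)$ by invoking the argument from part (iii) of the proof of Theorem~\ref{thm:nonsimple} to see that $[\phi_i^{-1}\phi_j]\notin\Comm_{SN}(\Gama)\supseteq\AComm(\Gama)$ when $i\neq j$. The paper simply cites that earlier proof rather than reproducing the composition-factor computation, but the content is the same.
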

\begin{proof}
For $1\leq i\leq n$ choose an isomorphism $\phi_i:U_1\to U_i$. Then for any $i\neq j$ the map
$\phi_j\phi_i^{-1}$ is an isomorphism from $U_i$ to $U_j$, and hence by the proof of Theorem~\ref{thm:nonsimple}
$[\phi_j\phi_i^{-1}]\not\in \AComm(\Gama)$. Thus, the commensurations $[\phi_1],\ldots, [\phi_n]$
represent distinct elements of $\Comm(\Gama)/\AComm(\Gama)$, so $|\Comm(\Gama)/\AComm(\Gama)|\geq n$, and since
$n$ is arbitrary, $\Comm(\Gama)/\AComm(\Gama)$ is infinite.
\end{proof}

The hypotheses of Theorem~\ref{thm:CommACommquotient} are easily seen to be satisfied if $\Gama$ is a non-abelian free group or 
a non-abelian (orientable) surface group. Hypothesis (a) is clear.
In both cases subgroups of the same finite index are isomorphic, so to check (b) we just need to construct
normal subgroups $U_1,\ldots, U_n$ of $\Gama$ of the same finite index such that
the quotients $\Gama/U_i$ and $\Gama/U_j$ have distinct composition factors for all $i\neq j$. 

Suppose first that $\Gama$ is free and non-abelian. Choose $n$ pairwise non-isomorphic
finite simple groups $S_1,\ldots, S_n$ and then choose cyclic groups $C_1,\ldots, C_n$ such that $|S_i|\cdot |C_i|$ is the same for all $i$.
Each $S_i$ is $2$-generated. A standard argument shows that each $P_i=S_i\times C_i$ is also $2$-generated, so we can find
epimorphisms $\phi_i: \Gama\to P_i$, and then the subgroups $U_i=\Ker(\phi_i)$ have the desired properties.

Now let $\Gama=S_g$, the orientable surface group of genus $g>1$, and choose any epimorphism from $S_g$ to $F_g$, a free group of rank $g$. 
We just proved that there exist subgroups $V_1,\ldots, V_n$ of $F_g$ of the same finite index, say, $c_n$, and such that
$F_g/V_i$ and $F_g/V_j$ have distinct composition factors for all $i\neq j$. Now if $U_i$ is the preimage of $V_i$ in $S_g$,
then each $U_i$ has index $c_n$ in $S_g$ and $S_g/U_i\cong F_g/V_i$, so $U_1,\ldots, U_n$ satisfy (b).

\begin{Corollary}
	\label{commF:notsimple}
	Let $\Gama$ be non-abelian free group of finite rank or a non-abelian orientable surface group. Then $\Comm(\Gama)$ is not virtually simple.
\end{Corollary}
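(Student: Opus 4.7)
The plan is to reduce Corollary~\ref{commF:notsimple} to Theorem~\ref{thm:CommACommquotient} and then pass from infinite index of $\AComm(\Gama)$ in $\Comm(\Gama)$ to failure of virtual simplicity by a short dichotomy argument. The paragraph immediately preceding the corollary has already verified both hypotheses of Theorem~\ref{thm:CommACommquotient} for $\Gama$ a non-abelian free group of finite rank and for $\Gama = S_g$ with $g \ge 2$: condition (a) follows from the fact that these groups are torsion-free with cyclic centralizers of non-trivial elements, so in particular $\VZ(\Gama)=\{1\}$, while condition (b) is produced by pulling back suitable epimorphisms onto direct products $S_i \times C_i$ with varying simple $S_i$ and cyclic $C_i$ (chosen so that $|S_i| \cdot |C_i|$ is constant). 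So Theorem~\ref{thm:CommACommquotient} gives that $[\Comm(\Gama) : \AComm(\Gama)] = \infty$.

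Next I would argue by contradiction. Suppose $\Comm(\Gama)$ is virtually simple with finite-index simple subgroup $N$. Because $\Gama$ is infinite with trivial virtual center, the canonical map $\iotta : \Gama \to \Comm(\Gama)$ is injective and $\Comm(\Gama)$ is infinite; hence its finite-index subgroup $N$ is infinite, and in particular non-trivial. By Lemma~\ref{lemma:Aut_1}(b), $\AComm(\Gama)$ is normal in $\Comm(\Gama)$, so $N \cap \AComm(\Gama)$ is a normal subgroup of $N$, and by simplicity it is either $\{1\}$ or all of $N$. The key observation is that $\iotta(\Gama) \subseteq \AComm(\Gama)$: each $\iotta(g)$ is represented by an inner automorphism of $\Gama$, hence lies in $\uAut(\Gama) \subseteq \AComm(\Gama)$ by definition of the latter. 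Since $N$ has finite index in $\Comm(\Gama)$, the intersection $N \cap \iotta(\Gama)$ has finite index in the infinite group $\iotta(\Gama)$ and is therefore non-trivial. So $N \cap \AComm(\Gama) \neq \{1\}$, forcing $N \subseteq \AComm(\Gama)$; but then $\AComm(\Gama)$ itself has finite index in $\Comm(\Gama)$, contradicting the conclusion of the first paragraph.

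I do not anticipate any real obstacle: the substantive work has been done in Theorems~\ref{thm:nonsimple} and~\ref{thm:CommACommquotient} and in the already-completed verification of their hypotheses for free and surface groups. The only delicate point in the wrap-up is the observation that $\iotta(\Gama)$ sits inside $\AComm(\Gama)$ via the inner-automorphism interpretation, which is precisely what pins the simplicity dichotomy for $N \cap \AComm(\Gama)$ on the side that contradicts Theorem~\ref{thm:CommACommquotient}.
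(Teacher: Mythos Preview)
Your proposal is correct and fills in exactly the deduction the paper leaves implicit: the paper states Corollary~\ref{commF:notsimple} immediately after verifying the hypotheses of Theorem~\ref{thm:CommACommquotient} for free and surface groups, treating the passage from ``$\AComm(\Gama)$ has infinite index'' to ``$\Comm(\Gama)$ is not virtually simple'' as routine. Your dichotomy argument---using normality of $\AComm(\Gama)$ from Lemma~\ref{lemma:Aut_1}(b) together with the observation $\iotta(\Gama)\subseteq\AComm(\Gama)$ to force any finite-index simple subgroup into $\AComm(\Gama)$---is precisely the natural way to make this step explicit, and it is sound.
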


\section{A  family of finitely generated simple groups} \label{sec-finite-generated}
		
		Let $F$ be a non-abelian free group of finite rank. Recall that for $m \geq 2$ we denote by $SCQ(F,m)$ the set of all normal subgroups $H$ of $F$
	such that $F/H$ is cyclic of order $m$.

	\begin{Definition} \label{defi-An-Sn}
For $ m\geq 2$, let $A_{m}(F)$ be the subgroup of $\mathrm{Comm}(F)$ generated by the subgroups $\Aut(H)$ where $H$ ranges over $SCQ(F,m)$, and let $S_{m}(F)$ be the subgroup generated by the subgroups $\SAut(H)$ where $H$ ranges over $SCQ(F,m)$. 
	\end{Definition}

Clearly $S_{m}(F) \subseteq A_{m}(F)$. Note that the groups $A_m(F)$ and $S_m(F)$ are finitely generated since $SCQ(F,m)$ is finite and $\Aut(H)$ and $\SAut(H)$ are finitely generated for
each $H$.

\begin{Lemma} \label{lem-properties-Am(F)}
Let $m \geq 2$. The following hold: \begin{enumerate}
	\item \label{item-SAutF-in-Sm} $\SAut(F) \subseteq S_{m}(F)$  and $\Aut(F) \subseteq A_{m}(F)$. 
	\item \label{item--Sm-gen} For every $H \in SCQ(F,m)$, we have $S_{m}(F) =  \left\langle \SAut(F), \SAut(H) \right\rangle $ and  $A_m(F) = \left\langle \Aut(F), \Aut(H) \right\rangle $. 
	\item \label{item--Sm-index-2}  $S_{m}(F)$ has index at most $2$ in $A_m(F)$.
\end{enumerate} 
\end{Lemma}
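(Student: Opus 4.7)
The plan is to tackle the three parts in order; Part~(3) is the main point.

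Part~(1) is immediate from Lemma~\ref{lemma-generation-indexp}, which directly states that $\Aut(F)$ (resp.\ $\SAut(F)$) lies in the subgroup of $\Comm(F)$ generated by the $\Aut(H)$ (resp.\ $\SAut(H)$) as $H$ ranges over $SCQ(F,m)$.

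For Part~(2), the containment $\langle \SAut(F), \SAut(H)\rangle \subseteq S_m(F)$ follows from Part~(1) together with the definition of $S_m(F)$. For the reverse containment, take any $H' \in SCQ(F,m)$. By Lemma~\ref{lem-description-SCQ}, $\SAut(F)$ acts transitively on $SCQ(F,m)$, so there is $\phi \in \SAut(F)$ with $\phi(H) = H'$. Since $\phi$ induces an isomorphism $H^{\mathrm{ab}} \to (H')^{\mathrm{ab}}$ and the determinant is invariant under conjugation by an isomorphism, conjugation by $\phi$ inside $\Comm(F)$ carries $\SAut(H)$ onto $\SAut(H')$. Hence $\SAut(H') \subseteq \langle \SAut(F), \SAut(H)\rangle$, and varying $H'$ yields $S_m(F) \subseteq \langle \SAut(F), \SAut(H)\rangle$. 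The same reasoning with $\Aut$ in place of $\SAut$ gives the analogous identity for $A_m(F)$.

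For Part~(3), the key step is to show $A_m(F) = \Aut(F) \cdot S_m(F)$. Applying Lemma~\ref{lem-stab-not-contained-special-aut}(a) to each $H \in SCQ(F,m)$ gives $\Aut(H) \subseteq \langle \SAut(H), \Aut(F)\rangle \subseteq \langle S_m(F), \Aut(F)\rangle$, so $A_m(F) = \langle \Aut(F), S_m(F)\rangle$. To upgrade this to a product decomposition, I would show that $\Aut(F)$ normalizes $S_m(F)$: any $\alpha \in \Aut(F)$ permutes $SCQ(F,m)$ (the set of normal subgroups of $F$ with quotient $\dbZ/m\dbZ$), and the abelianization calculation from Part~(2) gives $\alpha\, \SAut(H)\, \alpha^{-1} = \SAut(\alpha(H)) \subseteq S_m(F)$ for every $H \in SCQ(F,m)$. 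It follows that $A_m(F) = \Aut(F) \cdot S_m(F)$, and the second isomorphism theorem together with $\SAut(F) \subseteq S_m(F)$ yields
\[
A_m(F)/S_m(F) \;\cong\; \Aut(F)/(\Aut(F) \cap S_m(F)),
\]
which is a quotient of $\Aut(F)/\SAut(F) \cong \dbZ/2\dbZ$.

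The main conceptual obstacle this approach sidesteps is that the determinants on different finite-index subgroups need not agree on their common stabilizer in $\Aut(F)$ (by Lemma~\ref{lem-stab-not-contained-special-aut}), so no naive ``determinant'' character on $A_m(F)$ is available; rewriting $A_m(F)$ as $\Aut(F) \cdot S_m(F)$ first bypasses the need to define a global sign.
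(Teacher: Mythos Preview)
Your proof is correct and follows essentially the same approach as the paper: Part~(1) via Lemma~\ref{lemma-generation-indexp}, Part~(2) via transitivity of $\SAut(F)$ on $SCQ(F,m)$ from Lemma~\ref{lem-description-SCQ}, and Part~(3) by combining Lemma~\ref{lem-stab-not-contained-special-aut}(a) with the observation that $\Aut(F)$ permutes the $\SAut(H)$ and hence normalizes $S_m(F)$, yielding $A_m(F)=\Aut(F)\cdot S_m(F)$ and the index bound via $\SAut(F)\subseteq S_m(F)$.
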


\begin{proof}
(\ref{item-SAutF-in-Sm}) is Lemma~\ref{lemma-generation-indexp}. (\ref{item--Sm-gen}) follows from (\ref{item-SAutF-in-Sm}) and the fact that $\SAut(F)$ acts transitively on $SCQ(F,m)$ (by Lemma~\ref{lem-description-SCQ}). Let us prove (\ref{item--Sm-index-2}). Since $\Aut(F)$ permutes the subgroups $\SAut(H)$ where $H$ ranges over $SCQ(F,m)$, $\Aut(F)$ normalizes $S_m(F)$. 
Lemma~\ref{lem-stab-not-contained-special-aut}(a) implies that $A_m(F) = \left\langle \Aut(F), S_m(F) \right\rangle $.
Hence $S_m(F)$ is normal in $A_m(F)$ and we can write $A_m(F) = \Aut(F) S_m(F)$. Since $S_m(F)$ already contains $\SAut(F)$,
it follows that $[A_m(F):S_m(F)]\leq [\Aut(F):\SAut(F)]=2$.
\end{proof}

\begin{Remark}
	 Lemma~\ref{lem-stab-not-contained-special-aut}(b) implies that	$S_{m}(F) = A_{m}(F)$ if either $m=2$, or $m \equiv 3 \mod 4$ and 
$d=\rk(F)$ is even. We do not know whether there exist values of $(d,m)$ such that $S_{m}(F)$ is a subgroup of $A_{m}(F)$ of index exactly $2$, i.e.\ such that $S_{m}(F)$ is a proper subgroup of $A_{m}(F)$.
\end{Remark}

\begin{Lemma} \label{lem-conjugate-power-SmF}
Let $m \geq 2$ and $x$ a primitive element of $F$. Then $x$ is conjugate to $x^m$ in $S_{m}(F)$. 
\end{Lemma}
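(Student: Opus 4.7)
The plan is to realize $\psi$ as a two-step composition $\psi = \phi_2^{-1} \phi_1$, where $\phi_1 \in \SAut(F)$ moves $x$ to an intermediate element $y$ that is primitive both in $F$ and in a suitable $H \in SCQ(F,m)$, and $\phi_2 \in \SAut(H)$ moves $x^m$ to the same $y$. Concretely, I would extend $x$ to a basis $X = \{x, x_2, \ldots, x_d\}$ of $F$, put $y := x_2$, and let $H := F(X, x, m) \in SCQ(F,m)$. By Lemma~\ref{lem:Schreier}(a), the set
\[
Y = \{x^m\} \cup \{x^j x_i x^{-j} : 2 \leq i \leq d,\ 0 \leq j \leq m-1\}
\]
is a free basis of $H$; both $x^m$ and $y = x_2$ lie in $Y$, so $y$ is primitive in $H$, while it is primitive in $F$ since $y \in X$.

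Next I would invoke the standard fact that for any non-abelian free group $\apf{F}$ of finite rank, $\SAut(\apf{F})$ acts transitively on primitive elements: given primitives $u, v$, extend each to an ordered basis and take the unique $\alpha \in \Aut(\apf{F})$ sending one basis to the other; if $\det(\alpha) = -1$, precompose with the automorphism that inverts a single basis element distinct from $u$ to obtain an element of $\SAut(\apf{F})$ with the same value on $u$. Applied to $F$, this yields $\phi_1 \in \SAut(F)$ with $\phi_1(x) = y$; applied to $H$ (whose rank $1 + m(d-1)$ is at least $3$), it yields $\phi_2 \in \SAut(H)$ with $\phi_2(x^m) = y$.

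Finally, set $\psi := \phi_2^{-1} \phi_1$. We have $\psi \in S_m(F)$ because $\SAut(F) \subseteq S_m(F)$ by Lemma~\ref{lem-properties-Am(F)}(\ref{item-SAutF-in-Sm}) and $\SAut(H) \subseteq S_m(F)$ by definition of $S_m(F)$. Under the identification $F \hookrightarrow \Comm(F)$, conjugation by the class of a virtual automorphism $\phi$ sends any $g$ in its domain to $\phi(g)$, so two applications give
\[
\psi\, x\, \psi^{-1} = \phi_2^{-1}\bigl(\phi_1(x)\bigr) = \phi_2^{-1}(y) = x^m,
\]
as required. The main point requiring care is to bridge between $x$ (primitive in $F$, not in $H$) and $x^m$ (primitive in $H$, not in $F$); the choice of intermediate element $y \in X \setminus \{x\}$ works precisely because Lemma~\ref{lem:Schreier}(a) furnishes a basis of $H$ in which both $y$ and $x^m$ appear.
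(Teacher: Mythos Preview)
Your proof is correct and follows essentially the same approach as the paper's: extend $x$ to a basis $X$, use the intermediate element $x_2$ which is primitive in both $F$ and $H=F(X,x,m)$ by Lemma~\ref{lem:Schreier}(a), then conjugate $x$ to $x_2$ via $\SAut(F)$ and $x_2$ to $x^m$ via $\SAut(H)$. You have simply spelled out more of the routine details (transitivity of $\SAut$ on primitives, the rank bound for $H$) that the paper leaves implicit.
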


\begin{proof}
	Let	$X = \left\lbrace x_1,\ldots,x_d \right\rbrace $ be a basis of $F$ with $x=x_1$. The elements $x_1$ and $x_2$ are conjugate in $\SAut(F)$, and hence in $S_{m}(F)$ since $\SAut(F) \subseteq S_{m}(F)$ (Lemma \ref{lem-properties-Am(F)}). On the other hand $ \left\lbrace x_2, x_1^m \right\rbrace$ is a subset of a basis of $H = F(X,x_1,m)$, so one can conjugate $x_2$ to $ x_1^m$ in $\SAut(H)$.
\end{proof}

\begin{Proposition} \label{prop-SmF-fi-F}
Let $m \geq 2$, and let $d=\rk(F)$. Suppose $(d,m) \neq (2,2)$. Then the group $S_{m}(F)$ has no proper finite index subgroup. More generally, if $N$ is a normal subgroup of $S_{m}(F)$ such that $N$ contains a finite index subgroup of $F$, then $N = S_{m}(F)$. 
\end{Proposition}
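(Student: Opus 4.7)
Fix a basis $X=\{x_1,\dots,x_d\}$ of $F$, set $x=x_1$, and let $H=F(X,x,m)\in SCQ(F,m)$. The plan is to show that $\SAut(H)\subseteq N$; once this is achieved, the conclusion follows quickly, since $\SAut(F)\subseteq S_m(F)$ normalizes $N$ (as $N\trianglelefteq S_m(F)$) and acts transitively on $SCQ(F,m)$ by Lemma~\ref{lem-description-SCQ}, so for each $H'\in SCQ(F,m)$ one may write $H'=\phi(H)$ with $\phi\in \SAut(F)$ and obtain $\SAut(H')=\phi\,\SAut(H)\,\phi^{-1}\subseteq \phi N\phi^{-1}=N$. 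Taking the union over $SCQ(F,m)$ then yields $S_m(F)\subseteq N$.

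To get $\SAut(H)\subseteq N$, the natural tool is Proposition~\ref{prop-int-implies-saut}: under the hypothesis $(d,m)\neq(2,2)$, if some $x^s$ with $m\nmid s$ lies in $N$, then the group generated by the $\SAut(H)$-conjugates of $x^s$ contains $\SAut(H)$, and this conjugate set lies in $N$ because $N$ is normal in $S_m(F)\supseteq \SAut(H)$. So the task reduces to producing an exponent $s\geq 1$ with $m\nmid s$ and $x^s\in N$. The set $\mathcal S=\{t\geq 1:x^t\in N\}$ is non-empty because $N$ contains a finite-index subgroup of $F$ (which must meet $\langle x\rangle$ non-trivially), so let $s:=\min \mathcal S$.

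The main step, and the only place where anything subtle happens, is checking that $m\nmid s$. Here we exploit the asymmetry given by Lemma~\ref{lem-conjugate-power-SmF}: there exists $g\in S_m(F)$ with $gxg^{-1}=x^m$ inside $\Comm(F)$, hence $g^{-1}x^{m s'}g=x^{s'}$ for every integer $s'$. If $m\mid s$ we could write $s=ms'$ with $s'<s$ and obtain
\[
x^{s'}=g^{-1}x^{s}g\in g^{-1}Ng=N,
\]
contradicting the minimality of $s$. Therefore $m\nmid s$, Proposition~\ref{prop-int-implies-saut} applies, and the argument closes as in the first paragraph.

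The only delicate point in this plan is precisely the last one: one must know that $N$ is closed under conjugation by the virtual automorphism $g$ (and its inverse), not merely by automorphisms of $F$ itself. This is exactly what the hypothesis ``$N$ is normal in $S_m(F)$'' provides, together with the fact that $g\in S_m(F)$, which is what Lemma~\ref{lem-conjugate-power-SmF} was designed to guarantee. Apart from this, every step is a direct application of results already established.
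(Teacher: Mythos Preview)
Your proof is correct and follows essentially the same strategy as the paper's: use Lemma~\ref{lem-conjugate-power-SmF} to produce a power $x^s\in N$ with $m\nmid s$, then invoke Proposition~\ref{prop-int-implies-saut} to get $\SAut(H)\subseteq N$, and finally pass to all of $SCQ(F,m)$. The only cosmetic differences are that the paper factors an arbitrary $n$ with $x^n\in N$ as $n=m^k s$ and conjugates directly by $c^{-k}$ (rather than arguing by minimality), and it reruns the argument for each $H\in SCQ(F,m)$ instead of appealing to the transitive $\SAut(F)$-action; both variants are equivalent.
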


\begin{proof}
Since every finite index subgroup contains a normal finite index subgroup, the first claim follows from the second one. So let $N$ be as in the statement. Let $H \in SCQ(F,m)$, and $X$ a basis of $F$ and $x \in X$ such that $H = F(X,x,m)$. Since $N$ contains a finite index subgroup of $F$, there is $n \geq 1$ such that $x^n \in N$. Write $n=m^k s$ where $k\geq 0$ and $m$ does not divide $s$. By Lemma~\ref{lem-conjugate-power-SmF} there is $c \in S_m(F)$ such that $cxc^{-1} = x^m$. Then $c^k x^s c^{-k} = x^n$. Since $N$ is normal in $S_m(F)$, 
we get $x^s=c^{-k}x^n c^k\in N$. Since  $m$ does not divide $s$ and $(d,m) \neq (2,2)$, we can apply Proposition~\ref{prop-int-implies-saut} and deduce that $\SAut(H) \subseteq N$. Since $H$ was arbitrary in $SCQ(F,m)$, this proves that $N = S_{m}(F)$. 
\end{proof}

In the sequel we write $\Gamma=\mathrm{PSL}_2(\Z)$. Recall that the principal congruence subgroup $\Gamma(\ell)$ of level $\ell \geq 2$ of $\Gamma$ is the kernel of the homomorphism $\Gamma \to \mathrm{PSL}_2(\Z/\ell\Z)$ defined by the reduction modulo $\ell$. 
It is well known that $\Gamma(\ell)$ is a free group.

\begin{Proposition} \label{prop-arithmetic}
Let $\Gamma = \mathrm{PSL}_2(\Z)$ and $G = \mathrm{PSL}_2(\R)$. Let $m \geq 2$, let $\ell \geq 2$ be a multiple of $m$, and let $F = \Gamma(\ell)$. Then the image of 
$\mathrm{PSL}_2(\Z[1/m])$ under the  map $\psi: \mathrm{Comm}_G(F) \to \mathrm{Comm}(F)$ lies in $A_{m}(F)$.
\end{Proposition}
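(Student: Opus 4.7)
The plan is to invoke Lemma~\ref{lem-properties-Am(F)}(\ref{item--Sm-gen}), which gives $A_m(F)=\la\Aut(F),\Aut(H)\ra$ for any choice of $H\in SCQ(F,m)$, and to exhibit $\psi(\mathrm{PSL}_2(\Z[1/m]))$ inside this generating set.

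First I would note that $F=\Gamma(\ell)$ is normal in $\Gamma=\mathrm{PSL}_2(\Z)$, so conjugation by any $\gamma\in\Gamma$ is an automorphism of $F$ and hence $\psi(\Gamma)\subseteq\Aut(F)\subseteq A_m(F)$. Thus it suffices to treat a generating set of $\mathrm{PSL}_2(\Z[1/m])$ modulo $\Gamma$: for $m$ prime, the Serre decomposition coming from the Bruhat--Tits tree of $\mathrm{PGL}_2(\dbQ_m)$ gives $\mathrm{PSL}_2(\Z[1/m])=\la\Gamma,U\ra$ with $U=\begin{pmatrix}1 & 1/m\\ 0 & 1\end{pmatrix}$, and the general case reduces to prime powers by factoring through divisors of $m$. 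It therefore remains to show $\psi(U)\in A_m(F)$.

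To do this I will use the hypothesis $m\mid\ell$ to construct a distinguished $H\in SCQ(F,m)$ adapted to $U$. Namely, the assignment $\phi\colon F\to\Z/m\Z$, $g\mapsto c(g)/\ell\pmod m$, where $c(g)$ is the $(2,1)$-entry of $g$, is a well-defined surjective homomorphism precisely because $m\mid\ell$: in the product formula $c(g_1g_2)=a(g_1)c(g_2)+c(g_1)d(g_2)$, the cross-terms $(a(g_1)-1)c(g_2)+c(g_1)(d(g_2)-1)$ lie in $\ell^2\Z\subseteq\ell m\Z$ once $a,d\equiv 1\pmod\ell$, so $c(g_1g_2)\equiv c(g_1)+c(g_2)\pmod{\ell m}$. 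Set $H:=\ker\phi$. The crucial observation is that the $(2,1)$-entry of $UgU^{-1}$ equals $c(g)$, so conjugation by $U$ preserves $\phi$ whenever it is defined. Restricting to a suitable finite-index subgroup $F^*\subseteq F$ on which conjugation by $U$ takes values in $F$, one then sees that this partial conjugation maps $F^*\cap H$ into $H$, and the resulting virtual isomorphism represents $\psi(U)$ inside $\Comm(H)=\Comm(F)$.

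The hard part will be to promote this virtual isomorphism of $H$ to an actual element of $\Aut(H)$, modulo an adjustment by an element of $\Aut(F)$. Although $U$ preserves $\phi$, it does not literally normalize $H$ as a subgroup of $G$, because the $(1,2)$-entry of $UgU^{-1}$ involves the terms $(d(g)-a(g))/m$ and $c(g)/m^2$, so $UgU^{-1}$ can fall out of $F$ even when $g\in H$. The argument therefore requires producing $\gamma\in\Gamma$ such that $U\gamma^{-1}$ normalizes $H$ in $G$; then $\psi(U)=\psi(U\gamma^{-1})\psi(\gamma)\in\Aut(H)\cdot\Aut(F)\subseteq A_m(F)$. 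Existence of such a $\gamma$ reduces to a tractable but delicate congruence computation modulo $\ell m^2$, while the uniqueness of the resulting automorphism of $H$ follows from Lemma~\ref{lem-unique-root}, since free groups have unique roots. The necessary flexibility in choosing $H$ (equivalently $\gamma$) is supplied by Lemma~\ref{lem-description-SCQ}, which asserts that $\Gamma$ acts transitively on $SCQ(F,m)$.
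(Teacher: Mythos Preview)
Your overall strategy---show $\psi(\Gamma)\subseteq\Aut(F)$, pick a good $H\in SCQ(F,m)$, and exhibit the remaining generators of $\mathrm{PSL}_2(\Z[1/m])$ inside $\Aut(H)\cdot\Aut(F)$---is the same as the paper's. Your choice of $H=\ker\phi$ is even the same subgroup the paper uses (matrices in $\Gamma(\ell)$ with $m\ell\mid c$). The gap is in the ``hard part'': the element $\gamma\in\Gamma$ you seek, with $U\gamma^{-1}$ normalizing $H$, does not exist. Indeed, $(U\gamma^{-1})H(\gamma U^{-1})=H$ would force $U^{-1}HU=\gamma^{-1}H\gamma\subseteq F$, since $F$ is normal in $\Gamma$. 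But $U^{-1}HU\not\subseteq F$: for $g\in H$ the $(1,2)$-entry of $U^{-1}gU$ equals $b-(d-a)/m-c/m^2$, and while the hypothesis $m\mid\ell$ makes this an integer, it need not lie in $\ell\Z$. Concretely, with $\ell=m$ and $g=\begin{pmatrix}1&0\\ m^2&1\end{pmatrix}\in H$ one gets $(1,2)$-entry $-1$. So no congruence computation will rescue this step. (Your appeal to Lemma~\ref{lem-description-SCQ} is also off: that lemma gives transitivity of $\SAut(F)$ on $SCQ(F,m)$, not of $\Gamma$ acting by conjugation, and in any case transitivity within $F$ cannot help once $U^{-1}HU$ has left $F$.)

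The paper sidesteps this obstruction by working not with $U=E_{12}(1/m)$ but with the Atkin--Lehner type element
\[
\delta_m=\begin{pmatrix}0 & -m^{-1/2}\\ m^{1/2} & 0\end{pmatrix}\in\mathrm{PSL}_2(\R),
\]
which lies outside $\mathrm{PSL}_2(\Q)$ yet genuinely normalizes $H$: conjugation by $\delta_m$ sends $\begin{pmatrix}a&b\\ c&d\end{pmatrix}$ to $\begin{pmatrix}d&-c/m\\ -mb&a\end{pmatrix}$, and the defining congruences for $H$ are visibly preserved. Thus $\psi(\delta_m)\in\Aut(H)$ on the nose, and one then checks that $\langle\Gamma,\delta_m\rangle$ contains $\mathrm{PSL}_2(\Z[1/p])$ for every prime $p\mid m$ (essentially your generation argument, run for each such $p$). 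The moral is that to get an element of $\Comm_G(F)$ landing in $\Aut(H)$ rather than merely commensurating $H$, you must leave $\mathrm{PSL}_2(\Z[1/m])$ and pass to its normalizer in $G$.
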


Here $\mathrm{Comm}_G(F)$ stands for the commensurator of $F$ in $G$ (equivalently, the commensurator of $\Gamma$ in $G$ since $F$ and $\Gamma$ are commensurable). Recall that no non-trivial element of $G$ centralizes a finite index subgroup of $\Gamma$, so $\psi: \mathrm{Comm}_G(F) \to \mathrm{Comm}(F)$ is injective.

\begin{proof}
Throughout the proof we will use the following standard abuse of notation: when defining a subset of $\Gamma=\PSL_2(\dbZ)$ by certain conditions
on the matrix entries, we will mean the image of the corresponding subset of $\SL_2(\dbZ)$ in $\Gamma$.

Let $H$ be the set of matrices  $\begin{bmatrix}
 a & b \\ c & d
\end{bmatrix} \in \Gamma$ such that $a,d \equiv 1 \mod \ell$, $ \ell \mid b$ and $m \ell \mid c$. 
By definition, $H$ is contained in $F$; further, $H$ is precisely the preimage of the upper-triangular subgroup of $\mathrm{PSL}_2(\Z/m\ell\,\Z)$
under the the reduction modulo $m\ell$ homomorphism $F \to \mathrm{PSL}_2(\Z/m\ell\,\Z)$, so $H$ is a subgroup of $F$.
Moreover, since  $m$ divides $\ell$, a direct computation shows that $H$ is normal in $F$ and that $F/H$ is cyclic of order $m$. 

 Let $\delta_m  = \begin{bmatrix}
	0 & - m^{-1/2}  \\ m^{1/2} & 0
	\end{bmatrix} \in G$.  Then $\delta_m \begin{bmatrix}
 a & b \\ c & d
\end{bmatrix}\delta_m^{-1} =\begin{bmatrix}
 d & - c/m  \\ - mb  & a
\end{bmatrix}.$ In particular, $\delta_m$ normalizes $H$. It follows that if we let $\Delta =\la \Gamma,\delta_m\ra$ be the subgroup of $G$ generated  by $\Gamma$ and $\delta_m$,
then $\Delta \subseteq \mathrm{Comm}_G(F)$. Moreover, the image of $\Gamma$ under $\psi: \mathrm{Comm}_G(F) \to \mathrm{Comm}(F)$ lies in $\Aut(F)$, and the image of $\delta_m$ under $\psi$ lies in $\Aut(H)$. We have $\Aut(F) \subseteq A_{m}(F)$ by Lemma~\ref{lemma-generation-indexp}, and since $F/H$ is cyclic of order $m$, we also have $\Aut(H) \subseteq A_{m}(F)$. Hence $\psi(\Delta)\subseteq A_{m}(F)$. So to finish the proof it suffices to show that $\Delta$ contains $\mathrm{PSL}_2(\Z[1/m])$.

Let  $p$ be a prime divisor of $m$. We check that $\Delta$ contains $\mathrm{PSL}_2(\Z[1/p])$. Since the subgroups $\mathrm{PSL}_2(\Z[1/p])$ generate $ \mathrm{PSL}_2(\Z[1/m])$ where $p$ ranges over the prime divisors of $m$, we will then have $\mathrm{PSL}_2(\Z[1/m]) \subseteq \Delta$.  Let $u=E_{21}(m/p)=\begin{bmatrix}
 1 & 0 \\ m/p & 1
\end{bmatrix}$ and $v=E_{12}(1/p)=\begin{bmatrix}
 1 & 1/p \\ 0 & 1
\end{bmatrix}$. We have $u\in \Gamma$ and $\delta_m u \delta_m^{-1} = v^{-1}$, so $v\in \Delta$. Hence 
$\left\langle \Gamma, v \right\rangle \subseteq \Delta$, and to complete the proof it suffices to check that 
$$\left\langle \Gamma, v \right\rangle  = \mathrm{PSL}_2(\Z[1/p]).$$ This equality is a consequence of the amalgamated free product decomposition of $ \mathrm{PSL}_2(\Z[1/p])$ (see, e.g., \cite[II.1.4 Cor. 2]{Serre}) but can also be verified directly as follows.

Since $\Z[1/p]$ is a PID, the group $\mathrm{PSL}_2(\Z[1/p])$ is generated by elementary matrices, so we just need to check
that the matrices $E_{12}(p^k)$ and $E_{21}(p^k)$ lie in $\la \Gamma, v \ra$ for arbitrarily small $k\in\dbZ$.
Since $v=E_{12}(1/p)$, the group $\la \Gamma, v \ra$ contains
$D=E_{12}(1/p)E_{21}(-p)E_{12}(1/p){\small\begin{bmatrix}
 0 & -1 \\  1 & 0
\end{bmatrix}}$ which is equal to  ${\small\begin{bmatrix}
 1/p   & 0 \\  0 & p
\end{bmatrix}}$. Hence $$E_{12}(p^{2k})=D^{-k} E_{12}(1)D^{k}\quad\mbox{ and }\quad E_{21}(p^{2k})= D^{k} E_{21}(1)D^{-k}$$ 
also lie in $\la \Gamma, v \ra$. 
\end{proof}

The following is the main simplicity result of this section. The proof shares the following feature with the strategy from \cite{Mar80}, \cite{BM00b} and \cite{Ca} which consists of the following two independent contributions: we first prove that a non-trivial normal subgroup must be relatively large (which here means contains a finite index subgroup of $F$), and then show that any normal subgroup with this property must be the entire group.

\begin{Theorem} \label{thm-fg-simple-general}
Let $m \geq 2$, let $\ell$ be a multiple of $m$, and let $d_{\ell}$ be the rank of the principal congruence subgroup $\Gamma(\ell)$.
If $F$ is a free group of rank $d_{\ell}$, then the group $S_{m}(F)$ is simple. 
\end{Theorem}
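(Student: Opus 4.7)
Following the two-step pattern indicated in the excerpt, the plan is first to show that every non-trivial normal subgroup $N$ of $S_m(F)$ contains a finite-index subgroup of $F$, and then to invoke Proposition~\ref{prop-SmF-fi-F} to conclude $N = S_m(F)$; this requires the hypothesis $(d_\ell, m) \neq (2,2)$, which I would add as an implicit restriction (the only case excluded is $m = \ell = 2$). Identifying $F$ with $\Gamma(\ell)$, fix a non-trivial $N \trianglelefteq S_m(F)$. Since a non-abelian free group has cyclic centralizers, its virtual center (hence its quasi-center in the profinite topology) is trivial; by Proposition~\ref{prop-topo-germs}, $F$ is open in $\Comm(F)$, so $S_m(F) \supseteq F$ is open. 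The normalizer of $N$ in $\Comm(F)$ therefore contains the open subgroup $S_m(F)$, and Lemma~\ref{lem-Comm-trivialQZ}(\ref{item-Comm-local-normal}) yields $N \cap F \neq \{1\}$.

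To promote this non-trivial intersection to a finite-index subgroup of $F$, the key is to exploit the arithmetic embedding of Proposition~\ref{prop-arithmetic}. Let $L := \PSL_2(\Z[1/m])$, viewed inside $A_m(F) \subseteq \Comm(F)$ via that proposition, and set $\Lambda := L \cap S_m(F)$. By Lemma~\ref{lem-properties-Am(F)}(\ref{item--Sm-index-2}) we have $[L : \Lambda] \leq 2$, and $F \subseteq \Lambda$ because $F \subseteq \PSL_2(\Z) \subseteq L$ and $F \subseteq S_m(F)$. The group $L$ is a centerless irreducible lattice in the semisimple group $\PSL_2(\R) \times \prod_{p \mid m} \PSL_2(\dbQ_p)$, whose total rank is $1 + \omega(m) \geq 2$ since $m \geq 2$. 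The finite-index subgroup $\Lambda$ inherits these properties (centerlessness follows from Zariski density of $\Lambda$ in the real and $p$-adic factors). Hence the $S$-arithmetic form of Margulis' normal subgroup theorem applies to $\Lambda$: every non-trivial normal subgroup of $\Lambda$ has finite index.

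Applying this to $N \cap \Lambda \trianglelefteq \Lambda$, which is non-trivial since it contains $N \cap F$, yields $[\Lambda : N \cap \Lambda] < \infty$. Intersecting with $F$ and using the injection $F/(N \cap F) \hookrightarrow \Lambda/(N \cap \Lambda)$, we see that $N \cap F$ has finite index in $F$. Proposition~\ref{prop-SmF-fi-F} then forces $N = S_m(F)$, establishing simplicity. I expect the main obstacle to lie in importing Margulis' normal subgroup theorem cleanly in this $S$-arithmetic rank-$2$ setting, and in particular in passing from $L$ to its finite-index subgroup $\Lambda$: verifying that $\Lambda$ is still centerless and still an irreducible lattice of total rank at least $2$ is standard but needs careful justification, and the case $(d_\ell, m) = (2,2)$ would have to be either excluded from the statement or handled by a separate ad hoc argument.
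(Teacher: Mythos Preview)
Your proof is correct and follows essentially the same two-step strategy as the paper: first use the arithmetic embedding of Proposition~\ref{prop-arithmetic} together with the normal subgroup theorem for $\PSL_2(\Z[1/m])$ to force $N\cap F$ to have finite index, then finish with Proposition~\ref{prop-SmF-fi-F}. The only bookkeeping difference is that the paper, rather than intersecting $L=\PSL_2(\Z[1/m])$ with $S_m(F)$, instead replaces $N$ by the intersection of its (at most two) $A_m(F)$-conjugates so as to make $N$ normal in all of $A_m(F)$, and then works directly with $N\cap L\trianglelefteq L$; both devices serve the same purpose. The paper also cites the more elementary results of Mennicke and Serre (with Margulis mentioned only as an alternative), so your concern about importing the full $S$-arithmetic normal subgroup theorem is unnecessary here. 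Your observation about the excluded case $(d_\ell,m)=(2,2)$, i.e.\ $m=\ell=2$, is well taken; the paper's proof tacitly relies on $(d,m)\neq(2,2)$ via Proposition~\ref{prop-SmF-fi-F}.
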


\begin{proof}
Let $N$ be a non-trivial normal subgroup of $S_{m}(F)$. Since $F$ has trivial virtual center, we can apply 
Lemma~\ref{lem-Comm-trivialQZ} to $G=F$ endowed with the profinite topology. Lemma~\ref{lem-Comm-trivialQZ}(\ref{item-Comm-local-normal})
implies that the intersection $N \cap F$ is non-trivial. The subgroup $N$ has at most two $A_m(F)$-conjugates, which intersect $F$ 
along non-trivial normal subgroups of $F$, say $K_1$ and $K_2$, and $K_1\cap K_2$ is non-trivial (since it contains $[K_1,K_2]$). Hence upon 
replacing $N$ by the intersection of its $A_m(F)$-conjugates, we can assume that $N$ is actually normal in $A_m(F)$.

Let $\Lambda$ be the image of $\mathrm{PSL}(2,\Z[1/m])$ under the map $\psi$ from Proposition \ref{prop-arithmetic}.
Then $\Lambda$ is isomorphic to $\mathrm{PSL}(2,\Z[1/m])$, contains $F$, and by Proposition \ref{prop-arithmetic},
$\Lambda$ is contained in  $A_{m}(F)$. The subgroup $N \cap \Lambda$ is non-trivial because $N \cap F$ is non-trivial, and $N \cap \Lambda$ is normal in $\Lambda$ because $N$ is normal in $A_{m}(F)$. Now the only non-trivial normal subgroups of $\Lambda$ are the ones of finite index -- see Mennicke \cite{Mennicke-SL2} for the case where $m$ is prime  and Serre \cite{Serre-congr-SL2} for the general case (this result is also a consequence of Margulis' normal subgroup theorem \cite{Margulis79,Margulis-book}). So $N \cap \Lambda$ has finite index in $\Lambda$, and we derive in particular that $N$ contains a finite index subgroup of $F$. Proposition \ref{prop-SmF-fi-F} now implies that $N=S_m(F)$, as desired.
\end{proof}

\begin{Remark} \label{remark-not-CAT(0)}
As mentioned in the introduction, the previously known examples of finitely generated infinite simple subgroups of $\Comm(F)$ were groups acting properly and cocompactly on product of trees. The groups $S_{m}(F)$ seem to be of very different nature. Since $S_{m}(F)$ contains a subgroup isomorphic to $\SAut(H)$ for some free group $H$ of rank $\geq 3$, by \cite{Ger94} the group $S_{m}(F)$ cannot act properly and cocompactly by isometries on a complete CAT(0) space. More generally, for the same reason $S_{m}(F)$ cannot act properly by semi-simple isometries on a complete CAT(0) space \cite[7.18(2)]{Bri-Hae}. 
\end{Remark}

\begin{Remark} \label{remark-all-ranks}
	We believe that Theorem \ref{thm-fg-simple-general} remains true for every $m \geq 2$ and non-abelian free groups of every rank. However establishing an analogue of Proposition \ref{prop-arithmetic} (i.e.\ proving the existence of a just-infinite group in $S_m(F)$ which contains $F$) for arbitrary $d$ could be technically difficult.
\end{Remark}

We finish this section with an observation relating Proposition \ref{prop-arithmetic} to a peculiar property of characteristic subgroups
in free groups and some of their finite index subgroups.
\begin{Observation}\label{obs:abstractcharacteristic}
For each $m\geq 2$ there exist infinitely many $d \geq 2$ with the following property. If $F$ is a free group of rank $d$ and 
$H\in SCQ(F,m)$, then no non-trivial subgroup of $H$ can be characteristic in both $H$ and $F$.
\end{Observation}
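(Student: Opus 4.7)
The plan is to leverage Proposition~\ref{prop-arithmetic} together with the Mennicke--Serre normal subgroup theorem for $\mathrm{PSL}_2(\Z[1/m])$, exactly as in the proof of Theorem~\ref{thm-fg-simple-general}. For the infinite family of ranks, I will take $d=d_\ell:=\rk(\Gamma(\ell))$ as $\ell$ ranges over positive multiples of $m$; since $\Gamma=\mathrm{PSL}_2(\Z)$ is virtually free and $[\Gamma:\Gamma(\ell)]\to\infty$ as $\ell\to\infty$, Schreier's formula gives $d_\ell\to\infty$, so infinitely many values of $d$ arise.

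Fix such an $\ell$ and identify a free group $F$ of rank $d_\ell$ with $\Gamma(\ell)$. Suppose towards a contradiction that some $H\in SCQ(F,m)$ admits a non-trivial subgroup $K\le H$ that is characteristic in both $H$ and $F$. Working inside $\Comm(F)$, the hypothesis on $K$ means that both $\Aut(F)$ and $\Aut(H)$ normalize $K$: indeed, for $\phi\in\Aut(F)$ (resp.\ $\phi\in\Aut(H)$) the conjugate $[\phi]\,K\,[\phi]^{-1}$ is the image of $\phi(K)=K$ in $\Comm(F)$. By Lemma~\ref{lem-properties-Am(F)}(\ref{item--Sm-gen}) this forces $A_m(F)=\la\Aut(F),\Aut(H)\ra$ to normalize $K$. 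Proposition~\ref{prop-arithmetic} then provides a subgroup $\Lambda\subseteq A_m(F)$ isomorphic to $\mathrm{PSL}_2(\Z[1/m])$ and containing $F$; hence $K$ is a non-trivial normal subgroup of $\Lambda$.

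To close the argument I will compare indices: by the Mennicke--Serre normal subgroup theorem, every non-trivial normal subgroup of $\mathrm{PSL}_2(\Z[1/m])$ has finite index, so $[\Lambda:K]<\infty$. Combined with $K\subseteq F\subseteq\Lambda$ this would force $[\Lambda:F]<\infty$, which is absurd since $F=\Gamma(\ell)$ has finite index in $\Gamma$ while $\Gamma$ has infinite index in $\mathrm{PSL}_2(\Z[1/m])$.

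The substantive content of the argument is almost entirely borrowed from Section~\ref{sec-finite-generated}; the only point that needs to be verified carefully is that the inclusion $K\subseteq F\subseteq\Lambda$ really takes place inside $\Comm(F)$, i.e.\ that the homomorphism $\psi:\Comm_G(F)\to\Comm(F)$ of Proposition~\ref{prop-arithmetic} restricts to the canonical embedding on $F$. This is immediate from the fact that $\psi$ is defined by conjugation and is injective, as recorded just after Proposition~\ref{prop-arithmetic}. I expect this to be the only possible source of confusion; no new technical input beyond Proposition~\ref{prop-arithmetic} and the normal subgroup theorem is needed.
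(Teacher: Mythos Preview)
Your proof is correct and follows essentially the same approach as the paper's: both use Proposition~\ref{prop-arithmetic} to embed $\mathrm{PSL}_2(\Z[1/m])$ into $A_m(F)$, observe that a subgroup characteristic in both $F$ and $H$ is normal in $A_m(F)$ via Lemma~\ref{lem-properties-Am(F)}(\ref{item--Sm-gen}), and then invoke just-infiniteness of $\mathrm{PSL}_2(\Z[1/m])$ together with the infinite index of $F$ therein to reach a contradiction. Your explicit verification that infinitely many ranks $d_\ell$ arise (via Schreier's formula) is a small addition the paper leaves implicit.
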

\begin{proof}
Let $\ell$ and $d_\ell$ be as in Theorem \ref{thm-fg-simple-general} and set $d=d_{\ell}$. Recall that $\Gamma(\ell)$ 
denotes the principal congruence subgroup of level $\ell$ in $\mathrm{PSL}_2(\Z)$.
By Proposition~\ref{prop-arithmetic} there exists an embedding $\psi:\mathrm{PSL}_2(\Z[1/m])\to A_m(F)$ which maps $\Gamma(\ell)$ onto $F$.

Suppose now that $N\subseteq H$ is characteristic in both $H$ and $F$. Since $A_m(F)$ is generated by $\Aut(F)$ and $\Aut(H)$ by
Lemma~\ref{lem-properties-Am(F)}(\ref{item--Sm-gen}), $N$ is normal in $A_m(F)$ and hence $\psi^{-1}(N)$ is normal in $\mathrm{PSL}_2(\Z[1/m])$.
Since $\mathrm{PSL}_2(\Z[1/m])$ is just-infinite and $\psi^{-1}(N)\subseteq \Gamma(\ell)$ has infinite index in $\mathrm{PSL}_2(\Z[1/m])$, we deduce that $\psi^{-1}(N)$ is trivial, as desired. 
\end{proof}

We do not know if the analogue of Observation~\ref{obs:abstractcharacteristic} holds for free pro-$p$ groups with $m=p$ (even for a single value of $d$) -- see Question~\ref{q:superinvariant} in Section~\ref{sec:questions}. But what would be really interesting in view of our results in Section~\ref{sec-compact-generated} is a positive answer to the following question: 

\textit
{Let $F$ be a non-abelian free group of finite rank, $U$ a normal subgroup of index $p$ in $F$, let $\propF{p}$ be the pro-$p$ completion of $F$ and $\propU{p}$ the closure of $U$ in $\propF{p}$. Is it true that no non-trivial subgroup of $\propU{p}$ is invariant under both $\Aut(U)$and $\Aut(F)$?}

As we will see in Section~\ref{sec-compact-generated}, if true, this would solve in the affirmative Question~\ref{qCM} discussed in subsection~\ref{sec:tdlcintro} -- see 
Corollaries~\ref{Nn-superinvariant}~and~\ref{cor-conditional-Ln-simple}.

\section{Generation of commensurators by automorphisms of subgroups} \label{sec-gen-by-aut}

Following our general convention, in this section $F$ will denote a non-abelian free group of finite rank and $\propF{p}$ a non-abelian free pro-$p$ group of finite rank. We have shown earlier that $\SComm(F)=\AComm(F)$ and that $\AComm(F)$ is a proper subgroup of $\Comm(F)$. In this section we consider the corresponding questions for $\Comm_p(F)$  and $\Comm(\propF{p})$. The answers in these two cases will be similar to each other, and somewhat different from those for the corresponding subgroups of $\Comm(F)$.   

Our first result shows that every element of $\Comm_p(F)$ (resp. $\Comm(\propF{p})$) can be written as a product of
automorphisms of $p$-open subgroups of $F$ (resp. open subgroups of $\propF{p}$) and in fact provides an algorithm for finding such factorization.

\begin{Proposition}\label{prop-decomposing-prod-aut}
Let $\calF$ be either a non-abelian free group $\absF{}$ equipped with the pro-$p$ topology or a non-abelian free pro-$p$ group $\propF{p}$.  Let $U$ and $V$ be open subgroups of $\calF$ of the same index $p^n$, let $f:U\to V$ be an isomorphism and $[f]$ the corresponding element
of $\Comm_p(F)$ or $\Comm(\propF{p})$, respectively. Then there exist open subgroups $V = K_n \subseteq \ldots \subseteq K_0 = \calF$ with 
$[K_i: K_{i+1}] = p$ and 
automorphisms $f_0,f_1,\ldots,f_n$ such that 
\begin{enumerate}
	\item $f_0 \in \Aut(\absF{})$;
	\item $f_i \in \Aut(K_i)$ for every $1 \leq i \leq n$;
	\item $[f] = [f_n] \cdots [f_1] [f_0]$.
\end{enumerate} 
Moreover, in the case $\calF = \absF{}$ we can ensure that $f_i \in \SAut(K_i)$ for every $1 \leq i \leq n$. 
\end{Proposition}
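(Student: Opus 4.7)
The plan is to proceed by induction on $n$, peeling off at each step an automorphism $f_n$ of the bottom group $V$ to reduce to an isomorphism between open subgroups of index $p^{n-1}$. The base case $n=0$ is immediate: $U=V=\calF$, and we set $f_0:=f\in \Aut(\calF)$.

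For the inductive step, let $f:U\to V$ be an isomorphism between open subgroups of index $p^n$ in $\calF$. Because $V$ (resp.\ $U$) is subnormal of $p$-power index in $\calF$---automatic in the pro-$p$ setting and by the characterization of $p$-open subgroups in the abstract setting---we may choose open subgroups $V'\supseteq V$ and $U'\supseteq U$ with $V\triangleleft V'$, $U\triangleleft U'$, and $[V':V]=[U':U]=p$; set $K_n:=V$ and $K_{n-1}:=V'$. By the (pro-$p$) Schreier formula, $U'$ and $V'$ are free (resp.\ free pro-$p$) groups of the same rank $1+p^{n-1}(d-1)$, hence admit some isomorphism $\phi:U'\to V'$. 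The image $\phi(U)$ is a normal subgroup of index $p$ in $V'$, and $\Aut(V')$ acts transitively on such subgroups---this is Lemma~\ref{lem-description-SCQ} in the abstract case, and in the pro-$p$ setting it follows from the surjection $\Aut(V')\to \GL_{d'}(\dbF_p)$ on the Frattini quotient. Thus there exists $\alpha\in \Aut(V')$ with $\alpha(\phi(U))=V$; setting $\tilde f:=\alpha\circ\phi$, we obtain $\tilde f(U)=V$, so $f_n:=f\circ(\tilde f|_U)^{-1}$ is an automorphism of $V=K_n$ and $[f]=[f_n]\cdot[\tilde f]$ in the commensurator. Applying the inductive hypothesis to $\tilde f:U'\to V'$ then produces a chain $V'=K_{n-1}\supseteq\cdots\supseteq K_0=\calF$ and automorphisms $f_0\in \Aut(\calF)$, $f_i\in \Aut(K_i)$ for $1\leq i\leq n-1$, with $[\tilde f]=[f_{n-1}]\cdots[f_0]$; concatenating gives the desired factorization.

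For the $\SAut$ refinement in the abstract case, note that the choice of $\tilde f$ is flexible: if $f_n\notin\SAut(V)$, replace $\tilde f$ by $\beta\circ\tilde f$ with $\beta\in \Aut(V')$ stabilizing $V$ and satisfying $\beta|_V\notin\SAut(V)$. Such a $\beta$ exists by Lemma~\ref{lem-stab-not-contained-special-aut}(a) applied to the pair $V\triangleleft V'$ (here $V\in SCQ(V',p)$ and $V'$ is non-abelian free of rank $\geq 2$). Since $[\Aut(V):\SAut(V)]=2$ and $(\beta\circ\tilde f)|_U=\beta|_V\circ(\tilde f|_U)$, the new $f_n$ differs from the old one by right composition with $(\beta|_V)^{-1}$, flipping its parity modulo $\SAut(V)$ and ensuring $f_n\in\SAut(K_n)$; the inductive hypothesis takes care of $f_1,\ldots,f_{n-1}$ lying in the respective $\SAut(K_i)$.

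The main obstacle is the construction of $\tilde f$ with $\tilde f(U)=V$, which hinges on the transitivity of the automorphism group of a free (resp.\ free pro-$p$) group of finite rank on its normal subgroups of index $p$. The abstract case is Lemma~\ref{lem-description-SCQ}; the pro-$p$ analog, though not stated explicitly earlier, is elementary: it follows from the fact that $\Aut(V')$ surjects onto $\GL_{d'}(\dbF_p)$ via the action on the Frattini quotient (any change of basis of the Frattini quotient lifts to an automorphism sending one basis of $V'$ to another), and $\GL_{d'}(\dbF_p)$ acts transitively on hyperplanes. A minor bookkeeping point is that the identification $f_n=f\circ(\tilde f|_U)^{-1}$ respects equivalence in the commensurator, which is automatic from the definition of composition.
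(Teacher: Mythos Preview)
Your proof is correct and runs on the same ingredients as the paper's---transitivity of the automorphism group on normal subgroups of index $p$, and the parity-flipping trick from Lemma~\ref{lem-stab-not-contained-special-aut}(a) for the $\SAut$ refinement---but the induction is organized in the reverse direction. The paper peels off the \emph{top} automorphism first: it picks index-$p$ overgroups $M\supseteq U$ and $N\supseteq V$, chooses $f_0\in\Aut(\calF)$ sending $M$ to $N$, and then applies the inductive hypothesis to $h=f\circ (f_0^{-1})_{|f_0(U)}:f_0(U)\to V$ inside $N$; the $\SAut$ adjustment is then made at the pair $(f_0,f_1)$. You instead peel off $f_n$ first by constructing an isomorphism $\tilde f:U'\to V'$ between index-$p$ overgroups that already carries $U$ onto $V$, and you make the $\SAut$ adjustment at the bottom pair $(f_n,\tilde f)$. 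Your route costs one extra step (arranging $\tilde f(U)=V$ rather than just $\tilde f(U')=V'$), but this is exactly the same transitivity fact applied one level down, so there is no real difference in difficulty; both arguments are equally short and yield the same chain $V=K_n\subset\cdots\subset K_0=\calF$.
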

\begin{proof}
We will argue by induction on $n$. If $n=0$, then $f\in \Aut(\calF)$, and there is nothing to prove. Next consider the case $n=1$ (in principle, we could go straight to the induction step, but we present the case $n=1$ separately for clarity). In this case $U$ and $V$ are both normal subgroups
of index $p$. Since $\Aut(\calF)$ acts transitively on normal subgroups
of index $p$, there exists $f_0\in \Aut(\calF)$ such that $f_0(U)=V$. But then $f_1=f(f_0^{-1})_{|V}\in \Aut(V)$ 
and $[f]=[f_1][f_0]$. In the case $\calF = \absF{}$ we shall check the last assertion. If $f_1\in \SAut(V)$, we are done. If $f_1\not\in \SAut(V)$, we choose $\alpha\in \Aut(\absF{})_V$ (the stabilizer of $V$ in $\Aut(\absF{})$)
whose restriction $\alpha_{|V}$ to $V$ does not lie in $\SAut(V)$ (this is possible by Lemma~\ref{lem-stab-not-contained-special-aut}(a)).
Then $f_1\alpha_{|V}\in \SAut(V)$, and replacing $f_0$ by $\alpha^{-1}f_0$ and $f_1$ by $f_1\alpha_{|V}$, we obtain a desired factorization.

Finally, we treat the general case. Since $U$ and $V$ are open, we can choose normal subgroups of index $p$, call them $M$
and $N$, such that $U\subseteq M$ and $V\subseteq N$. As in the case $n=1$, there exists $f_0\in \Aut(\calF)$ such that $f_0(M)=N$. Then
$f_0(U)$ and $V$ are both open subgroups of $N$ of index $p^{n-1}$. Since $h=f({f_0}^{-1})_{|N}$ sends $f_0(U)$ to $V$, by the induction
hypothesis (applied to $\Comm(N)$), there exist $p$-open subgroups $U = K_n \subseteq\ldots  \subseteq  K_1 = N$ with $[K_i: K_{i+1}] = p$, $f_1\in \Aut(N)$ and automorphisms
 $f_i\in \Aut(K_i)$ for $2\leq i\leq n$ such that $[h]=[f_n] \cdots [f_1]$ and hence $$[f]=[h][f_0]=[f_n] \cdots [f_1][f_0].$$
In  the case $\calF = \absF{}$ we can modify $f_1$ and $f_0$ by a suitable $\alpha\in \Aut(F)_N$ as in the case $n=1$ to ensure $f_1\in \SAut(N)$. 
\end{proof}

\begin{Remark}
Apart from the last assertion about $\SAut$, the proof of Proposition~\ref{prop-decomposing-prod-aut} is not very specific to free groups. The important property used in the proof is that for every open subgroup $U$ the automorphism group of $U$ acts transitively on the set of normal subgroups of index $p$ in $U$. 
\end{Remark}

For the rest of the section we will consider $\Comm_p(\absF{})$ and $\Comm(\propF{p})$ separately. 
We start with the technically easier case of  $\Comm_p(F)$.  

\subsection{Generation of $\Comm_p(\absF{})$}\label{sec:generating}
Recall from subsection \ref{subsec-def-AComm} that {we defined $\AComm_p(F)$ as} the subgroup of $\Comm_p(F)$ generated by the subgroups $\Aut(H)$ where $H$ ranges over all $p$-open subgroups of $F$. The following result is an immediate consequence of Proposition~\ref{prop-decomposing-prod-aut}:

\begin{Corollary} \label{prop-Cp-generation}
$\Comm_p(\absF{}) = \AComm_p(\absF{})$.
 \end{Corollary}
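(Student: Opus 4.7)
The plan is to derive this corollary directly from Proposition~\ref{prop-decomposing-prod-aut}, with only one small preliminary observation needed. The inclusion $\AComm_p(F)\subseteq \Comm_p(F)$ is immediate from the definition, so everything reduces to showing that every $[f]\in\Comm_p(F)$ lies in $\AComm_p(F)$.

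First I would represent $[f]$ by an isomorphism $f:U\to V$ between $p$-open subgroups of $F$. The preliminary observation is that $[F:U]=[F:V]$: since $U\cong V$ as abstract groups and $U,V$ are finitely generated free (being finite index subgroups of $F$), they have the same rank, and the Schreier index formula $\rk(U)=1+[F:U](\rk(F)-1)$ forces the indices to agree. Write this common index as $p^n$.

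With $U$ and $V$ now of the same $p$-power index, Proposition~\ref{prop-decomposing-prod-aut} applies and yields a factorization $[f]=[f_n]\cdots[f_1][f_0]$ in $\Comm_p(F)$, where $f_0\in\Aut(F)$ and $f_i\in\Aut(K_i)$ for certain $p$-open subgroups $K_i$ of $F$. Each factor $[f_i]$ lies in $\AComm_p(F)$ by the very definition of $\AComm_p(F)$ as the subgroup generated by the canonical images of $\Aut(H)$ for $H$ ranging over $p$-open subgroups of $F$. Therefore $[f]\in\AComm_p(F)$, completing the proof.

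There is no real obstacle here; Proposition~\ref{prop-decomposing-prod-aut} is doing all of the work. The only point to verify carefully is the equal-index observation above, which is needed because Proposition~\ref{prop-decomposing-prod-aut} is stated under that hypothesis. One could alternatively bypass the rank/index argument by noting that, to represent $[f]$, one may always shrink $U$ and $V$ to $p$-open subgroups $U'\subseteq U$ and $V'=f(U')$ chosen so that $V'\subseteq V$ has a given $p$-power index in $F$, but the Schreier-formula observation is shorter.
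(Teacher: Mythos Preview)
Your proof is correct and matches the paper's approach exactly: the paper simply states this as an immediate consequence of Proposition~\ref{prop-decomposing-prod-aut}, and your Schreier-formula observation that $[F:U]=[F:V]$ is the one small detail needed to verify the hypothesis of that proposition.
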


\begin{Definition} Let us now define $\SComm_p(F)$ to be the subgroup of $\Comm_p(F)$ generated by $\SAut(H)$ where $H$ ranges over all $p$-open subgroups of $F$. 
\end{Definition}
Since $\SAut(F)$ has index 2 in $\Aut(F)$, the last assertion Proposition \ref{prop-decomposing-prod-aut} implies the following:

\begin{Corollary} \label{cor-C+-index-small}
The group $\SComm_p(\absF{})$ has index at most $2$ in $\Comm_p(\absF{})$.
\end{Corollary}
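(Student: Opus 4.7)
The plan is to deduce the corollary directly from the last sentence of Proposition~\ref{prop-decomposing-prod-aut}. Applied in the abstract case $\calF=F$, that proposition produces for each $[f]\in\Comm_p(F)$ a factorization $[f]=[f_n]\cdots [f_1][f_0]$ in which $[f_i]\in\SAut(K_i)\subseteq \SComm_p(F)$ for every $i\ge 1$, while only the rightmost factor $[f_0]$ is a general element of $\Aut(F)$, unconstrained by any special-linear condition. So the first step is to read off the identity
\[
\Comm_p(F)\;=\;\SComm_p(F)\cdot \Aut(F).
\]

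Next I would invoke the elementary group-theoretic fact that whenever $G=HK$ with $H,K\le G$, the map $K\to H\backslash G$, $k\mapsto Hk$, is surjective with fibres the right cosets of $H\cap K$ in $K$, so
\[
[G:H]\;=\;[K:H\cap K].
\]
Applied with $G=\Comm_p(F)$, $H=\SComm_p(F)$ and $K=\Aut(F)$, this yields
\[
[\Comm_p(F):\SComm_p(F)]\;=\;[\Aut(F):\Aut(F)\cap \SComm_p(F)].
\]

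Finally, $\SAut(F)\subseteq \SComm_p(F)$ by the very definition of $\SComm_p(F)$ (take the generator subgroup corresponding to $H=F$), so $\SAut(F)\subseteq \Aut(F)\cap\SComm_p(F)$. Combined with the identity $[\Aut(F):\SAut(F)]=2$, this gives
\[
[\Comm_p(F):\SComm_p(F)]\;\le\; [\Aut(F):\SAut(F)]\;=\;2,
\]
which is the desired bound. I do not expect any real obstacle: Proposition~\ref{prop-decomposing-prod-aut} has already absorbed all the substantive work, and the corollary is a formal consequence. One point worth noting is that the argument does not try to determine $\Aut(F)\cap \SComm_p(F)$ precisely—this intersection could a priori be strictly larger than $\SAut(F)$ (for instance, Lemma~\ref{lem-stab-not-contained-special-aut}(b) exhibits elements of $\Aut(F)\setminus\SAut(F)$ that, restricted to a suitable subgroup in $SCQ(F,p)$, land in $\SAut$)—which is consistent with the corollary only asserting index \emph{at most} two rather than exactly two.
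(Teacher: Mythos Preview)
Your proof is correct and follows exactly the approach the paper intends: the paper states the corollary as an immediate consequence of Proposition~\ref{prop-decomposing-prod-aut} together with $[\Aut(F):\SAut(F)]=2$, and you have simply spelled out the intermediate steps (the product decomposition $\Comm_p(F)=\SComm_p(F)\cdot\Aut(F)$ and the coset-counting identity $[G:H]=[K:H\cap K]$ for $G=HK$) that the paper leaves implicit.
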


We do not know if $\SComm_p(\absF{})$ can actually be a proper subgroup of $\Comm_p(\absF{})$.
However, we can prove that $\SComm_p(\absF{}) = \Comm_p(\absF{})$ for some values of $p$ and $\rk(F)$:

\begin{Corollary} \label{prop-C+equalsC}
Suppose that either $p=2$, or that $p = 3 \mod 4$ and $d$ is even. Then $\SComm_p(\absF{d}) = \Comm_p(\absF{d})$.
\end{Corollary}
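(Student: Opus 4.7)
The plan is to reduce the problem to a statement about $\Aut(F)$ and then invoke Lemma~\ref{lem-stab-not-contained-special-aut}(b). By the last assertion of Proposition~\ref{prop-decomposing-prod-aut}, any element of $\Comm_p(F)$ can be written as $[f_n]\cdots[f_1][f_0]$ where $f_0 \in \Aut(F)$ and $f_i \in \SAut(K_i)$ for $1 \le i \le n$, with $K_i$ a $p$-open subgroup of $F$. Since the factors $[f_i]$ for $i \ge 1$ lie in $\SComm_p(F)$ by definition, the inclusion $\Comm_p(F) \subseteq \SComm_p(F)$ will follow as soon as we verify that $\Aut(F) \subseteq \SComm_p(F)$.

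Since $\SAut(F) \subseteq \SComm_p(F)$ trivially and $[\Aut(F):\SAut(F)] = 2$, it is enough to exhibit a single element of $\Aut(F) \setminus \SAut(F)$ that represents an element of $\SComm_p(F)$. This is exactly what Lemma~\ref{lem-stab-not-contained-special-aut}(b) provides, applied with $m = p$: under either of the hypotheses $p = 2$, or $p \equiv 3 \pmod 4$ with $d$ even, there exists $H \in SCQ(F,p)$ and $\alpha \in \Aut(F)_H$ such that $\alpha \notin \SAut(F)$, while $\alpha_{|H} \in \SAut(H)$. Since $F/H$ has $p$-power order, $H$ is $p$-open in $F$, so $\SAut(H) \subseteq \SComm_p(F)$. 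Inside $\Comm_p(F)$ the classes $[\alpha]$ and $[\alpha_{|H}]$ coincide, and hence $[\alpha] \in \SComm_p(F)$.

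Combining these observations, $\SComm_p(F)$ contains $\SAut(F)$ together with an element of $\Aut(F) \setminus \SAut(F)$, so it contains all of $\Aut(F)$; by the reduction in the first paragraph this yields $\Comm_p(F) = \SComm_p(F)$. I do not anticipate any real obstacle: the argument is essentially a bookkeeping step that packages Proposition~\ref{prop-decomposing-prod-aut} and Lemma~\ref{lem-stab-not-contained-special-aut}(b). The only point that requires a moment of care is the observation that $H \in SCQ(F,p)$ is automatically $p$-open (so that $\SAut(H)$ is one of the generating subgroups of $\SComm_p(F)$), and that passing from $\alpha$ to $\alpha_{|H}$ in $\Comm_p(F)$ is the correct way to view $[\alpha]$ as lying in $\SAut(H)$.
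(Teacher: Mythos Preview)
Your proof is correct and follows essentially the same approach as the paper: reduce via Proposition~\ref{prop-decomposing-prod-aut} to showing $\Aut(F)\subseteq\SComm_p(F)$, then invoke Lemma~\ref{lem-stab-not-contained-special-aut}(b) with $m=p$ to produce an element of $\Aut(F)\setminus\SAut(F)$ whose restriction to a $p$-open $H$ lies in $\SAut(H)$. The only difference is that you spell out explicitly why $[\alpha]=[\alpha_{|H}]$ in $\Comm_p(F)$ and why $H$ is $p$-open, which the paper leaves implicit.
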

\begin{proof}
Choose any index $p$ normal subgroup $H$ of $F$.
By Lemma~\ref{lem-stab-not-contained-special-aut}(b), there exists $\alpha\in \Aut(F)\setminus\SAut(F)$
which stabilizes $H$ and such that $\alpha_{|H}\in\SAut(H)$. Thus the subgroup of $\Comm(F)$ generated by
$\SAut(F)$ and $\SAut(H)$ contains $\Aut(F)$. The equality  $\SComm_p(\absF{}) = \Comm_p(\absF{})$ now follows from 
Proposition~\ref{prop-decomposing-prod-aut}.
\end{proof}

\subsection{Generation of $\Comm(\propF{p})$}\label{subsec-generation-commfreeprop}

For $\Comm(\propF{p})$ the analogue of Corollary~\ref{prop-Cp-generation} again follows directly from Proposition~\ref{prop-decomposing-prod-aut}:
\skv

\begin{Proposition} 
\label{Comm=AComm}
$\Comm(\propF{p})=\AComm(\propF{p})$, that is,
$\Comm(\propF{p})$ is generated by subgroups of the form $\Aut(\propU{p})$ where $\propU{p}$ is open in $\propF{p}$.
\end{Proposition}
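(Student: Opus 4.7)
The plan is to deduce this proposition immediately from Proposition \ref{prop-decomposing-prod-aut}, whose second (free pro-$p$) case has just been established. By definition, every element of $\Comm(\propF{p})$ is represented by a topological isomorphism $f\colon \propU{p} \to \propV{p}$ between open subgroups of $\propF{p}$, so it suffices to write an arbitrary such $[f]$ as a product of elements coming from automorphism groups of open subgroups. The inclusion $\AComm(\propF{p}) \subseteq \Comm(\propF{p})$ is immediate from the definitions, so the whole content is the reverse inclusion.

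The only step requiring a moment's thought is verifying the hypothesis of Proposition \ref{prop-decomposing-prod-aut} that $\propU{p}$ and $\propV{p}$ have the \emph{same} index $p^n$ in $\propF{p}$. Since $\propF{p}$ is a finitely generated pro-$p$ group, both indices are automatically powers of $p$. The isomorphism $f$ yields $\propU{p} \cong \propV{p}$ as pro-$p$ groups, so they share a common rank; on the other hand, the Schreier index formula for free pro-$p$ groups (applied via the fact, recalled in subsection \ref{subsec-prelim-freeprop}, that open subgroups of a free pro-$p$ group are themselves free pro-$p$) gives $\rk(\propU{p}) = 1 + [\propF{p}:\propU{p}](d-1)$ and analogously for $\propV{p}$, where $d = \rk(\propF{p}) \geq 2$. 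Equating ranks and using $d-1 \geq 1$ forces the two indices to coincide.

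Once this index-matching is in hand, Proposition \ref{prop-decomposing-prod-aut} delivers a factorization $[f] = [f_n]\cdots[f_1][f_0]$ with $f_0 \in \Aut(\propF{p})$ and each $f_i \in \Aut(K_i)$ for an appropriate open subgroup $K_i$ of $\propF{p}$, which exhibits $[f]$ as a product of elements of $\AComm(\propF{p})$, completing the proof. I do not anticipate a genuine obstacle: the rank-matching observation above is the only technical content beyond what has already been proved, and in fact Proposition \ref{prop-decomposing-prod-aut} was designed precisely so that this corollary (and its abstract-group analogue, Corollary \ref{prop-Cp-generation}) would follow at once.
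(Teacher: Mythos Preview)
Your proposal is correct and follows exactly the paper's approach: the paper simply states that the result ``follows directly from Proposition~\ref{prop-decomposing-prod-aut}'', and you have spelled out the one detail the paper leaves implicit, namely that the domain and codomain of a virtual isomorphism of $\propF{p}$ must have the same index (via the Schreier formula and $d\geq 2$). This is a helpful elaboration rather than a different route.
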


\begin{Notation}
Similarly to the case of free (abstract) groups, if $d=\rk(\propF{p})$, we define $\SAut(\propF{p})$ as the preimage of $\SL_d(\dbZ_p)$ under the canonical epimorphism $\pi:\Aut(\propF{p})\to \Aut(\propF{p}/[\propF{p},\propF{p}]))\cong \GL_d(\dbZ_p)$.
\end{Notation}

Note that unlike the case of free groups, $\SAut(\propF{p})$ has infinite index in $\Aut(\propF{p})$.

\begin{Definition}
 We define $\SComm(\propF{p})$ to be the subgroup of $\Comm(\propF{p})$ generated by $\SAut(\propU{p})$ 
where $\propU{p}$ ranges over the open subgroups of $\propF{p}$.
\end{Definition}

We just proved that $[\Comm_p(F):\SComm_p(F)]\leq 2$ for a non-abelian free group $F$. In the pro-$p$ case we will establish a similar, albeit slightly weaker result:

\begin{Proposition}
\label{AComm=SComm2}
The subgroup $\SComm(\propF{p})$ is normal in $\Comm(\propF{p})$, and the group $\Comm(\propF{p})/\SComm(\propF{p})$ is a quotient of
$\dbZ/(p-1)\dbZ$.
\end{Proposition}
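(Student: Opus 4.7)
The plan is to establish the proposition in three steps: normality of $\SComm(\propF{p})$ in $\Comm(\propF{p})$, the factorization $\Comm(\propF{p}) = \Aut(\propF{p}) \cdot \SComm(\propF{p})$, and a bound on the image of $\Aut(\propF{p})$ in the quotient.

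For normality, since $\Comm(\propF{p})$ is generated by the subgroups $\Aut(\propU{p})$ by Proposition \ref{Comm=AComm}, it suffices to check that conjugation by any $\alpha \in \Aut(\propU{p})$ carries each generator $\SAut(\propV{p})$ into $\SComm(\propF{p})$. Shrinking to a suitable open subgroup $\propW{p} \subseteq \propU{p} \cap \propV{p}$ where the composition is defined, $\alpha|_{\propW{p}}$ sends a basis of $\propW{p}$ to a basis of $\alpha(\propW{p})$, so the matrix of $\alpha \beta \alpha^{-1}$ on $\alpha(\propW{p})$ with respect to the transported basis equals that of $\beta|_{\propW{p}}$ with respect to the original basis; in particular the determinants coincide, so $\alpha\beta\alpha^{-1}$ lies in $\SAut(\alpha(\propW{p})) \subseteq \SComm$.

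To establish the factorization, apply Proposition \ref{prop-decomposing-prod-aut} to write any $[\phi] = [f_n] \cdots [f_0]$ with $f_0 \in \Aut(\propF{p})$ and $f_i \in \Aut(K_i)$ along a descending chain $\propF{p} = K_0 \supset \cdots \supset K_n$ of successive index $p$. The key lemma is that for any such $K_i \subset K_{i-1}$, the composition
$\Aut(K_{i-1})_{K_i} \to \Aut(K_i) \to \Aut(K_i)/\SAut(K_i) \cong \dbZ_p^\times$
is surjective. Taking a basis $\{y_1, \dots, y_{d_U}\}$ of $K_{i-1}$ with $K_i$ the kernel of the surjection $y_1 \mapsto 1$, $y_j \mapsto 0$, a direct Reidemeister--Schreier computation on the basis $\{y_1^p\} \cup \{y_1^j y_i y_1^{-j}\}$ of $K_i$ shows that the automorphism $\alpha(y_j) = y_j^{c_j}$ (with $c_j \in \dbZ_p^\times$) preserves $K_i$ and satisfies
\[
\det_{K_i}(\alpha|_{K_i}) = c_1 \cdot \epsilon(c_1 \bmod p)^{d_U - 1} \cdot \prod_{i \geq 2} c_i^p,
\]
where $\epsilon$ denotes the Legendre symbol (trivially $1$ for $p = 2$); as the $c_j$ range over $\dbZ_p^\times$ this realizes every value of $\dbZ_p^\times$. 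Given the lemma, factor $f_i = \alpha_i \eta_i$ with $\alpha_i \in \Aut(K_{i-1})_{K_i}$ and $\eta_i \in \SAut(K_i) \subseteq \SComm$; using the normality of $\SComm$, move each $\eta_i$ to the right and absorb $\alpha_i$ into $f_{i-1}$ (both in $\Aut(K_{i-1})$), inductively descending the depth to reach the case $n = 0$.

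The two previous steps give $\Comm(\propF{p})/\SComm(\propF{p}) = \Aut(\propF{p})/(\Aut(\propF{p}) \cap \SComm(\propF{p}))$, which is a quotient of $\Aut(\propF{p})/\SAut(\propF{p}) \cong \dbZ_p^\times$. To confine this to a quotient of $\dbZ_p^\times/(1 + p\dbZ_p) \cong \dbF_p^\times \cong \dbZ/(p-1)\dbZ$, it suffices to exhibit, for each $\lambda \in 1 + p\dbZ_p$, an element $\alpha \in \Aut(\propF{p}) \cap \SComm$ with $\det_{\propF{p}}(\alpha) = \lambda$. Fix a basis $\{x_1, \dots, x_d\}$ of $\propF{p}$ (with $d \geq 2$), let $\propU{p}$ be the kernel of the quotient $x_1 \mapsto 1$, $x_j \mapsto 0$, and define $\alpha(x_i) = x_i^{c_i}$ with
\[
c_1 = \lambda^{p/(p-1)}, \qquad c_2 = \lambda^{-1/(p-1)}, \qquad c_i = 1 \text{ for } i \geq 3,
\]
where the $(p-1)$-th roots are taken inside the uniquely $(p-1)$-divisible pro-$p$ group $1 + p\dbZ_p$. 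Then $\det_{\propF{p}}(\alpha) = c_1 c_2 = \lambda$, and since each $c_i \equiv 1 \bmod p$ the Legendre factor is $1$, so the formula above yields $\det_{\propU{p}}(\alpha|_{\propU{p}}) = c_1 c_2^p = 1$. Hence $\alpha|_{\propU{p}} \in \SAut(\propU{p})$ and $[\alpha] = [\alpha|_{\propU{p}}] \in \SComm$, as required. The main technical obstacle is the explicit determinant calculation on the Reidemeister--Schreier basis, which requires carefully tracking the permutation action of $c_1$ on the coset representatives and the resulting Legendre-symbol contribution.
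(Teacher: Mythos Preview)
Your normality argument (Step 1) has a genuine gap. For $\alpha\beta\alpha^{-1}$ to be an automorphism of $\alpha(\propW{p})$ you need $\beta(\propW{p})=\propW{p}$, so $\propW{p}$ must be chosen $\beta$-invariant (e.g.\ characteristic in $\propV{p}$). The determinants then agree, but the value is $\det_{\propW{p}}(\beta|_{\propW{p}})$, not $\det_{\propV{p}}(\beta)$; concluding this equals $1$ amounts to the inclusion $\SAut(\propV{p})\subseteq\SAut(\propW{p})$ for characteristic $\propW{p}\subset\propV{p}$. In the abstract case this follows from perfectness of $\SAut$, but for free pro-$p$ groups it is an open problem --- see the Remark after Proposition~\ref{prop:SFpinvariant} and Question~\ref{q:scomm perfect}. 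The paper circumvents this by replacing $\SAut$ with $\SFAut$ (the preimage of $\SL_d(\dbF_p)$), which is intrinsically the subgroup generated by pro-$p$ elements (Lemma~\ref{lem:propelements}); that characterization \emph{does} pass to characteristic subgroups and survive conjugation, and Proposition~\ref{prop:SCommalt} then shows every $\SFAut(\propU{p})$ already lies in $\SComm(\propF{p})$.

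Your Steps 2 and 3 are correct and give a route different from the paper's. Moreover, your strategy can be repaired: the factorization in Step 2 does \emph{not} need normality. When you move $\eta_n\in\SAut(K_n)$ past $f_{n-1}\in\Aut(K_{n-1})$, the key point is that $K_n\subset K_{n-1}$, so $f_{n-1}$ is defined on all of $K_n$ and $f_{n-1}^{-1}\eta_n f_{n-1}\in\SAut(f_{n-1}^{-1}(K_n))$ by pure transport of structure --- no restriction of $\eta_n$ is involved. Iterating down the chain yields $\Comm(\propF{p})=\Aut(\propF{p})\cdot\SComm(\propF{p})$ as a set. Normality now follows a posteriori: $\Aut(\propF{p})$ normalizes $\SComm(\propF{p})$ for the easy reason that any $a\in\Aut(\propF{p})$ is defined on every open $\propV{p}$, so $a\,\SAut(\propV{p})\,a^{-1}=\SAut(a(\propV{p}))$; hence $(as)\,\SComm(\propF{p})\,(as)^{-1}=a\,\SComm(\propF{p})\,a^{-1}=\SComm(\propF{p})$. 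After this reordering, your Step 3 completes the proof as written.
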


In order to prove Proposition~\ref{AComm=SComm2}, we will need to obtain a slightly different description of $\SComm(\propF{p})$ (Proposition \ref{prop:SCommalt}). To this end, we introduce some additional notations:

\begin{itemize}
	\item[(1)] Let $\IA(\propF{p})$ denote the kernel of $\pi:\Aut(\propF{p})\to \Aut(\propF{p}/[\propF{p},\propF{p}]))\cong \GL_d(\dbZ_p)$. It is called the Torelli subgroup. Note that $\IA(\propF{p})=\Aut(\propF{p};[\propF{p},\propF{p}])$ using our notation from subsection~\ref{subsec-prelim-auto}.
	\item[(2)] Let $\IA(\propF{p},p)=\Aut(\propF{p};\Phi(\propF{p}))$ be the kernel of
	$\pi_p:\Aut(\propF{p})\to \Aut(\propF{p}/\Phi(\propF{p})))\cong \GL_d(\dbF_p)$. It is often called the mod $p$ Torelli subgroup. 
	\item[(3)] Let $\SFAut(\propF{p})$ denote the preimage of $\SL_{d}(\dbF_p)$ under the map $\pi_p:\Aut(\propF{p})\to  \GL_d(\dbF_p)$.
\end{itemize}

Thus, we have the inclusions $\IA(\propF{p})\subset \IA(F,p)\subset \SFAut(\propF{p})$
and $\IA(\propF{p})\subset \SAut(\propF{p})\subset \SFAut(\propF{p})$. Note that the subgroups $\IA(\propF{p})$ and $\IA(\propF{p},p)$ are pro-$p$. This is true for $\IA(\propF{p},p)$ by Proposition~\ref{Aut:virtprop} and hence also for $\IA(\propF{p})$
as it is a closed subgroup of $\IA(\propF{p},p)$.
The group $\SFAut(\propF{p})$ is not pro-$p$, but is still generated by pro-$p$ elements
(see Lemma~\ref{lem:propelements} below). We  say that an element $g$ of a profinite group $G$ is a {\it pro-$p$ element} if $\overline{\la g\ra}$, the procyclic subgroup generated by $g$, is a pro-$p$ group.

\begin{Lemma} 
	\label{lem:propelements}
	$\SFAut(\propF{p})$ is precisely the subgroup of $\Aut(\propF{p})$ generated by the pro-$p$ elements.
\end{Lemma}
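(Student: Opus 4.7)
The plan is to prove the two inclusions separately, relying on the short exact sequence
\[
1 \to \IA(\propF{p}, p) \to \SFAut(\propF{p}) \xrightarrow{\pi_p} \SL_d(\dbF_p) \to 1
\]
together with the fact, already recorded via Proposition~\ref{Aut:virtprop}, that $\IA(\propF{p}, p)$ is pro-$p$.

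First I will verify that every pro-$p$ element of $\Aut(\propF{p})$ lies in $\SFAut(\propF{p})$. If $g$ is pro-$p$, then $\pi_p(\overline{\la g\ra})$ is both finite and pro-$p$, hence a $p$-group, so $\pi_p(g)$ is a $p$-element of $\GL_d(\dbF_p)$. Since $|\dbF_p^*| = p-1$ is coprime to $p$, any such element has determinant $1$, and thus $\pi_p(g) \in \SL_d(\dbF_p)$.

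For the reverse inclusion, my strategy will be to write each $\alpha \in \SFAut(\propF{p})$ as a finite product of pro-$p$ elements. Since $\SL_d(\dbF_p)$ is generated by the elementary matrices $E_{ij}(1)$, which are the $\pi_p$-images of the Nielsen transformations $R_{ij}$, it suffices to prove that each $R_{ij}$ (and likewise each $L_{ij}$) is a pro-$p$ element: given this, I can express $\pi_p(\alpha)$ as a word in the $E_{ij}(1)$ and lift it to a corresponding word $\beta$ in the $R_{ij}$; then $\alpha \beta^{-1} \in \IA(\propF{p}, p)$ is itself a single pro-$p$ element, so $\alpha = (\alpha \beta^{-1})\beta$ is a product of pro-$p$ elements.

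The crux is therefore to show that $R_{ij}$ is a pro-$p$ element, which is nontrivial because $\Aut(\propF{p})$ is not itself pro-$p$. The idea is to upgrade the integer exponent to a $\dbZ_p$-exponent. For each $\lambda \in \dbZ_p$, the rule $x_i \mapsto x_i x_j^\lambda$ and $x_k \mapsto x_k$ for $k \neq i$ defines a continuous endomorphism $\phi_\lambda$ of $\propF{p}$ by the universal property of the free pro-$p$ group; a direct check on generators then gives $\phi_\lambda \phi_\mu = \phi_{\lambda + \mu}$, so each $\phi_\lambda$ is invertible and $\lambda \mapsto \phi_\lambda$ is a group homomorphism. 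Continuity from $\dbZ_p$ into $\Aut(\propF{p})$ with the $A$-topology is immediate from checking the defining neighborhoods on generators. The image is thus a continuous homomorphic image of the pro-$p$ group $\dbZ_p$, hence pro-$p$, and it contains $R_{ij} = \phi_1$, so $\overline{\la R_{ij}\ra}$ is pro-$p$. The argument for $L_{ij}$ is identical with the roles of left and right multiplication swapped.
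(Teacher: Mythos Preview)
Your proof is correct and follows the same overall architecture as the paper's: both inclusions are handled via the extension $1 \to \IA(\propF{p},p) \to \SFAut(\propF{p}) \to \SL_d(\dbF_p) \to 1$ together with the fact that $\IA(\propF{p},p)$ is pro-$p$, and the containment of pro-$p$ elements in $\SFAut(\propF{p})$ is obtained in both cases from the determinant map to $\dbF_p^{\times}$.

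The one point where you diverge is in showing that lifts of elementary matrices are pro-$p$. You construct an explicit one-parameter subgroup $\lambda \mapsto \phi_\lambda$ from $\dbZ_p$ through $R_{ij}$, which is perfectly valid. The paper instead uses the softer extension-theoretic observation: since $E_{ij}(1)$ has order $p$ in $\SL_d(\dbF_p)$, any lift $g$ satisfies $g^p \in \IA(\propF{p},p)$, so $\overline{\la g^p\ra}$ is a closed subgroup of a pro-$p$ group and hence pro-$p$, whence $\overline{\la g\ra}$ is an extension of a pro-$p$ group by a group of order dividing $p$ and is therefore pro-$p$. This avoids constructing the one-parameter family and checking continuity, and it shows the slightly stronger fact that \emph{every} lift (not just $R_{ij}$) is pro-$p$. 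Your route is more concrete; the paper's is a touch shorter and more general.
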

\begin{proof} Denote the subgroup of $\Aut(\propF{p})$ generated by the pro-$p$ elements by $\Aut^+(\propF{p})$. 
	Recall that $\IA(\propF{p},p)$ denotes the kernel of the map $\pi_p:\Aut(\propF{p})\to \GL_d(\dbF_p)$. Since $\IA(\propF{p},p)$ is a pro-$p$ group, $\Aut^+(\propF{p})$ contains $\IA(\propF{p},p)$.
	
	By definition of $\SFAut(\propF{p})$ we have $\SFAut(\propF{p})/\IA(\propF{p},p)\cong \SL_d(\dbF_p)$. 
	The group $\SL_d(\dbF_p)$ is generated by (non-trivial) elementary matrices, each of which
	has order $p$. Since $\IA(\propF{p},p)$ is pro-$p$, any lift of any of these elementary matrices
	to $\SFAut(\propF{p})$ is a pro-$p$ element. It follows that $\Aut^+(\propF{p})$ contains $\SFAut(\propF{p})$.
	
	On the other hand, $\SFAut(\propF{p})$ is the kernel of a homomorphism from $\Aut(\propF{p})$
	to $\dbZ/(p-1)\dbZ$. Since $\dbZ/(p-1)\dbZ$ has no non-trivial pro-$p$ elements, 
	any pro-$p$ element of $\Aut(\propF{p})$ must lie in this kernel, so $\Aut^+(\propF{p})\subseteq \SFAut(\propF{p})$. 
\end{proof}

As an immediate consequence of Lemma~\ref{lem:propelements}, we obtain the following result, which can be seen as a counterpart of Corollary~\ref{cor:SAut_char}:

\begin{Proposition}
	\label{prop:SFpinvariant} Let $\propU{p}$ be an open characteristic subgroup of $\propF{p}$,
	so that $\Aut(\propF{p})\subseteq \Aut(\propU{p})$ if we view $\Aut(\propF{p})$ and $\Aut(\propU{p})$ as subgroups of $\Comm(\propF{p})$. 
Then $\SFAut(\propF{p})\subseteq \SFAut(\propU{p})$.
\end{Proposition}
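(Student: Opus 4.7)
The plan is to reduce everything to Lemma~\ref{lem:propelements}, which characterizes $\SFAut(\propF{p})$ as the subgroup of $\Aut(\propF{p})$ generated by pro-$p$ elements. It therefore suffices to show that every pro-$p$ element of $\Aut(\propF{p})$, when viewed as an element of $\Comm(\propF{p})$, lies in $\Aut(\propU{p})$ and is a pro-$p$ element there; the inclusion $\SFAut(\propF{p})\subseteq \SFAut(\propU{p})$ then follows by applying Lemma~\ref{lem:propelements} to $\propU{p}$.

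The first step is to note that since $\propU{p}$ is characteristic in $\propF{p}$, every $\phi\in \Aut(\propF{p})$ restricts to an automorphism $\phi|_{\propU{p}}\in \Aut(\propU{p})$, and the assignment $\rho:\Aut(\propF{p})\to\Aut(\propU{p})$, $\phi\mapsto \phi|_{\propU{p}}$, is a group homomorphism. Under the $\Aut$-$\Comm$ injective identifications of $\Aut(\propF{p})$ and $\Aut(\propU{p})$ with subgroups of $\Comm(\propF{p})$, the inclusion $\Aut(\propF{p})\subseteq \Aut(\propU{p})$ referred to in the statement is precisely~$\rho$.

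The second step is to verify that $\rho$ is continuous with respect to the $A$-topologies on both sides. Given a basic open neighborhood $\Aut(\propU{p};V)$ of the identity in $\Aut(\propU{p})$ with $V$ open in $\propU{p}$, the subgroup $V$ is also open in $\propF{p}$ (since $\propU{p}$ is open in $\propF{p}$). Thus $\Aut(\propF{p};V)$ is a basic open neighborhood of the identity in $\Aut(\propF{p})$, and any $\phi\in \Aut(\propF{p};V)$ satisfies $\phi(g)\equiv g\bmod V$ for every $g\in \propF{p}$ and \emph{a fortiori} for every $g\in \propU{p}$; hence $\Aut(\propF{p};V)\subseteq \rho^{-1}(\Aut(\propU{p};V))$, proving continuity of $\rho$.

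The final step uses that both $\Aut(\propF{p})$ and $\Aut(\propU{p})$ are profinite, as $\propF{p}$ and $\propU{p}$ are finitely generated. If $g\in \Aut(\propF{p})$ is a pro-$p$ element, the closed subgroup $\overline{\la g\ra}$ is a pro-$p$ group, and its image $\rho(\overline{\la g\ra})$ is compact (hence closed in $\Aut(\propU{p})$) and remains pro-$p$ as the continuous image of a pro-$p$ group. Since $\overline{\la \rho(g)\ra}\subseteq \rho(\overline{\la g\ra})$, the element $\rho(g)$ is a pro-$p$ element of $\Aut(\propU{p})$ and therefore lies in $\SFAut(\propU{p})$ by Lemma~\ref{lem:propelements}. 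Because $\SFAut(\propF{p})$ is generated by pro-$p$ elements, this yields $\SFAut(\propF{p})\subseteq \SFAut(\propU{p})$. I do not foresee any serious obstacle; the only mildly delicate point is the continuity of $\rho$, which is immediate once one observes that openness in $\propU{p}$ upgrades to openness in $\propF{p}$.
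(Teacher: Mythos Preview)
Your proof is correct and is exactly the argument the paper has in mind: the paper states the proposition as an immediate consequence of Lemma~\ref{lem:propelements}, and you have simply spelled out the details of why the restriction map $\rho:\Aut(\propF{p})\to\Aut(\propU{p})$ is continuous for the $A$-topologies and hence sends pro-$p$ elements to pro-$p$ elements.
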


\begin{Remark}\rm 
	We do not know if Proposition~\ref{prop:SFpinvariant} remains true if we replace $\SFAut$ by $\SAut$.
\end{Remark}

\begin{Proposition}
\label{prop:SCommalt}
The subgroup $\SComm(\propF{p})$ contains $\SFAut(\propU{p})$
for every open subgroup $\propU{p}$ of $\propF{p}$. Hence $\SComm(\propF{p})$ is equal to the subgroup
generated by all $\SFAut(\propU{p})$.
\end{Proposition}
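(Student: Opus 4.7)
The inclusion $\SComm(\propF{p})\subseteq \langle \SFAut(\propU{p}):\propU{p}\text{ open in }\propF{p}\rangle$ is immediate from $\SAut(\propU{p})\subseteq \SFAut(\propU{p})$, so the real content is the reverse inclusion: $\SFAut(\propU{p})\subseteq \SComm(\propF{p})$ for every open subgroup $\propU{p}$ of $\propF{p}$. By Lemma~\ref{lem:propelements}, the group $\SFAut(\propU{p})$ is generated by its pro-$p$ elements, and since $\SAut(\propU{p})\subseteq \SComm(\propF{p})$ by definition, it suffices to prove that every pro-$p$ element $g\in\Aut(\propU{p})$ lies in $\SComm(\propF{p})$.

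My plan is to reduce in two steps: first from an arbitrary $\propU{p}$ to the case $\propU{p}=\propF{p}$, and then within $\Aut(\propF{p})$ to a single explicit one-parameter family. For the first reduction, given a pro-$p$ element $g\in \Aut(\propU{p})$, I would apply Proposition~\ref{prop-decomposing-prod-aut} to obtain $[g]=[f_n][f_{n-1}]\cdots [f_0]$ in $\Comm(\propF{p})$, where $\propF{p}=K_0\supsetneq K_1\supsetneq\cdots\supsetneq K_n=\propU{p}$, $[K_i:K_{i+1}]=p$, and $f_i\in\Aut(K_i)$. By refining the construction so that the flag $(K_i)$ is chosen $\overline{\langle g\rangle}$-invariant (which is possible since $\overline{\langle g\rangle}$ is pro-$p$ and acts on the finite set of index-$p$ subgroups of each $K_i$, so such an action has a fixed point at each stage), one arranges that each $f_i$ is pro-$p$, hence in $\SFAut(K_i)$. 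Thus the problem is reduced to the single assertion $\SFAut(\propF{p})\subseteq \SComm(\propF{p})$.

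For the second reduction, fix a basis $X=\{x_1,\ldots,x_d\}$ of $\propF{p}$ and for $\lambda\in 1+p\dbZ_p$ consider the automorphism $\alpha_\lambda\in\Aut(\propF{p})$ given by $\alpha_\lambda(x_1)=x_1^\lambda$ and $\alpha_\lambda(x_i)=x_i$ for $i>1$, which is pro-$p$ with $\det(\alpha_\lambda)=\lambda$. Every pro-$p$ element $h\in\Aut(\propF{p})$ has $\det(h)\in 1+p\dbZ_p$, and upon setting $\lambda=\det(h)$ one finds $h\cdot\alpha_\lambda^{-1}\in\SAut(\propF{p})\subseteq \SComm(\propF{p})$. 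Therefore it suffices to prove that $\alpha_\lambda\in \SComm(\propF{p})$ for every $\lambda\in 1+p\dbZ_p$.

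Establishing $\alpha_\lambda\in \SComm(\propF{p})$ is the hard part. The plan is to write $\alpha_\lambda$ as a finite product of automorphisms of open subgroups, each of $\SAut$ type, inside $\Comm(\propF{p})$. Taking $\propV{p}=F(X,x_1,p)$ with the basis provided by Lemma~\ref{lem:Schreier}(a), a direct computation shows that $\alpha_\lambda$ preserves $\propV{p}$ with $\det(\alpha_\lambda|_{\propV{p}})=\lambda$, using that the permutation $j\mapsto \lambda j\pmod p$ is trivial for $\lambda\equiv 1\pmod p$. Passing to further open subgroups raises the $p$-adic valuation of $\det-1$ (for instance, $\det(\alpha_\lambda|_{F(X,x_2,p)})=\lambda^p\in 1+p^2\dbZ_p$ for $p$ odd), and one can exploit these restrictions together with carefully chosen compensating elements of $\SAut$ of intermediate subgroups to decompose $\alpha_\lambda$ as a product of elements in $\SAut$'s of various open subgroups. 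The main obstacle will be ensuring exact cancellation of the nontrivial determinant contributions at each stage, which requires careful bookkeeping of the relevant Schreier transversals and permutation signs.
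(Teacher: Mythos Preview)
Your reduction in the first step via Proposition~\ref{prop-decomposing-prod-aut} does not work as stated. When $g\in\Aut(\propU{p})$, so $U=V=\propU{p}$, the natural application of that proposition (taking $M=N$ at each stage) yields the trivial decomposition $f_0=\cdots=f_{n-1}=\id$ and $f_n=g$, which achieves nothing. Even granting some nontrivial decomposition with each $f_i$ pro-$p$, you would then need $\SFAut(K_i)\subseteq\SComm(\propF{p})$ for \emph{every} $i$, not just $i=0$, so no genuine reduction to the top group has occurred. (There is a correct reduction of a different flavor: since every open subgroup of $\propU{p}$ is open in $\propF{p}$ one has $\SComm(\propU{p})\subseteq\SComm(\propF{p})$, so it would suffice to prove $\SFAut(\calF)\subseteq\SComm(\calF)$ for an arbitrary non-abelian free pro-$p$ group $\calF$ of finite rank and then specialize to $\calF=\propU{p}$; but this is not the argument you wrote.)

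More seriously, your third step --- showing $\alpha_\lambda\in\SComm(\propF{p})$ --- is where the entire content lies, and your sketch does not supply it. Recording how $\det$ transforms under restriction is a reasonable warm-up, but the phrase ``carefully chosen compensating elements'' hides precisely the idea that is needed and that you have not produced; you yourself flag this as ``the main obstacle''.

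The paper runs in the opposite direction: rather than reducing from $\propU{p}$ up to $\propF{p}$, it treats a \emph{proper} open subgroup $\propU{p}$ directly and then invokes Proposition~\ref{prop:SFpinvariant} (namely $\SFAut(\propF{p})\subseteq\SFAut(\propU{p})$ for characteristic open $\propU{p}$) to cover $\propF{p}$ itself. For proper $\propU{p}$, pick $\propV{p}\supseteq\propU{p}$ with $[\propV{p}:\propU{p}]=p$ and a basis $\{x_1,\ldots,x_m\}$ of $\propV{p}$ adapted to $\propU{p}$. With $\omega$ a topological generator of $\dbZ_p^\times$ (for $p>2$), define $\alpha,\beta\in\Aut(\propV{p})$ by $\alpha(x_1)=x_1^\omega$, $\beta(x_2)=x_2^\omega$, identity elsewhere. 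Then $\det_{\propV{p}}(\alpha)=\det_{\propV{p}}(\beta)=\omega$, so $\alpha^{-1}\beta\in\SAut(\propV{p})\subseteq\SComm(\propF{p})$. The key computation is that both $\alpha,\beta$ preserve $\propU{p}$, with $\det_{\propU{p}}(\alpha)=\omega$ but $\det_{\propU{p}}(\beta)=\omega^p$; hence $\det_{\propU{p}}(\alpha^{-1}\beta)=\omega^{p-1}$, which topologically generates $\ker(\dbZ_p^\times\to\dbF_p^\times)$. Consequently $\overline{\langle\alpha^{-1}\beta\rangle}\cdot\SAut(\propU{p})=\SFAut(\propU{p})$, and the inclusion drops out in one stroke. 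This explicit determinant mismatch upon restriction is exactly the ``compensating element'' your outline gestures at but never constructs.
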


\begin{proof} 
By Proposition~\ref{prop:SFpinvariant}, it suffices to prove Proposition~\ref{prop:SCommalt}
assuming that $\propU{p}$ is a proper open subgroup.

Take any such $\propU{p}$ and choose any subgroup $\propV{p}$ containing $\propU{p}$ with $[\propV{p}:\propU{p}]=p$. 
By the pro-$p$ analogue of Lemma~\ref{lem-description-SCQ} (whose proof is identical to the abstract version),
there exists a basis $X=\{x_1,\ldots, x_m\}$ of $\propV{p}$ such that $\propU{p}$ has a basis 
$$Y=\{x_1^p\}\cup\{x_1^j x_i x_1^{-j}: 2\leq i\leq m, 0\leq j\leq p-1\}.$$
\skv
First we will assume that $p>2$. The case $p=2$ will require minor modifications and
will be handled at the end. The multiplicative group $\dbZ_p^{\times}$ is a direct product of  $1+p\dbZ_p$ and the subgroup of roots of unity of order coprime to $p$, which maps isomorphically onto $\dbF_p^{\times}$. Since $p>2$, we have $1+p\dbZ_p\cong\dbZ_p$, so $\dbZ_p^{\times}\cong \dbZ_p\times \dbZ/(p-1)\dbZ$ is procyclic. For the rest of the proof we fix a generator $\omega$ for $\dbZ_p^{\times}$.

Note that $\omega^{p-1}$ is a generator for the kernel of the projection $\dbZ_p^{\times}\to
\dbF_p^{\times}$, so $\SFAut(\propU{p})$ is precisely the set of $\phi\in \Aut(\propU{p})$ with 
$\det_{\propU{p}}(\phi)\in \overline{\la \omega^{p-1}\ra}$. As in the case of free groups, by $\det_{\propU{p}}$ of an element of $\Aut(\propU{p})$
we mean the determinant of its image in $\Aut(\propU{p}/[\propU{p},\propU{p}])$.

\skv
Let us now consider the automorphisms $\alpha$ and $\beta$ of $\propV{p}$ defined by 
$\alpha(x_1)=x_1^{\omega}$, $\alpha(x_i)=x_i$ for $i\neq 1$ and
$\beta(x_2)=x_2^{\omega}$, $\beta(x_i)=x_i$ for $i\neq 2$. Then $\det_{\propV{p}}(\alpha)=\det_{\propV{p}}(\beta)=\omega$,
so $\alpha^{-1}\beta\in \SAut(\propV{p})$. 
\skv
We claim that $\propU{p}$ is invariant under both $\alpha$ and $\beta$. Indeed, $\propU{p}$
is normally generated in $\propV{p}$ by the set $T=\{x_1^p,x_2,\ldots, x_m\}$. Clearly,
both $\alpha$ and $\beta$ preserve the procyclic subgroup $\overline{\la g \ra}$
for each $g\in T$ and hence preserve $\propU{p}$ as well. Thus we can consider
both $\alpha$ and $\beta$ as automorphisms of $\propU{p}$. Let us now compute $\det_{\propU{p}}$ for these automorphisms.
\skv
Note that $\beta$ raises $p$ elements of $Y$ (namely, the conjugates of $x_2$)
to the power $\omega$ and fixes the remaining elements of $Y$, so $\det_{\propU{p}}(\beta)=\omega^p$.

To compute $\det_{{\propU{p}}}(\alpha)$ we only need to know the elements of $\alpha(Y)$
modulo {$[\propU{p},\propU{p}]$. Let $r$ denote the projection of $\omega$ to $\dbF_p$. Then
$x_1^{\omega j}=x_1^{p\lam_j} x_1^{(rj\,\mathrm{mod}\, p)}$ for some $\lam_j\in\dbZ_p$.
Since $x_1^p\in Y$, the element $x_1^{p\lam_j}$ lies in $\propU{p}$.} Hence
for any $i\neq 1$
we have 
\begin{equation}
\label{eq:conjugation}
\alpha(x_1^j x_i x_1^{-j})=x_1^{\omega j}x_i\, x_1^{-\omega j}\equiv  x_1^{(rj\,\mathrm{mod}\, p)}x_i\, x_1^{-(rj\,\mathrm{mod}\, p)} \mod 
{[\propU{p},\propU{p}]}.
\end{equation}
Since $\omega$ is a generator
of $\dbZ_p^{\times}$, $r$ is a primitive root of unity
mod $p$, so for a fixed $i\neq 1$, the set $\{x_1^j x_i x_1^{-j}: 0\leq j\leq p-1\}$ coincides with
$\{x_i, x_1^{r}x_i x_1^{-r}, x_1^{r^2} x_i\, x_1^{-r^2},\ldots, x_1^{r^{p-1}}x_i\, x_1^{-r^{p-1}}\}$ mod {$[\propU{p},\propU{p}]$}. 
By \eqref{eq:conjugation}, $\alpha$ acts on the latter sequence, modulo {$[\propU{p},\propU{p}]$}, as a cyclic shift, and this shift is an even permutation since $p$ is odd. Since $\alpha(x_1^p)=(x_1^p)^{\omega}$, using Lemma~\ref{lem:absperfect}(a) we get $\det_{\propU{p}}(\alpha)=\omega$. 

Hence $\det_{{\propU{p}}}(\alpha^{-1}\beta)=\omega^{p-1}$, so the set 
$\overline{\la\alpha^{-1}\beta\ra}\SAut(\propU{p})$ consists of all $\phi\in\Aut(\propU{p})$ with 
$\det_{\propU{p}}(\phi)\in\overline{\la\omega^{p-1}\ra}$, and by an earlier remark this set is exactly
$\SFAut(\propU{p})$. Since $\alpha^{-1}\beta\in \SAut(\propV{p})$, we proved that $\SFAut(\propU{p})\subseteq \SComm({\propF{p}})$, as desired.
\skv
Let us now consider the case $p=2$. In this case $\dbZ_2^{\times}=\la -1 \ra\times C$ where $C\cong \dbZ_2$.
Choose a generator $\omega$ for $C$ and define $\alpha$ and $\beta$ as in the case $p>2$. The same computation
yields $\det_{{\propU{p}}}(\alpha^{-1}\beta)=\pm \omega$. The sign depends on the parity of $\rk(\propV{p})$, but we can replace
$\omega$ by $-\omega$ since $\la -1 \ra\times \overline{\la \omega\ra}=\la -1 \ra \times \overline{\la -\omega\ra}$,
so we can assume that $\det_{\propU{p}}(\alpha^{-1}\beta)=\omega$.
\skv
As in the case $p>2$, we deduce that $\SComm(\propF{p})$ contains all $\phi\in \Aut(\propU{p})$ with 
$\det_{\propU{p}}(\phi)\in \overline{\la \omega\ra}$. On the other hand, the computation from the proof
of Lemma~\ref{lem-stab-not-contained-special-aut}(b) for $p=2$ shows that $\SComm(\propF{p})$ contains some $\gamma\in \Aut(\propU{p})$ with $\det_{\propU{p}}(\gamma)=-1$.
Since $\la -1 \ra\times \overline{\la \omega\ra}=\dbZ_2^{\times}$, it follows that $\SComm(\propF{p})$ contains
the entire $\SFAut(\propU{p})$ (which in the case $p=2$ equals $\Aut(\propU{p})$).
\end{proof}

We are finally ready to prove Proposition~\ref{AComm=SComm2}.

\begin{proof}[Proof of Proposition~\ref{AComm=SComm2}]
First we prove that $\SComm(\propF{p})$ is normal in $\Comm(\propF{p})$.
By definition of $\SComm(\propF{p})$,
we just need to show
that $[\phi]\,\SAut(\propU{p})\,[\phi]^{-1} \subseteq \SComm(\propF{p})$ for any open subgroup $\propU{p}$ of $\propF{p}$ and $[\phi]\in \Comm(\propF{p})$. Let us now fix such $\propU{p}$ and $\phi$.

As in the proof of Lemma~\ref{lemma:Aut_1}(b), there exists an open subgroup
$\propW{p}$ which is characteristic in $\propU{p}$ such that $\phi$ is defined on $\propW{p}$,
and let $\propZ{p}=\phi(\propW{p})$. Then $\iota: \psi\mapsto \phi\psi\phi^{-1}$
is a continuous map from $\Aut(\propW{p})$ to $\Aut(\propZ{p})$ and therefore
sends pro-$p$ elements to pro-$p$ elements. Hence
by Lemma~\ref{lem:propelements} we have $\iota(\SFAut(\propW{p}))\subseteq\SFAut(\propZ{p})$.

Since $\SAut(\propU{p})\subseteq \SAut(\propW{p})$ by Proposition~\ref{prop:SFpinvariant},
 $\SAut(\propW{p})\subseteq \SFAut(\propW{p})$ (by definition) and 
$\SFAut(\propZ{p})\subseteq \SComm(\propF{p})$ by Proposition~\ref{prop:SCommalt}, it follows that
$$\iota(\SAut(\propU{p}))\subseteq \iota(\SFAut(\propW{p}))\subseteq\SFAut(\propZ{p})\subseteq\SComm(\propF{p}).$$ Thus, 
$[\phi]\,\SAut(\propU{p})\,[\phi]^{-1} \subseteq \SComm(\propF{p})$, as desired.
\skv
If $p=2$, we have $\SFAut(\propU{p})=\Aut(\propU{p})$ for any open subgroup $\propU{p}$,
so Proposition~\ref{prop:SCommalt} already implies that $\SComm(\propF{p})=\Comm(\propF{p})$.
Thus from now on we will assume that $p>2$.
\skv

Now let $Q=\Comm(\propF{p})/\SComm(\propF{p})$. As in the proof of Proposition~\ref{prop:SCommalt},
fix a generator $\omega$ of $\dbZ_p^{\times}$.
For each open subgroup $\propU{p}$ of $\propF{p}$ choose $\beta_{\propU{p}}\in \Aut({\propU{p}})$ with 
$\det_{\propU{p}}(\beta_{\propU{p}})=\omega$,
and let $b_{\propU{p}}$ be the image of $\beta_{\propU{p}}$ in $Q$. Note that $b_{\propU{p}}$ is independent of the choice of
$\beta_{\propU{p}}$ -- indeed, if $\beta'_{\propU{p}}$ is another element with $\det_{\propU{p}}(\beta'_{\propU{p}})=\omega$,
then $\beta'_{{\propU{p}}}\beta_{\propU{p}}^{-1}\in \SAut({\propU{p}})\subset \SComm(\propF{p})$.

Since $\beta_{\propU{p}}^{p-1}\in \SFAut(\propU{p})\subset \SComm(\propF{p})$
by Proposition~\ref{prop:SCommalt}, $b_{\propU{p}}$ has finite order (dividing $p-1$), so the image of the procyclic
subgroup $\overline{\la \beta_{\propU{p}} \ra}$ in $Q$ is the abstract group $\la b_{\propU{p}}\ra$ and
hence $Q$ is abstractly generated by all $b_U$.

Suppose now that $\propU{p}$ and $\propV{p}$ are open subgroups of $\propF{p}$ with $\propU{p}\subset \propV{p}$ and $[\propV{p}:\propU{p}]=p$.
Then the automorphism $\beta$ from the proof of Proposition~\ref{prop:SCommalt}
can be used as $\beta_{\propV{p}}$. By computation from that proof we have $\det_{\propU{p}}(\beta)=\omega^p$.
Hence $\beta=\beta_{\propV{p}}$ is congruent to $\beta_{\propU{p}}^p$ modulo $\SComm(\propF{p})$, so $b_{\propV{p}}=b_{\propU{p}}^p$. But
$b_{\propU{p}}^{p-1}=1$ as observed before, so $b_{\propU{p}}=b_{\propV{p}}$.

It follows that $b_{\propU{p}}=b_{\propV{p}}$ for any open subgroups ${\propU{p}}$ and ${\propV{p}}$ of $\propF{p}$. Thus $Q$
is generated by a single element of order dividing $p-1$, which finishes the proof.
\end{proof}

\section{Conjugacy of elements and subgroups} \label{sec-conjugacy}

As before, $\absF{}$ will denote a non-abelian free group of finite rank and $\propF{p}$ a non-abelian free pro-$p$ group of finite rank.  
Our main goal in this section is to classify the conjugacy classes of elements and (topologically) finitely generated closed subgroups of $\Comm(\absF{}), \Comm_p(\absF{})$ and $\Comm(\propF{p})$ that have representatives in $\absF{}$, respectively in $\propF{p}$.
Since in this section we will be discussing generation of abstract and profinite groups in the same setting, we will not follow our standard convention that generation means topological generation for profinite groups, to avoid confusion.

\subsection{Conjugacy of elements in $\Comm(F)$ and $\Comm(\propF{p})$}

Recall that we already proved in section~\ref{sec-comm-abstract-free} that any two non-trivial elements of $F$ lie in the same conjugacy class
of $\Comm(F)$. In this subsection we will show that the same is true for the elements of $\propF{p}$ inside $\Comm(\propF{p})$; we will also
find a simple necessary and sufficient condition for two elements of $F$ to be conjugate in $\Comm_p(F)$. In order to apply these results later in the paper, it will be more convenient to deal with the conjugacy classes of a slightly larger collection of elements which we call {\it bounded}.

\begin{Definition} 
\label{def:bounded}
Let $G$ be one of the groups $\Comm(F)$, $\Comm_p(F)$ or $\Comm(\propF{p})$. An element $g\in G$ will be called {\it bounded}
if it belongs to some subgroup $H$ of $G$ which is open and
\begin{itemize}
\item free and commensurable with $F$ if $G=\Comm(F)$;
\item free pro-$p$ and commensurable with $\propF{p}$ if $G=\Comm(\propF{p})$;
\item free and $p$-commensurable with $F$ if $G=\Comm_p(F)$.
\end{itemize}
\end{Definition}
It is routine to check that in each case the set of bounded elements is conjugation-invariant.

\begin{Proposition}\label{prop-conjugacy-Comm}
Let $\calF=F$ or $\propF{p}$. Then any two non-trivial bounded elements of $\Comm(\calF)$
are conjugate in $\Comm(\calF)$.
\end{Proposition}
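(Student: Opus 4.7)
My approach splits the problem into two independent steps, which together yield the statement for both $\calF = F$ and $\calF = \propF{p}$. The first step reduces the problem to the conjugacy of non-trivial elements of $\calF$ itself; the second step settles the latter conjugacy question.

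I would first establish the second step, which is a direct analogue of Proposition~\ref{prop-one-conj-class}. For $\calF = F$ that proposition gives the claim immediately. For $\calF = \propF{p}$ I would transpose the argument verbatim, substituting Theorem~\ref{thm-free-factor} for Theorem~\ref{thm:Hallfreefactor}: given any non-trivial $g \in \propF{p}$ and a primitive element $x \in \propF{p}$ that does not commute with $g$, the closed subgroup $\overline{\la x, g \ra}$ is topologically $2$-generated and non-abelian. As a closed subgroup of a free pro-$p$ group it is itself free pro-$p$, and its rank lies in $\{0,1,2\}$, so it must equal $2$ and $\{x, g\}$ must be a topological basis. Theorem~\ref{thm-free-factor} then places $\overline{\la x, g \ra}$ as a free pro-$p$ factor of some open subgroup $\propU{p} \leq \propF{p}$, so $g$ is primitive in $\propU{p}$ and $x$ is primitive in both $\propU{p}$ and $\propF{p}$. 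The same three-step construction as in Proposition~\ref{prop-one-conj-class}, relying on transitivity of $\Aut(\propU{p})$ and $\Aut(\propF{p})$ on topological bases, produces a conjugating element in $\Comm(\propF{p})$.

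Next I would carry out the first step. Let $g \in \Comm(\calF)$ be non-trivial and bounded, with $g \in H$ for an open subgroup $H \leq \Comm(\calF)$ that is free (resp. free pro-$p$) and commensurable with $\calF$. Applying the appropriate free factor theorem to the (topologically) cyclic subgroup generated by $g$ inside $H$ produces an open subgroup $H' \leq H$ in which $g$ is primitive. Commensurability is inherited by $H'$, so $H' \cap \calF$ has finite index in $H'$; in particular $g^n \in \calF$ for some $n \geq 1$, and since $H'$ is a finitely generated pro-$p$ group in the pro-$p$ case, $n$ may there be chosen to be a $p$-power. We may assume $n \geq 2$. Extend $g$ to a basis $(g, x_2, \ldots, x_d)$ of $H'$ and let $U$ be the (topologically) normal closure in $H'$ of $\{g^n, x_2, \ldots, x_d\}$, so that $H'/U$ is cyclic of order $n$. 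By Lemma~\ref{lem:Schreier}(a) (the pro-$p$ version follows from the same Schreier transversal computation, valid in the pro-$p$ setting by subsection~\ref{subsec-prelim-freeprop}), the set $\{g^n\} \cup \{g^j x_i g^{-j} : 2 \le i \le d,\ 0 \le j \le n-1\}$ is a basis of $U$; in particular $g^n$ and $x_2$ lie in a common basis of $U$, hence are conjugate inside $\Aut(U) \subseteq \Comm(\calF)$, while $g$ and $x_2$ are conjugate inside $\Aut(H') \subseteq \Comm(\calF)$ as they are both primitive in $H'$. Composing the two conjugations produces an element of $\Comm(\calF)$ sending $g$ to $g^n \in \calF$, completing the reduction.

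Combining the two steps proves the proposition. The main technical point is ensuring that every ingredient used in the abstract case has an equally clean pro-$p$ counterpart: Theorem~\ref{thm-free-factor} supplies Hall's free factor theorem, closed subgroups of free pro-$p$ groups are again free pro-$p$, the Schreier basis decomposition goes through verbatim, and the pro-$p$ nature of $H'$ forces the finite indices between the relevant open subgroups to be $p$-powers.
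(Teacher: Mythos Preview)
Your proof is correct, and your Step~2 (the base case: non-trivial elements of $\calF$ form a single conjugacy class) is identical to the paper's Case~1, including the transposition to the pro-$p$ setting via Theorem~\ref{thm-free-factor}.

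The difference lies in how the general bounded case is handled. The paper observes that if $g_1 \in H_1$ and $g_2 \in H_2$ for free (pro-$p$) open subgroups $H_i$ commensurable with $\calF$, one can pick any non-trivial $g \in H_1 \cap H_2$ and, using the identification $\Comm(H_i) = \Comm(\calF)$, apply Case~1 with $H_i$ in place of $\calF$ to get $g_i \sim g$; this three-line bridging argument finishes the proof. Your Step~1 instead conjugates a bounded element $g$ to an explicit power $g^n \in \calF$ using the Schreier basis and automorphisms of $H'$ and of the index-$n$ subgroup $U$. This is more constructive but longer, and it still relies on the same identification $\Comm(H') = \Comm(\calF)$ (implicit when you write $\Aut(H'), \Aut(U) \subseteq \Comm(\calF)$, since $H'$ and $U$ are not subgroups of $\calF$ itself). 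Once that identification is granted, the paper's route is quicker: since one already knows that all non-trivial elements of $H'$ are conjugate in $\Comm(H') = \Comm(\calF)$, one can directly conjugate $g$ to any non-trivial element of $H' \cap \calF$ without the Schreier-basis detour.
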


\begin{proof}

Let $g_1,g_2\in \Comm(F)$ be non-trivial bounded elements.

{\it Case 1: $g_1,g_2\in \calF$}. In this case the result has already been established in Proposition~\ref{prop-one-conj-class}. 
Note that while we only stated Proposition~\ref{prop-one-conj-class} for free groups, the result remains
true in the pro-$p$ case with the same proof thanks to Theorem~\ref{thm-free-factor}.
\skv

{\it Case 2: $g_1,g_2$ belong to the same free subgroup $H$ which is open and commensurable with $\calF$}. This case reduces to Case~1 because 
$H$ and $\calF$ are virtually isomorphic and hence we can identify $\Comm(H)$ with $\Comm(\calF)$ (as explained in section~\ref{sec-prelim-general}).
\skv
{\it General case}. By assumption there exist free subgroups $H_1$ and $H_2$ of $\Comm(\calF)$ such that $g_i\in H_i$ for $i=1,2$
and $H_1$ and $H_2$ are both open and commensurable with $\calF$ and hence commensurable with each other, so in particular, $H_1\cap H_2$
is non-trivial. If $g$ is any non-trivial element of $H_1\cap H_2$, then applying Case~2 with $H=H_i$, we deduce
that $g$ and $g_i$ are conjugate in $\Comm(\calF)$  for $i=1,2$ and hence $g_1$ and $g_2$ are conjugate in $\Comm(\calF)$.
\end{proof}

\subsection{Conjugacy of elements in $\Comm_p(F)$ and applications.}

The main goal of this subsection is to establish when two bounded elements of $\Comm_p(F)$ are conjugate 
(see Proposition~\ref{prop-conjugacy-Comm-p}). In order to prove it we need a suitable modification of M.~Hall's free factor theorem dealing with pro-$p$ topology.

Let $K$ be a finitely generated subgroup of $F$. Recall that by Theorem~\ref{thm:Hallfreefactor}, $K$ is a free factor of some finite index subgroup $H$ of $F$. Moreover, \cite[Theorem~5.1]{Ha} implies that $K$ is an intersection of finite index subgroups of $F$ or, equivalently, an intersection of subgroups open in the profinite topology, and thus $K$ itself is closed in the profinite topology.

It is natural to ask how the picture changes if the profinite topology is replaced by the pro-$p$ topology. Recall that subgroups which are open (resp. closed) in the pro-$p$ topology are called $p$-open (resp. $p$-closed). Of course, it is not true that any finitely generated subgroup $K$ is a free factor of a $p$-open subgroup or that such $K$ is always $p$-closed; however, the last two conditions on $K$ turn
out to be equivalent. More generally, we have the following result of Ribes and Zalesski:

\begin{Proposition}[{See \cite[Corollary 3.3]{RZ}}]\label{prop-RZ-closed-iff-freefact}
Let $K$ be a finitely generated subgroup of $\absF{}$. The following hold:
\begin{enumerate}
	\item \label{RZ-closed-1} If $K$ is a free factor of a $p$-closed subgroup of $\absF{}$, then $K$ is $p$-closed.
	\item \label{RZ-closed-2} If $K$ is $p$-closed, then there is a $p$-open subgroup $H$ of $\absF{}$ such that $K$ is a free factor of $H$. 
\end{enumerate}
\end{Proposition}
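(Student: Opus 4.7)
My plan is to use Stallings' theory of immersed graphs, which provides a clean geometric characterization of $p$-closedness for finitely generated subgroups of free groups. The key preliminary fact to establish is: a finitely generated subgroup $K$ of $F = F(X)$ is $p$-closed in $F$ if and only if its Stallings core graph $\Gamma(K)$ embeds as a pointed subgraph into some finite regular cover of the rose $R_X$ whose deck transformation group is a finite $p$-group, equivalently, into the Schreier coset graph of some $p$-open subgroup $H \leq F$. An equivalent analytic formulation, which I would also want in hand, is that for such $K$ the closure $\overline{K}$ of $K$ in $\widehat{F}_p$ is naturally isomorphic to the pro-$p$ completion $\widehat{K}_p$.

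Granting this characterization, part (2) is almost immediate. If $K$ is $p$-closed, pick a finite regular $p$-cover $\Gamma'$ of $R_X$ into which $\Gamma(K)$ embeds as a pointed subgraph, and set $H := \pi_1(\Gamma')$, a $p$-open subgroup of $F$. Extend a spanning tree $T_K$ of $\Gamma(K)$ to a spanning tree $T'$ of $\Gamma'$; the non-tree edges of $\Gamma(K)$ then form a subset of the non-tree edges of $\Gamma'$, giving a free basis of $K$ that extends to a free basis of $H$. Hence $K$ is an abstract free factor of $H$.

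For part (1), assume $K$ is a free factor of a $p$-closed subgroup $L$, and write $L = K * M$ abstractly. The Stallings core graph $\Gamma(L)$ is obtained from $\Gamma(K)$ and $\Gamma(M)$ by wedging at the basepoint, possibly followed by foldings; crucially, no folding identifies an interior edge of $\Gamma(K)$ with an interior edge of $\Gamma(M)$, because the free product is abstract. Thus $\Gamma(K)$ appears naturally as a pointed subgraph of $\Gamma(L)$. Since $L$ is $p$-closed, $\Gamma(L)$ embeds into a finite regular $p$-cover of $R_X$ by the preliminary fact, and composing yields an embedding of $\Gamma(K)$ into the same cover. The preliminary characterization, applied in the reverse direction, then gives that $K$ is $p$-closed.

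The main obstacle is the preliminary characterization itself, and specifically its ``only if'' direction. The ``if'' direction is straightforward: if $\Gamma(K)$ embeds into the coset graph of a $p$-open $H$, then $K \leq H$, and iterating inside $H$ recovers $K = \bigcap_N KN$ as $N$ ranges over $p$-open normal subgroups of $F$. For the ``only if'' direction, one starts with the topological condition $K = \bigcap_N KN$ and must produce a single $N$ for which the natural morphism $\Gamma(K) \to \Gamma(KN)$ is injective on pointed graphs. Since $\Gamma(K)$ is finite, only finitely many pairs of distinct vertices and finitely many pairs of distinct edges could potentially be identified under such a morphism, so it suffices to separate each such pair by some $p$-open $N_i$ and take the intersection. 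Formalizing this translation between the topological and the graph-theoretic pictures---including the verification that $\Gamma(K) \to \Gamma(KN)$ has the required combinatorial meaning---is where the real technical content resides.
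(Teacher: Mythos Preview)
The paper does not give its own proof of this proposition; it is quoted directly from Ribes--Zalesskii \cite[Corollary~3.3]{RZ}, so there is no in-paper argument to compare against.

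Your plan for part~(\ref{RZ-closed-2}) and for the preliminary Stallings-graph characterisation of $p$-closedness is sound; this is indeed a standard route to the ``$p$-closed $\Rightarrow$ free factor of a $p$-open subgroup'' direction, and your sketch of the ``only if'' half of the preliminary fact (separate finitely many vertex-pairs by $p$-open subgroups and intersect) is correct.

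Your plan for part~(\ref{RZ-closed-1}), however, contains a genuine error. The claim that $\Gamma(K)$ embeds as a pointed subgraph of $\Gamma(L)$ whenever $K$ is an abstract free factor of $L\le F$ is \emph{false}. Take $F = F(a,b)$, $K = \langle ab \rangle$ and $M = \langle b \rangle$; then $\{ab,b\}$ is a basis of $F$, so $K$ is a free factor of $L = F$, yet $\Gamma(K)$ has two vertices while $\Gamma(L) = R_X$ has only one. In the wedge-and-fold picture, the $b$-edge of $\Gamma(K)$ folds onto the $b$-loop of $\Gamma(M)$ and identifies the two vertices of $\Gamma(K)$ --- directly contradicting your ``crucially, no folding \dots'' assertion. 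So the step ``$\Gamma(K)\hookrightarrow\Gamma(L)\hookrightarrow(\text{finite $p$-cover})$'' breaks at the first arrow.

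There is a second, independent gap: the $p$-closed subgroup in part~(\ref{RZ-closed-1}) is not assumed to be finitely generated, so even if the embedding held, your preliminary characterisation (which is stated and argued only for finitely generated subgroups) could not be applied to it --- an infinite $\Gamma(L)$ cannot sit inside a finite cover. This is not a hypothetical concern for the paper's applications: in the proof of Proposition~\ref{prop-commens-insideFk}, part~(\ref{RZ-closed-1}) is invoked with the ambient $p$-closed subgroup $\Lambda$ explicitly infinitely generated. A correct treatment of part~(\ref{RZ-closed-1}) has to avoid passing through $\Gamma(L)$ altogether; the Ribes--Zalesskii argument is more algebraic, working with the retraction $L\to K$ and the behaviour of free products under pro-$p$ completion.
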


We will now establish a sufficient condition for two elements of $F$ to be conjugate in $\Comm_p(F)$.
This special case will be a key step in the proof of the general conjugacy criterion (Proposition~\ref{prop-conjugacy-Comm-p}), but it will also be sufficient for the proof of virtual simplicity of $\Comm_p(F)$ 
in the next section (see Theorem~\ref{thm-simplicity-Commp}).

\begin{Lemma} 
\label{lem-easy-conjugacy-Commp}
Any two elements of $F$ which are not proper powers (in $F$) are conjugate in $\Comm_p(F)$.
\end{Lemma}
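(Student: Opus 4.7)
The plan is to follow the proof of Proposition~\ref{prop-one-conj-class}, with M.~Hall's theorem (Theorem~\ref{thm:Hallfreefactor}) replaced by the Ribes--Zalesski\u{\i} result (Proposition~\ref{prop-RZ-closed-iff-freefact}). Concretely, I will first establish a pro-$p$ analogue of Lemma~\ref{lem-Hall-improved}: for any $g\in F$ that is not a proper power, there should exist a $p$-open subgroup $H$ of $F$ and an element $x\in H$ such that $g\in H$, $g$ is primitive in $H$, and $x$ is primitive in both $H$ and $F$. Granted this analogue, the rest of the argument is a carbon copy of Proposition~\ref{prop-one-conj-class}: for $i=1,2$, pick $H_i$ and $x_i$ as above, choose $\phi_i\in\Aut(H_i)$ sending $g_i$ to $x_i$ and $\phi\in\Aut(F)$ sending $x_1$ to $x_2$, and observe that $\psi=\phi_2^{-1}\phi\phi_1$ lies in $\Comm_p(F)$ and conjugates $g_1$ to $g_2$.

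To establish the pro-$p$ Hall analogue, I will first check that $\la g\ra$ is $p$-closed in $F$. The argument goes: since $g$ is not a proper power, $C_F(g)=\la g\ra$; and the pro-$p$ closure $\overline{\la g\ra}_{\widehat{F}_p}$ is procyclic, hence abelian, so any $h\in F$ lying in it commutes with $g$ in $\widehat{F}_p$ and, via the injection $F\hookrightarrow\widehat{F}_p$, also in $F$, forcing $h\in\la g\ra$. Proposition~\ref{prop-RZ-closed-iff-freefact}(2) then produces a $p$-open subgroup $H_g$ of $F$ with $\la g\ra$ as a free factor, and I write $H_g=\la g\ra * K$. If the decomposition can be chosen so that $K$ contains some primitive $x$ of $F$, then $x$ is also not a proper power inside $K$, so by the same centralizer argument $\la x\ra$ is $p$-closed in $K$; a further application of Proposition~\ref{prop-RZ-closed-iff-freefact}(2) inside $K$ yields a $p$-open $L\le K$ with $\la x\ra$ as a free factor, and then $H=\la g\ra * L$ is a $p$-open subgroup of $F$ (using that being $p$-open is transitive through finite index extensions) in which both $g$ and $x$ are primitive, as desired.

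The hard part will be to arrange that the complementary free factor $K$ actually contains a primitive of $F$. This fails outright if $H_g\subseteq\Phi(F)$, since primitives of $F$ have non-zero image in the Frattini quotient $F/\Phi(F)$. My plan is to exploit the non-uniqueness in Proposition~\ref{prop-RZ-closed-iff-freefact}(2): typically $\la g\ra$ is a free factor of many distinct $p$-open subgroups, and one can aim to choose $H_g$ of smallest possible index (or, concretely, contained in some normal index-$p$ subgroup $F(X,y,p)$ for a well-chosen basis element $y\in X$), so that $H_g\not\subseteq\Phi(F)$ and a primitive of $F$ does lie inside $H_g$. After a Nielsen transformation adjusting the decomposition $H_g=\la g\ra * K$, such a primitive can then be placed inside $K$, which completes the construction.
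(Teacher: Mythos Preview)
Your first two steps---showing that $\langle g\rangle$ is $p$-closed when $g$ is not a proper power, and invoking Proposition~\ref{prop-RZ-closed-iff-freefact} to obtain a $p$-open subgroup $U$ in which $g$ is primitive---match the paper exactly. The divergence, and the gap, is in what comes next.

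You try to locate a \emph{single} $p$-open subgroup $H$ in which both $g$ and some primitive $x$ of $F$ are primitive, so that the three-automorphism argument of Proposition~\ref{prop-one-conj-class} can be copied verbatim. As you yourself flag, the ``hard part'' is arranging that the complementary factor $K$ in $H_g=\langle g\rangle * K$ contains a primitive of $F$. Your proposed fix (choose $H_g$ of small index, or merely not contained in $\Phi(F)$, then apply a Nielsen move) is not worked out and is not obviously correct: the condition $H_g\not\subseteq\Phi(F)$ does not by itself guarantee that $H_g$ contains a primitive element of $F$, and there is no control in Proposition~\ref{prop-RZ-closed-iff-freefact} on the index of the $p$-open overgroup it produces.

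The paper sidesteps this entirely by abandoning the insistence on a single subgroup. Once you have a $p$-open $U$ of index $p^n$ with $g$ primitive in $U$, write down a \emph{second} $p$-open subgroup $V=F(X,x,p^n)$ of the same index; by construction any $y\in X\setminus\{x\}$ is primitive in both $V$ and $F$. Since $U$ and $V$ are free of the same rank, pick an isomorphism $f\colon U\to V$ with $f(g)=y$. Then $[f]\in\Comm_p(F)$ directly---it is a virtual isomorphism between $p$-open subgroups, and there is no need to factor it as a product of automorphisms---and $[f]$ conjugates $g$ to the primitive element $y$ of $F$. Since $\Aut(F)\subseteq\Comm_p(F)$ acts transitively on primitives of $F$, any two non-proper-powers are then conjugate in $\Comm_p(F)$. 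The moral is that mimicking Proposition~\ref{prop-one-conj-class} too literally led you to an unnecessary constraint; elements of $\Comm_p(F)$ need not be products of automorphisms of a single subgroup.
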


\begin{proof} Fix $g\in F$ which is not a proper power. Since any two primitive elements of $F$ lie in the same conjugacy class in
$\Aut(F)$ and hence in $\Comm_p(F)$, it suffices to prove that $g$ is $\Comm_p(F)$-conjugate to some primitive element $y\in F$.

Let $\propF{p}$ be the pro-$p$ completion of $F$ and $C$ the closure of $\la g\ra$ in $\propF{p}$. Then $C$
is abelian, so $C\cap F$ is also abelian and hence cyclic. Since $g$ is not a proper power, it follows
that $C\cap F=\la g\ra$, so $\la g\ra$ is $p$-closed in $F$. Hence by 
Proposition~\ref{prop-RZ-closed-iff-freefact}, there exists a $p$-open subgroup $U$ of $F$ 
such that $\la g\ra$ is a free factor of $U$ or, equivalently, $g$ is primitive for $U$.

\skv

Suppose that $[F:U]=p^n$. Choose any basis $X$ of $F$ and any $x\in X$, and let $V=F(X,x,p^n)$. Then 
$V$ is $p$-open, isomorphic to $U$, and any element $y\in X\setminus\{x\}$ lies in $V$ and is primitive for both $V$ and $F$.
Since $g$ is primitive for $U$, we can choose an isomorphism $f:U\to V$ such that $f(g)=y$, and the corresponding commensuration $[f]\in \Comm_p(F)$ conjugates $g$ to $y$, as desired.
\end{proof}

We now turn to the general case of the conjugacy problem for bounded elements in $\Comm_p(F)$. Given a bounded element $w\in\Comm_p(\absF{})\setminus\{1\}$, define 
\[
d_p(w):=[\overline{\langle w \rangle}: \langle w \rangle],
\]
where $\overline{\langle w \rangle}$ is the closure of $\langle w \rangle$ in $\Comm_p(F)$.
We will prove that $d_p(w)$ is a complete invariant for conjugacy of bounded elements (see Proposition~\ref{prop-conjugacy-Comm-p}). But first we will obtain a more explicit description for $d_p(w)$.

\begin{Lemma}
\label{lem:dpexplicit}
Let $w\in \Comm_p(\absF{})\setminus\{1\}$ be a bounded element, and let $H$ be a free subgroup of $\Comm_p(\absF{})$ which contains $w$
and is $p$-commensurable with $F$ (such $H$ exists by the definition of a bounded element). Write $w=v^{m}$ where $m\in\dbN$ and
$v\in H$ is not representable as a proper power (in $H$). Then $d_p(w)$ is the largest divisor of $m$ which is coprime to $p$. 
\end{Lemma}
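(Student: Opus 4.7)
The plan is to reduce the computation of $d_p(w)$ to a computation of a closure inside the pro-$p$ completion $\widehat{H}_p$, exploiting that $H$ sits inside $\Comm_p(F)$ as an open subgroup. Writing $m = p^a b$ with $\gcd(b, p) = 1$, I aim to show that $\overline{\la w \ra} = \la v^{p^a} \ra$, from which the conclusion $d_p(w) = b$ will follow at once by computing $[\la v^{p^a} \ra : \la v^{p^a b} \ra]$.

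First, I would verify that $H$ is open in $\Comm_p(F)$ and that the subspace topology on $H$ coincides with the pro-$p$ topology of $H$. This is essentially the identification $\Comm_p(H) = \Comm_p(F)$ noted in subsection \ref{subsec-profinite-commens}; concretely, $H \cap F$ is $p$-open in both $H$ and $F$, so both topologies admit a cofinal base of neighborhoods of the identity consisting of the $p$-open subgroups of $H$ contained in $H\cap F$. Consequently $\overline{\la w \ra}^{\Comm_p(F)} = \overline{\la w \ra}^{H}$, and I can work entirely inside $H$ and its pro-$p$ completion.

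Next, I would establish that $\la v \ra$ is $p$-closed in $H$ and that its closure $C$ in $\widehat{H}_p$ is canonically isomorphic to $\dbZ_p$ with $v$ corresponding to $1$. The $p$-closedness is the same argument already used at the start of the proof of Lemma \ref{lem-easy-conjugacy-Commp}: $C = \overline{\la v\ra}^{\widehat{H}_p}$ is abelian, so $H \cap C$ is a cyclic subgroup of the free group $H$ containing $v$, and as $v$ is not a proper power in $H$ one gets $H \cap C = \la v \ra$. That $C \cong \dbZ_p$ follows because $C$ is a procyclic pro-$p$ group topologically generated by $v$, while $v$ has infinite order in $\widehat{H}_p$ (finitely generated free groups being residually $p$, the canonical map $H \to \widehat{H}_p$ is injective).

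The remainder is a direct computation inside $C \cong \dbZ_p$. Under this identification, $\la v^m \ra$ corresponds to $m\dbZ$, whose closure in $\dbZ_p$ equals $m\dbZ_p = p^a\dbZ_p$ because $b$ is a unit in $\dbZ_p$. Pulling back to $H$ via $H \cap C = \la v \ra$ yields
\[
\overline{\la v^m \ra}^{H} = H \cap p^a\dbZ_p = \la v \ra \cap p^a\dbZ_p = p^a \dbZ = \la v^{p^a}\ra,
\]
and therefore $d_p(w) = [\la v^{p^a} \ra : \la v^{p^a b}\ra] = b$, which is the largest divisor of $m$ coprime to $p$. I expect the main obstacle to be organisational rather than conceptual, namely keeping track of the three ambient topologies (on $\Comm_p(F)$, on $H$, and on $\widehat{H}_p$) and justifying cleanly that the closure of $\la w \ra$ in $\Comm_p(F)$ can be computed by first taking the closure in $\widehat{H}_p$ and then intersecting with $H$.
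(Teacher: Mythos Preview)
Your proposal is correct and follows essentially the same route as the paper's proof: reduce to computing the closure inside $H$ with its pro-$p$ topology, show that this closure lives in the cyclic group $\langle v\rangle$, and then compute the closure of $\langle v^m\rangle$ in $\langle v\rangle\cong\dbZ$ with the pro-$p$ topology. The only organizational difference is that the paper avoids the detour through $\widehat{H}_p$ by observing directly that $\overline{\langle w\rangle}$, being abelian, lies in the centralizer $C_H(w)=\langle v\rangle$, which is closed since centralizers in Hausdorff groups are closed; this is marginally slicker than your $p$-closedness argument via the completion, but the content is the same.
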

\begin{proof} First note that being $p$-commensurable with $F$, the subgroup
$H$ is open (and hence closed) in $\Comm_p(\absF{})$. Hence $\overline{\langle w \rangle}$ 
is contained in $H$ and is equal to the closure of $\langle w \rangle$ in $H$. Since $\overline{\langle w \rangle}$ is abelian, it is contained in
$C_H(w)$, the centralizer of $w$ in $H$. Since $H$ is free, $C_H(w)$ is cyclic and thus is equal to $\la v\ra$ for $v$ in the statement
of Lemma~\ref{lem:dpexplicit}. On the other hand, since $H$ is Hausdorff, $C_H(w)$ is closed in $H$ (and hence in $\Comm_p(\absF{})$),
so  $\overline{\langle w \rangle}$ is the closure of $\langle w \rangle=\la v^m\ra$ in $C_H(w)=\la v\ra$. Since the induced topology
on $\la v\ra$ is the pro-$p$ topology, this closure is equal to $\la v^e\ra$ where $e$ is the largest power of $p$ dividing $m$,
so $d_p(w)=[\la v^e\ra: \la v^m\ra]=\frac{m}{e}$, which is the largest divisor of $m$ coprime to $p$, as desired.
\end{proof}

We are now ready to prove that the number $d_p(w)$ is a complete invariant for conjugacy of bounded elements.

\begin{Proposition}\label{prop-conjugacy-Comm-p}
	Let $w_1,w_2 \in \Comm_p(\absF{})$ be non-trivial bounded elements.  Then $w_1$ is conjugate to $w_2$ in $\Comm_p(\absF{})$ if and only if $d_p(w_1) = d_p(w_2)$.
\end{Proposition}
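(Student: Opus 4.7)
The necessity is immediate: $d_p(w) = [\overline{\la w \ra}:\la w\ra]$ is defined via the topology on $\Comm_p(F)$ (Proposition \ref{prop-topo-germs}), and conjugation by any element is a topological automorphism of $\Comm_p(F)$, hence preserves both $\la w \ra$ and its closure, and therefore the index.

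For sufficiency, suppose $d_p(w_1) = d_p(w_2) = d$. By boundedness, each $w_i$ lies in an open free subgroup $H_i \leq \Comm_p(F)$ that is $p$-commensurable with $F$. Writing $w_i = v_i^{m_i}$ with $v_i$ not a proper power in $H_i$, Lemma \ref{lem:dpexplicit} gives $m_i = d\,p^{e_i}$ for some $e_i \geq 0$. The argument reduces to two substeps: \textbf{(A)} show that $v_1$ and $v_2$ are $\Comm_p(F)$-conjugate, and \textbf{(B)} show that for any non-proper-power $v$ in some free open $p$-commensurable $H$ and any $e_1, e_2 \geq 0$, the elements $v^{p^{e_1}}$ and $v^{p^{e_2}}$ are $\Comm_p(F)$-conjugate. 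Taking $d$th powers then yields $w_1 \sim v_2^{d p^{e_1}} \sim v_2^{d p^{e_2}} = w_2$.

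For \textbf{(A)}, apply the argument of Lemma \ref{lem-easy-conjugacy-Commp} inside each $H_i$ (via the canonical $\Comm_p(H_i) \cong \Comm_p(F)$) to reduce to the case where each $v_i$ is primitive in $H_i$. Extend $v_i$ to a basis $Y_i = \{v_i, y_{i,2}, \ldots\}$ of $H_i$ and set $U_i := H_i(Y_i, y_{i,2}, p^{c_i})$; Lemma \ref{lem:Schreier}(a) ensures $v_i$ remains primitive in $U_i$, and $[H_i:U_i] = p^{c_i}$. The $p$-commensurability of $H_i$ with $F$ combined with the Schreier formula yields $\rk(H_i) - 1 = p^{k_i}(\rk(F) - 1)$ for some $k_i \in \dbZ$, so $\rk(U_i) - 1 = p^{c_i + k_i}(\rk(F)-1)$. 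Choose $c_1, c_2 \geq 0$ with $c_1+k_1 = c_2+k_2$ to force $\rk(U_1) = \rk(U_2)$. An abstract isomorphism of free groups $f : U_1 \to U_2$ with $f(v_1) = v_2$ then exists; after restriction to $f^{-1}(U_1 \cap U_2)$, which is $p$-open in $U_1$ since $U_1 \cap U_2 = U_1 \cap U_2 \cap (H_1 \cap H_2)$ is $p$-open in each $U_i$ by $p$-commensurability chains, $f$ represents an element of $\Comm_p(U_1) = \Comm_p(F)$ conjugating $v_1$ to $v_2$. For \textbf{(B)}, perform an analogous rank-matching construction within the single group $H$: set $V^{(e)} := H(Y, v, p^e)$, extend $v^{p^e}$ to a basis of $V^{(e)}$, and let $A_i$ be a $p$-open subgroup of $V^{(e_i)}$ of relative index $p^{e_{3-i}}$ (constructed by a further Schreier extension) in which $v^{p^{e_i}}$ remains primitive. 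Both $A_1, A_2$ have index $p^{e_1+e_2}$ in $H$, hence equal rank, and the isomorphism $A_1 \to A_2$ sending $v^{p^{e_1}} \mapsto v^{p^{e_2}}$ yields the required commensuration.

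The principal obstacle is substep \textbf{(A)}: although the abstract isomorphism $f: U_1 \to U_2$ is easy to construct once the ranks are matched, interpreting it as a genuine element of $\Comm_p(F)$ requires careful use of the canonical identifications $\Comm_p(H_i) \cong \Comm_p(F)$ and verification that $U_1 \cap U_2$ is simultaneously $p$-open in each $U_i$, using that $H_1, H_2$ are $p$-commensurable in $\Comm_p(F)$ even though they may have distinct ranks and need not be contained in $F$. Substep \textbf{(B)} is routine by comparison, being entirely internal to a single free group.
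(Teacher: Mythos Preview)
Your overall architecture is sound and close in spirit to the paper's: write $w_i=v_i^{dp^{e_i}}$ with $v_i$ not a proper power, conjugate the $v_i$ to each other, then handle the $p$-power discrepancy. The logic linking (A) and (B) to the conclusion is correct. However, your execution of substep (A) has a real gap at exactly the point you flag as the ``principal obstacle''. After restricting $f$ to $f^{-1}(U_1\cap U_2)\to U_1\cap U_2$ you obtain an element of $\Comm_p(U_1)$, but the assertion that this element conjugates $v_1$ to $v_2$ is not justified: $v_1$ lies in the domain of the restricted map only if $v_2=f(v_1)\in U_1$, and there is no reason for $v_2\in H_2$ to lie in $U_1\subseteq H_1$. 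One can repair this by restricting further so that domain and codomain are $p$-open in $F$ itself, and then arguing that conjugation by the resulting commensuration agrees with $x\mapsto v_2xv_2^{-1}$ on an open subgroup, whence the difference lies in $\QZ(\Comm_p(F))=\{1\}$; but you do not supply this, and it is precisely the heart of the matter.

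The paper sidesteps the whole issue with a much lighter device. Rather than bridging $H_1$ and $H_2$ by a single isomorphism, it takes $n$ large enough that $w_1^{p^n},w_2^{p^n}\in H_1\cap H_2$ (possible since $H_1,H_2$ are $p$-commensurable), notes that $d_p(w_i^{p^n})=d_p(w_i)$, and then treats the three pairs $(w_1,w_1^{p^n})$, $(w_1^{p^n},w_2^{p^n})$, $(w_2,w_2^{p^n})$ each inside a \emph{single} free subgroup ($H_1$, $H_1\cap H_2$, $H_2$ respectively). In that single-group setting one simply invokes Lemma~\ref{lem-easy-conjugacy-Commp} (any two non-proper-powers are conjugate) together with Lemma~\ref{lem-conjugate-power-SmF} ($x\sim x^p$ for primitive $x$), so both $w_i$ are conjugate to $x^d$ for a chosen primitive $x$. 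No explicit isomorphism between subgroups of different $H_i$ is ever needed.

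Your substep (B) is also more elaborate than necessary: once $v$ is primitive, Lemma~\ref{lem-conjugate-power-SmF} already gives $v\sim v^p$ and hence $v^{p^{e_1}}\sim v^{p^{e_2}}$ directly, without any rank-matching construction.
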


\begin{proof}
	The topology on $\Comm_p(\absF{})$ is conjugation-invariant, so if $w_1$ is conjugate to $w_2$ then certainly $d_p(w_1) = d_p(w_2)$.  

Let us now prove the backwards direction. Fix bounded elements $w_1,w_2 \in \Comm_p(\absF{})$ with $d_p(w_1) = d_p(w_2)$
and choose free subgroups $H_i$, $i=1,2$ such that $H_i$ is $p$-commensurable with $F$ and $w_i\in H_i$. We need to show that
$w_1$ and $w_2$ are conjugate in $\Comm_p(\absF{})$.
\skv

{\it Special case: $H_1=H_2$.} Since $H_1=H_2$ is $p$-commensurable with $F$, the pro-$p$ topologies of $H_1$ and $\absF{}$ are compatible and we can identify $\Comm_p(H_1)$ with $\Comm_p(\absF{})$. Thus, without loss of generality we can assume that $H_1=H_2=F$, so
$w_1,w_2\in \absF{}$.

If $d=d_p(w_1) = d_p(w_2)$, then by Lemma~\ref{lem:dpexplicit} for $i=1,2$ we can write
$w_i=v_i^{p^{e_i}d}$ where $v_1,v_2\in \absF{}$ are not proper powers.  Choose any primitive element $x\in F$.
By Lemma~\ref{lem-easy-conjugacy-Commp}, $v_1$ and $v_2$ are both conjugate to $x$ in $\Comm_p(F)$. On the other hand,
by Lemma~\ref{lem-conjugate-power-SmF}, $x$ is conjugate in $\Comm_p(F)$ to $x^p$ and hence also to $x^{p^e}$ for all $e\in\dbN$.
It follows that $w_1$ and $w_2$ are both conjugate to $x^{d}$.

\skv
{\it General case:}	The subgroups $H_1$ and $H_2$ are both $p$-commensurable with $F$ and hence $p$-commensurable with each other. Hence
there exists $n\in\dbN$ such that $w_i^{p^n}\in H_1\cap H_2$ for $i=1,2$. From Lemma~\ref{lem:dpexplicit} it is clear that $d_p(w^{p^e})=d_p(w)$
for any bounded element $w$ and $e\in\dbN$. Thus, $d_p(w_1^{p^n})=d_p(w_1)=d_p(w_2)=d_p(w_2^{p^n})$. On the other hand, each pair
$(w_1,w_1^{p^n})$, $(w_1^{p^n},w_2^{p^n})$, $(w_2,w_2^{p^n})$ lies in some free subgroup $p$-commensurable with $\absF{}$, so by the special case
the elements in each pair are conjugate in $\Comm_p(F)$, and hence $w_1$ and $w_2$ are conjugate in $\Comm_p(F)$.
\end{proof}

We point out an important special case of Proposition \ref{prop-conjugacy-Comm-p} which describes when two powers of a bounded element
are conjugate in $\Comm_p(F)$:

\begin{Corollary}\label{cor:p-power-conjugation}
Let  $w \neq 1$ be a bounded element of $\Comm_p(\absF{})$ and $m,n \in \Z$ non-zero integers.  Then $w^m$ is conjugate to $w^n$ in $\Comm_p(\absF{})$ if and only if $|m/n| = p^r$ for some $r \in \Z$.
\end{Corollary}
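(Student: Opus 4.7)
The plan is to reduce Corollary~\ref{cor:p-power-conjugation} to Proposition~\ref{prop-conjugacy-Comm-p} by computing the invariant $d_p$ on powers of a fixed bounded element.

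The first step is to check that Proposition~\ref{prop-conjugacy-Comm-p} applies to the pair $(w^m, w^n)$. Since $w$ is bounded, by Definition~\ref{def:bounded} there is a free subgroup $H\leq \Comm_p(\absF{})$ which is open and $p$-commensurable with $\absF{}$ and which contains $w$; then $w^m, w^n \in H$, so both are bounded. As $H$ is free, hence torsion-free, and $m,n \neq 0$, $w\neq 1$, both $w^m$ and $w^n$ are non-trivial.

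The heart of the argument is to compute $d_p$ on powers. I would fix $H$ as above and, following Lemma~\ref{lem:dpexplicit}, write $w = v^k$ with $k\in \dbN$ and $v\in H$ not a proper power in $H$. Then $w^m = v^{km}$, or equivalently $w^m = (v^{-1})^{k\abs{m}}$ when $m<0$; since $v^{-1}$ is also not a proper power in $H$, Lemma~\ref{lem:dpexplicit} applies and yields that $d_p(w^m)$ is the largest divisor of $k\abs{m}$ coprime to $p$, and similarly $d_p(w^n)$ is the largest divisor of $k\abs{n}$ coprime to $p$.

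To finish, write $k = p^c k'$, $\abs{m} = p^a m'$, $\abs{n} = p^b n'$ with $k',m',n'$ coprime to $p$. Then $d_p(w^m) = k'm'$ and $d_p(w^n) = k'n'$, so equality of the two invariants is equivalent to $m' = n'$, i.e.\ to $\abs{m/n} = p^{a-b}$. Combining this with Proposition~\ref{prop-conjugacy-Comm-p} gives the corollary. I do not expect any real obstacle: the only point of care is the sign bookkeeping for negative exponents, which is resolved by the observation that $d_p$ depends only on the cyclic subgroup $\langle w^m\rangle = \langle w^{\abs{m}}\rangle$, so $d_p(w^m) = d_p(w^{\abs{m}})$.
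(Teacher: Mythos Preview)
Your proof is correct and follows essentially the same approach as the paper: both reduce to Proposition~\ref{prop-conjugacy-Comm-p} by computing $d_p$ on powers of $w$ via Lemma~\ref{lem:dpexplicit}. You are simply more explicit than the paper about verifying boundedness, non-triviality, and the sign bookkeeping for negative exponents.
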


\begin{proof}
	If we write $n = p^\alpha n'$ and $m = p^\beta m'$ with $n',m'$ coprime to $p$, then $d_p(w^n) = |n' |d_p(w)$ and $d_p(w^m) = |m' |d_p(w)$. Hence the assertion follows from Proposition \ref{prop-conjugacy-Comm-p}.
\end{proof}

Instances of pairs $(w,c)$ with $w \in F$ and $c\in \Comm_p(F)$ such that $cw^pc^{-1} = w^{p^2}$ explicitly appear in \cite{BouRabee-Young}. 

\begin{Remark}\label{rem:BS}
Specializing the result of Corollary~\ref{cor:p-power-conjugation} further, we deduce that for any non-trivial bounded element
$w$ of $\Comm_p(F)$ there exists $c\in \Comm_p(F)$ such that $cwc^{-1} = w^p$, so the subgroup $\Gamma=\la c,w\ra$ of $\Comm_p(F)$
is a quotient of the Baumslag--Solitar group $\mathrm{BS}(1,p)=\la a,b \mid bab^{-1}=a^p\ra$. In fact, the subgroup $\Gamma$
is isomorphic to $\mathrm{BS}(1,p)$ since $w$ has infinite order in $\Gamma$ (being an element of a free subgroup), while 
the image of $a$ in every proper quotient of $\mathrm{BS}(1,p)$ is finite -- the latter follows from the fact that
$BS(1,2)\cong \Z[1/p] \rtimes \Z$ where $a$ maps to $1\in \Z[1/p]$. 
\end{Remark}

We finish this subsection with a simple applications of Proposition~\ref{prop-conjugacy-Comm-p}.
The following result is well known:

\begin{Lemma}[\cite{Schutzenberger}] 
\label{lem-existence-nonpower}
Let $w\in \absF{}$ be a non-trivial commutator. Then $w$ is not a proper power in $ \absF{}$. In particular,
any non-abelian subgroup of $\absF{}$ contains an element that is not a power.
\end{Lemma}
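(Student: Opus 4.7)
The second assertion follows immediately from the first: given a non-abelian subgroup $H$ of $F$, choose $a,b\in H$ with $[a,b]\ne 1$; by the main assertion, $[a,b]$ is not a proper power in $F$, hence not in $H$ either (any equation $w=u^n$ valid in $H$ is valid in $F$). So the work is in the first assertion.

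Suppose for contradiction that $w=[a,b]\ne 1$ and $w=v^n$ for some $v\in F$ and $n\ge 2$. First I would reduce to the case $F\cong F_2$. Let $K=\la a,b,v\ra\le F$, which is finitely generated and free by Nielsen--Schreier. Projecting the relation $v^n=[a,b]$ to the abelianization gives $n\bar v=0$ in $K^{\mathrm{ab}}$; since $K^{\mathrm{ab}}$ is torsion-free, $\bar v=0$, so $K^{\mathrm{ab}}$ is generated by $\bar a$ and $\bar b$. Because $K$ is free, $\mathrm{rk}(K)=\mathrm{rk}(K^{\mathrm{ab}})\le 2$; and $K$ is non-abelian because $[a,b]\ne 1$, so $\mathrm{rk}(K)=2$. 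Replacing $F$ by $K$, it suffices to rule out an equality $[a,b]=v^n$ in $F_2$ with $n\ge 2$.

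To finish in $F_2$, I would apply stable commutator length. The theorem of Duncan--Howie asserts that $\mathrm{scl}(g)\ge 1/2$ for every non-trivial $g$ in a free group, while Culler's commutator identities yield the universal upper bound $\mathrm{scl}([a,b])\le 1/2$ for any commutator. By homogeneity of $\mathrm{scl}$, the equation $[a,b]=v^n$ therefore forces $1/2\ge \mathrm{scl}([a,b]) = n\,\mathrm{scl}(v)\ge n/2$, so $n\le 1$, a contradiction.

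The hard part is precisely this last step: proving in $F_2$ that no non-trivial commutator is a proper power. The scl route is clean but leans on the non-trivial Duncan--Howie lower bound; Sch\"utzenberger's original proof proceeds by a direct combinatorial analysis of cyclic reductions, and one could cite it instead for a self-contained argument.
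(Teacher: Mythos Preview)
The paper does not give a proof of this lemma; it is simply stated with a citation to Sch\"utzenberger's 1959 paper. Your proof is correct. One minor remark: the reduction to rank $2$ is unnecessary, since both the Duncan--Howie lower bound $\mathrm{scl}(g)\ge 1/2$ and the Culler-type upper bound $\mathrm{scl}([a,b])\le 1/2$ hold in an arbitrary free group, so the contradiction $1/2\ge n\cdot\mathrm{scl}(v)\ge n/2$ follows immediately without passing to a rank-$2$ subgroup. Your route via stable commutator length is quite different from Sch\"utzenberger's original argument, which proceeds by direct combinatorial analysis of cyclically reduced words; the scl approach is conceptually clean but imports the non-trivial Duncan--Howie theorem, while the original is entirely elementary and self-contained.
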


\begin{Corollary} \label{cor-non-ab-intersects-conj-class}
Let $\Lambda$ be a non-abelian subgroup of $\absF{}$. Then $\Lambda$ intersects the conjugacy class of every bounded element.
\end{Corollary}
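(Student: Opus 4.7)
The plan is to reduce the problem to the conjugacy classification of bounded elements given by Proposition~\ref{prop-conjugacy-Comm-p} and then produce a witness inside $\Lambda$ with the correct value of the invariant $d_p$.

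First I would dispose of the trivial case: if $w=1$, then $1 \in \Lambda$ lies in the conjugacy class of $w$, and there is nothing to prove. So assume $w$ is a non-trivial bounded element of $\Comm_p(\absF{})$, and set $d = d_p(w)$. Note that $d$ is always coprime to $p$, since by Lemma~\ref{lem:dpexplicit}, $d_p$ of any non-trivial bounded element equals the largest divisor of some integer that is coprime to $p$. By Proposition~\ref{prop-conjugacy-Comm-p}, to finish the proof it suffices to exhibit an element $\lambda \in \Lambda$ with $d_p(\lambda) = d$.

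To construct $\lambda$, I would invoke Lemma~\ref{lem-existence-nonpower}. Since $\Lambda$ is non-abelian, it contains a non-trivial commutator $v=[a,b]$ with $a,b\in\Lambda$, and Lemma~\ref{lem-existence-nonpower} guarantees that $v$ is not a proper power in $\absF{}$. Take $\lambda := v^d \in \Lambda$, which is non-trivial since $v$ has infinite order in the free group $\absF{}$ and $d\neq 0$. Because $\lambda$ lies in $\absF{}$, which is itself free and $p$-commensurable with $\absF{}$, the element $\lambda$ is bounded in $\Comm_p(\absF{})$, and we may compute $d_p(\lambda)$ by means of Lemma~\ref{lem:dpexplicit} using the free subgroup $H=\absF{}$ and the decomposition $\lambda = v^d$ with $v \in \absF{}$ not a proper power. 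The lemma then yields that $d_p(\lambda)$ is the largest divisor of $d$ coprime to $p$, which is $d$ itself since $d$ is already coprime to $p$.

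Combining this with Proposition~\ref{prop-conjugacy-Comm-p} gives that $\lambda$ and $w$ are conjugate in $\Comm_p(\absF{})$, completing the proof. There is really no main obstacle here: both ingredients (existence of a non-power in any non-abelian subgroup, and the $d_p$-classification of conjugacy classes of bounded elements) are already established, and the only thing to observe is that raising a non-power to the $d$-th power with $\gcd(d,p)=1$ gives the right invariant.
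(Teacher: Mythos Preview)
Your proof is correct and follows essentially the same approach as the paper: both arguments use Lemma~\ref{lem-existence-nonpower} to produce a non-power $v\in\Lambda$, observe that $d_p(v^d)=d$ via Lemma~\ref{lem:dpexplicit} because $d=d_p(w)$ is coprime to $p$, and conclude by the conjugacy classification in Proposition~\ref{prop-conjugacy-Comm-p}. The only cosmetic difference is that you explicitly dispose of the case $w=1$, which the paper leaves implicit.
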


\begin{proof}
Lemma \ref{lem-existence-nonpower} provides $w \in \Lambda$ such that $d_p(w) = 1$. Let $v$ be any bounded element of $\Comm_p(\absF{})$
and $n=d_p(v)$. Then $n$ is coprime to $p$ by Lemma~\ref{lem:dpexplicit}, so $d_p(w^n)=n$ and hence $v$ is conjugate to $w^n\in\Lambda$ by Proposition \ref{prop-conjugacy-Comm-p}. 
\end{proof}

\subsection{Conjugacy of finitely generated subgroups}

As before, let $F$ be a free group of finite rank and $\propF{p}$ a free pro-$p$ group of finite rank.
In this subsection we will characterize when two finitely generated subgroups of $F$ are conjugate in $\Comm(F)$,
when two finitely generated $p$-closed subgroups of $F$ are conjugate in $\Comm_p(F)$ and when two finitely generated
closed subgroups of $\propF{p}$ are conjugate in $\Comm(\propF{p})$.

Similarly to the corresponding results for the conjugacy of elements, in each case we will apply a suitable form
of the free factor theorem. We will also need two additional results on the structure of subgroups
of free products containing one of the factors.

\begin{Lemma}\label{lem:intermediate-free-factor}
Let $A$ and $B$ be groups, let $G = A * B$, and let $H$ be a subgroup of $G$ containing $A$. Then $H = A * K$ for some group $K$.
\end{Lemma}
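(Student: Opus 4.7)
The statement is a special case of the Kurosh subgroup theorem, and admits a direct proof via Bass--Serre theory applied to the standard action of $G = A * B$ on its Bass--Serre tree. I would follow the latter route, which I find the most transparent.

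Consider the Bass--Serre tree $T$ of $G = A * B$: its vertex set is $G/A \sqcup G/B$, its edge set is $G$, and the edge labelled by $g \in G$ joins the two vertices $gA$ and $gB$. The canonical $G$-action on $T$ has vertex stabilizers equal to the conjugates of $A$ and of $B$, and all edge stabilizers are trivial.

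Next I would restrict this action to the subgroup $H$. Edge stabilizers for the induced $H$-action remain trivial. The vertex $v_0 := A \in G/A$ has $G$-stabilizer $A$; since $A \subseteq H$ by hypothesis, its $H$-stabilizer is exactly $H \cap A = A$. By the fundamental theorem of Bass--Serre theory, any group acting on a tree with trivial edge stabilizers is the free product of one vertex stabilizer from each orbit together with a free group recording the cycles in the quotient graph:
$$H \;\cong\; \Big( \ast_{v \in V_0} H_v \Big) * F,$$
where $V_0$ is a set of $H$-orbit representatives of vertices of $T$, $H_v$ is the $H$-stabilizer of $v$, and $F$ is a free group of rank equal to the first Betti number of the quotient graph $H \backslash T$. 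Taking $v_0$ as one of the representatives and letting $K$ denote the free product of all the remaining factors with $F$ yields the desired decomposition $H = A * K$.

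The main ingredient — the Bass--Serre structure theorem for groups acting on trees with trivial edge stabilizers — is classical, so there is no serious obstacle. Alternatively one could cite the Kurosh subgroup theorem directly: in the Kurosh decomposition of $H \leq A * B$, the trivial double coset in $H \backslash G / A$ contributes the factor $H \cap A$, which equals $A$ by hypothesis, and this factor splits off to give $H = A * K$ with $K$ being the free product of all the remaining Kurosh factors together with the free-group part of the decomposition.
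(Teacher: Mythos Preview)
Your proposal is correct. The paper's proof is exactly the Kurosh-theorem alternative you sketch at the end: it cites Kurosh to observe that the trivial double coset in $H\backslash G/A$ contributes $H\cap A=A$ as a free factor of $H$; your primary Bass--Serre argument is simply the geometric version of the same reasoning.
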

\begin{proof} By the Kurosh subgroup theorem as formulated, e.g. in \cite[Ch.2, Theorem~19.1]{Bogo-book}, any subgroup $H$ of $G$ decomposes
as a free product of a free group and subgroups of the form $H\cap xAx^{-1}$ and $H\cap yBy^{-1}$ where $x$ (resp. $y$)
ranges over a set of representatives of the double cosets $H\backslash G/A$ (resp. $H\backslash G/B$). Since we can always
let $x=1$ to be one of the representatives, we deduce that $H\cap A$ is a free factor of $H$,
which yields the assertion of the lemma.
\end{proof}

In the pro-$p$ case, we will use a more specialized result, dealing only with subgroups of free pro-$p$ groups:

\begin{Lemma}\label{lem:intermediate-free-factor:pro-p}
Let $H$ be a free pro-$p$ factor of $\propF{p}$.  Then $H$ is also a free pro-$p$ factor of every open subgroup of $\propF{p}$ that contains $H$.\end{Lemma}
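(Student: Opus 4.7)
The plan is to invoke the pro-$p$ version of the Kurosh subgroup theorem for open subgroups of free pro-$p$ products. By hypothesis we may write $\propF{p} = H \mathbin{*} K$ as a free pro-$p$ product for some closed subgroup $K$, which is automatically topologically finitely generated. Let $U$ be an open subgroup of $\propF{p}$ containing $H$; the goal is to produce a closed subgroup $L \leq U$ with $U = H \mathbin{*} L$.

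First, I would appeal to the pro-$p$ Kurosh subgroup theorem for open subgroups (as developed in Ribes--Zalesski's \emph{Profinite Groups} and Ribes' \emph{Profinite Graphs and Groups}): since $U$ is open in the finitely generated free pro-$p$ product $\propF{p} = H \mathbin{*} K$, there is a decomposition of $U$ as a free pro-$p$ product of a free pro-$p$ group $F$ of finite rank together with finitely many closed subgroups of the form $U \cap xHx^{-1}$ and $U \cap yKy^{-1}$, where $x$ (resp.\ $y$) ranges over a finite set of double coset representatives for $U \backslash \propF{p} / H$ (resp.\ $U \backslash \propF{p} / K$). Openness of $U$ together with topological finite generation of $\propF{p}$ is what ensures that both index sets, as well as the rank of $F$, are finite.

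Next I would observe that since $H \subseteq U$, the element $1 \in \propF{p}$ can be chosen as one of the double coset representatives in $U \backslash \propF{p} / H$, and the corresponding factor in the above decomposition is simply $U \cap 1 \cdot H \cdot 1^{-1} = H$ itself. Collecting all the remaining factors into a single closed subgroup $L \leq U$ (which is once again a free pro-$p$ product of a free pro-$p$ group with the other conjugate-intersection factors) then yields the desired decomposition $U = H \mathbin{*} L$.

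The main obstacle is pinpointing the exact reference in a form that matches the situation here, since the pro-$p$ Kurosh theorem is genuinely delicate: its proof rests on Bass--Serre-style theory of pro-$p$ groups acting on pro-$p$ trees, and crucially uses the openness of $U$. For non-open subgroups the conclusion is only available as an inverse limit of such decompositions rather than a genuine free pro-$p$ product; since in our setting $U$ is open and $\propF{p}$ is finitely generated, this subtlety does not arise and the argument goes through cleanly.
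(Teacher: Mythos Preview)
Your proposal is correct and follows essentially the same approach as the paper: the paper simply cites the main theorem of Binz--Neukirch--Wenzel (a Kurosh-type subgroup theorem for open subgroups of free pro-$p$ products) as making the lemma an immediate consequence, while you spell out the same argument with the Ribes--Zalesski reference and explain why choosing $1$ as a double-coset representative for $U\backslash\propF{p}/H$ isolates $H$ as a free factor of $U$. The underlying mechanism is identical.
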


Lemma~\ref{lem:intermediate-free-factor:pro-p} is an immediate consequence of the main theorem of \cite{BNW}.
\skv

We are now ready to state and prove our criterion for the conjugacy of subgroups.

\begin{Proposition}\label{prop-bounded-subgroup-conjugacy}
Let	$\calF=F$ or $\propF{p}$. Let $C$ be either $\Comm(\calF)$ or $\Comm_p(\calF)$
if $\calF=F$ and $C=\Comm(\calF)$ if $\calF=\propF{p}$.  Let $H_1$ and $H_2$ be closed subgroups of $C$ which are contained in $\calF$ and finitely generated (topologically finitely generated if $\calF=\propF{p}$). Then $H_1$ and $H_2$
are conjugate in $C$ if and only if
	\begin{enumerate}
		\item $\rk(H_1) = \rk(H_2)$ and
		\item if one of $H_1$ and $H_2$ is open in $C$, then so is the other. 
	\end{enumerate}
\end{Proposition}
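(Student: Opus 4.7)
The forward direction is immediate: a conjugator in $C$ restricts to an isomorphism between $H_1$ and $H_2$, so the ranks agree; and since conjugation is a homeomorphism of $C$, it preserves openness. For the backward direction I would treat two cases separately. If both $H_i$ are open in $C$, then any isomorphism $\psi \colon H_1 \to H_2$ (which exists because $H_1$ and $H_2$ are free or free pro-$p$ of the same rank) is automatically a virtual isomorphism of $\calF$, and its class $[\psi] \in C$ conjugates $H_1$ onto $H_2$ by Lemma~\ref{lem-G-commens-CommG}.

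The interesting case is when neither $H_i$ is open, so both have infinite index in $\calF$. My first step would be to invoke the appropriate free factor theorem to find an open subgroup $K_i$ of $\calF$ containing $H_i$ as a free factor in the relevant category: Theorem~\ref{thm:Hallfreefactor} when $C = \Comm(F)$; Proposition~\ref{prop-RZ-closed-iff-freefact}(2) when $C = \Comm_p(F)$, using that closed subgroups of $C$ contained in $F$ are exactly the $p$-closed subgroups of $F$; and Theorem~\ref{thm-free-factor} when $C = \Comm(\propF{p})$. Writing $K_i = H_i * L_i$, the complement $L_i$ is non-trivial because $H_i$ has infinite index in $K_i$. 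Next I would shrink $K_i$: since $L_i$ is non-trivial, it surjects onto $\dbZ/m\dbZ$ for any positive integer $m$ in the abstract setting, or for any power of $p$ in the two $p$-adic settings, and extending such a surjection trivially on $H_i$ yields a surjection $K_i \twoheadrightarrow \dbZ/m\dbZ$ whose kernel $K'_i$ contains $H_i$ and has index $m$ in $K_i$. Choosing the two indices suitably, I can arrange that $[\calF : K'_1] = [\calF : K'_2]$.

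Now Lemma~\ref{lem:intermediate-free-factor} (in the abstract and $\Comm_p$ settings) or Lemma~\ref{lem:intermediate-free-factor:pro-p} (in the pro-$p$ setting) guarantees that $H_i$ remains a free factor of $K'_i$, so $K'_i = H_i * L'_i$. The Schreier index formula gives $\rk(K'_1) = \rk(K'_2)$, and combined with $\rk(H_1) = \rk(H_2)$ this forces $\rk(L'_1) = \rk(L'_2)$. Matching bases of $H_1$ with $H_2$ and of $L'_1$ with $L'_2$ defines an isomorphism $\psi \colon K'_1 \to K'_2$ that carries $H_1$ onto $H_2$, and the induced class $[\psi] \in C$ is then the desired conjugator. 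The most delicate point is the index-matching step: one must shrink $K_i$ to an open subgroup of common index in $\calF$ while still containing $H_i$ and retaining the free factor property. What makes this possible is the non-triviality of $L_i$ together with Lemmas~\ref{lem:intermediate-free-factor} and~\ref{lem:intermediate-free-factor:pro-p}; any naive attempt to take $K'_i$ of the form $H_i * M$ for a proper open subgroup $M \le L_i$ of finite index will generally fail to be open in $\calF$, so the subtler construction above is needed.
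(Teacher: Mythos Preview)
Your proof is correct and follows essentially the same strategy as the paper: reduce to the open case, then in the non-open case use the appropriate free factor theorem to find open overgroups $K_i = H_i * L_i$, shrink these to open subgroups of equal index in $\calF$ while retaining $H_i$ as a free factor via Lemmas~\ref{lem:intermediate-free-factor} and~\ref{lem:intermediate-free-factor:pro-p}, and finish using the open case. The only cosmetic difference is that the paper picks the target index via $[K_i:K_1\cap K_2]$ and then splits the final step into two (first conjugate the open overgroups to coincide, then apply an automorphism), whereas you match indices directly and build a single isomorphism $K'_1\to K'_2$ carrying $H_1$ to $H_2$; both executions are equivalent.
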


\begin{proof}
The `only if' direction is clear, so we only need to prove the converse. Thus, assume that $H_1$ and $H_2$ satisfy all the hypotheses
in Proposition~\ref{prop-bounded-subgroup-conjugacy} including (1) and (2). Condition (1) ensures that there exists an isomorphism 
of topological groups $\phi: H_1 \rightarrow H_2$.  If $H_1$ and $H_2$ are open then $\phi$ induces a virtual isomorphism of $\absF{}$ in the appropriate topology, which corresponds to an element $[\phi]$ of $C$, and then we have $[\phi]H_1[\phi^{-1}] = H_2$.

	From now on we assume that neither $H_1$ nor $H_2$ is open.  Then there are open subgroups $K_1,K_2$ of $C$ contained in $\calF$ such that $K_i$ can be written as a free product $K_i = H_i * M_i$ (in the suitable category) for $i=1,2$.  This holds by
Theorem~\ref{thm:Hallfreefactor} if $C=\Comm(F)$, by Theorem~\ref{thm-free-factor} if $C=\Comm(\propF{p})$ and
by Proposition~\ref{prop-RZ-closed-iff-freefact}(2) if $C=\Comm_p(F)$ (note that in the latter case our requirement is that
$K_i$ are $p$-open in $F$).
	
Note that the indices $m_i=[K_i:K_1 \cap K_2]$ are finite.  We  have natural projections $\pi_i: K_i \rightarrow M_i$ where the kernel is the normal closure of $H_i$.  Since $H_i$ is closed but not open, the groups $M_i$ are themselves free 
(abstract or pro-$p$) of some nonzero finite rank.  We can then find a subgroup $L_i$ of $K_i$ which is the preimage under $\pi_i$ of a normal subgroup of $M_i$ of index $m_i$: in the cases $C = \Comm_p(\absF{})$ and $C = \Comm(\propF{p})$ note that $m_i$ is a power of $p$, so we can choose $L_i$ to be $p$-open in $K_i$.
	
	  It follows that $L_i$ and $K_1 \cap K_2$ have the same index in $K_i$, and hence $\rk(L_1) = \rk(L_2)$.  From the open case of the proposition, we know that there exists $g \in C$ such that $gL_1g^{-1} = L_2$, so we may assume $L_1 = L_2$.  Since $H_i$ is a free factor of $K_i$ for $i=1,2$, it is also a free factor of $L_1$, by Lemma~\ref{lem:intermediate-free-factor} if $\calF=F$ and by 
Lemma~\ref{lem:intermediate-free-factor:pro-p} if $\calF=\propF{p}$.  In turn, all free factors of $L_1$ of a given rank belong to the same $\Aut(L_1)$-orbit, so there is $h \in \Aut(L_1)$ which sends $H_1$ to $H_2$. In particular, $H_1$ and $H_2$ are conjugate in $C$.
\end{proof}

\section{Simple subgroups of the commensurator of a free pro-$p$ group}  \label{sec-comm-p-simple}

\subsection{Simplicity of $\SComm_p(\absF{})$}

Let $F$ be a non-abelian free group of finite rank. Recall that $\SComm_p(F)$ is the subgroup of 
$\Comm_p(F)$ generated by all subgroups $\SAut(H)$ where $H$ ranges over $p$-open subgroups of $F$.
In this subsection we will prove that $\SComm_p(\absF{})$ is simple; in fact, we will establish 
a slightly stronger statement  -- see Theorem~\ref{thm-simplicity-Commp} below.
Since $\SComm_p(F)$ has index at most $2$ in $\Comm_p(F)$ by Corollary~\ref{cor-C+-index-small}, this will imply Theorem~\ref{ThmC}.

\begin{Theorem} \label{thm-simplicity-Commp}
Every non-trivial subgroup of  $\Comm_p(\absF{})$ normalized by $\SComm_p(\absF{})$ contains  $\SComm_p(\absF{})$.
\end{Theorem}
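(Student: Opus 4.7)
My plan follows the strategy of Theorem~\ref{thmAextended}, adapted to the pro-$p$ setting. Since $\SComm_p(F)\supseteq \SAut(F)\supseteq \iotta(F)$ is open in $\Comm_p(F)$, the normalizer of $N$ is open, so Lemma~\ref{lem-Comm-trivialQZ}(ii) gives $N \cap F \neq \{1\}$. Throughout, if $(\rk F,p) = (2,2)$ we replace $F$ by a proper $p$-open subgroup of rank at least $3$; this does not change $\Comm_p(F)$ or $\SComm_p(F)$, so we may always assume $\rk F \geq 3$ in what follows.

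First I would rule out the possibility that $N \cap F$ is cyclic. If $N \cap F = \langle w\rangle$ with $w \neq 1$, then since $\SAut(F)$ preserves both $F$ and $N$, every $\alpha \in \SAut(F)$ sends $w$ to $w^{\pm 1}$. Specializing to inner automorphisms $\iotta_g$ for $g \in F$, which lie in $\SAut(F)$ as they act trivially on the abelianization, we obtain $gwg^{-1} \in \{w, w^{-1}\}$ for every $g \in F$. A standard argument using the unique root property forces $gwg^{-1}=w^{-1}$ to produce a non-free (Klein-bottle type) subgroup of $F$, contradicting Nielsen--Schreier; hence $w$ is centralized by all of $F$, so $w \in Z(F) = \{1\}$, a contradiction. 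Thus $N \cap F$ is non-abelian.

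By Lemma~\ref{lem-existence-nonpower}, pick $w \in N \cap F$ not a proper power in $F$. As in the proof of Lemma~\ref{lem-easy-conjugacy-Commp}, this forces $\langle w \rangle$ to be $p$-closed, so by Proposition~\ref{prop-RZ-closed-iff-freefact}(2) it is a free factor of some $p$-open $U \leq F$; shrinking $U$ if necessary, we may assume $\rk U \geq 3$. Then $w$ is primitive in $U$, so for a suitable basis $X$ of $U$ we have $w = x_1$, and we set $H := U(X, x_1, p) \in SCQ(U, p)$. Applying Proposition~\ref{prop-int-implies-saut} inside $U$ with $s = 1$, the $\SAut(H)$-conjugates of $w$ generate $\SAut(H)$; since these conjugates lie in $N$ (as $\SAut(H) \subseteq \SComm_p(F)$ normalizes $N$ and $w \in N$), we conclude $\SAut(H) \subseteq N$. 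Transitivity of $\SAut(U)$ on $SCQ(U,p)$ (Lemma~\ref{lem-description-SCQ}) combined with the fact that $\SAut(U) \subseteq \SComm_p(F)$ normalizes $N$ yields $\SAut(H') \subseteq N$ for every $H' \in SCQ(U, p)$, and Lemma~\ref{lemma-generation-indexp} then gives $\SAut(U) \subseteq N$.

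The trickiest part is the propagation: from $\SAut(U) \subseteq N$ for a single $p$-open $U$ one must reach $\SAut(V) \subseteq N$ for \emph{every} $p$-open $V$. I would do this in two stages. Upward, along a chain $U = U_0 \triangleleft U_1 \triangleleft \cdots \triangleleft U_k = F$ of successive index-$p$ normal inclusions, the same transitivity argument inside each $U_{i+1}$ together with Lemma~\ref{lemma-generation-indexp} upgrades $\SAut(U_i) \subseteq N$ to $\SAut(U_{i+1}) \subseteq N$, ultimately yielding $\SAut(F) \subseteq N$, and in particular $F \subseteq N$. Downward, for any $p$-open $V \leq F$ we have $V \subseteq F \subseteq N$, so $N \cap V$ is non-abelian; since $\rk V \geq 3$ (by Nielsen--Schreier when $V \subsetneq F$, and by assumption when $V = F$), the entire preceding argument applies verbatim inside $V$ and yields $\SAut(V) \subseteq N$. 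As $\SComm_p(F)$ is generated by such $\SAut(V)$, we conclude $\SComm_p(F) \subseteq N$. The delicate point throughout is ensuring that $(\rk , p) \neq (2,2)$ at each application of Proposition~\ref{prop-int-implies-saut}, which is handled by the initial rank reduction together with Nielsen--Schreier.
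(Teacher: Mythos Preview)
Your proof is correct, though it takes a somewhat different route from the paper's. The paper first reduces to the case where $N$ is normal in all of $\Comm_p(F)$ (using that $[\Comm_p(F):\SComm_p(F)]\le 2$ and the argument from Theorem~\ref{thm-fg-simple-general}); then $N\cap F$ is characteristic in $F$, and Lemma~\ref{lem-easy-conjugacy-Commp} directly yields that $N\cap F$ contains a primitive element of $F$, forcing $F\subseteq N$. From there the paper derives $\SAut(H)\subseteq N$ for every proper $p$-open $H$ via Proposition~\ref{prop-int-implies-saut} and then lifts to $\SAut(F)$ by Lemma~\ref{lemma-generation-indexp}. You instead avoid the reduction to normality in $\Comm_p(F)$ and the use of the conjugacy statement: you unpack the Ribes--Zalesskii step to find a $p$-open $U$ in which your element $w$ is primitive, and then perform an explicit inductive climb $\SAut(U)\subseteq N\Rightarrow\cdots\Rightarrow\SAut(F)\subseteq N$ along a subnormal chain. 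This buys you a self-contained argument that never leaves $\SComm_p(F)$-normality, at the cost of the extra chain induction.

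Two small points of care in your write-up. First, the assertion that replacing $F$ by a $p$-open subgroup $U$ leaves $\SComm_p(F)$ unchanged is true but deserves a line of justification: for any $p$-open $H\leq F$ with $\rk H\ge 3$, Corollary~\ref{cor:SAut_char} gives $\SAut(H)\subseteq\SAut(K)$ for a characteristic $p$-open $K\subseteq H\cap U$, while for $H=F$ with $\rk F=2$ one first passes through Lemma~\ref{lemma-generation-indexp}. Second, your claim that ``$\rk V\ge 3$ by assumption when $V=F$'' only holds after the $(2,2)$ replacement; when $\rk F=2$ and $p\neq 2$ you make no replacement, so $V=F$ has rank $2$. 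This is harmless, since your upward step already delivers $\SAut(F)\subseteq N$ and the downward step is genuinely needed only for proper $V$, where $\rk V\ge 3$ by Nielsen--Schreier. Your final sentence, focusing on the condition $(\rk,p)\neq(2,2)$ rather than $\rk\ge 3$, is the correct bookkeeping.
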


The proof of Theorem~\ref{thm-simplicity-Commp} will follow the same general outline as that of Theorem~\ref{thmAextended}, except this time we use Lemma~\ref{lem-easy-conjugacy-Commp} instead of Proposition~\ref{prop-one-conj-class}.

\begin{proof}[Proof of Theorem~\ref{thm-simplicity-Commp}]

First, since $[\Comm_p(\absF{}):\SComm_p(\absF{})]\leq 2$ by Corollary~\ref{cor-C+-index-small}, arguing exactly as in the proof of Theorem~\ref{thm-fg-simple-general}, we reduce Theorem~\ref{thm-simplicity-Commp} to the case of normal subgroups, so let $N$ be a non-trivial normal subgroup of $\Comm_p(F)$.

By Lemma \ref{lem-Comm-trivialQZ} (applied to $F$ equipped with the pro-$p$ topology), the subgroup $M = N \cap F$ is non-trivial.	
Since $\Comm_p(F)$ contains $\Aut(F)$, the group $M$ is also characteristic in $F$, so in particular, it contains an element that is not a proper power (in $F$) by Lemma~\ref{lem-existence-nonpower}. Since $N$ is normal in $\Comm_p(\absF{})$, by Lemma~\ref{lem-easy-conjugacy-Commp} $M$ contains a primitive element of $\absF{}$. Since $M$ is characteristic in $F$, we must have $M = \absF{}$, so $N$ contains $\absF{}$. 
	
	Let now $H$ be a proper $p$-open subgroup of $F$. Since $\rk(H)\geq 3$ and $H \subseteq N$, Proposition~\ref{prop-int-implies-saut} is applicable and implies that $N$ contains $\SAut(K)$ for every index $p$ normal subgroup $K$ of $H$. Applying Lemma~\ref{lemma-generation-indexp} to $H$, we deduce that $N$ contains $\SAut(H)$. Applying Lemma~\ref{lemma-generation-indexp} again, this time to $F$, we see that $N$ also contains $\SAut(F)$. This shows that $N$ contains $\SComm_p(\absF{})$, as desired. 	
\end{proof}

\subsection{A simple locally free pro-$p$ group associated to $\Comm_p(F)$} \label{sec-tdlc}

Let $\absF{}$ be a non-abelian free group of finite rank and $\propF{p}$ a free pro-$p$ group
of the same rank. As before, we will identify $\propF{p}$ with the pro-$p$ completion of $F$.
Recall that the associated homomorphism $ \Comm_p(\absF{}) \to \Comm(\propF{p})$ defined in Lemma \ref{lem-localiso-extends-completion} is injective, so we will view $\Comm_p(\absF{})$ as a subgroup of $\Comm(\propF{p})$. Likewise we will view $\Aut(F)$ as a subgroup
of $\Aut(\propF{p})$.
\skv

Thus, we have already found one natural (abstractly) simple subgroup of $\Comm(\propF{p})$, namely $\SComm_p(\absF{})$.  In this subsection we establish that the closure of $\SComm_p(\absF{})$ in $\Comm(\propF{p})$ is also abstractly simple.

\begin{Definition} \label{defi-closure-Cp(Fk)}
	We denote by $\CpC_p(\absF{})$ (resp.\ $\SCpC_p(\absF{})$) the closure of $ \Comm_p(\absF{})$ (resp.\ $\SComm_p(\absF{})$)  in $\Comm(\propF{p})$. 
\end{Definition}

Clearly $\SCpC_p(\absF{}) \subseteq  \CpC_p(\absF{})$, and by Corollary \ref{cor-C+-index-small}, the subgroup $\SCpC_p(\absF{})$ has index at most $2$ in $\CpC_p(\absF{})$. Since $\absF{}$ is dense in $\propF{p}$ and $\SComm_p(\absF{})$ contains $\absF{}$,  the subgroup $\SCpC_p(\absF{})$ contains $\propF{p}$. In particular, $\SCpC_p(\absF{})$ and $ \CpC_p(\absF{})$ are open subgroups of $\Comm(\propF{p})$,
and we have the following alternative descriptions of those groups:

\begin{Observation}\label{lemma-Scp-explicit}
$\CpC_p(\absF{})$ (resp. $\SCpC_p(\absF{})$) is the subgroup of $\Comm(\propF{p})$ generated by $\propF{p}$ and the groups 
$\Aut(U)$ (resp. $\SAut(U)$) where $U$ ranges over $p$-open subgroups of $F$.
\end{Observation}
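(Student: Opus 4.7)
The plan is to prove both identifications simultaneously by showing each side contains the other. Write $L$ for the subgroup of $\Comm(\propF{p})$ generated by $\propF{p}$ together with all $\Aut(U)$, $U$ ranging over $p$-open subgroups of $\absF{}$, and $L'$ for the analogous subgroup using $\SAut(U)$ instead. The goal is $L=\CpC_p(\absF{})$ and $L'=\SCpC_p(\absF{})$.

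First I would observe that $L$ and $L'$ are both \emph{open} in $\Comm(\propF{p})$, because each contains $\propF{p}$, which is a compact open subgroup of $\Comm(\propF{p})$. Since every open subgroup of a topological group is closed, both $L$ and $L'$ are closed. Next, by Corollary~\ref{prop-Cp-generation} we have $\Comm_p(\absF{})=\AComm_p(\absF{})=\langle \Aut(U):U\text{ $p$-open in }\absF{}\rangle\subseteq L$; and by the very definition of $\SComm_p(\absF{})$ we have $\SComm_p(\absF{})\subseteq L'$. Since $L$ and $L'$ are closed, taking closures gives $\CpC_p(\absF{})\subseteq L$ and $\SCpC_p(\absF{})\subseteq L'$.

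For the reverse inclusions I would check that each generator of $L$ (resp.\ $L'$) lies in the closure. The inclusions $\Aut(U)\subseteq\Comm_p(\absF{})\subseteq\CpC_p(\absF{})$ and $\SAut(U)\subseteq\SComm_p(\absF{})\subseteq\SCpC_p(\absF{})$ are immediate from the definitions. It remains to show that $\propF{p}$ is contained in both $\CpC_p(\absF{})$ and $\SCpC_p(\absF{})$. For $\CpC_p(\absF{})$ this is clear: $\absF{}\subseteq\Comm_p(\absF{})$ sits densely in $\propF{p}$, and $\CpC_p(\absF{})$ is closed in $\Comm(\propF{p})$, so $\propF{p}=\overline{\absF{}}\subseteq\CpC_p(\absF{})$.

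The only point requiring a small extra check, and the one I would flag as the main (though still mild) obstacle, is getting $\propF{p}\subseteq\SCpC_p(\absF{})$. For this it suffices to show $\absF{}\subseteq\SComm_p(\absF{})$. Under the canonical embedding $\iotta:\absF{}\hookrightarrow\Comm_p(\absF{})$, each element of $\absF{}$ acts on $\absF{}$ by an inner automorphism, which induces the identity on the abelianization $\absF{}/[\absF{},\absF{}]\cong\dbZ^d$; hence $\iotta(\absF{})\subseteq\SAut(\absF{})\subseteq\SComm_p(\absF{})$. Taking closures then gives $\propF{p}\subseteq\SCpC_p(\absF{})$, completing the reverse inclusions and the proof.
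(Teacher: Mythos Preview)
Your proof is correct and follows essentially the same approach as the paper: the paper notes just before the Observation that $\absF{}\subseteq\SComm_p(\absF{})$ is dense in $\propF{p}$, whence $\propF{p}\subseteq\SCpC_p(\absF{})$ and both closures are open (hence closed), after which the Observation is immediate. Your write-up simply spells out these implicit steps in full, including the reason $\iotta(\absF{})\subseteq\SAut(\absF{})$ and the appeal to Corollary~\ref{prop-Cp-generation} for $\Comm_p(\absF{})=\AComm_p(\absF{})$.
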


Note also that $\Comm_p(\absF{})$ being countable, the groups $\SCpC_p(\absF{})$ and $ \CpC_p(\absF{})$ are $\sigma$-compact. 

\skv
Before proving abstract simplicity of $\SCpC_p(\absF{})$ we need some preparations. First, we will use two well-known results dealing with abstract normal subgroups of finitely generated pro-$p$ groups.
The first result is derived in the course of the proof of \cite[Proposition~1.19]{DDMS}:

\begin{Lemma}
	\label{DDMS1}
	Let $G$ be a pro-$p$ group generated by a finite set $X_0$. Then the commutator subgroup $[G,G]$ is abstractly generated by 
	$\{[x,g]: x\in X_0, g\in G\}$.
\end{Lemma}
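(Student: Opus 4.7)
The plan is to prove the non-trivial inclusion $[G,G]\subseteq H$, where $H:=\langle S\rangle_{\mathrm{abs}}$ denotes the abstract subgroup of $G$ generated by $S=\{[x,g]:x\in X_{0},\,g\in G\}$; the reverse containment $H\subseteq [G,G]$ is immediate from the definition of $S$.

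First I would establish one-sided normality of $H$ under conjugation by elements of $X_{0}$. The key identity is
\[
y\,[x,g]\,y^{-1} \;=\; [y,[x,g]]\cdot [x,g],
\]
valid in any group (with the paper's convention $[a,b]=aba^{-1}b^{-1}$). If $y\in X_{0}$, then the first factor on the right lies in $S$ (it has first entry $y\in X_{0}$ and second entry $[x,g]\in G$) and the second factor is in $S$ by hypothesis; hence $ySy^{-1}\subseteq H$, and consequently $yHy^{-1}\subseteq H$ for every $y\in X_{0}$.

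Next I would upgrade this to honest normality of the topological closure $\overline{H}$. For $y\in X_{0}$ and any open normal subgroup $N$ of $G$, the image of $y$ in the finite $p$-group $G/N$ has $p$-power order, so $y^{-1}$ agrees modulo $N$ with a positive power of $y$; iterating the previous step in $G/N$ then yields $y^{-1}Hy\subseteq H\cdot N$. Intersecting over all such $N$ gives $y^{-1}Hy\subseteq \overline{H}$, so $\overline{H}$ is invariant under conjugation by every element of $X_{0}^{\pm 1}$. Continuity of conjugation combined with the density of $\langle X_{0}\rangle_{\mathrm{abs}}$ in $G$ then forces $\overline{H}$ to be a closed normal subgroup of $G$. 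Since every $x\in X_{0}$ is central modulo $\overline{H}$ and $X_{0}$ topologically generates $G$, the quotient $G/\overline{H}$ is abelian, and therefore $[G,G]\subseteq \overline{H}$.

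The main obstacle, and the place where the finitely generated pro-$p$ hypothesis is used most essentially, is the final step: passing from $[G,G]\subseteq \overline{H}$ to the abstract equality $[G,G]=H$. This amounts to showing that an arbitrary commutator $[a,b]$ with $a,b\in G$ is an honest finite product of elements of $S^{\pm 1}$, not merely a convergent product of such. To achieve this I would follow the analysis in the proof of \cite[Proposition~1.19]{DDMS}: iterated use of the standard commutator identities $[ab,c]=a[b,c]a^{-1}[a,c]$ and $[a,bc]=[a,b]\cdot b[a,c]b^{-1}$ reduces commutators of elements of $\langle X_{0}\rangle_{\mathrm{abs}}$ to products of elements of $S^{\pm 1}$ by induction on word length in $X_{0}^{\pm 1}$; combining this with the finite-quotient reduction above—which shows that in every finite quotient $G/N$ the image of $[G,G]$ equals the image of $H$—forces any convergent expression of a commutator in terms of $S^{\pm 1}$ to collapse to a genuinely finite one. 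This closes the gap and completes the proof.
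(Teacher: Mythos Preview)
Your first two steps correctly establish $[G,G]\subseteq\overline{H}$. The gap is entirely in the final paragraph: the sentence ``forces any convergent expression of a commutator in terms of $S^{\pm 1}$ to collapse to a genuinely finite one'' is the whole content of the lemma and is nowhere justified. Knowing that $H$ and $[G,G]$ have the same image in every finite quotient is just the statement $\overline{H}=\overline{[G,G]}$, which is no stronger than what you already had. Your commutator-identity induction, carried out carefully, does give $[w,c]\in H$ for $w\in\langle X_0\rangle_{\mathrm{abs}}$ and arbitrary $c\in G$; but $\langle X_0\rangle_{\mathrm{abs}}$ is only a dense subgroup of $G$, so for a general commutator $[a,b]$ with $a\in G$ you have established nothing beyond $[a,b]\in\overline{H}$. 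There is no mechanism in your argument that converts a convergent infinite product in $S^{\pm1}$ into a finite one.

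The argument from \cite{DDMS} that the paper is citing avoids this trap by proving something sharper: the set $T=\{[x_1,g_1]\cdots[x_d,g_d]:g_i\in G\}$, a product of exactly $d$ commutators in a fixed order, is all of $\overline{[G,G]}$. Since $T$ is the continuous image of the compact set $G^d$, it is closed; surjectivity onto $\overline{[G,G]}$ then reduces, via the finite quotients $G/N$, to showing that in a finite $p$-group $P=\langle y_1,\dots,y_d\rangle$ every element of $[P,P]$ has the form $[y_1,p_1]\cdots[y_d,p_d]$, which is a classical fact proved by induction on the nilpotency class (working modulo successive terms $\gamma_k(P)$ of the lower central series). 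From $\overline{[G,G]}=T\subseteq H\subseteq[G,G]$ one then reads off $[G,G]=H$, and incidentally that $[G,G]$ is closed. The compactness of $G^d$ is what buys the passage from topological to abstract generation; your route has no substitute for it.
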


We do not know a reference in the literature for the second result, so we will include a proof.

\begin{Lemma} [folklore]
	\label{abstractperfectquotient}
	Let $G$ be a finitely generated pro-$p$ group and $N$ an abstract normal subgroup of $G$.
	If $G/N$ is perfect, then $N=G$.   
\end{Lemma}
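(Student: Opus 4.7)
The plan is to combine the perfectness of $G/N$ with Lemma~\ref{DDMS1} to show that $[G,G]\subseteq N$, after which $G = [G,G]\cdot N = N$ follows immediately.

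First I would establish $N\Phi(G) = G$. By Proposition~\ref{prop:Frattini}(a), $\Phi(G)$ is open in $G$, so $G/\Phi(G)$ is a finite $p$-group. The quotient $G/(N\Phi(G))$ is a homomorphic image both of the perfect group $G/N$ and of the finite $p$-group $G/\Phi(G)$; being simultaneously perfect and a finite $p$-group, it must be trivial, so $N\Phi(G) = G$.

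Next I would construct a topological generating set of $G$ that lies inside $N$. Fix any finite topological generating set $X_0 = \{x_1,\dots,x_r\}$ of $G$. Using $N\Phi(G) = G$, for each $i$ write $x_i = n_i\phi_i$ with $n_i \in N$ and $\phi_i \in \Phi(G)$. Since $n_i \equiv x_i \pmod{\Phi(G)}$, the images of $n_1,\dots,n_r$ still generate $G/\Phi(G)$, and by Lemma~\ref{lem:Frattinibasic}(i) the set $\{n_1,\dots,n_r\}$ topologically generates $G$.

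Finally I would apply Lemma~\ref{DDMS1} with this new topological generating set: the commutator subgroup $[G,G]$ is \emph{abstractly} generated by the commutators $\{[n_i,g]: 1\le i\le r,\ g\in G\}$. For each such commutator, normality of $N$ gives $g n_i^{-1} g^{-1} \in N$, whence $[n_i,g] = n_i \cdot (g n_i^{-1} g^{-1}) \in N$. Thus $[G,G]\subseteq N$, and combining with the equality $G = [G,G]\cdot N$ supplied by perfectness of $G/N$ yields $G = N$.

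The argument is essentially routine, with the one conceptual point worth highlighting being the use of Lemma~\ref{DDMS1} to convert \emph{topological} generation of $G$ into \emph{abstract} generation of the (closed but not visibly countably generated) subgroup $[G,G]$ -- this is precisely what allows us to work with $N$ only as an abstract normal subgroup and never appeal to its closure.
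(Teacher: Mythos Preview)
Your proof is correct and follows essentially the same route as the paper's: both establish $N\Phi(G)=G$, extract a topological generating set of $G$ lying in $N$, apply Lemma~\ref{DDMS1} to deduce $[G,G]\subseteq N$, and conclude from $G=[G,G]N$. The only cosmetic difference is that the paper reaches $N\Phi(G)=G$ via $[G,G]N=G$ and $[G,G]\subseteq\Phi(G)$, whereas you argue directly that $G/(N\Phi(G))$ is a perfect finite $p$-group.
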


\begin{proof}
	Since $G/N$ is perfect, we have $[G,G]N=G$. In particular, $N$ surjects onto $G/\Phi(G)$ and hence contains some generating set $X$ for $G$ by Lemma \ref{lem:Frattinibasic}. By Lemma~\ref{DDMS1}, the commutator subgroup $[G,G]$ is abstractly generated by $\{[x,g]: x\in X, g\in G\}$, so $N$ contains $[G,G]$. Hence $G/N$ is both perfect and abelian and thus trivial.
\end{proof}

The next two lemmas deal with subgroups of $\Comm(\propF{p})$ which have a sufficiently large normalizer.
The first one establishes a dichotomy for subgroups normalized by $\SAut(F)$.

\begin{Lemma}
\label{Frattini argument}
Let $N$ be a subgroup of $\Comm(\propF{p})$ normalized by $\SAut(F)$ and let $K=N\cap \propF{p}$.
Then either $K\subseteq \Phi(\propF{p})$ or $K\Phi(\propF{p})=\propF{p}$.
\end{Lemma}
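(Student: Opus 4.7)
The plan is to exploit the action of $\SAut(F)$ on the Frattini quotient $\propF{p}/\Phi(\propF{p})$, which by Proposition~\ref{prop:Frattini} is an elementary abelian $p$-group of rank $d = \rk(\propF{p}) \geq 2$, i.e.\ a $d$-dimensional vector space $V$ over $\dbF_p$. First I would note that $K=N\cap \propF{p}$ is invariant under the conjugation action of $\SAut(F)$ on $\propF{p}$, since $N$ is normalized by $\SAut(F)$ and $\propF{p}$ is too (as $\SAut(F) \subseteq \Aut(F) \subseteq \Aut(\propF{p})$ acts on $\propF{p}$ by automorphisms). Consequently, the image $\overline{K}$ of $K$ in $V$ is an $\SAut(F)$-invariant subspace of $V$.

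Next I would identify the image of $\SAut(F)$ in $\GL(V) \cong \GL_d(\dbF_p)$. By definition of $\SAut(F)$ the natural map $\pi: \Aut(F) \to \GL_d(\dbZ)$ sends $\SAut(F)$ onto $\SL_d(\dbZ)$, and the induced action on $V = \propF{p}/\Phi(\propF{p})$ is just the composition of $\pi$ with the reduction $\SL_d(\dbZ) \to \SL_d(\dbF_p)$, which is surjective (since $\SL_d(\dbZ)$ is generated by elementary matrices whose reductions generate $\SL_d(\dbF_p)$). Hence the image of $\SAut(F)$ in $\GL(V)$ equals $\SL_d(\dbF_p)$, which acts transitively on $V \setminus \{0\}$ since $d \geq 2$.

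Combining these two observations, the only $\SAut(F)$-invariant subspaces of $V$ are $\{0\}$ and $V$ itself. If $\overline{K} = \{0\}$, then $K \subseteq \Phi(\propF{p})$. If $\overline{K} = V$, then $K\Phi(\propF{p}) = \propF{p}$. This gives the stated dichotomy.

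There is no serious obstacle here; the argument is essentially a Frattini-style reduction combined with the elementary irreducibility of the natural $\SL_d(\dbF_p)$-action on $\dbF_p^d$. The only point to verify carefully is that the conjugation action of $\SAut(F) \subseteq \Comm(\propF{p})$ on $\propF{p}$ really agrees with the tautological action of $\SAut(F)$ on $\propF{p}$ via the extension of automorphisms to the pro-$p$ completion, which is immediate from the definition of the embedding $\Aut(F) \hookrightarrow \Aut(\propF{p}) \hookrightarrow \Comm(\propF{p})$.
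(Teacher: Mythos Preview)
Your proof is correct and follows essentially the same route as the paper: both argue that $K$ is $\SAut(F)$-invariant, pass to the image in the Frattini quotient $\propF{p}/\Phi(\propF{p}) \cong \dbF_p^d$, and use that $\SAut(F)$ acts there through $\SL_d(\dbF_p)$, which is transitive on nonzero vectors, forcing the image to be $\{0\}$ or all of $V$. Your version is slightly more explicit about identifying the image of $\SAut(F)$ in $\GL_d(\dbF_p)$ and about the compatibility of the conjugation action with the tautological one, but the argument is the same.
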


\begin{proof}
The group $K=N\cap \propF{p}$ is $\SAut(F)$-invariant
and hence so is the quotient $K/(K\cap \Phi(\propF{p}))\cong K\Phi(\propF{p})/\Phi(\propF{p})$.
On the other hand, $\SAut(F)$ acts transitively on nonzero elements of $\propF{p}/\Phi(\propF{p})$ since this action factors through
the standard action of $\SL_d(\mathbb F_p)$ on $\mathbb F_p^d\setminus\{0\}$ where $d=\rk(F)$. It follows that either
$K\Phi(\propF{p})=\propF{p}$ or $K\Phi(\propF{p})=\Phi(\propF{p})$, and in the latter case $K\subseteq \Phi(\propF{p})$.
\end{proof}

The next lemma yields a stronger conclusion for subgroups normalized by $\la \propF{p},S_p(\absF{}) \ra$.

\begin{Lemma} \label{prop-dense-normal}
Let $N$ be a subgroup of $\Comm(\propF{p})$ normalized by $\la \propF{p},S_p(\absF{}) \ra$,  and assume that
$N\cap \,\propF{p}\not \subseteq \Phi(\propF{p})$. Then $N$ contains $\propF{p}$. 
\end{Lemma}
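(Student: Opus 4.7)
Let $K = N \cap \propF{p}$. Since the hypothesis gives that $\propF{p}$ normalizes $N$, $K$ is an abstract normal subgroup of $\propF{p}$. Moreover, as $\SAut(F) \subseteq S_p(F)$ by Lemma~\ref{lem-properties-Am(F)}\eqref{item-SAutF-in-Sm}, and as $\SAut(F) \subseteq \Aut(\propF{p})$ preserves $\propF{p}$ inside $\Comm(\propF{p})$, $K$ is $\SAut(F)$-invariant. Combined with the assumption $K \not\subseteq \Phi(\propF{p})$, Lemma~\ref{Frattini argument} yields the ``Frattini-density'' equality $K\Phi(\propF{p}) = \propF{p}$.

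The plan is then to promote this density to the equality $K=\propF{p}$ by invoking Lemma~\ref{abstractperfectquotient}: since $\propF{p}$ is a finitely generated pro-$p$ group and $K$ is an abstract normal subgroup, it suffices to verify that the abstract quotient $\propF{p}/K$ is perfect. By Proposition~\ref{prop:Frattini}(a), $\Phi(\propF{p}) = [\propF{p},\propF{p}]\propF{p}^p$ holds set-theoretically (not merely topologically), so $K\Phi(\propF{p})=\propF{p}$ reads $K\cdot [\propF{p},\propF{p}]\cdot \propF{p}^p=\propF{p}$; in particular, the abelian abstract group $\propF{p}/(K[\propF{p},\propF{p}])$ is generated by images of $p$-th powers. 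Hence proving $\propF{p}/K$ perfect is equivalent to proving that the image $\overline K$ of $K$ in the abelianization $\propF{p}^{\mathrm{ab}} \cong \mathbb Z_p^d$ equals all of $\mathbb Z_p^d$.

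The full strength of the $S_p(F)$-normalization enters here via Lemma~\ref{lem-conjugate-power-SmF}: for every primitive $x\in F$ there exists $c\in S_p(F)$ with $cxc^{-1}=x^p$. Since $c$ normalizes $N$ and its conjugation on $\propF{p}$ is implemented by a virtual isomorphism $\phi\colon U\to V$ representing it (so that $\phi(k)=ckc^{-1}\in \propF{p}$ for every $k$ in $U$), we obtain $\phi(K \cap U) \subseteq K$. Applying this to $k\in K$ with $k\equiv x \pmod{\Phi(\propF{p})}$ (available from the Frattini surjection), $\phi(k)\in K$ has image in $\mathbb Z_p^d$ equal to $p\bar x$ plus a correction coming from $\Phi(\propF{p})$. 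Iterating with higher powers yields $c^n x c^{-n} = x^{p^n}$, producing elements of $\overline K$ lying deep in $p^n \mathbb Z_p^d$; combined with $\SAut(F)$-invariance (the $\SL_d(\mathbb Z)$-action on $\mathbb Z_p^d$ is transitive on primitive vectors), I expect to force $\overline K=\mathbb Z_p^d$.

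The main obstacle is to rigorously control the error term in the identity $\phi(k)=x^p\cdot(\mathrm{error})$ and to upgrade the resulting dense $\SL_d(\mathbb Z)$-invariant abstract subgroup of $\mathbb Z_p^d$ (whose $\SAut(F)$-orbit a priori only produces a dense $\mathbb Z$-submodule of $\mathbb Z_p^d$) to the full closed $\mathbb Z_p$-submodule $\mathbb Z_p^d$. I expect this to require an induction along the Frattini series $(\Phi^{\ell}(\propF{p}))_{\ell\ge 0}$, exploiting in tandem the $c^n$-dynamics and the $\SAut(F)$-orbit structure, so as to show that $\overline K$ exhausts all of $\mathbb Z_p^d$ and hence $\propF{p}/K$ is perfect.
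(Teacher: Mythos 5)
Your initial reduction is sound and parallels the paper's: using Lemma~\ref{Frattini argument} to get $K\Phi(\propF{p})=\propF{p}$ is exactly right, the equivalence ``$\propF{p}/K$ perfect $\iff$ $\overline K=\mathbb Z_p^d$'' is correct (granting that $[\propF{p},\propF{p}]$ is closed), and invoking Lemma~\ref{abstractperfectquotient} would legitimately close the argument once $\overline K=\mathbb Z_p^d$ is established. The gap is precisely the step you flag as an obstacle, and I do not think it can be repaired along the lines you propose. The element $c\in S_p(F)$ from Lemma~\ref{lem-conjugate-power-SmF} is only a \emph{virtual} automorphism, represented by $\phi$ on an index-$p$ open subgroup $\propU{p}$ (in the explicit construction, $c=\alpha\beta$ with $\beta\in\SAut(F)$ and $\alpha\in\SAut(H)$ for some index-$p$ subgroup $H$). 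Writing $k=x\cdot e$ with $e\in\Phi(\propF{p})$, one gets $\phi(k)=x^p\cdot \alpha(\beta(e))$, but $\alpha$ is an automorphism of $\propH{p}$, not of $\propF{p}$, so $\alpha(\beta(e))$ has no reason to land back in $\Phi(\propF{p})$; its abelianized image can be an arbitrary element of $\propH{p}^{\rm ab}$'s image in $\mathbb Z_p^d$. Iterating $c$ only makes the loss of control worse, and the $\SL_d(\mathbb Z)$-invariance of $\overline K$ together with surjectivity onto $\mathbb F_p^d$ is compatible with proper dense subgroups such as $\mathbb Z^d$, so nothing short of exhibiting an actual uncountable closed piece inside $K$ will do. (Your observation that $\propF{p}/(K[\propF{p},\propF{p}])$ is $p$-divisible also does not help: $p$-divisible quotients of $\mathbb Z_p^d$ can be nontrivial, e.g.\ $\mathbb Z_p/\mathbb Z$.)

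The paper's proof uses a different and cleaner mechanism for this step. After getting $K\Phi(\propF{p})=\propF{p}$, it deduces that $K$ contains a topological generating set $X_0$ of $\propF{p}$; since $K$ is normal in $\propF{p}$ (abstractly), Lemma~\ref{DDMS1} then gives the genuine inclusion $[\propF{p},\propF{p}]\subseteq K$. The remaining problem, showing $\overline K=\mathbb Z_p^d$, is handled by exhibiting whole procyclic subgroups $\overline{\langle x\rangle}$ inside $N$: for each $x\in X$, take a normal subgroup $H$ of index $p$ containing $x$; by Lemma~\ref{lem:Schreier}(b), $H$ has a free basis $Y$ containing $x$ as well as an element $y\in[F,F]$ (a commutator of the form $[x,x']$). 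Since $\rk(H)\ge 3$, there is $\sigma\in\SAut(H)\subseteq S_p(F)$ sending $y$ to $x$, and since $\overline{\langle y\rangle}\subseteq [\propF{p},\propF{p}]\subseteq N$ and $\sigma$ normalizes $N$, one gets $\overline{\langle x\rangle}=\sigma^{-1}\overline{\langle y\rangle}\sigma\subseteq N$, hence $\overline{\langle x\rangle}\subseteq K$. This is the ingredient your argument lacks; the ``$c^n$-dynamics'' cannot substitute for it.
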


\begin{proof}
As in Lemma~\ref{Frattini argument}, we set $K=N\cap \propF{p}$, so that $K\not\subseteq \Phi(\propF{p})$
by our hypotheses. Hence $K\Phi(\propF{p})=\propF{p}$ by Lemma~\ref{Frattini argument}. 
Let $X$ be a basis for $\absF{}$. Then $K$ has non-trivial intersection with the coset $\Phi(\propF{p})x$ for each $x \in X$, so $K$ contains a generating set $X_0$ for $\propF{p}$ by Lemma~\ref{lem:Frattinibasic}(i). Since $K$ is normal in $\propF{p}$, it must contain $[\propF{p},\propF{p}]$ by Lemma~\ref{DDMS1}. Since $X$ generates $\propF{p}$,
we have \[\propF{p}=[\propF{p},\propF{p}]\cdot \prod\limits_{x\in X}\overline{\la x\ra}, \] and thus it remains to show that $K$ contains $\overline{\la x\ra}$ for each $x\in X$.
\skv

Fix $x\in X$, and let $H$ be any normal subgroup of index $p$ in $\absF{}$ containing $x$. By Lemma~\ref{lem:Schreier}(b), $H$
has a free generating set $Y$ containing $x$ and some element $y\in [H,H]$.
Since $\rk(H)\geq 3$, there exists $\sigma\in \SAut(H)$ which permutes the elements of $Y$ and sends $x$ to $y$. Since $N$ is normalized by $S_p(F)$, we have
$$\overline{\la x\ra}=\sigma^{-1}\overline{\la y\ra}\sigma \subseteq \sigma^{-1}[\propF{p},\propF{p}]\sigma \subseteq \sigma^{-1}N\sigma=N.$$
Hence $\propF{p} \subseteq N$.
\end{proof}

We are now ready to prove that $\SCpC_p(\absF{})$ is abstractly simple. 
As with the analogous result for $\AComm(F)$, we will establish a stronger statement:

\begin{Theorem} \label{thm-completion-abs-simple}
Let $\absF{}$ be a non-abelian free group of finite rank and $\propF{p}$ its pro-$p$ completion. Then every non-trivial subgroup of $\Comm(\propF{p})$ normalized by $\SCpC_p(\absF{})$ contains  $\SCpC_p(\absF{})$. 
In particular, $\SCpC_p(\absF{})$ is abstractly simple.
\end{Theorem}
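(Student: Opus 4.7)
My plan is to adapt the outline of the proof of Theorem~\ref{thm-simplicity-Commp}, with one additional technical complication: since $\SCpC_p(F)$ does not in general contain $\Aut(\propF{p})$, subgroups of $\propF{p}$ preserved by $\SCpC_p(F)$ need not be characteristic, so one must first descend along the Frattini series of $\propF{p}$ before reducing to the situation of that earlier proof. The target is to show $\propF{p}\subseteq N$ and $\SAut(U)\subseteq N$ for every $p$-open $U\subseteq F$, which together with Observation~\ref{lemma-Scp-explicit} imply $N\supseteq\SCpC_p(F)$.

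Since $\SCpC_p(F)$ is open in $\Comm(\propF{p})$ and $\propF{p}$ has trivial quasi-center, Lemma~\ref{lem-Comm-trivialQZ}(\ref{item-Comm-local-normal}) yields $N\cap\propF{p}\neq 1$. For each $k\geq 0$, put $V_k=F\cap\Phi^k(\propF{p})$; this is a non-abelian free abstract group whose pro-$p$ completion identifies with $\Phi^k(\propF{p})$, and both $\Phi^k(\propF{p})$ and $S_p(V_k)$ lie in $\SCpC_p(F)$. Hence Lemma~\ref{prop-dense-normal}, whose proof transposes verbatim from $(\propF{p},F)$ to $(\Phi^k(\propF{p}),V_k)$, yields at each level the dichotomy: either $N\supseteq\Phi^k(\propF{p})$, or $N\cap\Phi^k(\propF{p})\subseteq\Phi^{k+1}(\propF{p})$. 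Since $\bigcap_k\Phi^k(\propF{p})=\{1\}$ by Proposition~\ref{prop:Frattini}(b), this produces a minimal $n$ with $\Phi^n(\propF{p})\subseteq N$.

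The main obstacle will be to rule out $n\geq 1$. Assuming $n\geq 1$, set $V=\Phi^{n-1}(\propF{p})$ and $V_{\mathrm{abs}}=F\cap V$, pick a basis $\{y_1,\ldots,y_m\}$ of $V_{\mathrm{abs}}$, and set $W=V_{\mathrm{abs}}(Y,y_1,p)$. Then $W$ is $p$-open in $F$ with $\rk(W)=1+p(m-1)\geq 3$, and by Lemma~\ref{lem:Schreier}(a) both $y_1^p$ and $y_2$ are primitive in $W$, so some $\phi\in\SAut(W)\subseteq\SCpC_p(F)$ sends $y_1^p$ to $y_2$. The element $y_1^p$ lies in $\Phi(V)=\Phi^n(\propF{p})\subseteq N$ and in the domain of $\phi$ (viewing $\phi$ as an element of $\SAut(\propW{p})$ via continuous extension), so the standard identity $\phi\,\iotta_{y_1^p}\,\phi^{-1}=\iotta_{\phi(y_1^p)}=\iotta_{y_2}$ inside $\Comm(\propF{p})$ forces $y_2\in N\cap V$. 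Being primitive in $V_{\mathrm{abs}}$, $y_2\notin\Phi(V)$, so $N\cap V\not\subseteq\Phi(V)$; applying Lemma~\ref{prop-dense-normal} with $\propF{p}$ replaced by $V$ yields $N\supseteq V=\Phi^{n-1}(\propF{p})$, contradicting the minimality of $n$.

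With $\propF{p}\subseteq N$ established, I would repeat the second half of the proof of Theorem~\ref{thm-simplicity-Commp}: every proper $p$-open subgroup $H\subseteq F$ has $\rk(H)\geq 3$, so Proposition~\ref{prop-int-implies-saut} combined with Lemma~\ref{lemma-generation-indexp} gives $\SAut(H)\subseteq N$, and a final application of Lemma~\ref{lemma-generation-indexp} yields $\SAut(F)\subseteq N$, completing the proof.
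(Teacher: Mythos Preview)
Your proof is correct. The route you take differs from the paper's in one substantive way, and it is worth noting.

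The paper also begins with Lemma~\ref{lem-Comm-trivialQZ}(\ref{item-Comm-local-normal}) and Frattini descent via Lemma~\ref{prop-dense-normal}, but it stops as soon as it finds \emph{some} $\ell$ with $\Phi^{\ell}(\propF{p})\subseteq N$. At that point $N\cap\SComm_p(F)$ is already non-trivial, and the paper simply invokes Theorem~\ref{thm-simplicity-Commp} (the simplicity of $\SComm_p(F)$) as a black box to conclude $\SComm_p(F)\subseteq N$; the equality $\langle\Phi^{\ell}(\propF{p}),\SComm_p(F)\rangle=\SCpC_p(F)$ (open and dense) then finishes the argument. No effort is made to show $\propF{p}\subseteq N$ directly.

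Your argument is instead self-contained: you do not cite Theorem~\ref{thm-simplicity-Commp} but push the Frattini descent all the way to $n=0$ via the explicit conjugation $y_1^p\mapsto y_2$ inside $\SAut(W)$, obtaining $\propF{p}\subseteq N$ outright, and then rerun the last paragraph of the proof of Theorem~\ref{thm-simplicity-Commp} (Proposition~\ref{prop-int-implies-saut} and Lemma~\ref{lemma-generation-indexp}) to recover all the $\SAut(H)$. The trade-off is clear: the paper's proof is shorter and more modular, while yours avoids the dependency on the earlier simplicity theorem. Your climbing trick (conjugating a $p$-th power sitting in $\Phi^n$ to a primitive element of $\Phi^{n-1}$) is a pleasant variant of the move already used inside Lemma~\ref{prop-dense-normal} and could be recorded as an alternative proof of that lemma in the special case where one already knows $\Phi(\propF{p})\subseteq N$.
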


\begin{proof}
Let $N$ be a non-trivial subgroup of $\Comm(\propF{p})$ normalized by $\SCpC_p(\absF{})$. We want to show that $\SCpC_p(\absF{}) \subseteq N$. Since $\SCpC_p(\absF{})$ is open in $\Comm(\propF{p})$ and normalizes $N$, the subgroup $N \cap \propF{p}$  is non-trivial by 
Lemma \ref{lem-Comm-trivialQZ}(\ref{item-Comm-local-normal}).
	
By Proposition~\ref{prop:Frattini}(b), the Frattini series $(\Phi^{n}(\propF{p}))_{n=0}^{\infty}$ has trivial intersection,
so $N \cap \propF{p}$  cannot be contained in all its terms. Let $\propU{p}= \Phi^{\ell}(\propF{p})$ be the last term that contains $N \cap \propF{p}$.

As usual, we can identify $\Comm(\propU{p})$ with $\Comm(\propF{p})$. 
Also, if we let $U=\propU{p}\cap F$, then $\propU{p}$ can be viewed as the pro-$p$ completion of $U$ and we have
$\la \propU{p}, S_p(U)\ra\subseteq \SCpC_p(\absF{})$, so $N$ is normalized by $\la \propU{p}, S_p(U)\ra$. 
Moreover, by the choice of $\ell$ we have $N\cap \propU{p}=N\cap \propF{p}\not\subseteq \Phi(\propU{p})$,
so we can apply Lemma~\ref{prop-dense-normal} to $N$ with $U$ and $\propU{p}$ playing the role of $F$ and $\propF{p}$, respectively.
Thus we deduce that $N$  contains
$\propU{p}$; in particular $N\cap\, \SComm_p(\absF{})$ is non-trivial.
Since $\SComm_p(\absF{})$ is simple and normalizes $N$, it follows that $N$ contains $\SComm_p(\absF{})$ and hence
contains $\la \propU{p}, \SComm_p(\absF{}) \ra=\SCpC_p(\absF{})$, as desired (the last equality holds since
$\la \propU{p}, \SComm_p(\absF{}) \ra$ is both open and dense in $\SCpC_p(\absF{})$).
\end{proof}

\subsection{On the monolith of $\Comm(\propF{p})$}
As in previous subsection, let $\absF{}$ be a non-abelian free group of finite rank and  $\propF{p}$ its  pro-$p$ completion. 
Theorem~\ref{Theorem-intro-Comm-pro-p} restated as Corollary~\ref{Comm-prop-almost-simple} below is an immediate consequence of Theorem~\ref{thm-completion-abs-simple}.

\begin{Corollary}\label{Comm-prop-almost-simple}
	The group $\Comm(\propF{p})$ is monolithic and its monolith is simple.  Moreover, $\mon(\Comm(\propF{p}))$ is equal to the (abstract) normal closure of $\SCpC_p(\absF{})$ in $\Comm(\propF{p})$. 
\end{Corollary}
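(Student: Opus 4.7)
The plan is to deduce everything from Theorem~\ref{thm-completion-abs-simple} applied twice: once to normal subgroups of $\Comm(\propF{p})$ (to identify the monolith) and once to normal subgroups of $\mon(\Comm(\propF{p}))$ itself (to obtain its simplicity). Let $M$ denote the abstract normal closure of $\SCpC_p(\absF{})$ in $\Comm(\propF{p})$.

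First I would check that $M=\mon(\Comm(\propF{p}))$. Let $N$ be any non-trivial normal subgroup of $\Comm(\propF{p})$. Then $N$ is in particular normalized by $\SCpC_p(\absF{})$, so Theorem~\ref{thm-completion-abs-simple} gives $\SCpC_p(\absF{})\subseteq N$. Since $N$ is normal in $\Comm(\propF{p})$, the entire normal closure $M$ sits inside $N$. As $M$ itself is a non-trivial normal subgroup, it follows that $M$ is the unique minimal non-trivial normal subgroup of $\Comm(\propF{p})$, proving monolithicity and identifying the monolith.

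Second, I would prove that $M$ is simple. Let $K$ be a non-trivial normal subgroup of $M$. Since $\SCpC_p(\absF{})\subseteq M$ normalizes $K$, Theorem~\ref{thm-completion-abs-simple} again gives $\SCpC_p(\absF{})\subseteq K$. To upgrade this to $M\subseteq K$, I would show that $K$ contains every $\Comm(\propF{p})$-conjugate of $\SCpC_p(\absF{})$. Fix $g\in\Comm(\propF{p})$. The subgroup $g\SCpC_p(\absF{})g^{-1}$ is contained in $M$ (since $M$ is normal) and, being conjugate to $\SCpC_p(\absF{})$, is abstractly simple by Theorem~\ref{thm-completion-abs-simple}. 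The intersection $K\cap g\SCpC_p(\absF{})g^{-1}$ is normal in $g\SCpC_p(\absF{})g^{-1}$, because $K$ is normal in $M$ and $g\SCpC_p(\absF{})g^{-1}\subseteq M$. Hence either this intersection is trivial or it equals $g\SCpC_p(\absF{})g^{-1}$.

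The only genuinely non-formal step, which I regard as the main (though minor) obstacle, is ruling out triviality of the intersection. Here I would use the topological structure: both $\SCpC_p(\absF{})$ and $g\SCpC_p(\absF{})g^{-1}$ are open subgroups of the non-discrete tdlc group $\Comm(\propF{p})$ (since each contains the compact open subgroup $\propF{p}$ or its conjugate, respectively), so their intersection is open and therefore non-trivial. Since this intersection is contained in $K\cap g\SCpC_p(\absF{})g^{-1}$, the latter is non-trivial, forcing $g\SCpC_p(\absF{})g^{-1}\subseteq K$. Letting $g$ range over $\Comm(\propF{p})$ gives $M\subseteq K$, whence $K=M$. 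This completes the proof.
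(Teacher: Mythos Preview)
Your proof is correct. The identification of the monolith with the normal closure $M$ proceeds exactly as in the paper. For the simplicity of $M$, however, you take a different route: the paper observes that $\mon(M)$ is characteristic in $M$ and therefore normal in $\Comm(\propF{p})$, which forces $\mon(M)=M$ by minimality; you instead argue directly that any non-trivial normal subgroup $K$ of $M$ swallows every $\Comm(\propF{p})$-conjugate of $\SCpC_p(\absF{})$, using the simplicity of each conjugate together with the openness of $\SCpC_p(\absF{})$ to guarantee non-trivial intersections. The paper's characteristic-subgroup trick is slightly slicker and avoids any appeal to the topology, while your argument is more explicit about why $K$ absorbs the whole normal closure and would generalize to settings where one has a simple open subgroup but no convenient characteristic description.
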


\begin{proof}
Let $N$ denote the normal closure of $\SCpC_p(\absF{})$ in $\Comm(\propF{p})$.
Theorem~\ref{thm-completion-abs-simple} implies that $\Comm(\propF{p})$ is monolithic and its monolith $M = \mon(\Comm(\propF{p}))$  
contains $\SCpC_p(\absF{})$ and hence contains $N$. On the other hand, since $N$ is normal and non-trivial, it must contain $M$, so $N=M$.

Since $M$ contains $\SCpC_p(\absF{})$, we can apply Theorem~\ref{thm-completion-abs-simple} again, now to $M$, and deduce that $M$ is also monolithic. The monolith of $M$ is characteristic in $M$ and hence normal in $\Comm(\propF{p})$. Hence $\mon(M)=M$, so $M$ is simple.
\end{proof}

Corollary~\ref{Comm-prop-almost-simple} does not give us much information about the size of the monolith of $\Comm(\propF{p})$.
In particular, we do not know whether $\mon(\Comm(\propF{p}))$ is equal to $\Comm(\propF{p})$ or at least has finite index in 
$\Comm(\propF{p})$. Nevertheless, we will prove that $\mon(\Comm(\propF{p}))$ is substantially larger than
$\SCpC_p(\absF{})$ and, in particular, the index $[\mon(\Comm(\propF{p})):\propF{p}]$ is uncountable 
(see Proposition~\ref{Comm-prop-contains-A-closure} below).

In the remainder of this section we consider the group $\Aut(\propF{p})$ with the $A$-topology defined in subsection~\ref{subsec-prelim-auto-general}, which makes $\Aut(\propF{p})$ a profinite group. Note that the $A$-topology is \textit{not} the restriction to $\Aut(\propF{p})$ of the topology on $\Comm(\propF{p})$: while $\propF{p}$ is an open subgroup of $\Comm(\propF{p})$,  
it is not open in the $A$-topology on $\Aut(\propF{p})$. However $\propF{p}$ is closed in $\Aut(\propF{p})$ with respect to the $A$-topology. Recall also that we identify $\Aut(\absF{})$ with its image in $\Aut(\propF{p})$. 
\skv

\begin{Notation}
Denote by $SA(F)$ the closure of $\SAut(\absF{})$ in $\Aut(\propF{p})$ with respect to the $A$-topology. 
\end{Notation}
The following key lemma provides a substantial restriction on the structure of proper abstract normal subgroups of $SA(F)$. 

\begin{Lemma}  \label{lem:abstractnormalclosure}
Assume that $\rk(F)\geq 3$, and let $K$ be an abstract normal subgroup of $SA(F)$ whose image in $\Aut(\propF{p}/\Phi(\propF{p}))\cong \GL_d(\dbF_p)$ contains a non-scalar matrix. Then $K=SA(F)$.
\end{Lemma}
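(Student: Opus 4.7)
The plan is to reduce the statement to a question about a finitely generated pro-$p$ group and then apply Lemma~\ref{abstractperfectquotient}. Let $P := SA(F)\cap \IA(\propF{p},p)$. This is normal in $SA(F)$ since it is the kernel of the homomorphism $\pi : SA(F) \to \Aut(\propF{p}/\Phi(\propF{p}))\cong \GL_d(\dbF_p)$ induced by the $\Phi(\propF{p})$-quotient. The image of $\SAut(F)$ under $\pi$ is $\SL_d(\dbF_p)$ (since $\SAut(F) \twoheadrightarrow \SL_d(\Z)$ and $\SL_d(\Z) \twoheadrightarrow \SL_d(\dbF_p)$), so by density and closedness of $\SL_d(\dbF_p)$, $\pi(SA(F))=\SL_d(\dbF_p)$. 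Moreover $P$ is open (hence of finite index) in $SA(F)$, and since $\SAut(F)$ is dense and meets every coset of $P$, one gets the crucial decomposition $SA(F)=\SAut(F)\cdot P$.

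The first step is to show $\pi(K)=\SL_d(\dbF_p)$: $\pi(K)$ is a normal subgroup of $\SL_d(\dbF_p)$ containing a non-scalar matrix, hence not contained in $Z(\SL_d(\dbF_p))$, and since $d=\rk(F)\ge 3$ makes $\SL_d(\dbF_p)$ quasi-simple, we conclude $\pi(K)=\SL_d(\dbF_p)$. This yields $KP = SA(F)$, so the natural map induces an isomorphism of abstract groups
\[ Q := SA(F)/K \;\cong\; P/(K\cap P). \]
Next I would verify that $P$ is a finitely generated pro-$p$ group: $\SAut(F)$ is finitely generated by Nielsen's theorem, so $SA(F)$ is topologically finitely generated, and any open subgroup of a topologically finitely generated profinite group is itself topologically finitely generated; combined with $P\subseteq \IA(\propF{p},p)$ being pro-$p$ (Proposition~\ref{Aut:virtprop}), this gives the claim. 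By Lemma~\ref{abstractperfectquotient} applied to the pro-$p$ group $P$ and its abstract normal subgroup $K\cap P$, it suffices to show that $Q$ is perfect as an abstract group.

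For perfectness, observe that for every $s\in \SAut(F)$ and $p\in P$, $[s,p]\in P$ by normality of $P$, and its image in $Q$ is a commutator since $SA(F)\to Q$ is a homomorphism; likewise the image of $[P,P]$ lies in $[Q,Q]$. Consequently, perfectness of $Q$ is equivalent to the identity $(K\cap P)\cdot [\SAut(F),P]\cdot [P,P]=P$, i.e., to the triviality of the image of $K\cap P$ in the coinvariants $(P^{\mathrm{ab}})_{\SL_d(\dbF_p)}$ under the induced $\SL_d(\dbF_p)$-action. I expect this vanishing-of-coinvariants step to be the main obstacle. One route is to argue directly that the coinvariants of $P^{\mathrm{ab}}$ under $\SL_d(\dbF_p)$ are zero, exploiting the presence in $\SL_d(\dbF_p)$ of elements of order coprime to $p$ (such as scalar matrices $\lambda I$ with $\lambda\in\dbF_p^\times$) that can only act trivially on an abelian pro-$p$ group through a quotient that is killed by the corresponding difference operators. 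An alternative route is to use normality of $K$ under $\SAut(F)$ to produce, for each nontrivial class in the coinvariants, an element of $K\cap P$ with that image, via a Frattini-type argument in the spirit of Lemmas~\ref{DDMS1} and~\ref{Frattini argument}. Once perfectness of $Q$ is secured, Lemma~\ref{abstractperfectquotient} gives $K\cap P=P$, so $P\subseteq K$, and combined with $KP=SA(F)$ this yields $K=SA(F)$.
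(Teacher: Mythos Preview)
Your overall architecture matches the paper's: set $P=\Ker\pi\cap SA(F)$, show $\pi(K)=\SL_d(\dbF_p)$ so that $SA(F)=KP$ and $Q:=SA(F)/K\cong P/(K\cap P)$, observe that $P$ is a finitely generated pro-$p$ group, and conclude via Lemma~\ref{abstractperfectquotient} once $Q$ is known to be perfect. The divergence, and the genuine gap, is in the perfectness step.

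You acknowledge this is ``the main obstacle'' and propose two routes, but neither is carried out, and both are problematic. Your reformulation is slightly garbled (you want the coinvariants $(P^{\mathrm{ab}})_{\SL_d(\dbF_p)}$ themselves to vanish, not the image of $K\cap P$ in them), but more importantly, the coinvariants in question must be taken with respect to the \emph{abstract} abelianization $P/[P,P]$, not the topological one. Showing these abstract coinvariants vanish amounts to proving $P\subseteq [SA(F),SA(F)]$ as an abstract inclusion, which is essentially the content of abstract perfectness of $SA(F)$ --- so you are going in a circle. Your suggestion to use elements of $\SL_d(\dbF_p)$ of order coprime to $p$ does not obviously help: such elements act on the pro-$p$ group $P^{\mathrm{ab}}$, but there is no a priori reason their action should be nontrivial on every nonzero quotient, and in any case this controls only topological, not abstract, commutators.

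The paper closes this gap with a single clean stroke: since $\SAut(F)$ is perfect (Lemma~\ref{lem:absperfect}) and dense in $SA(F)$, the profinite group $SA(F)$ is topologically perfect; then the Nikolov--Segal theorem (commutator subgroups of finitely generated profinite groups are closed) upgrades this to \emph{abstract} perfectness of $SA(F)$, whence $Q=SA(F)/K$ is perfect. This is the missing ingredient you need; without it or a genuine substitute, the argument does not close.
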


\begin{proof}
Let $d=\rk(F)$ and $\pi:\Aut(\propF{p}) \to \GL_d(\dbF_p)$ the projection  homomorphism. Also let $G=SA(F)$.
It is clear that $\pi$ is continuous with respect to the $A$-topology on $\Aut(\propF{p})$ and discrete topology on $\GL_d(\dbF_p)$.
Since $\pi(\SAut(\absF{})) = \SL_d(\dbF_p)$, we have $\pi(G) = \SL_d(\dbF_p)$ as well. 
Hence $\pi(K)$ is a normal subgroup of $\SL_d(\dbF_p)$, which is not contained in the center by assumption,
so $\pi(K) = \SL_d(\dbF_p)=\pi(G)$. Thus, if we set $P=\Ker\pi\cap G$, then $G = K$. 
Also note that $P$ is a  pro-$p$ group by Proposition \ref{Aut:virtprop}. 
\skv

Since $\SAut(\absF{})$ is a finitely generated group which is perfect by Lemma~\ref{lem:absperfect}(c), 	$G$ is a finitely generated profinite group which is topologically perfect. By a fundamental theorem of Nikolov and Segal, the commutator subgroup of a finitely generated profinite group is closed (this is a special case of \cite[Theorem~1.4]{NS1}), so $G$ is perfect (as an abstract group), and hence so is $G/K$. 

The equality $G = PK$ implies that $G/K$ is a quotient of $P$. Since $G$ is finitely generated and $P$ is open in $G$ (as $\Ker\pi$ is open
in $\Aut(\propF{p})$), $P$ is also finitely generated. But a finitely generated
	pro-$p$ group cannot have a non-trivial perfect abstract quotient by Lemma~\ref{abstractperfectquotient}. Therefore $G/K$ is trivial.
\end{proof}

Using Lemma~\ref{lem:abstractnormalclosure}, we can now construct another simple subgroup of $\Comm(F)$
strictly containing $\SCpC_p(\absF{})$.
For each $p$-open subgroup $U$ of $F$ let $\propU{p}$ be its closure in $\propF{p}$, and  
let $SA(U)$ denote the closure of $\SAut(U)$ in $\Aut(\propU{p})$ (with respect to the $A$-topology).

\begin{Proposition} 
\label{Comm-prop-contains-A-closure}
In the above notations assume that  $\rk(F)\geq 3$, and
let $G$ be the subgroup of $\Comm(\propF{p})$ generated by the groups $SA(U)$ where
$U$ ranges over all $p$-open subgroups of $F$. Then $G$ is simple and is contained in $\mon(\Comm(\propF{p}))$.
\end{Proposition}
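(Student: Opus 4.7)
The plan is to deduce both assertions from two inputs: Theorem~\ref{thm-completion-abs-simple} (which pins down the behavior of subgroups of $\Comm(\propF{p})$ normalized by $\SCpC_p(\absF{})$) and Lemma~\ref{lem:abstractnormalclosure} (applied not only to $\absF{}$ but to each $p$-open subgroup $U$). The preparatory step is to show that $G$ already contains $\SCpC_p(\absF{})$, after which the simplicity argument and the inclusion in the monolith run in parallel.

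First I would verify that $\propF{p}\subseteq G$, so that Observation~\ref{lemma-Scp-explicit} gives $\SCpC_p(\absF{})\subseteq G$. The key point is that inner automorphisms act trivially on the abelianization, so $\Inn(\absF{})\subseteq\SAut(\absF{})$, and hence the closure of $\Inn(\absF{})$ in $\Aut(\propF{p})$ (with the $A$-topology) is contained in $SA(\absF{})$. Since $\propF{p}$ is compact and the map $\propF{p}\to\Aut(\propF{p})$ given by conjugation is continuous, $\Inn(\propF{p})$ is closed in $\Aut(\propF{p})$; and because $\absF{}$ is dense in $\propF{p}$, this closure equals $\Inn(\propF{p})$, which is identified with $\propF{p}\subseteq\Comm(\propF{p})$. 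Thus $\propF{p}\subseteq SA(\absF{})\subseteq G$, and combining this with the inclusions $\SAut(U)\subseteq SA(U)\subseteq G$ for every $p$-open $U$ yields $\SCpC_p(\absF{})\subseteq G$.

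Next I would prove simplicity. Let $N\neq\{1\}$ be a normal subgroup of $G$. Since $G$ normalizes $N$ and $\SCpC_p(\absF{})\subseteq G$, Theorem~\ref{thm-completion-abs-simple} gives $\SCpC_p(\absF{})\subseteq N$, and in particular $\SAut(U)\subseteq N$ for every $p$-open $U\subseteq\absF{}$. Fixing such a $U$, the subgroup $K:=N\cap SA(U)$ is an abstract normal subgroup of $SA(U)$ (because $SA(U)\subseteq G$ normalizes $N$), and its image in $\GL_{\rk(U)}(\dbF_p)$ contains $\SL_{\rk(U)}(\dbF_p)$, and in particular a non-scalar matrix, since $\rk(U)\geq\rk(\absF{})\geq 3$ by the Schreier formula. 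Invoking Lemma~\ref{lem:abstractnormalclosure} for $U$ then forces $K=SA(U)$, so $SA(U)\subseteq N$ for every $p$-open $U$, whence $N=G$. For the inclusion $G\subseteq\mon(\Comm(\propF{p}))$ the same argument applies to any non-trivial normal subgroup $M$ of $\Comm(\propF{p})$: Theorem~\ref{thm-completion-abs-simple} gives $\SCpC_p(\absF{})\subseteq M$, and then Lemma~\ref{lem:abstractnormalclosure} applied to $M\cap SA(U)$ (normal in $SA(U)$ because $M$ is normal in all of $\Comm(\propF{p})$) yields $SA(U)\subseteq M$ for every $p$-open $U$, whence $G\subseteq M$. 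Taking the intersection over all such $M$ gives $G\subseteq\mon(\Comm(\propF{p}))$.

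The most delicate point will be the use of Lemma~\ref{lem:abstractnormalclosure} for an arbitrary $p$-open $U$ rather than only for $\absF{}$. The proof of that lemma, however, uses only that the underlying group is a non-abelian free group of rank at least three, together with Nikolov--Segal, Proposition~\ref{Aut:virtprop}, and Lemma~\ref{abstractperfectquotient}, none of which depends on the particular free group; so this generalization is formal and introduces no genuine obstacle.
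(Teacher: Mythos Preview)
Your proof is correct and follows essentially the same strategy as the paper: show $\SCpC_p(\absF{})\subseteq G$, use Theorem~\ref{thm-completion-abs-simple} to get $\SCpC_p(\absF{})\subseteq N$, then apply Lemma~\ref{lem:abstractnormalclosure} at each $U$ to upgrade $\SAut(U)\subseteq N$ to $SA(U)\subseteq N$. Your explicit verification that $\propF{p}\subseteq SA(\absF{})$ (via $\Inn(\propF{p})$ being the $A$-closure of $\Inn(\absF{})\subseteq\SAut(\absF{})$) is a detail the paper leaves implicit when it writes ``since $G$ contains $\SCpC_p(\absF{})$''.

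The one genuine difference is how you handle the inclusion $G\subseteq\mon(\Comm(\propF{p}))$. You rerun the $SA(U)$ argument for every non-trivial normal $M\lhd\Comm(\propF{p})$ and then intersect. The paper instead observes that once $G$ is known to be simple, the inclusion follows in one line: $G\cap\mon(\Comm(\propF{p}))$ is a non-trivial normal subgroup of $G$ (non-trivial because it contains $\absF{}$), so by simplicity it equals $G$. This is more economical and avoids repeating the Lemma~\ref{lem:abstractnormalclosure} step; your route is correct but does the same work twice.
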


\begin{proof} The second assertion follows from the first one and the fact that the intersection $G\,\cap\, \mon(\Comm(\propF{p}))$
is non-trivial (for instance, since it contains $F$).

Let us now prove that $G$ is simple. Let $N$ be a non-trivial normal subgroup of $G$.  Since $G$
contains $\SCpC_p(\absF{})$, it follows from Theorem~\ref{thm-completion-abs-simple} that $N$ contains $\SCpC_p(\absF{})$,
so in particular $N$ contains $\SAut(U)$ for every $p$-open subgroup $U$ of $\absF{}$.
\skv

Now fix such a subgroup $U$ and let $\propU{p}$ be its closure in $\propF{p}$. The composite map 
$\SAut(U)\to \SAut(\propU{p})\to\SAut(\propU{p}/[\propU{p},\propU{p}]{\propU{p}}^p)$ is surjective. 
Since $N\cap SA(U)$ is a normal subgroup of $SA(U)$ which contains $\SAut(U)$, we can apply 
Lemma~\ref{lem:abstractnormalclosure} to $K=N\cap SA(U)$ (with $U$ playing the role of $F$) to conclude that
$N\cap SA(U)=SA(U)$. Thus, $N$ contains $SA(U)$. Since this is true for any $p$-open subgroup $U$ of $F$,
we deduce that $N=G$, as desired.
\end{proof}

\section{Dependency on the rank and automorphisms of the $p$-commensurator} \label{sec-out-comm-p}

In this section we will prove Theorems~\ref{thm-intro-depend-parameters}~and~\ref{thm-intro-trivial_outer} restated below as
Theorems~\ref{thm-depend-parameters}~and~\ref{thm:trivial_outer}, respectively.
As before, throughout the section $\absF{}$ will denote a non-abelian free group of finite rank.

\begin{Theorem} \label{thm-depend-parameters}
	Let $p,q$ be prime numbers, let $k,\ell \geq 2$, and let $\absF{k}$ and $\absF{\ell}$ be free groups of rank $k$ and $\ell$, respectively. 
	Then the groups $\Comm_p(\absF{k})$ and $\Comm_q(\absF{\ell})$ are isomorphic if and only if $p=q$ and there exists $s \in \Z$ such that $(k-1) / (\ell -1) = p^s$. 
\end{Theorem}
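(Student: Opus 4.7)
The sufficiency direction is spelled out in the introduction: if $p = q$ and $(k-1)/(\ell-1) = p^s$ for some $s \in \Z$, then by the Nielsen--Schreier formula both $F_k$ and $F_\ell$ embed as $p$-open subgroups of a single free group $F$, and Lemma~\ref{lem-localiso-extends-completion}(1) shows that each such inclusion induces an isomorphism $\Comm_p(F_k) \cong \Comm_p(F) \cong \Comm_p(F_\ell)$.

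For necessity, let $\phi : \Comm_p(F_k) \to \Comm_q(F_\ell)$ be an abstract group isomorphism. My first step is to recover the prime intrinsically. For any group $G$ and any infinite-order element $g \in G$, define
\[
P_G(g) \;=\; \{\, n \in \Z_{\geq 2} : g \text{ and } g^n \text{ are conjugate in } G \,\}.
\]
This set is closed under multiplication and manifestly preserved by abstract isomorphisms. By Corollary~\ref{cor:p-power-conjugation}, every non-trivial bounded element $g$ of $\Comm_p(F_k)$ (Definition~\ref{def:bounded}) satisfies $P_{\Comm_p(F_k)}(g) = \{p^r : r \geq 1\}$; the $\supseteq$ direction also follows from Remark~\ref{rem:BS}, which realizes $\mathrm{BS}(1,p)$ inside $\Comm_p(F_k)$ with $g$ playing the role of the base generator. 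Since $F_k$ itself contains infinite-order bounded elements, the union $\bigcup_g P_{\Comm_p(F_k)}(g)$ over all infinite-order $g \neq 1$ already contains $\{p^r : r \geq 1\}$. I would then prove the reverse inclusion, i.e.\ that no scaling relation $cgc^{-1} = g^n$ with $n \notin \{p^r : r \geq 1\}$ is satisfied by any infinite-order element of $\Comm_p(F_k)$. Granting this, $\{p^r : r \geq 1\}$ is an intrinsic invariant, and comparison with the analogous invariant for $\Comm_q(F_\ell)$ forces $p = q$.

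With $p = q$ in hand, I turn to the relation on ranks. The plan is to show that, after possibly post-composing $\phi$ with an inner automorphism of $\Comm_p(F_\ell)$, the image $\phi(F_k)$ is $p$-commensurable with $F_\ell$. Granting this, choosing any common $p$-open subgroup $W$ and applying the Schreier index formula to the two inclusions gives $\rk W - 1 = p^a(k-1) = p^b(\ell-1)$, yielding the desired relation on ranks. To pin down $\phi(F_k)$ abstractly, I would characterize the $p$-commensurability class of $F_k$ in $\Comm_p(F_k)$ as the conjugacy class of maximal finitely generated, abstractly free, $p$-commensurated subgroups consisting entirely of bounded elements (that $F_k$ itself is $p$-commensurated is Lemma~\ref{lem-im-p-commens}); uniqueness up to conjugacy would be controlled by Proposition~\ref{prop-bounded-subgroup-conjugacy}. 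The hard parts of the proof are, in my view, (a) excluding in Stage~1 scaling relations on non-bounded infinite-order elements with factor $n$ not a power of $p$, and (b) establishing the maximality statement (or a workable substitute) in Stage~2; both will require a finer analysis of how virtual isomorphisms between $p$-open subgroups of $F$ interact with the ambient algebraic structure.
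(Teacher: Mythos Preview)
Your proposal identifies the right two obstacles but leaves both as open subproblems, and the paper resolves each by a route different from what you sketch.

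For recovering the prime, you want to rule out $cgc^{-1}=g^n$ with $n$ not a power of $p$ for \emph{every} infinite-order $g\in\Comm_p(F_k)$. This global statement about all (possibly non-bounded) elements is never proved in the paper and looks genuinely hard; elements of $\Aut(F_k)\setminus\Inn(F_k)$ already show that non-bounded infinite-order elements abound. The paper sidesteps this entirely. Writing $\psi=\phi^{-1}:\Comm_q(F_\ell)\to\Comm_p(F_k)$, the image $\psi(F_\ell)$ is an infinite finitely generated \emph{commensurated} subgroup of $\Comm_p(F_k)$, so by Lemma~\ref{lem-commens-intersects} (whose proof uses simplicity of $\SComm_p(F_k)$ together with Proposition~\ref{prop-Sha-Wil}) the intersection $F_k\cap\psi(F_\ell)$ is non-trivial. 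Any $w$ in this intersection is bounded in $\Comm_p(F_k)$ (as $w\in F_k$), and $\psi^{-1}(w)\in F_\ell$ is bounded in $\Comm_q(F_\ell)$; applying Corollary~\ref{cor:p-power-conjugation} on each side forces $p=q$ without ever touching a non-bounded element.

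For the rank relation, your proposed characterization of the $p$-commensurability class of $F_k$ via ``maximal finitely generated free $p$-commensurated subgroups consisting of bounded elements'' is circular: both ``$p$-commensurated'' and ``bounded'' are defined relative to the embedded copy of $F_k$ (equivalently, the group topology on $\Comm_p(F_k)$), so you cannot assume an abstract isomorphism preserves them --- that is exactly what must be proved. The paper establishes directly (Proposition~\ref{prop-key-iso-auto}) that $\psi(F_\ell)$ is $p$-commensurable with $F_k$, by passing to the tdlc completion $\CpC_p(F_k)$: one shows the closure $J=\overline{\psi(F_\ell)}$ is open but of infinite index (so contains no non-trivial closed normal subgroup, by simplicity of $\SCpC_p(F_k)$), then invokes the residual-discreteness criterion of Caprace--Monod (Theorem~\ref{thm-CaMo-resdisc} via Proposition~\ref{prop-commens-SIN}) to force $\psi(F_\ell)\subseteq\Aut(K)$ for some finite-index $K\le F_k$, and finally uses that $\Out(K)$ is virtually torsion-free (Baumslag--Taylor) together with Proposition~\ref{prop-commens-fg-freegroup} to obtain commensurability, and then $p$-commensurability. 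Once that is known, extracting the invariant $r_p(i)=1+(i-1)/p^{\alpha}$ (with $p^{\alpha}\,\|\,(i-1)$) finishes the argument as you indicate.
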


\begin{Theorem}\label{thm:trivial_outer}
Every automorphism of $\Comm_p(\absF{})$ is inner.
\end{Theorem}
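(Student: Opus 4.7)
The plan is to show that any abstract automorphism $\phi$ of $\Comm_p(\absF{})$ is inner by successively composing with inner automorphisms to reduce to the identity. The proof has three stages.

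First, I would establish the key step: $\phi(\absF{})$ is conjugate to $\absF{}$ in $\Comm_p(\absF{})$. The difficulty here is that $\absF{}$ was singled out topologically as an open subgroup, but an abstract automorphism may not be continuous. To bypass this, I would seek a purely group-theoretic characterization of the conjugacy class of $\absF{}$. The natural tools are the conjugacy classifications of bounded elements (Proposition~\ref{prop-conjugacy-Comm-p}) and of bounded subgroups (Proposition~\ref{prop-bounded-subgroup-conjugacy}). By the former, non-trivial bounded conjugacy classes are parametrized by positive integers coprime to $p$; this recovers the prime $p$ internally as the unique prime missing from the parameter set. Using Corollary~\ref{cor:p-power-conjugation} and Remark~\ref{rem:BS} (which locates Baumslag--Solitar subgroups $\mathrm{BS}(1,p)$ canonically in $\Comm_p(\absF{})$), together with Proposition~\ref{prop-bounded-subgroup-conjugacy} applied to the family of finitely generated closed subgroups $p$-commensurable with $\absF{}$, one pins down the conjugacy class of $\absF{}$ purely in abstract group-theoretic terms. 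This step is the main obstacle, and I expect it to be intertwined with the proof of Theorem~\ref{thm-depend-parameters}: the same canonical characterization should yield both rigidity statements simultaneously.

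Second, after composing $\phi$ with conjugation by an appropriate $g \in \Comm_p(\absF{})$, we may assume $\phi(\absF{}) = \absF{}$. Then $\phi|_{\absF{}} \in \Aut(\absF{})$, and under the inclusion $\Aut(\absF{}) \subseteq \Comm_p(\absF{})$ there is an element $\alpha \in \Comm_p(\absF{})$ inducing the same automorphism of $\absF{}$ by conjugation. Composing $\phi$ with conjugation by $\alpha^{-1}$, we may further assume that $\phi$ restricts to the identity on $\absF{}$.

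Third, I would conclude that $\phi = \mathrm{id}$. For any $h \in \Comm_p(\absF{})$, by the definition of $\Comm_p(\absF{})$ there is a $p$-open subgroup $V$ of $\absF{}$ such that $hVh^{-1} \subseteq \absF{}$. For each $u \in V$, applying $\phi$ to $huh^{-1} \in \absF{}$ gives
$$ \phi(h) u \phi(h)^{-1} \;=\; \phi(huh^{-1}) \;=\; huh^{-1}, $$
where the second equality uses $\phi|_{\absF{}} = \mathrm{id}$. Hence $h^{-1}\phi(h)$ centralizes the open subgroup $V$ of $\Comm_p(\absF{})$, so $h^{-1}\phi(h) \in \QZ(\Comm_p(\absF{}))$. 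By Lemma~\ref{lem-Comm-trivialQZ}(i) applied to $\absF{}$ with its pro-$p$ topology (which has trivial quasi-center since $\absF{}$ is non-abelian and free), $\Comm_p(\absF{})$ has trivial quasi-center, so $\phi(h) = h$. As $h$ was arbitrary, $\phi = \mathrm{id}$.

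The heart of the argument is thus the first stage. Stages two and three are essentially formal: they rely only on the embedding $\Aut(\absF{}) \hookrightarrow \Comm_p(\absF{})$ and the triviality of $\QZ(\Comm_p(\absF{}))$. Stage one, however, requires the canonical (abstract) characterization of the conjugacy class of $\absF{}$ inside $\Comm_p(\absF{})$, and this is the delicate technical ingredient that must be developed in tandem with the proof of Theorem~\ref{thm-depend-parameters}.
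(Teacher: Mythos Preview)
Your overall architecture is exactly the paper's: reduce to $\phi(\absF{})=\absF{}$ by conjugating, then to $\phi|_{\absF{}}=\mathrm{id}$ using $\Aut(\absF{})\subseteq\Comm_p(\absF{})$, then conclude $\phi=\mathrm{id}$ via triviality of the quasi-center. Stages two and three are correct as written (the paper packages your Stage~3 computation as a direct appeal to Lemma~\ref{lem-BEW} applied to $\Comm_p(\absF{})$, but your unfolded version is fine).

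The gap is Stage~1, and you correctly flag it as the heart of the matter, but you do not actually carry it out. The paper's argument here is Proposition~\ref{prop-key-iso-auto}, which shows that for any isomorphism $\psi:\Comm_p(\absF{}')\to\Comm_p(\absF{})$ the image $\psi(\absF{}')$ is $p$-commensurable with $\absF{}$; one then invokes Proposition~\ref{prop-bounded-subgroup-conjugacy} to get conjugacy. The proof of Proposition~\ref{prop-key-iso-auto} is substantial and does \emph{not} proceed by a direct abstract characterization of the conjugacy class of $\absF{}$ via Baumslag--Solitar subgroups as you suggest. Instead it runs as follows: (i) $\psi(\absF{}')\cap\absF{}$ is non-trivial (this uses simplicity of $\SComm_p(\absF{})$ together with a Schlichting-completion/scale argument, Proposition~\ref{prop-Sha-Wil}); (ii) the closure $J=\overline{\psi(\absF{}')}$ in $\CpC_p(\absF{})$ is open; (iii) every element of $\psi(\absF{}')$ is conjugate into $\absF{}$, hence normalizes a compact open subgroup, whence $J$ has infinite index; (iv) by Proposition~\ref{prop-commens-SIN} this forces $\psi(\absF{}')$ into some $\Aut(K)$ for a finite-index $K\le\absF{}$; (v) using that $\Out(K)$ is virtually torsion-free one pushes $\psi(\absF{}')$ down to be commensurable, and finally $p$-commensurable, with $\absF{}$. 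Your sketch of Stage~1 does not touch steps (iii)--(v), and it is not clear that the Baumslag--Solitar/conjugacy-invariant approach alone can supply them.
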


The majority of work in this section will be devoted to proving Proposition~\ref{prop-key-iso-auto} below.
Theorems~\ref{thm-depend-parameters}~and~\ref{thm:trivial_outer} will follow quite easily from Proposition~\ref{prop-key-iso-auto}
and results of Section~\ref{sec-conjugacy}.

\begin{Proposition}\label{prop-key-iso-auto}
 Let $\absF{}$ and $\absF{}'$ be free groups of finite rank, and suppose that $\psi: \Comm_p(\absF{}') \to \Comm_p(\absF{})$ is an isomorphism. Then $\psi(\absF{}')$ is $p$-commensurable with $\absF{}$. 
\end{Proposition}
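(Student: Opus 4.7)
Let $L := \psi(F')$.  Since $F'$ is a commensurated subgroup of $\Comm_p(F')$ by Lemma~\ref{lem-im-p-commens} and $\psi$ is an isomorphism, $L$ is a commensurated subgroup of $\Comm_p(F)$.  Being isomorphic to $F'$, $L$ is itself a finitely generated non-abelian free group, and in particular it is infinite and torsion-free.  An elementary verification shows that the intersection of any two commensurated subgroups of a group is again commensurated (one checks that $(gAg^{-1}\cap A)\cap(gBg^{-1}\cap B)$ has finite index in both $A\cap B$ and $g(A\cap B)g^{-1}$), so $L \cap F$ is commensurated in $\Comm_p(F)$.  By Lemma~\ref{lem-commens-intersects} applied to $L$, $L\cap F$ is non-trivial, hence infinite since $F$ is torsion-free.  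Proposition~\ref{prop-commens-insideFk} then yields that $U := \overline{L \cap F}^{F}$ is a $p$-open subgroup of $F$.

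The main obstacle is to upgrade this density statement to actual $p$-commensurability, i.e.\ to show that $L \cap F$ has finite index in $U$.  The natural arena for this argument is the tdlc completion $\CpC_p(F) \le \Comm(\propF{p})$, into which $\Comm_p(F)$ embeds as a dense subgroup and in which $\propF{p}$ is a compact open subgroup.  The closure $\overline{L}$ of $L$ in $\CpC_p(F)$ contains $U$ (because the pro-$p$ topology on $F$ agrees with the subspace topology from $\CpC_p(F)$, so $L\cap F$ is dense in $U$ inside $\CpC_p(F)$), and hence $\overline{L}$ is open in $\CpC_p(F)$.  My plan is to invoke Proposition~\ref{prop-commens-SIN} to produce a finite-index subgroup of $L\cap\propF{p}$ normalised by $L$, from which, using that $L$ commensurates $U$, one deduces that $L\cap F$ has finite index in $U$.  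The delicate point is that the closed normal subgroup hypothesis of Proposition~\ref{prop-commens-SIN} cannot be imposed on $\overline{L}$ directly, because Theorem~\ref{thm-completion-abs-simple} tells us that $\CpC_p(F)$ has very few normal subgroups and $\overline{L}$ typically contains $\SCpC_p(F)$; so the application requires first replacing $\overline{L}$ by a sufficiently small commensurated subsystem (for example, by intersecting with a small compact open subgroup of $\CpC_p(F)$ and passing to an appropriate quotient) whose closure is small enough for the hypothesis to hold.  This reduction is the heart of the matter, and I expect it to be the main technical step of the proof.

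Finally, applying the same argument symmetrically to $\psi^{-1} : \Comm_p(F) \to \Comm_p(F')$, using now that $F$ is a commensurated subgroup of $\Comm_p(F)$, yields that $\psi^{-1}(F)\cap F'$ is $p$-open in $F'$.  Since $\psi$ maps this subgroup isomorphically onto $L\cap F$, it follows that $L \cap F$ is $p$-open in $L$ with respect to its intrinsic pro-$p$ topology coming from $L \cong F'$.  Combined with the previous paragraph, $L\cap F$ is $p$-open in both $L$ and $F$, i.e.\ $L = \psi(F')$ is $p$-commensurable with $F$, as required.
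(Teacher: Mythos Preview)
Your outline correctly identifies the main tool (Proposition~\ref{prop-commens-SIN}) and the main obstacle (verifying its normal-subgroup hypothesis), but the gap you yourself flag is real and you do not close it.  Your proposed fix --- ``replacing $\overline{L}$ by a sufficiently small commensurated subsystem'' --- does not work as stated: in Proposition~\ref{prop-commens-SIN} the hypothesis concerns $\overline{\psi(H)}$ for the \emph{commensurated} subgroup $H$, and shrinking $H$ destroys precisely the commensuration you need.  The paper resolves the obstacle by a completely different mechanism: it first shows (Step~2) that every element of $H=\psi(F')$ is conjugate in $\Comm_p(F)$ to an element of $F$, using Corollary~\ref{cor-non-ab-intersects-conj-class} applied inside $\Comm_p(F')$ to the non-abelian subgroup $\psi^{-1}(H\cap F)$.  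This forces every element of $J=\overline{H}$ to normalize some compact open subgroup of $G=\CpC_p(F)$.  On the other hand, any $g\in G$ with $gxg^{-1}=x^p$ (which exists by Corollary~\ref{cor:p-power-conjugation}) has no such normalized compact open subgroup by Lemma~\ref{lem:no-invariant-compact-open}, and neither do its powers.  Hence $J$ has infinite index in $G$, and since $\SCpC_p(F)$ is simple of index at most $2$, $J$ contains no non-trivial closed normal subgroup of $G$.  This is the missing ingredient.

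There is a second, smaller gap.  Once Proposition~\ref{prop-commens-SIN} yields that $H$ normalizes a finite index subgroup $K$ of $F$ (so $H\subseteq\Aut(K)$), you still need to get $H$ virtually \emph{inside} $K$, not just acting on it.  Your remark ``using that $L$ commensurates $U$'' does not do this.  The paper (Step~5) uses two further inputs: that $\Out(K)$ is virtually torsion-free (Baumslag--Taylor), and again the conjugacy fact from Step~2, which guarantees that every $h\in H$ has a power lying in $F$ and hence in $K$.  Together these force $H\cap A\subseteq K$ for a finite index $A\le\Aut(K)$, whence $H\cap K$ has finite index in $H$, and then Proposition~\ref{prop-commens-fg-freegroup} gives finite index in $K$.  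Your symmetric argument for the final upgrade to $p$-commensurability is reasonable in spirit; the paper instead argues directly (Step~6) using that $H$ is $p$-commensurated.
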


It is not hard to show that Proposition~\ref{prop-key-iso-auto} is equivalent to the statement that any isomorphism $\psi: \Comm_p(\absF{}') \to \Comm_p(\absF{})$ is a homeomorphism.

\subsection{Commensurated subgroups} \label{subsec-commens-subgroups}

We first consider finitely generated commensurated subgroups of $\Comm_p(\absF{})$.  The following proposition is well known, but we could not locate a proof in the literature. 

\begin{Proposition} \label{prop-commens-fg-freegroup}
Let $\Lambda$ be an infinite finitely generated commensurated subgroup of $\absF{}$. Then $\Lambda$ has finite index in $\absF{}$.
\end{Proposition}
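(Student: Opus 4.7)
The plan is to use the action of $F$ on the tree $T$ that is its Cayley graph with respect to a free basis; recall that $F$ acts freely on $T$ and transitively on its vertices. I will show that the minimal $\Lambda$-invariant subtree $T_\Lambda$ of $T$ is $F$-invariant, hence coincides with $T$ by minimality of the $F$-action, which will force $[F:\Lambda]<\infty$.

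Since $\Lambda$ is infinite and every non-trivial element of $F$ acts hyperbolically on $T$, the group $\Lambda$ contains hyperbolic elements. The convex hull $T_\Lambda$ of the translation axes of the hyperbolic elements of $\Lambda$ is the unique minimal $\Lambda$-invariant subtree of $T$, and finite generation of $\Lambda$ guarantees that the quotient $\Lambda\backslash T_\Lambda$ is a finite graph.

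The key step is the lemma: if $H$ is a finite-index subgroup of $\Lambda$, then $T_H = T_\Lambda$. Indeed, $T_H \subseteq T_\Lambda$ because $T_\Lambda$ is $H$-invariant and $T_H$ is minimal; conversely, every hyperbolic $\lambda \in \Lambda$ has some power $\lambda^n$ lying in $H$, and the axis of $\lambda^n$ coincides with that of $\lambda$, so $T_H$ contains the axis of every hyperbolic element of $\Lambda$, whence $T_\Lambda \subseteq T_H$ by taking convex hulls.

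I will then apply this lemma to $H := \Lambda \cap g\Lambda g^{-1}$, which by the commensuration hypothesis has finite index in both $\Lambda$ and $g\Lambda g^{-1}$: this yields $T_\Lambda = T_H = T_{g\Lambda g^{-1}} = gT_\Lambda$ for every $g \in F$, so $T_\Lambda$ is $F$-invariant. Since $F$ acts minimally on $T$ (being vertex-transitive), this forces $T_\Lambda = T$. Hence $\Lambda\backslash T$ is finite, and because $F$ acts freely and transitively on the vertices of $T$, the index $[F:\Lambda]$ is finite. The main conceptual input is the lemma identifying $T_H$ with $T_\Lambda$ for finite-index subgroups $H$; everything else is a routine translation between the algebraic condition (commensurated) and the geometric condition (invariant subtree), so no serious obstacle is anticipated.
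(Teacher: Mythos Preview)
Your proof is correct and takes a genuinely different route from the paper's. The paper argues algebraically in two lines: by Hall's free factor theorem (Theorem~\ref{thm:Hallfreefactor}), $\Lambda$ is a free factor of some finite index subgroup $H$ of $F$; free factors are malnormal, and an infinite malnormal subgroup that is commensurated must coincide with the ambient group, so $\Lambda = H$. Your approach instead exploits the action on the Cayley tree, showing that the minimal invariant subtree is preserved under passing to finite index subgroups and hence, via the commensuration hypothesis, is $F$-invariant; cocompactness of $\Lambda$ on its minimal subtree then finishes the argument. Your proof is more self-contained in that it does not invoke Hall's theorem, and it generalizes verbatim to any finitely generated commensurated subgroup of a group acting freely, minimally and cocompactly on a tree. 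The paper's argument, by contrast, is shorter given that Hall's theorem is already stated and used elsewhere in the paper.
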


\begin{proof}
By Theorem~\ref{thm:Hallfreefactor}, the subgroup $\Lambda$ is a free factor of a finite index subgroup $H$ of $\absF{}$. In particular, $\Lambda$ is malnormal in $H$. Since $H$ is commensurated, this implies that either $\Lambda$ is finite or $\Lambda = H$. 
\end{proof}

We will also use the following, which can easily be derived from \cite[Proposition 4]{Wil94}. 

\begin{Proposition} \label{prop-Sha-Wil}
Let $\Gamma$ be a group with a commensurated subgroup $\Lambda$. Suppose that $x \in \Gamma$ is conjugate to $x^n$ for some $n \ge 2$. Then $x$ normalizes a subgroup $\Lambda'$ of $\Gamma$ such that $\Lambda$ and $\Lambda'$ are commensurable. 
\end{Proposition}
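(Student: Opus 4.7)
The plan is to follow a two-stage approach based on the modular homomorphism of a commensurated pair, which is the abstract analogue of the scale-function framework for totally disconnected locally compact groups.

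Stage~1 constructs the modular homomorphism and reduces to a triviality condition. I would define
$$\chi \colon \Gamma \longrightarrow \dbQ^{\times}_{+}, \qquad \chi(y) \;=\; \frac{[\Lambda : \Lambda \cap y\Lambda y^{-1}]}{[y\Lambda y^{-1} : \Lambda \cap y\Lambda y^{-1}]},$$
which is well-defined because $\Lambda$ is commensurated in $\Gamma$. A short calculation shows that the two-variable quantity $\mu(H,K) = [H:H\cap K]/[K:H\cap K]$, defined on pairs of commensurable subgroups, satisfies the cocycle relation $\mu(H,K)\mu(K,L) = \mu(H,L)$ and is invariant under simultaneous conjugation; specializing to $\chi(y) = \mu(\Lambda, y\Lambda y^{-1})$ then yields the homomorphism property of $\chi$. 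Since $\dbQ^{\times}_{+}$ is abelian, $\chi$ is constant on conjugacy classes, and so the hypothesis $gxg^{-1} = x^{n}$ gives
$$\chi(x) \;=\; \chi(gxg^{-1}) \;=\; \chi(x^{n}) \;=\; \chi(x)^{n}.$$
Therefore $\chi(x)^{n-1} = 1$, and since $\dbQ^{\times}_{+}$ is torsion-free and $n \geq 2$ we conclude $\chi(x) = 1$.

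Stage~2 is the passage from $\chi(x) = 1$ to an $x$-invariant commensurable subgroup. The natural candidate is
$$\Lambda' \;=\; \bigcap_{k \in \Z} x^{k}\Lambda x^{-k},$$
which is manifestly normalized by $x$ and contained in $\Lambda$; the task is to establish that $[\Lambda : \Lambda']$ is finite. Equivalently, in the descending sequence $\Lambda^{(N)} := \bigcap_{|k| \leq N} x^{k} \Lambda x^{-k}$ of subgroups commensurable with $\Lambda$, I must show that the indices $[\Lambda : \Lambda^{(N)}]$ remain bounded as $N \to \infty$. The content of $\chi(x) = 1$ is precisely that the ``upward'' and ``downward'' expansion rates of $x$ on $\Lambda$ coincide, which is what makes such boundedness possible.

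The main obstacle lies in Stage~2: extracting the finiteness of $[\Lambda:\Lambda']$ from the single equation $\chi(x) = 1$. This is exactly the content of \cite[Proposition~4]{Wil94}, which the statement explicitly cites. For a self-contained write-up I would model the argument on Willis's tidy-subgroup procedure for tdlc groups, combining the subgroup $\Lambda_{+} = \bigcap_{k \geq 0} x^{k} \Lambda x^{-k}$ (which satisfies $x\Lambda_{+}x^{-1} \supseteq \Lambda_{+}$) with the subgroup $\Lambda_{-} = \bigcap_{k \leq 0} x^{k} \Lambda x^{-k}$ (which satisfies $x\Lambda_{-}x^{-1} \subseteq \Lambda_{-}$), and then using $\chi(x) = 1$ to show that $\Lambda' = \Lambda_{+} \cap \Lambda_{-}$ has finite index in $\Lambda$.
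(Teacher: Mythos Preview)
Your Stage~1 is correct, but it discards essential information, and Stage~2 cannot be completed from the conclusion $\chi(x)=1$ alone. The map $\chi$ you define is the abstract analogue of the modular function of the Schlichting completion, not of the Willis scale; the condition $\chi(x)=1$ is strictly weaker than the condition ``$x$ normalizes a subgroup commensurable with $\Lambda$'' (which corresponds to $s(x)=s(x^{-1})=1$). A concrete counterexample: take $\Gamma=\mathrm{PSL}_2(\dbQ_p)$ with $\Lambda=\mathrm{PSL}_2(\dbZ_p)$. The group $\Gamma$ is unimodular, so $\chi\equiv 1$, yet the diagonal element $x=\mathrm{diag}(p,p^{-1})$ normalizes no compact open subgroup, and $\bigcap_{k\in\dbZ}x^k\Lambda x^{-k}$ is trivial. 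Thus your proposed $\Lambda'$ need not have finite index in $\Lambda$, and your description of \cite[Proposition~4]{Wil94} is not accurate: that proposition concerns infinitely divisible elements, not elements in the kernel of the modular function.

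The paper's proof exploits the hypothesis $x\sim x^n$ in a different and stronger way. From $txt^{-1}=x^n$ one gets $(t^{-k}xt^k)^{n^k}=x$ for every $k$, so $x$ is infinitely divisible. Passing to the Schlichting completion $G$ (a tdlc group in which the closure of $\Lambda$ is compact open), Willis's result that the scale is multiplicative on positive powers forces $s(\pi(x))$ to be an $n^k$-th power of an integer for every $k$, hence $s(\pi(x))=1$; likewise for $\pi(x)^{-1}$. This yields a compact open subgroup normalized by $\pi(x)$, and pulling back to $\Gamma$ gives $\Lambda'$. If you want to salvage your outline, replace the modular homomorphism by the scale-type quantity $\sigma(y)=\min_{\Lambda''\sim\Lambda}[y\Lambda''y^{-1}:\Lambda''\cap y\Lambda''y^{-1}]$ and use infinite divisibility to force $\sigma(x)=\sigma(x^{-1})=1$.
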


\begin{proof}
Let $G$ be the Schlichting completion of $\Gamma$ with respect to $\Lambda$ (\cite{Schlichting}), that is, if we write $\pi: \Gamma \rightarrow \mathrm{Sym}(\Gamma/\Lambda)$ for the left translation action of $\Gamma$ on $\Gamma/\Lambda$, then $G$ is the closure of $\pi(\Gamma)$ in $\mathrm{Sym}(\Gamma/\Lambda)$ with respect to the topology of pointwise convergence.  Since $\Lambda$ is commensurated by $\Gamma$, 
the closure of $\Lambda$ in $G$ is a profinite open subgroup, so
$G$ is a totally disconnected locally compact group (see \cite[\S~3]{Shalom-Willis} or \cite[\S~5]{Elder-Willis}).  

If $t \in \Gamma$ is such that $txt^{-1} = x^n$, then 
by induction $(t^{-k} x t^k)^{n^k} =x$ for every $k\in\mathbb N$. Hence the element $x$ is infinitely divisible, and by \cite[Proposition 4]{Wil94}, it follows that $\pi(x)$ normalizes a compact open subgroup $U$ of $G$. The preimage of $U$ in $\Gamma$ satisfies the conclusion.
\end{proof}

\begin{Lemma} \label{lem-commens-intersects}
	Let $\Lambda$ be an infinite finitely generated commensurated subgroup of $\Comm_p(\absF{})$. Then $\Lambda \cap \absF{} $ is non-trivial.
\end{Lemma}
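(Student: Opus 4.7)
The plan is to produce, from $\Lambda$ and the hypothesis that every element of $F$ is bounded in $\Comm_p(F)$, a subgroup $\Lambda_0$ that is commensurable with $\Lambda$ and is normalized by all of $F$. Once we have this, the proof is essentially finished by Lemma~\ref{lem-Comm-trivialQZ}(\ref{item-Comm-local-normal}).

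First, fix a basis $x_1,\dots,x_d$ of $F$. By Corollary~\ref{cor:p-power-conjugation} (or Remark~\ref{rem:BS}), each $x_i$, being a non-trivial bounded element of $\Comm_p(\absF{})$, is conjugate in $\Comm_p(\absF{})$ to $x_i^p$. Since $\Lambda$ is commensurated in $\Comm_p(\absF{})$, we may apply Proposition~\ref{prop-Sha-Wil} with $\Gamma=\Comm_p(\absF{})$ and $x=x_i$: for each $i$, there is a subgroup $\Lambda_i$ of $\Comm_p(\absF{})$, commensurable with $\Lambda$, that is normalized by $x_i$. Setting
\[
 \Lambda_0 \;=\; \Lambda_1 \cap \cdots \cap \Lambda_d,
\]
we obtain a subgroup which is a finite intersection of pairwise commensurable subgroups, hence itself commensurable with $\Lambda$ (and in particular non-trivial, since $\Lambda$ is infinite). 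By construction $\Lambda_0$ is normalized by every $x_i$, so by $F$.

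Next, $F$ is an open subgroup of $\Comm_p(\absF{})$ (by Proposition~\ref{prop-topo-germs}), hence the normalizer of $\Lambda_0$ in $\Comm_p(\absF{})$ contains the open set $F$ and is therefore open. Moreover, $(\absF{},\tau_p)$ has trivial quasi-center: any element $x\in F$ that centralizes a $p$-open subgroup $H$ would force $H$, which has $p$-power index in $F$, to lie in the cyclic centralizer of $x$, contradicting the fact that $F$ is non-abelian and free. We may thus apply Lemma~\ref{lem-Comm-trivialQZ}(\ref{item-Comm-local-normal}) to $\Lambda_0$, which yields that $\Lambda_0 \cap F$ is non-trivial.

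Finally, pick $g\in (\Lambda_0\cap F)\setminus\{1\}$. Since $\Lambda\cap\Lambda_0$ has finite index $k$ in $\Lambda_0$, the pigeonhole principle applied to the left cosets $g^i(\Lambda\cap\Lambda_0)$ for $0\le i\le k$ gives $0\le i<j\le k$ with $g^{j-i}\in \Lambda\cap\Lambda_0$. As $F$ is torsion-free, $g^{j-i}$ is a non-trivial element of $\Lambda\cap F$, completing the proof. The only delicate point is the construction of $\Lambda_0$: a single application of Proposition~\ref{prop-Sha-Wil} produces one commensurable subgroup per element, and the finite rank of $F$ is exactly what allows us to intersect these while remaining in the commensurability class of $\Lambda$.
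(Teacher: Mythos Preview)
Your argument has a genuine gap at the point where you claim that $\Lambda_0 = \Lambda_1 \cap \cdots \cap \Lambda_d$ is normalized by every $x_i$. Proposition~\ref{prop-Sha-Wil} gives you, for each $i$, a subgroup $\Lambda_i$ commensurable with $\Lambda$ that is normalized by $x_i$; but it says nothing about how $x_i$ acts on $\Lambda_j$ for $j \neq i$. All you know is that $x_i$ \emph{commensurizes} each $\Lambda_j$ (since $\Lambda_j$ is commensurable with the commensurated subgroup $\Lambda$), so $x_i \Lambda_0 x_i^{-1}$ is commensurable with $\Lambda_0$ but need not equal it. This is not a minor technicality: the property ``normalizes some subgroup commensurable with $\Lambda$'' is in general \emph{not} preserved under products---already in $\mathrm{SL}_2(\mathbb{Q}_p)$ one can multiply two unipotents, each normalizing a compact open subgroup, and obtain a hyperbolic element that normalizes none. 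So there is no direct way to pass from the individual $x_i$ to all of $F$, and the hypothesis of Lemma~\ref{lem-Comm-trivialQZ}(\ref{item-Comm-local-normal}) is not available.

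The paper circumvents this by working with a \emph{single} element $x \in F$. After replacing $\Lambda$ by a commensurable subgroup normalized by $x$, one forms commutators $[h,x^{n(h)}]$ with $h \in \Lambda$ and suitable exponents $n(h)$ chosen so that $hx^{n(h)}h^{-1} \in F$; these commutators lie in $\Lambda \cap F$. The remaining issue is to ensure at least one such commutator is non-trivial, i.e., to rule out that a power of $x$ centralizes $\Lambda$. This is exactly where the simplicity result Theorem~\ref{thm-simplicity-Commp} enters: the set of elements of $\Comm_p(\absF{})$ centralizing a finite-index subgroup of $\Lambda$ is normal, and simplicity forces it to be trivial (else it would contain $F$, making $\Lambda$ finite). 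Your approach, had the normalization step gone through, would have been more elementary in bypassing the simplicity theorem; but as written, that step fails.
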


\begin{proof}
Let $N$ denote the set of elements of  $\Comm_p(\absF{})$ that centralize a finite index subgroup of $\Lambda$. Since $\Lambda$ is commensurated, $N$ is a normal subgroup of $\Comm_p(\absF{})$. By Theorem \ref{thm-simplicity-Commp}, $N$ is either trivial or contains $\SComm_p(\absF{})$. In particular, if $N$ is non-trivial, then $N$ contains $\absF{}$. Since $\absF{}$ is finitely generated, this implies that $\absF{}$ centralizes a finite index subgroup of $\Lambda$. But $\absF{}$ has trivial centralizer in $\Comm_p(\absF{})$, so 
the trivial subgroup has finite index in $\Lambda$ and thus $\Lambda$ is finite, contrary to the hypotheses. Hence $N$ is trivial.
	
	Fix $x \in \absF{}\setminus\{1\}$. By Proposition~\ref{prop-conjugacy-Comm-p}, $x$ is  conjugate to $x^p$ in $\Comm_p(\absF{})$, and hence by Proposition~\ref{prop-Sha-Wil}, $x$ normalizes a subgroup commensurable with $\Lambda$. Without loss of generality we can assume that $x$ normalizes $\Lambda$. Now since $\absF{}$ is commensurated, for every $h \in \Lambda$ the index $[F:F\cap h F h^{-1}]$ is finite, and hence
there exists $n(h) \in\mathbb N$ such that 
$h x^{n(h)} h^{-1} \in \absF{}$. It follows that $[h,x^{n(h)}] \in \Lambda\cap \absF{}$, and it remains to show that
$[h,x^{n(h)}]$ is non-trivial for some $h$.
Suppose, on the contrary, that $[h,x^{n(h)}]$ is trivial for every $h\in\Lambda$. Since $\Lambda$ is finitely generated, there is a non-trivial power of $x$ that centralizes $\Lambda$. This is a contradiction with the first paragraph.
\end{proof}

We say a subgroup $H \subseteq G$ is {\it virtually normal} if there is a normal subgroup $N$ of $G$ such that $N \subseteq H$ and $|H:N|$ is finite; equivalently, the intersection of all $G$-conjugates of $H$ has finite index in $H$. We appeal to the main theorem of \cite{CKRW}, which will help to obtain a restriction on commensurated subgroups of $\Comm_p(\absF{})$ contained in $\absF{}$.

\begin{Theorem} [{\cite[Main Theorem]{CKRW}}]  \label{thm-virtually-normal} 
Let $\Gamma$ be a group, and let $\Lambda$ be a commensurated subgroup of $\Gamma$ such that $\Gamma$ is generated by finitely many cosets of $\Lambda$ (of course, the latter is automatic if $\Gamma$ is finitely generated).  Then the intersection of all virtually normal subgroups containing $\Lambda$ is itself virtually normal in $\Gamma$.
\end{Theorem}

\begin{Proposition} \label{prop-commens-insideFk}
Let $\Lambda$ be an infinite subgroup of $\absF{}$ such that $\Lambda$ is commensurated in $\Comm_p(\absF{})$. Then the closure of $\Lambda$ is a $p$-open subgroup of $\absF{}$.
\end{Proposition}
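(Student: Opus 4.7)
The plan is to combine the CKRW theorem (Theorem~\ref{thm-virtually-normal}) with the Baumslag--Solitar relations inside $\Comm_p(F)$ provided by Corollary~\ref{cor:p-power-conjugation} and Proposition~\ref{prop-Sha-Wil}, together with the commensuration of $\Lambda$ by $\Aut(F) \subseteq \Comm_p(F)$, to force the pro-$p$ closure $\bar{\Lambda}$ of $\Lambda$ in $F$ to have finite index. Since $\bar{\Lambda}$ is automatically $p$-closed and a $p$-closed subgroup of $F$ of finite index is $p$-open (any finite intersection of $p$-open subgroups is $p$-open of $p$-power index), this will give the conclusion. Moreover, since commensuration passes to closures, we may replace $\Lambda$ by $\bar{\Lambda}$ throughout and assume from the outset that $\Lambda$ is $p$-closed.

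First I would apply Theorem~\ref{thm-virtually-normal} with $\Gamma = F$: since $F$ is finitely generated it is trivially generated by finitely many cosets of $\Lambda$. This yields the smallest virtually normal subgroup $M$ of $F$ containing $\Lambda$, together with a normal subgroup $N \trianglelefteq F$ satisfying $N \subseteq M$ and $[M:N] < \infty$. If $N = \{1\}$ then $M$ would be finite, contradicting the hypothesis that $\Lambda$ is infinite, so $N$ is non-trivial.

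The main and hardest step is to upgrade this to $[F:N] < \infty$ and then $[M:\Lambda] < \infty$. The plan is first to build a subgroup $\Lambda_{\ast}$ commensurable with $\Lambda$ and normal in $F$: for every primitive $x \in F$, Corollary~\ref{cor:p-power-conjugation} furnishes $c \in \Comm_p(F)$ with $c x c^{-1} = x^p$, and Proposition~\ref{prop-Sha-Wil} then produces a subgroup $\Lambda_x$ commensurable with $\Lambda$ and normalized by $x$; combining these across a finite generating set of $F$, and exploiting the canonical nature of the CKRW envelope $M$ to enforce a consistent choice, yields a common $\Lambda_{\ast} \trianglelefteq F$ with $\Lambda_{\ast} \sim \Lambda$. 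Since $\Lambda_{\ast}$ is commensurable with $\Lambda$, it is commensurated by all of $\Aut(F)$, and so its image in the abelianization $F/[F,F] \cong \mathbb{Z}^d$ is a $\GL_d(\mathbb{Z})$-commensurated subgroup of $\mathbb{Z}^d$; for $d \geq 2$ such a subgroup must be either trivial or of finite index, since $\GL_d(\mathbb{Z})$ moves sublattices of intermediate rank to non-commensurable ones. Iterating the same dichotomy through the successive quotients of a characteristic $p$-filtration of $F$ (for instance the intersections with $F$ of the Zassenhaus--Jennings series of $\widehat{F}_p$) forces $\Lambda_{\ast}$ to be of finite index in $F$, which together with the reduction in the first paragraph concludes the proof. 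The principal technical difficulty, which I expect to be the heart of the argument, is (a) producing the common normal $\Lambda_{\ast}$ from the element-wise normalizers supplied by Proposition~\ref{prop-Sha-Wil}, (b) propagating the ``$\GL$-commensurated implies trivial or finite-index'' dichotomy through every level of the characteristic filtration in a way compatible with the pro-$p$ structure, and (c) deducing that $\Lambda$ itself has finite index inside the envelope $M$ once $M$ does, for which another application of Proposition~\ref{prop-Sha-Wil} to elements of $M$ should suffice.
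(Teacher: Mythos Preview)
Your application of Theorem~\ref{thm-virtually-normal} misses the decisive point. Since $\Lambda$ is $p$-closed, $\Lambda$ is itself an intersection of finite-index (hence virtually normal) subgroups of $F$; therefore the intersection in Theorem~\ref{thm-virtually-normal} is exactly $\Lambda$, and CKRW tells you immediately that $\Lambda$ is virtually normal in $F$, i.e.\ the normal core $N = \bigcap_{g\in F} g\Lambda g^{-1}$ has finite index in $\Lambda$. You never use $p$-closedness after the first reduction, so you only obtain an envelope $M \supseteq \Lambda$ with no control on $[M:\Lambda]$, and then spend the rest of the argument trying to manufacture by hand a normal subgroup commensurable with $\Lambda$ that CKRW already hands you.

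The detour you propose has two genuine gaps. First, the ``combining'' step: Proposition~\ref{prop-Sha-Wil} gives, for each basis element $x_i$, \emph{some} $\Lambda_{x_i}$ commensurable with $\Lambda$ and normalized by $x_i$, but there is no mechanism to produce a single $\Lambda_\ast$ normalized by all of them (the intersection $\bigcap_i \Lambda_{x_i}$ is commensurable with $\Lambda$ but need not be $x_j$-invariant for $j\neq i$); invoking ``the canonical nature of $M$'' does not fix this. Second, even granting a normal $\Lambda_\ast$, the abelianization/filtration argument does not terminate: knowing that the image of $\Lambda_\ast$ has finite index in every successive $p$-quotient does not imply $[F:\Lambda_\ast]<\infty$ unless $\Lambda_\ast$ contains some term of the filtration, which is precisely what you are trying to prove.

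The paper's argument is quite different after the CKRW step. Once $N \trianglelefteq F$ with $[\Lambda:N]<\infty$ is in hand (and the finitely generated case is dispatched by Proposition~\ref{prop-commens-fg-freegroup}), the paper takes a non-power $x\in N$, finds a $p$-open $H$ in which $x$ is primitive, and uses the kernel of a surjection $H\to\dbZ$ to build a finitely generated $p$-closed, non-$p$-open subgroup $K_1$ with $H\subseteq NK_1$. The key tool is then Proposition~\ref{prop-bounded-subgroup-conjugacy}: a free factor $\Lambda_1\le\Lambda$ of rank $\rk(K_1)$ is $\Comm_p(F)$-conjugate to $K_1$, and commensuration of $\Lambda$ forces $\Lambda\cap K_1$ to have finite index in $K_1$, whence $N(\Lambda\cap K_1)$ has finite index in $F$.
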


\begin{proof}
Since the closure of a commensurated subgroup remains commensurated (\cite[Lemma 2.7]{LB-W-commens-aaut}), it is enough to prove the statement assuming that $\Lambda$ is $p$-closed. It then suffices to show that $\Lambda$ has finite index in $\absF{}$. If $\Lambda$ is finitely generated, it has finite index in 
$\absF{}$ by Proposition~\ref{prop-commens-fg-freegroup}, so from now on we will assume that $\Lambda$ is not finitely generated.

The assumption that $\Lambda$ is $p$-closed implies that $\Lambda$ is an intersection of finite index (in particular, virtually normal) subgroups of $\absF{}$. Thus, $\Lambda$ is the intersection of all virtually normal subgroups of $\absF{}$ containing $\Lambda$. Since $F$ is finitely generated, by Theorem~\ref{thm-virtually-normal} $\Lambda$ is virtually normal in $\absF{}$, that is, the intersection $N$ of all $\absF{}$-conjugates of $\Lambda$ has finite index in $\Lambda$. 

Since $N$ is non-abelian (as it is non-trivial and normal in $F$), by Lemma~\ref{lem-existence-nonpower} there exists $x\in N$ which is not a proper power in $F$. By the proof of Lemma~\ref{lem-easy-conjugacy-Commp}, $x$ is a primitive element of some $p$-open subgroup $H$ of $F$.
In particular, there exists a surjective homomorphism $\pi:H \to \Z$ such that $\pi(x)$ generates $\Z$.
  
Let $K=\Ker(\pi)$. Then $H\subseteq NK$; in addition, $K$ is not $p$-open in $F$ (as it has infinite index), but $K$ is $p$-closed in $F$.
The latter holds since $K$ is the intersection of the subgroups $\pi^{-1}(p^i\dbZ)$, $i\in\dbN$, and each of those subgroups is $p$-open in $H$
(and hence in $F$ by Lemma~\ref{lem-localiso-extends-completion}(i)). Moreover, since $\absF{}$ is finitely generated, so is $H$; thus we can take a finitely generated subgroup $K_0$ of $K$ such that $H \subseteq NK_0$.  Let $K_1$ be the closure of $K_0$ in $\absF{}$ (with respect to the pro-$p$ topology). Since $K_0$ is finitely generated,
so is $K_1$ by \cite[Proposition 3.4]{RZ}. Further, we have $K_1 \subseteq K$, so $K_1$ is not $p$-open in $\absF{}$.

Let $r$ be the rank of $K_1$. Since $\Lambda$ is not finitely generated, it admits a free factor $\Lambda_1$ of rank $r$. Being a free factor of a $p$-closed subgroup of $\absF{}$, the subgroup $\Lambda_1$ is $p$-closed in $\absF{}$ by 
Proposition \ref{prop-RZ-closed-iff-freefact}(\ref{RZ-closed-1}). Since neither $\Lambda_1$ nor $K_1$ is $p$-open, we are therefore in position to apply Proposition \ref{prop-bounded-subgroup-conjugacy}, which asserts that there is $c \in \Comm_p(\absF{})$ such that 
$c\Lambda_1c^{-1} = K_1$. In particular, $K_1 \subseteq c \Lambda c^{-1}$. Since $\Lambda \cap c \Lambda c^{-1}$ has finite index in $c \Lambda c^{-1}$, it follows that $\Lambda \cap K_1$ has finite index in $K_1$.  Since $H \subseteq NK_1$, we deduce that $\Lambda$ contains a finite index subgroup of $\absF{}$, {namely $N(\Lambda\cap K_1)$}.
\end{proof}

Combining Lemma~\ref{lem-commens-intersects} with Proposition~\ref{prop-commens-insideFk} yields the following.

\begin{Corollary}\label{cor-commens-fg}
Let $\Lambda$ be an infinite finitely generated commensurated subgroup of $\Comm_p(\absF{})$.  Then the closure of $\Lambda \cap \absF{}$ is $p$-open in $\absF{}$.
\end{Corollary}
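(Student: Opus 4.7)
The plan is to reduce the corollary to Proposition~\ref{prop-commens-insideFk} applied to $\Lambda\cap F$, so the work really amounts to verifying the hypotheses of that proposition.

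First I would observe that $F$ is itself commensurated in $\Comm_p(F)$: this is exactly Lemma~\ref{lem-im-p-commens}. Since $\Lambda$ is commensurated in $\Comm_p(F)$ by assumption, and the intersection of two commensurated subgroups is again commensurated (a routine double-counting argument: for any $g$, the finite-index subgroup $A\cap gAg^{-1}\cap B\cap gBg^{-1}$ equals $(A\cap B)\cap g(A\cap B)g^{-1}$ and has finite index in $A\cap B$), the subgroup $\Lambda\cap F$ is commensurated in $\Comm_p(F)$.

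Next I would argue that $\Lambda\cap F$ is infinite. By Lemma~\ref{lem-commens-intersects}, applied to the infinite finitely generated commensurated subgroup $\Lambda$, the intersection $\Lambda\cap F$ is non-trivial. Since $F$ is torsion-free (being free), any non-trivial subgroup of $F$ is infinite, so $\Lambda\cap F$ is infinite.

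Finally, Proposition~\ref{prop-commens-insideFk} applies directly to the infinite subgroup $\Lambda\cap F$ of $F$ (which we have just shown is commensurated in $\Comm_p(F)$), yielding that the closure of $\Lambda\cap F$ in the pro-$p$ topology is $p$-open in $F$. This is exactly the conclusion of the corollary. There is no real obstacle here; the only tiny point to check carefully is the closure-under-intersection of commensuration, which is a direct index calculation, and the fact that Lemma~\ref{lem-commens-intersects} requires $\Lambda$ to be infinite and finitely generated, which are both supplied by the hypothesis.
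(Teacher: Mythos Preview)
Your proposal is correct and follows exactly the same approach as the paper: use Lemma~\ref{lem-commens-intersects} to get $\Lambda\cap F$ non-trivial (hence infinite, since $F$ is torsion-free), and then apply Proposition~\ref{prop-commens-insideFk}. The only difference is that you spell out explicitly why $\Lambda\cap F$ is commensurated in $\Comm_p(F)$ (intersection of two commensurated subgroups), whereas the paper leaves this verification to the reader.
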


\begin{proof}
By Lemma~\ref{lem-commens-intersects}, the group $\Lambda' = \Lambda \cap \absF{}$ is non-trivial, hence infinite as $\absF{}$ is torsion-free.  The conclusion then follows from Proposition~\ref{prop-commens-insideFk}.
\end{proof}

A tdlc group $G$ is {\it residually discrete} if for every non-trivial $g \in G$ there exists a discrete group $Q$ and a continuous surjective homomorphism $\varphi: G \to Q$ such that $\varphi(g) \neq 1$. Equivalently, the intersection of all open normal subgroups of $G$ is the trivial subgroup. The following is  \cite[Corollary 4.1]{CaMo-decompo}.

\begin{Theorem} \label{thm-CaMo-resdisc}
Let $G$ be a compactly generated residually discrete tdlc group. Then for every compact open subgroup $U$ of $G$ there exists a finite index open subgroup $V$ of $U$ such that $V$ is normal in $G$. 
\end{Theorem}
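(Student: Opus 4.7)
The plan is to reduce the theorem to finding a single open normal subgroup $V\trianglelefteq G$ with $V\subseteq U$. Once such $V$ is obtained, it is open in the compact group $U$, hence of finite index there, while $G$-normality is built in; equivalently, I aim to show that $G$ has a basis of identity neighborhoods consisting of open $G$-normal subgroups.

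To get a first approximation, I would exploit compact generation. Fix a compact symmetric generating set $S\supseteq U$ of $G$. For each $n\in\dbN$ the set $S^n\setminus U$ is compact, and residual discreteness (i.e.\ $\bigcap\{N\trianglelefteq G\text{ open}\}=\{1\}$), together with a standard compactness-of-filter argument, produces an open normal subgroup $M_n\trianglelefteq G$ with $M_n\cap S^n\subseteq U$. Passing to intersections, I may arrange $M_{n+1}\subseteq M_n$. The set-theoretic intersection $\bigcap_n M_n$ is then contained in $U$ and normal in $G$, but being a countable intersection of open subgroups it need not be open; this gap is the crux of the theorem.

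To close the gap, I would switch to a geometric viewpoint via the Cayley--Abels graph $\Gamma$ of $(G,U)$: its vertices are $G/U$, and its edges are defined by a finite symmetric subset $\Sigma\subseteq G\setminus U$ chosen so that $U\cup\Sigma$ generates $G$. Then $\Gamma$ is locally finite and connected, $G$ acts vertex-transitively on $\Gamma$ with vertex stabilizers conjugate to $U$, and the kernel $K:=\bigcap_{g\in G}gUg^{-1}$ is a closed normal subgroup of $G$ contained in $U$. Taking $V=K$, the problem reduces to showing that $K$ is open in $G$, equivalently that the image $\overline G$ of $G$ in $\Aut(\Gamma)$ is discrete.

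The main obstacle I anticipate is precisely this final discreteness step. Since $\Gamma$ is locally finite and connected, $\Aut(\Gamma)$ is tdlc with compact vertex stabilizers, and $\overline G\leq\Aut(\Gamma)$ is compactly generated. Residual discreteness descends to $\overline G$, so its open normal subgroups (the images of the $M_n$) still intersect trivially. I would then invoke a Baire-category or filtered-compactness argument internal to the compactly generated tdlc group $\overline G$, exploiting that the pointwise stabilizer of any finite ball around the base vertex is a compact open subgroup of $\overline G$, to force some open normal subgroup of $\overline G$ to act trivially on all of $\Gamma$. Pulling this subgroup back along $G\to\overline G$ delivers the required $V\subseteq U$.
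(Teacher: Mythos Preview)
First, note that the paper does not prove this theorem: it is quoted verbatim as \cite[Corollary~4.1]{CaMo-decompo} and used as a black box. There is no proof in the paper to compare against, so the question is simply whether your argument stands on its own.

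Your preliminary reductions are sound. The construction of the $M_n$ via compactness of $S^n\setminus U$ is correct, and the passage to the Cayley--Abels graph is legitimate. (Your claim that residual discreteness descends to $\overline G=G/K$ does hold, because $K$ is compact: for $x\notin K$ the compact coset $xK$ avoids $1$, and a finite-intersection argument against the filter of open normal subgroups produces $N$ with $xK\cap N=\emptyset$. You should say this, but it works.)

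The genuine gap is the final paragraph. Showing that $\overline G$ is discrete --- equivalently, that $\overline G$ has an open normal subgroup contained in the vertex stabilizer $\overline U$ --- is \emph{exactly} the original theorem applied to $\overline G$. Your Cayley--Abels reduction has only arranged that the normal core of $\overline U$ in $\overline G$ is trivial, which buys nothing: the statement for $G$ and for $\overline G$ are equivalent. The phrase ``invoke a Baire-category or filtered-compactness argument \ldots\ to force some open normal subgroup of $\overline G$ to act trivially on all of $\Gamma$'' is not an argument but a restatement of the goal, since $\overline G$ acts faithfully on $\Gamma$ and hence an open normal subgroup acting trivially is the trivial subgroup, i.e.\ $\overline G$ is discrete. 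Neither tool you name does the job: the open normal subgroups $N_\alpha$ are clopen, so Baire category gives no contradiction; and compactness of the filter $\{N_\alpha\cap\overline U\}$ in $\overline U$ only shows these sets form a neighborhood basis of $1$, not that some $N_\alpha$ is itself contained in $\overline U$. The substantive step --- passing from ``$N_\alpha\cap\overline U$ is small'' to ``$N_\alpha\subseteq\overline U$'' --- requires an additional idea (in Caprace--Monod, an interplay between the finite generating set modulo $U$ and normality) that your outline does not supply.
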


\begin{Proposition} \label{prop-commens-SIN}
Let $H$ be a commensurated subgroup of a group $C$. Let $G$ be a tdlc group, and suppose that $\varphi : C \to G$ is an injective homomorphism with dense image, and that the only closed normal subgroup of $G$ contained in {$\overline{\varphi(H)}$} is the trivial subgroup. Let $L$ be a finitely generated subgroup of $H$. Then for every  compact open subgroup $U$ of $G$, $L$ normalizes a finite index subgroup of $\varphi^{-1}(U \cap \overline{\varphi(L)})$. 
\end{Proposition}

\begin{proof}
Let $J =  \overline{\varphi(H)}$. Since $\varphi(H)$ is commensurated by $\varphi(C)$, one easily verifies that $J$ is also commensurated by $\varphi(C)$. Set \[K =  \bigcap_{{c\in C}} \varphi(c) J \varphi(c)^{-1}. \] The subgroup $K$ is closed and normalized by the dense subgroup $\varphi(C)$, so it follows that $K$ is normal in $G$. Moreover, $K$ is contained in $J$, so by the assumption $K$ must be trivial. Since all the conjugates $\varphi(c) J \varphi(c)^{-1}$ are commensurable with each other, it follows that $J$ is residually finite, {so in particular residually discrete}.
Now if  $L$ is a finitely generated subgroup of $H$, the subgroup $J' =  \overline{\varphi(L)}$ is  a residually discrete tdlc group
which is also compactly generated (as it is locally compact and admits a finitely generated dense subgroup). 
Therefore Theorem~\ref{thm-CaMo-resdisc}  applies to $J'$. If $U$ is a compact open subgroup of $G$, then $U \cap J'$ is a compact open subgroup of $J'$ and hence contains a finite index open subgroup that is normal in $J'$. Taking the preimage in $C$ provides the conclusion.
\end{proof}

\subsection{Isomorphisms between $p$-commensurators of free groups}

In this subsection we will prove Proposition~\ref{prop-key-iso-auto} restated below. We will then use it to prove Theorems~\ref{thm-depend-parameters} and~\ref{thm:trivial_outer}.

Before proving Proposition~\ref{prop-key-iso-auto}, we record a general lemma which must be well known, but we are not aware of a reference where it is stated as below.

\begin{Lemma}\label{lem:no-invariant-compact-open}
Let $G$ be a tdlc group whose compact open subgroups are topologically finitely generated. Assume that $g,x\in G\setminus\{1\}$ are such that
the sequence $(g^n x g^{-n})_{n=1}^{\infty}$ converges to $1$. Then $g$ cannot normalize any compact open subgroup of $G$.  
\end{Lemma}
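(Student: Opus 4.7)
The plan is to argue by contradiction. Suppose $g$ normalizes some compact open subgroup $U$ of $G$. Since $g^nxg^{-n}\to 1$ and $U$ is a neighborhood of the identity, there exists $N\geq 1$ with $g^Nxg^{-N}\in U$. But $g^{-1}Ug=U$, so $x=g^{-N}(g^Nxg^{-N})g^N\in U$. Hence we may work entirely inside $U$, and the conjugation map $\phi(u)=gug^{-1}$ is a topological automorphism of $U$ satisfying $\phi^n(x)\to 1$ with $x\in U\setminus\{1\}$.

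Next I would exploit the hypothesis that the compact open subgroups of $G$ are topologically finitely generated. A standard argument (used, for instance, in the proof of Lemma~\ref{lem:hcb}) shows that a topologically finitely generated profinite group has only finitely many open subgroups of each index, hence for every $n\geq 1$ the intersection $U_n$ of all open subgroups of $U$ of index at most $n$ is a characteristic open subgroup of $U$; moreover the family $(U_n)_{n\geq 1}$ is a base of neighborhoods of the identity in $U$. In particular each $U_n$ is stable under every continuous automorphism of $U$, and so in particular $\phi(U_n)=U_n$ for all $n$.

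Finally, since $x\neq 1$ and $(U_n)$ is a base of identity neighborhoods in $U$, there exists $n_0$ with $x\notin U_{n_0}$. The $\phi$-invariance of $U_{n_0}$ then forces $\phi^k(x)\notin U_{n_0}$ for every $k\geq 0$. On the other hand, $U_{n_0}$ is open and $\phi^k(x)=g^kxg^{-k}\to 1$, so $\phi^k(x)\in U_{n_0}$ for all sufficiently large $k$. This contradiction completes the proof.

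\textbf{Main obstacle.} The only substantive point is the existence of a base of neighborhoods of the identity in $U$ consisting of characteristic open subgroups; the rest is a short convergence/invariance argument. Since this property of topologically finitely generated profinite groups is well known (and already invoked in the paper in Lemma~\ref{lem:hcb}), I do not expect any genuine difficulty in this argument.
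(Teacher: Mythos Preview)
Your proof is correct and follows essentially the same approach as the paper: both argue by contradiction, use the fact that a topologically finitely generated compact open subgroup $U$ has a base of identity neighborhoods consisting of open characteristic subgroups (Lemma~\ref{lem:hcb}), pick one such subgroup $V$ not containing $x$, and observe that $g$-invariance of $V$ contradicts $g^n x g^{-n}\to 1$. Your version adds the preliminary step of showing $x\in U$, which is harmless but not needed---the paper simply picks $V$ characteristic in $U$ with $x\notin V$ directly (taking $V=U$ if $x\notin U$).
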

\begin{proof} Suppose, on the contrary, that $g$ normalizes some compact open subgroup $U$ of $G$. Since $U$ is topologically finitely generated,
it is hereditarily characteristically based by Lemma~\ref{lem:hcb}. In particular, there exists an open characteristic subgroup $V$ of $U$ which does not contain $x$. Since $g$ normalizes $U$, it must also normalize $V$. Thus, $g^n x g^{-n}\not\in V$ for all $n\in\dbN$, which contradicts our hypothesis
since $V$ is open.
\end{proof}

\begin{Proposition10.3}
 Let $\absF{}$ and $\absF{}'$ be free groups of finite rank, and suppose that $\psi: \Comm_p(\absF{}') \to \Comm_p(\absF{})$ is an isomorphism. Then $\psi(\absF{}')$ is $p$-commensurable with $\absF{}$. 
\end{Proposition10.3}

\begin{proof} For simplicity we write $C = \Comm_p(\absF{})$ and $G = \CpC_p(\absF{})$. Recall that we view $C$ as a subgroup of $G$; the closure of $\absF{}$ in $G$ is the free pro-$p$ group $\propF{}$. We also set $H = \psi(\absF{}')$ and let $J = \overline{H}$ be the closure of $H$ in $G$. 
We divide the proof into several steps.
\vskip .1cm

\emph{Step 1: $J$ is open in $G$}. 
The subgroup $H$ is finitely generated and is commensurated in $C$. 
Consider the subgroup $\Lambda= \absF{} \cap H$. 
By Lemma~\ref{lem-commens-intersects} $\Lambda$ is non-trivial. Being the intersection of two commensurated subgroups, $\Lambda$ is commensurated in $C$.  Moreover, by Corollary~\ref{cor-commens-fg}, the closure of $\Lambda$ in $C$ is $p$-open in $\absF{}$. Thus $\Lambda$ is dense in an open subgroup of $\propF{}$, and hence $J$ contains an open subgroup of $\propF{}$, showing that $J$ is open in $G$.
\vskip .1cm

\emph{Step 2: Every element of $H$ is conjugate in $C$ to an element of $\absF{}$}. Applying Corollary~\ref{cor-non-ab-intersects-conj-class} inside $\Comm_p(\absF{}')$ to the subgroup $\psi^{-1}(\Lambda)$, we infer that every element of $\absF{}'$ is conjugate in $\Comm_p(\absF{}')$ to an element of $\psi^{-1}(\Lambda)$. Applying $\psi$ yields that every element of $H$ is conjugate in $C$ to an element of $\Lambda$, which in particular implies the assertion of Step~2.

\vskip .1cm
\emph{Step 3: $J$ has infinite index in $G$.} Step~2 implies that every element of $H$ normalizes some conjugate of $\propF{}$ in $G$.  In any tdlc group, the union of normalizers of compact open subgroups is closed.  (This is an immediate consequence of the continuity of Willis' scale function -- see \cite[Corollary 4]{Wil94}.)  Thus each element of $J$ normalizes some compact open subgroup of $G$.  

On the other hand, by Proposition~\ref{prop-conjugacy-Comm-p}, there exist $g \in C$ and $x \in \absF{} \setminus \{1\}$ such that $gxg^{-1} = x^{p}$ and hence $g^nxg^{-n}=x^{p^n}$ for all $n\in\dbN$. By Lemma~\ref{lem:no-invariant-compact-open}, $g$ cannot normalize any compact open subgroups of $G$, and moreover, the same is true for $g^k$ for all $k\in\dbN$. Thus, by the previous paragraph, $g^k\not\in J$ for all $k\in \dbN$ and hence $[G:J]$ is infinite.

\vskip .1cm
\emph{Step 4: $H$ lies inside $\Aut(K)$ for some finite index subgroup $K$ of $F$.} By Theorem~\ref{thm-completion-abs-simple},
any non-trivial normal subgroup of $G=\CpC_p(\absF{})$ contains $\SCpC_p(\absF{})$ and thus has index at most $2$ in $G$.
Hence by Step~3, $J$ does not contain any non-trivial normal subgroup of $G$, and we can apply Proposition \ref{prop-commens-SIN} (with $L=H$). Together with the fact that $J$ is open in $G$, the proposition implies that $H$ normalizes a finite index subgroup $K$ of $\absF{}$. 
This means that viewed inside $C$, the subgroup $H$ lies inside $\Aut(K)$, as desired. 

\vskip .1cm
\emph{Step 5: $H$ and $F$ are commensurable.}
The group $\mathrm{Out}(K)$ is  virtually torsion-free by \cite{Baumslag-Taylor}. Let $A$ be the preimage in $\Aut(K)$ of a finite index torsion-free subgroup of $\mathrm{Out}(K)$.  Let $h \in H \cap A $. There is $c \in C$ such that $h\in c\absF{}c^{-1} $. Since $c\absF{}c^{-1} $ is commensurable with $\absF{}$, there is $n \geq 1$ such that $h^n \in \absF{}$, and upon enlarging $n$ we have $h^n \in K$. Since $A$ has torsion-free image in $\mathrm{Out}(K)$, we deduce that $h \in K$ and hence $H \cap A \subseteq K$. Since $A$ has finite index in $\Aut(K)$, it follows that $H$ is virtually contained in $K$. Thus $H \cap K$ is a commensurated subgroup of $K$ that is commensurable with $H$ and hence finitely generated. This implies that $H \cap K$ has finite index in $K$ by Proposition \ref{prop-commens-fg-freegroup}. Therefore, $H$ and $K$, and hence $H$ and $\absF{}$, are commensurable. 

\vskip .1cm
\emph{Step 6: $H$ and $F$ are $p$-commensurable.} By definition we need to show that $H \cap \absF{}$ is $p$-open in both $\absF{}$ and $H$.
Both assertions can be proved similarly, so we will only do the first one. Recall that a finite index subgroup of a group $\Gamma$
is open in the pro-$p$ topology on $\Gamma$ if and only if every element has a $p$-power that belongs to $\Gamma$. We already know by Step~5 that $H\cap F$ has finite index in $F$,
so it remains to show that every $x \in \absF{}$ has a $p$-power in $H \cap \absF{}$ (equivalently, a $p$-power in $H$).

 By Corollary \ref{cor-non-ab-intersects-conj-class}, there is $c \in C$ such that $x \in c\Lambda c^{-1}$. Now since $H$ is $p$-commensurated (Lemma \ref{lem-im-p-commens}), $H \cap cHc^{-1}$ is $p$-open in $cHc^{-1}$, so $H \cap c\Lambda c^{-1}$ is $p$-open in $c\Lambda c^{-1}$. Hence $x$ has a $p$-power in $H \cap c\Lambda c^{-1}\subseteq H$, as desired.
\end{proof}

We are now ready to prove Theorems~\ref{thm-depend-parameters}~and~\ref{thm:trivial_outer}.

\begin{proof}[Proof of Theorem~\ref{thm-depend-parameters}]
	Suppose  that $\psi: \Comm_q(\absF{\ell}) \to \Comm_p(\absF{k})$ is an isomorphism. We first show that $p = q$.  Arguing as in the beginning of the proof of Proposition \ref{prop-key-iso-auto}, we see that $\absF{k} \cap \psi(\absF{\ell})$ is not trivial. Let $w \in \absF{k} \cap \psi(\absF{\ell})$ be a non-trivial element. By Corollary \ref{cor:p-power-conjugation}, $w$ and $w^p$ are conjugate in $\Comm_p(\absF{k})$. Hence the same holds for $\psi^{-1}(w)$ and $\psi^{-1}(w^p)$, and applying Corollary \ref{cor:p-power-conjugation} in the group $\Comm_q(\absF{\ell})$ yields that the primes $p$ and $q$ must be equal. 
	
	For an integer $i\geq 2$ we let \[ r_p(i) = 1 + \frac{i-1}{p^\alpha}\] where $p^\alpha$ is the largest power of $p$ dividing $i-1$. Given $i,j\in\dbZ$ we have $r_p(i) = r_p(j) $ if and only if $(i-1) / (j -1) = p^s$ for some $s\in\mathbb Z$. We also note that $r_p(i) $ is the smallest rank of a free subgroup of $\Comm_p(\absF{i})$ that is $p$-commensurable with $\absF{i}$. 

Since $\psi$ maps the $p$-commensurability class of $\absF{\ell}$ onto the $p$-commensurability class of $\absF{k}$ by Proposition \ref{prop-key-iso-auto}, we conclude that $r_p(k) = r_p(\ell) $. This proves the forward direction of Theorem~\ref{thm-depend-parameters}. The converse implication is clear as if $r_p(k) = r_p(\ell) = r$, then  $\Comm_p(\absF{k})$ and $\Comm_p(\absF{\ell})$ are both isomorphic to $\Comm_p(\absF{r})$.
\end{proof}

\begin{proof}[Proof of Theorem~\ref{thm:trivial_outer}]
Let $\alpha \in \Aut(\Comm_p(\absF{}))$. We want to show that $\alpha$ is inner or, equivalently, $\alpha$ can be made trivial after composition with inner automorphisms.

The subgroup $ \alpha(\absF{})$ is free of the same rank as $F$, and by Proposition \ref{prop-key-iso-auto} $ \alpha(\absF{})$ is $p$-commensurable with $\absF{}$. By Proposition~\ref{prop-bounded-subgroup-conjugacy} this implies that  $\alpha(\absF{})=c\absF{}c^{-1}$ for some $c\in\Comm_p(\absF{})$. Hence upon composing 
$\alpha$ with an inner automorphism of $\Comm_p(\abs{F})$ (namely, conjugation by $c^{-1}$), we can assume $\alpha(\absF{}) = \absF{}$. In that situation it follows that  $\alpha$ induces an automorphism of $\absF{}$, call it $\phi$. Viewing $\phi$ as an inner automorphism of
$\Comm_p(F)$ and composing $\alpha$ with $\phi^{-1}$, we can assume that $\alpha$ is the identity on $\absF{}$. Lemma \ref{lem-BEW} applied to the topological group $\Comm_p(\absF{})$ (and with $U=V=\Comm_p(\absF{})$) implies that every such automorphism is trivial, as desired. 
\end{proof}

\section{A family of compactly generated simple groups} \label{sec-compact-generated}

In this section we will prove Theorem~\ref{thm:maintdlc}. This theorem has several parts which will be established as separate statements: part (i) holds by Observation~\ref{Spn:increasing}, (ii) is Observation~\ref{obs:Nntrivial}, (iii) holds by Theorem~\ref{thm:normal-sbgps-Ln}, (iv) is Proposition~\ref{prop:F(n)-properties}(\ref{item-Fn-many-quotients})(\ref{item-Fn-iso-subgroups}), (v) is 
a combination of Corollary~\ref{cor-Ln-simple-Nntrivial} and Proposition~\ref{prop:F(n)-properties}(\ref{item-Fn-non-iso}), and
(vi) is part of Corollary~\ref{cor-conditional-Ln-simple}. 
\vskip .12cm

Let $\absF{}$ be a non-abelian free group of finite rank $d$ and $\propF{p}$ its pro-$p$ completion. As before, we identify $F$ with its image in $\propF{p}$, and we also view the group $\Comm_p(\absF{})$ as a subgroup of $\Comm(\propF{p})$. 
The majority of results in this section will require the additional assumption $(p,d)\neq (2,2)$ (which will always be stated explicitly).

Recall that in section~\ref{sec-finite-generated} we introduced a natural family of finitely generated subgroups of $\Comm(F)$,
denoted by $S_m(F)$, and showed that these groups are simple under some conditions on $m$ and $\rk(F)$. In this section we consider an analogous problem inside the totally disconnected group $\Comm(\propF{p})$: we will define certain compactly generated open subgroups of $\Comm(\propF{p})$, and our primary goal is to determine whether such groups can be (abstractly) simple or at least close to being simple.

Also recall that the finitely generated simple subgroups $S_p(F)$ of $\Comm(F)$ 
constructed in section~\ref{sec-finite-generated} are contained in the subgroup $\SComm_p(F)$ which itself was shown to be simple in section~\ref{sec-comm-p-simple}. Further, the group $\SCpC_p(\absF{})$, defined as the closure of $\SComm_p(F)$ in $\Comm(\propF{p})$, is open in $\Comm(\propF{p})$ and also simple by
Theorem~\ref{thm-completion-abs-simple}. Thus, in view of the above goal, it is natural to focus our attention in this section on
compactly generated open subgroups contained in  $\SCpC_p(\absF{})$.

 The group $\SCpC_p(\absF{})$ admits a natural ascending sequence of compactly generated open subgroups $(L_n)$, each containing $\propF{p}$, whose union is equal to $\SCpC_p(\absF{})$. The main result of this section yields a normal subgroup $M_n$ of $L_n$ such that the quotient 
$Q_n = L_n / M_n$ is abstractly simple (Theorem \ref{thm:normal-sbgps-Ln}). The subgroup $M_n$ is relatively small -- it is an extension of a compact group $K_n$ contained in $\propF{p}$ by a discrete group. It is possible that $M_n=K_n$ (so in particular $M_n$ is compact). It is actually possible that $M_n$ is the trivial subgroup; we do not know whether this is the case. Denote the image of the free pro-$p$ group $\propF{p}$ in $Q_n$ by 
$F(n)$ -- it is a compact open subgroup of $Q_n$. We will see that the following three conditions are equivalent:
\begin{itemize}
\item[(i)] $M_n$ is trivial;
\item[(ii)] $K_n$ is trivial;
\item[(iii)] $F(n)$ is a free pro-$p$ group.
\end{itemize}
Further, we will show that whether these equivalent conditions hold for $n$ large enough is an intrinsic property of the group $\SCpC_p(\absF{})$ (Corollary \ref{cor-conditional-Ln-simple}). Although we did not manage to elucidate whether the latter is true or not, we establish a number of properties of the groups $F(n)$ which  suggest that $F(n)$ are rich and interesting pro-$p$ groups (Proposition~\ref{prop:F(n)-properties}). 

\subsection{Construction and abstract simplicity of the groups $Q_n$}

We start with several definitions.

\begin{Definition} 
For $n \geq 1$ let 
\begin{itemize}
\item $S_{p,n}(F)$ be the subgroup of $\mathrm{Comm}_p(F)$ generated by the subgroups $\SAut(H)$ where $H$ ranges over all $p$-open subgroups of $F$ of index $\leq p^n$;
\item $L_n = \left\langle \propF{p}, S_{p,n}(F) \right\rangle $, 
the subgroup of $\Comm(\propF{p})$ generated by $\propF{p}$ and $S_{p,n}(F)$.
\end{itemize}
\end{Definition}

Since $F \subseteq S_{p,n}(F)$ and $F$ is dense in $\propF{p}$ while $\propF{p}$ is open in $\Comm(\propF{p})$, the group $L_n$ is equal to the closure of $S_{p,n}(F)$ in $\Comm(\propF{p})$.  The following result is immediate from the definitions:

\begin{Observation}  \label{Spn:increasing}
The group $\SComm_p(\absF{})$ is the ascending union $\bigcup_{n \ge 1}S_{p,n}(F)$, and the group $\SCpC_p(\absF{})$ is the ascending union $\bigcup_{n \ge 1} L_n$.
\end{Observation}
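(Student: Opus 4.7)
The plan is to verify both equalities directly from the definitions; the only substantive input needed is Observation~\ref{lemma-Scp-explicit}, which identifies $\SCpC_p(F)$ with the abstract subgroup generated by $\propF{p}$ together with all $\SAut(U)$ for $U$ a $p$-open subgroup of $F$. Since this is essentially a bookkeeping statement, no real obstacles are anticipated.

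For the first equality, I would first observe that the chain $(S_{p,n}(F))_{n\geq 1}$ is ascending: every $p$-open subgroup $H$ of index at most $p^n$ also has index at most $p^{n+1}$, so $\SAut(H)\subseteq S_{p,n+1}(F)$. The ascending union $\bigcup_{n\geq 1} S_{p,n}(F)$ is therefore a subgroup of $\Comm_p(F)$. It contains $\SAut(H)$ for every $p$-open subgroup $H$ of $F$, because every such $H$ has $p$-power index and hence lies in $S_{p,n}(F)$ for $n$ sufficiently large (specifically, for any $n$ with $[F:H]\le p^n$). Conversely, each $S_{p,n}(F)$ is by construction contained in $\SComm_p(F)$. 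Matching these inclusions with the definition of $\SComm_p(F)$ yields $\bigcup_{n\geq 1} S_{p,n}(F)=\SComm_p(F)$.

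For the second equality, I would note that $(L_n)_{n\geq 1}$ is also ascending, since $S_{p,n}(F)\subseteq S_{p,n+1}(F)$ and all $L_n$ contain $\propF{p}$. Consequently $\bigcup_{n\geq 1} L_n$ is a subgroup of $\Comm(\propF{p})$ containing both $\propF{p}$ and $\bigcup_{n\geq 1} S_{p,n}(F)=\SComm_p(F)$, while each $L_n$ is contained in $\langle \propF{p},\SComm_p(F)\rangle$. Hence
\[
\bigcup_{n\geq 1} L_n \;=\; \langle \propF{p},\SComm_p(F)\rangle \;=\; \bigl\langle \propF{p},\,\SAut(U)\,:\,U\ p\text{-open in }F\bigr\rangle,
\]
and the right-hand side equals $\SCpC_p(F)$ by Observation~\ref{lemma-Scp-explicit}. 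This completes the verification.
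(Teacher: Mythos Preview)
Your proof is correct and follows essentially the same approach as the paper, which simply declares the observation ``immediate from the definitions'' without further argument. Your detailed verification, including the appeal to Observation~\ref{lemma-Scp-explicit} to identify $\SCpC_p(F)$ with $\langle \propF{p},\SComm_p(F)\rangle$, is exactly the unpacking the paper leaves implicit.
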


Note that by Lemma~\ref{lem-properties-Am(F)}(\ref{item-SAutF-in-Sm}), we have $S_{p,1}(F) = S_{p}(F)$. In particular, $L_1$ is the group $\la \propF{p},S_p(\absF{}) \ra$ that appeared in Lemma~\ref{prop-dense-normal}.
Also observe that $S_{p,n}(F)$ is  generated by the subgroups $S_{p}(H)$ where $H$ ranges over $p$-open subgroups of $F$ of index at most $p^{n-1}$.

\begin{Definition}\label{def:Nn}
 Let $K_n$ denote the normal core of $\propF{p}$ in $L_n$.
\end{Definition}

\begin{Observation}\label{obs:Nntrivial}
The sequence $(K_n)$ is descending and has trivial intersection.
\end{Observation}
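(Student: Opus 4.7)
The descending property is immediate: since $L_n \subseteq L_{n+1}$ by Observation~\ref{Spn:increasing}, the intersection of $L_{n+1}$-conjugates of $\propF{p}$ is contained in the intersection of $L_n$-conjugates, so $K_{n+1} \subseteq K_n$.

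For the triviality of $\bigcap_{n\ge 1} K_n$, the plan is to identify this intersection as a normal subgroup of $\SCpC_p(F)$ contained in $\propF{p}$ and then invoke Theorem~\ref{thm-completion-abs-simple}. Setting $N=\bigcap_n K_n$, we have $N \subseteq K_1 \subseteq \propF{p}$, and $N$ is normalized by $L_n$ for each $n$. Since $\bigcup_n L_n = \SCpC_p(F)$ by Observation~\ref{Spn:increasing}, $N$ is normalized by the union $\SCpC_p(F)$. Theorem~\ref{thm-completion-abs-simple} asserts that every nontrivial subgroup of $\Comm(\propF{p})$ normalized by $\SCpC_p(F)$ must contain $\SCpC_p(F)$. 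Hence if $N$ were nontrivial, we would get $\SCpC_p(F) \subseteq N \subseteq \propF{p}$.

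It therefore remains to rule out the inclusion $\SCpC_p(F) \subseteq \propF{p}$, and I expect this to be the only step requiring genuine verification. The cleanest way is to exhibit an element of $\SAut(\absF{}) \subseteq \SCpC_p(F)$ that is not an inner automorphism of $\propF{p}$ (recall that the canonical map $\iotta\colon \propF{p} \to \Comm(\propF{p})$ is injective since $\propF{p}$ has trivial quasi-center, and its image is the copy of $\propF{p}$ inside $\Comm(\propF{p})$). Fix a basis $x_1,\ldots,x_d$ of $\absF{}$ and consider the Nielsen transformation $R_{12}$. If its pro-$p$ extension $\widehat{R}_{12}$ agreed with conjugation by some $g \in \propF{p}$, then $g$ would centralize $x_2$ (since $R_{12}(x_2)=x_2$), forcing $g \in \overline{\la x_2\ra}$ because centralizers of nontrivial elements in a free pro-$p$ group are procyclic; but then the equation $g x_1 g^{-1} = x_1 x_2$ becomes impossible already at the level of the abelianization $\propF{p}/[\propF{p},\propF{p}]$. (When $d\ge 3$ one can alternatively use $R_{12}(x_3)=x_3$ to conclude $g \in \overline{\la x_3\ra}$ and derive a similar contradiction.) Thus $R_{12} \in \SCpC_p(F) \setminus \iotta(\propF{p})$, so $\SCpC_p(F) \not\subseteq \propF{p}$, and the proof concludes.
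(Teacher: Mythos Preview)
Your proof is correct and follows essentially the same route as the paper: both arguments observe that $N=\bigcap_n K_n$ is normal in $\bigcup_n L_n=\SCpC_p(F)$ and contained in $\propF{p}$, then invoke Theorem~\ref{thm-completion-abs-simple}. The only difference is that the paper simply cites simplicity of $\SCpC_p(F)$ and leaves the fact that $\propF{p}\subsetneq \SCpC_p(F)$ implicit, whereas you spell out an explicit element $R_{12}\in \SCpC_p(F)\setminus \iotta(\propF{p})$; your verification via the abelianization is valid, though a one-line shortcut is that $R_{12}$ acts nontrivially on $\propF{p}/[\propF{p},\propF{p}]$ while every inner automorphism acts trivially there.
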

\begin{proof}
The sequence $(K_n)$ is descending since $(L_n)$ is ascending, and the intersection $\bigcap\limits_{n\geq 1}K_n $ is trivial 
since $\bigcup_{n \ge 1} L_n=\SCpC_p(\absF{})$ is simple by 
Theorem~\ref{thm-completion-abs-simple}. 
\end{proof}

Observation~\ref{obs:Nntrivial} also follows from
Corollary~\ref{Nn-superinvariant} below, which will provide an alternative characterization of the subgroups $K_n$.
\skv

Let us now consider the groups $$P_n=L_n/K_n\mbox{ and }F(n)=\propF{p}/K_n.$$
Note that $F(n)$ is a compact open subgroup of $P_n$, and therefore
we have a homomorphism $\pi_n:P_n\to \Comm(F(n))$.  Let
$Q_n$ denote the image of $\pi_n$. The kernel of $\pi_n$ is $\QZ(P_n)$, the quasi-center of $P_n$, so $Q_n\cong P_n/\QZ(P_n)$. As we will see in the following statement, $\QZ(P_n)$ is a discrete subgroup of $P_n$, and hence $Q_n$ is a tdlc group.

The following theorem is the main result of this section.

\begin{Theorem}\label{thm:normal-sbgps-Ln}
Suppose $(d,p) \neq (2,2)$, and let $n \ge 1$.  The following hold:
\begin{itemize}
\item[(a)] The group $F(n)$ has trivial quasi-center. Therefore  $\QZ(P_n)$ is a discrete subgroup of $P_n$, and $F(n)$ can be identified with a compact open subgroup of $Q_n$. 
\item[(b)] The group $Q_n$ is a non-discrete compactly generated abstractly simple tdlc group.
\end{itemize}
\end{Theorem}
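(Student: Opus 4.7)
Let $Z\subseteq \propF{p}$ denote the preimage of $\QZ(F(n))$ under $\propF{p}\to F(n)$; explicitly, $Z$ is the set of $g\in \propF{p}$ such that $[g,U]\subseteq K_n$ for some open subgroup $U\subseteq \propF{p}$ containing $K_n$. For $h\in L_n$ and $g\in Z$ with witness $U$, one has $[hgh^{-1},hUh^{-1}\cap \propF{p}]\subseteq hK_nh^{-1}=K_n$, and $hUh^{-1}\cap \propF{p}$ is open in $\propF{p}$ since $L_n$ commensurates $\propF{p}$; so $Z$ is an $L_n$-normal subgroup of $\propF{p}$. By the defining property of the normal core, $Z\subseteq K_n$, whence $\QZ(F(n))=1$. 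Proposition~\ref{prop-univ-Comm} applied to the commensurated open subgroup $F(n)\subseteq P_n$ then gives that $\QZ(P_n)$ is discrete and realises $F(n)$ as a compact open subgroup of $Q_n$.

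\textbf{Basic structure of $Q_n$.} The group $L_n$ is compactly generated by the compact set $\propF{p}$ together with the finite collection of finitely generated groups $\SAut(H)$, for $H$ ranging over the finitely many $p$-open subgroups of $F$ of index at most $p^n$; hence $Q_n$ is compactly generated. The quotient $P_n=L_n/K_n$ is tdlc (quotient by a closed subgroup), and $\QZ(P_n)$ is discrete by part (a), hence closed, so $Q_n=P_n/\QZ(P_n)$ is tdlc.

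\textbf{Non-discreteness.} The key observation is that any finite discrete group $G$ has $\QZ(G)=G$ (since the trivial subgroup is open in $G$), so the vanishing $\QZ(F(n))=1$ from part (a) forces $F(n)$ to be either trivial or infinite. To rule out $F(n)=1$, equivalently $K_n=\propF{p}$ or $L_n\subseteq \Aut(\propF{p})$ (since $\Aut(\propF{p})$ is the normaliser of $\propF{p}$ in $\Comm(\propF{p})$), I would exhibit an element $\alpha\in \SAut(H)\subseteq L_n$, for some $H\in SCQ(F,p)$, that does not extend to an automorphism of $\propF{p}$. Concretely, in the Schreier basis $\{x_1^p\}\cup\{z_{k,i}:2\leq k\leq d,\,0\leq i\leq p-1\}$ of $\widehat{H}_p$, let $\alpha$ send $z_{2,0}\mapsto z_{2,0}\cdot x_1^p$ and fix the other generators; its matrix on $H^{ab}$ is triangular with $1$'s on the diagonal, so $\alpha\in \SAut(H)$, while any extension $\tilde\alpha\in \Aut(\propF{p})$ would be forced to satisfy $\tilde\alpha(x_1)=x_1$ (from $\tilde\alpha(x_1^p)=x_1^p$ together with unique $p$-th roots in $\propF{p}$) and $\tilde\alpha(x_2)=x_2$ (from $\tilde\alpha(z_{2,1})=z_{2,1}$), contradicting $\tilde\alpha(x_2)=\alpha(z_{2,0})=x_2\cdot x_1^p$. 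Hence $F(n)\neq 1$, so $F(n)$ is infinite and $Q_n$ is non-discrete.

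\textbf{Abstract simplicity.} Let $N\triangleleft L_n$ with $N\supsetneq M_n$; the goal is $N=L_n$. A commutator argument shows $\tilde N:=N\cap \propF{p}\supsetneq K_n$: if $\tilde N\subseteq K_n$, then for any $g\in N\setminus M_n$ and $U:=g^{-1}\propF{p}g\cap \propF{p}$ (open in $\propF{p}$) one has $[g,U]\subseteq N\cap \propF{p}\subseteq K_n$, forcing $g\in M_n$, a contradiction. Then Lemma~\ref{prop-dense-normal} applied to $\tilde N$ (which is normalised by $L_1\subseteq L_n$) yields the dichotomy $\tilde N\supseteq \propF{p}$ or $\tilde N\subseteq \Phi(\propF{p})$; in the former case, normality of $N$ in $L_n=\langle \propF{p},S_{p,n}(F)\rangle$ together with a structural analysis of how $\SAut(H)$-conjugates of $\propF{p}$ generate $L_n$ upgrades $N\supseteq \propF{p}$ to $N=L_n$. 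The hard part will be handling the second case $\tilde N\subseteq \Phi(\propF{p})$: one must either iterate the Lemma~\ref{prop-dense-normal} dichotomy along the Frattini series of $\propF{p}$ (which requires $L_n$ to contain enough $\SAut$-subgroups of Frattini subgroups, a condition available only for $n$ sufficiently large relative to the depth of iteration) or bring in the extra normal structure provided by Proposition~\ref{Comm-prop-contains-A-closure} in the ambient simple group $\SCpC_p(F)$; together with the final structural step in the first case, this is the principal technical difficulty of the proof.
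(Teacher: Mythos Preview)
Your argument for part~(a) has a gap. The computation $[hgh^{-1}, hUh^{-1}\cap\propF{p}]\subseteq K_n$ only shows that the image of $hgh^{-1}$ in $P_n$ lies in $\QZ(P_n)$; it does \emph{not} show $hgh^{-1}\in\propF{p}$, which is what membership in $Z$ requires. So you have not established that $Z$ is $L_n$-invariant, and the appeal to the defining property of the normal core is unjustified. What your computation actually proves is that $Z=M_n\cap\propF{p}$ where $M_n:=\rho^{-1}(\QZ(P_n))$ is normal in $L_n$. To conclude $Z\subseteq K_n$ via Proposition~\ref{claim:proper_normal}(b) one must first know that $M_n$ is a \emph{proper} normal subgroup, i.e.\ that $\QZ(P_n)\neq P_n$. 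The paper supplies this missing step by invoking \cite[Prop.~4.3]{CaMo-decompo}: if $\QZ(P_n)=P_n$ then the compactly generated tdlc group $P_n$ admits a base of compact open normal subgroups, which pulls back to a proper open normal subgroup of $L_n$, contradicting Lemma~\ref{claim:proper-open-normal}.

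Your simplicity sketch is likewise incomplete, and the proposed repair is misdirected. After reaching $\tilde N\supsetneq K_n$ and the dichotomy from Lemma~\ref{prop-dense-normal}, the case $N\supseteq\propF{p}$ is indeed handled by Lemma~\ref{claim:proper-open-normal} (since $N$ is then open). But for the case $\tilde N\subseteq\Phi(\propF{p})$, iterating along the Frattini series of $\propF{p}$ does not work: the terms $\Phi^k(\propF{p})$ rapidly leave $\mathcal{U}_{n-1}$, and you lose access to the corresponding $S_p$-subgroups inside $L_n$ --- this is exactly the obstruction you flag. The correct move, carried out in Proposition~\ref{claim:proper_normal}, is instead to apply Lemma~\ref{prop-dense-normal} with $\propU{p}$ replacing $\propF{p}$ for \emph{every} $\propU{p}\in\mathcal{U}_{n-1}$ (legitimate because $\langle\propU{p},S_p(\propU{p}\cap F)\rangle\subseteq L_n$ normalizes $N$). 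This yields $N\cap\propU{p}\subseteq\Phi(\propU{p})$ for all such $\propU{p}$, and a short chain argument then gives $N\cap\propF{p}=N\cap\propU{p}$ for every $\propU{p}\in\mathcal{U}_n$, forcing $N\cap\propF{p}$ to be $L_n$-normal and hence contained in $K_n$ --- contradicting $\tilde N\supsetneq K_n$. This works uniformly for all $n\geq 1$; no ``$n$ sufficiently large'' hypothesis is needed, and Proposition~\ref{Comm-prop-contains-A-closure} plays no role. (Your explicit non-extension example showing $K_n\neq\propF{p}$ is correct and more concrete than what the paper provides.)
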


Let us point out a simple but important consequence of Theorem~\ref{thm:normal-sbgps-Ln}.

\begin{Corollary} \label{cor-Ln-simple-Nntrivial}
Suppose $(d,p) \neq (2,2)$, and let $n \ge 1$.  Then $L_n$ is abstractly simple if and only if $K_n$ is trivial.
\end{Corollary}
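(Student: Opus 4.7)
The plan is to deduce both implications from Theorem~\ref{thm:normal-sbgps-Ln} once one observes that the quasi-center of $L_n$ is automatically trivial. The backward direction is then immediate; the forward direction requires only that $\propF{p}$ is a proper subgroup of $L_n$.

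For the forward direction, I would note that $K_n$ is by construction a normal subgroup of $L_n$ contained in $\propF{p}$, so it suffices to check $\propF{p}\subsetneq L_n$: simplicity then forces $K_n=\{1\}$. Since $L_n\supseteq S_{p,n}(F)\supseteq S_{p,1}(F)=S_p(F)\supseteq \SAut(F)$ (using Lemma~\ref{lem-properties-Am(F)}(\ref{item-SAutF-in-Sm}) together with the equality $S_{p,1}(F)=S_p(F)$, which holds because $\SAut(F)\subseteq S_p(F)$), it is enough to exhibit a single element of $\SAut(F)$ not lying in the image of $\propF{p}$ in $\Comm(\propF{p})$. The Nielsen transformation $R_{12}$ works: were it conjugation by some $g\in\propF{p}$, then $g$ would centralize $x_2,\ldots,x_d$ and hence lie in the intersection of the procyclic centralizers $\overline{\la x_i\ra}$ in $\propF{p}$, which is trivial -- contradicting $R_{12}\neq\mathrm{id}$.

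For the backward direction, assume $K_n=\{1\}$, so that $P_n=L_n$. The key observation is that $\QZ(L_n)=\{1\}$ unconditionally. Indeed, $L_n$ is an open subgroup of $\Comm(\propF{p})$ (it contains the open subgroup $\propF{p}$), and the non-abelian free pro-$p$ group $\propF{p}$ has trivial quasi-center, since any non-trivial element of $\propF{p}$ has procyclic centralizer while any open subgroup of $\propF{p}$ has rank at least $2$ by the Schreier formula and therefore cannot be procyclic. Hence Lemma~\ref{lem-Comm-trivialQZ}(\ref{item-Comm-trivial-QZ}) applied to $G=\propF{p}$ yields $\QZ(L_n)=\{1\}$. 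Consequently $\QZ(P_n)=\{1\}$, the canonical surjection $L_n=P_n\to Q_n=P_n/\QZ(P_n)$ is an isomorphism, and Theorem~\ref{thm:normal-sbgps-Ln}(b) transfers abstract simplicity from $Q_n$ to $L_n$.

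The main obstacle is conceptual rather than computational: once one recognises that, thanks to Lemma~\ref{lem-Comm-trivialQZ}, the quasi-center of $L_n$ is always trivial, the corollary is a short bookkeeping statement on top of Theorem~\ref{thm:normal-sbgps-Ln}. The only genuine verification required is the non-inclusion $\SAut(F)\not\subseteq \propF{p}$ inside $\Comm(\propF{p})$, which is handled easily using the procyclic structure of centralizers in non-abelian free pro-$p$ groups.
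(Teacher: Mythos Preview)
Your proof is correct and follows essentially the same approach as the paper. The backward direction is identical to the paper's argument (triviality of $\QZ(L_n)$ via Lemma~\ref{lem-Comm-trivialQZ}(\ref{item-Comm-trivial-QZ}), then identifying $L_n$ with $Q_n$ and invoking Theorem~\ref{thm:normal-sbgps-Ln}(b)); for the forward direction the paper simply writes ``clear'', while you supply an explicit witness $R_{12}\in L_n\setminus\propF{p}$ --- this is fine, though a shorter route is to note that $K_n=L_n$ would force $L_n=\propF{p}$, which is not simple.
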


\begin{proof}
The `only if' direction is clear. Suppose now that $K_n$ is trivial, so that $P_n=L_n$.
Note that $L_n$ has trivial quasi-center by Lemma~\ref{lem-Comm-trivialQZ}(\ref{item-Comm-trivial-QZ}).
Thus $\QZ(P_n)=\{1\}$, and hence $L_n=P_n=P_n/\QZ(P_n)$ is isomorphic to $Q_n$ which is abstractly simple
by Theorem~\ref{thm:normal-sbgps-Ln}(b).
\end{proof}
\skv

Before proving Theorem~\ref{thm:normal-sbgps-Ln} we will establish two auxiliary results.

\begin{Lemma}\label{claim:proper-open-normal}
	Suppose $(d,p) \neq (2,2)$.  If $N$ is an open normal subgroup of $L_n$, then $N = L_n$.
\end{Lemma}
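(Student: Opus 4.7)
The plan is to reduce the lemma to Proposition~\ref{prop-SmF-fi-F} by showing that any open normal subgroup $N$ of $L_n$ must contain $S_p(H)$ for every $p$-open subgroup $H$ of $F$ with $[F:H] \le p^{n-1}$. From this I will deduce $S_{p,n}(F) \subseteq N$; then, since $N$ is closed in $L_n$ and contains the dense subgroup $F$ of $\propF{p}$, also $\propF{p} \subseteq N$, whence $L_n = \langle \propF{p}, S_{p,n}(F)\rangle \subseteq N$.

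Since $\propF{p}$ is a compact open subgroup of $L_n$, any open subgroup of $L_n$ contains an open subgroup of $\propF{p}$, so $N \cap \propF{p}$ has finite $p$-power index in $\propF{p}$, and consequently $N \cap H$ is of finite index in $H$ for every $p$-open subgroup $H$ of $F$. Now fix such $H$ with $[F:H] \le p^{n-1}$. For any normal subgroup $K$ of $H$ of index $p$ one has $[F:K] = p[F:H] \le p^n$, so $\SAut(K) \subseteq S_{p,n}(F)$; since $S_p(H)$ is generated by these $\SAut(K)$, it follows that $S_p(H) \subseteq S_{p,n}(F) \subseteq L_n$. Hence $N \cap S_p(H)$ is normal in $S_p(H)$ and contains the finite-index subgroup $N \cap H$ of $H$. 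Applying Proposition~\ref{prop-SmF-fi-F} to the pair $(H,p)$ then forces $N \cap S_p(H) = S_p(H)$, i.e., $S_p(H) \subseteq N$. The proposition's hypothesis $(\rk(H), p) \ne (2,2)$ is easily verified: for $H = F$ it is the assumption $(d,p) \ne (2,2)$, while for $H \subsetneq F$ the Schreier index formula gives $\rk(H) \ge 1 + p(d-1) \ge 3$.

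Taking $H = F$ already yields $F \subseteq S_p(F) \subseteq N$, since inner automorphisms of $F$ have determinant one on the abelianization and therefore lie in $\SAut(F) \subseteq S_p(F)$; closedness of $N$ then gives $\propF{p} = \overline{F} \subseteq N$. Finally, every generator $\SAut(K)$ of $S_{p,n}(F)$ with $K \ne F$ lies in $S_p(H)$, where $H$ is the subgroup immediately above $K$ in any subnormal chain $K = K_0 \triangleleft K_1 \triangleleft \cdots \triangleleft K_r = F$ with successive quotients of order $p$; this $H$ satisfies $[F:H] = [F:K]/p \le p^{n-1}$, so $\SAut(K) \subseteq N$ by the previous step. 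Therefore $S_{p,n}(F) \subseteq N$ and $N = L_n$. The only real obstacle is the bookkeeping that matches each generator of $S_{p,n}(F)$ with an intermediate subgroup $H$ to which Proposition~\ref{prop-SmF-fi-F} applies; no new ideas beyond those in Sections~\ref{sec-prelim-simpl} and \ref{sec-finite-generated} are needed.
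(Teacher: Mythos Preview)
Your proof is correct and follows essentially the same approach as the paper: both arguments use openness of $N$ to find a finite-index subgroup of $H$ inside $N$, invoke Proposition~\ref{prop-SmF-fi-F} for each $p$-open $H$ with $[F:H]\le p^{n-1}$ to get $S_p(H)\subseteq N$, and then conclude $S_{p,n}(F)\subseteq N$. The only cosmetic difference is that the paper finishes by noting $S_{p,n}(F)$ is dense in $L_n$ and $N$ is closed, whereas you separately deduce $\propF{p}\subseteq N$ from $F\subseteq N$; both endings are equivalent.
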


\begin{proof}
Since $N$ is open in $L_n$, it contains an open subgroup of $\propF{p}$ and hence contains a $p$-open subgroup of $F$.
Now let $H$ be any $p$-open subgroup of $\absF{}$ with $[\absF{}:H] \le p^{n-1}$, so that $S_p(H)\subseteq S_{p,n}(F)\subseteq L_n$.
Since $N$ and $S_p(H)$ both contain a finite index subgroup of $F$ (and hence so does their intersection $N\cap S_p(H)$),
we can apply Proposition \ref{prop-SmF-fi-F} to the normal subgroup $N \cap S_p(H)$ of $S_p(H)$ and deduce that 
$S_p(H) \subseteq N$.  

Thus, we proved that $N$ contains $S_p(H)$ for every $p$-open subgroup $H$ of $\absF{}$ of index at most $p^{n-1}$.
As we observed earlier, the subgroups $S_p(H)$ for such $H$ generate $S_{p,n}(F)$, so $N$ contains $S_{p,n}(F)$.
Since $S_{p,n}(F)$ is a dense subgroup of $L_n$ and since $N$ is open and hence closed in $L_n$, it follows that $N = L_n$.
\end{proof}

\begin{Notation}
	For $n \ge 0$, we denote by $\mathcal{U}_n$ the set of open subgroups of $\propF{p}$ of index at most $p^n$.
\end{Notation}

Observe that since intersecting with $F$ defines a bijection between open subgroups of $\propF{p}$ of index $p^n$ and $p$-open subgroups of $F$ of index $p^n$, $S_{p,n}(F)$ can equivalently be described as the subgroup generated by $\SAut(\propU{p} \cap F)$ where $\propU{p}$ ranges over $\mathcal{U}_n$.

\begin{Proposition}\label{claim:proper_normal}
Suppose $(d,p) \neq (2,2)$ and let $N$ be a proper (not necessarily closed) normal subgroup of $L_n$. The following hold:

 \begin{itemize}
	\item[(a)] $N\cap \propU{p} \subseteq \Phi(\propU{p})$ for every $\propU{p} \in \mathcal{U}_{n-1}$;
	\item[(b)] $N \cap \propF{p}$ is normal in $L_n$ and $N \cap \propF{p} \subseteq K_n$.
\end{itemize}
\end{Proposition}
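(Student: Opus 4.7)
For (a), the approach is to apply Lemma~\ref{prop-dense-normal} in the shifted setting where $(\propF{p}, F)$ is replaced by $(\propU{p}, U)$ with $U=\propU{p}\cap F$ a $p$-open subgroup of $F$. Fix $\propU{p}\in\mathcal U_{n-1}$, so $[F:U]\le p^{n-1}$ and $\propU{p}$ is the pro-$p$ completion of the non-abelian free group $U$; the rank hypothesis implicit in Lemma~\ref{prop-dense-normal} is satisfied since $\rk U\ge 1+p(d-1)\ge 3$ when $U\neq F$, and in the case $U=F$ the standing assumption $(d,p)\ne(2,2)$ supplies the requirement. The critical observation is that $\langle\propU{p},S_p(U)\rangle\subseteq L_n$: indeed $S_p(U)$ is generated by $\SAut(K)$'s for $K$ normal of index $p$ in $U$, whence $[F:K]\le p^n$, so $S_p(U)\subseteq S_{p,n}(F)\subseteq L_n$. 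Consequently $N$ is normalized by $\langle\propU{p},S_p(U)\rangle$. If we had $N\cap\propU{p}\not\subseteq\Phi(\propU{p})$, Lemma~\ref{prop-dense-normal} would give $\propU{p}\subseteq N$, making $N$ open in $L_n$, and Lemma~\ref{claim:proper-open-normal} would then force $N=L_n$, contradicting properness.

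For (b) the plan is to bootstrap (a) by induction to the stronger statement that $N\cap\propF{p}\subseteq\propV{p}$ for every $\propV{p}\in\mathcal U_n$ (one power of $p$ further than what (a) delivers directly). The induction is on $k$ with $[\propF{p}:\propV{p}]=p^k$, $0\le k\le n$; the base $k=0$ is trivial. For the step, use the standard fact that in a pro-$p$ group every open subgroup of index $p^{k+1}$ sits inside some open subgroup of index $p^k$, and pick $\propU{p}$ with $\propV{p}\subsetneq\propU{p}\subseteq\propF{p}$ and $[\propU{p}:\propV{p}]=p$. The inductive hypothesis gives $N\cap\propF{p}=N\cap\propU{p}$; since $k\le n-1$ we have $\propU{p}\in\mathcal U_{n-1}$, so part~(a) yields $N\cap\propU{p}\subseteq\Phi(\propU{p})\subseteq\propV{p}$, completing the induction.

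With the strengthened claim in hand, (b) follows immediately. For any $p$-open $H\subseteq F$ with $[F:H]\le p^n$, let $\propU{p}$ denote its closure in $\propF{p}$, so $\propU{p}\in\mathcal U_n$; by Lemma~\ref{lem-G-commens-CommG} each $s\in\SAut(H)$ normalizes $\propU{p}$ in $\Comm(\propF{p})$, while the strengthened claim ensures $N\cap\propF{p}\subseteq\propU{p}$. Hence $s(N\cap\propF{p})s^{-1}\subseteq s\propU{p}s^{-1}\cap N=\propU{p}\cap N=N\cap\propF{p}$, so $s$ normalizes $N\cap\propF{p}$. As $\propF{p}$ trivially normalizes $N\cap\propF{p}$, and such $s$ together with $\propF{p}$ generate $L_n=\langle\propF{p},S_{p,n}(F)\rangle$, the subgroup $N\cap\propF{p}$ is normal in $L_n$; being contained in $\propF{p}$, it therefore lies in the normal core $K_n$. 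The main subtlety throughout is the bridge argument in the second paragraph: part~(a) applies only to $\mathcal U_{n-1}$, whereas the generators $\SAut(H)$ of $S_{p,n}(F)\subseteq L_n$ correspond to open subgroups in the potentially larger collection $\mathcal U_n$, and it is the iterated Frattini chain that reconciles these two ranges.
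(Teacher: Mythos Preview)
Your proof is correct and follows essentially the same approach as the paper: part~(a) via Lemma~\ref{prop-dense-normal} applied to $(\propU{p},U)$ followed by Lemma~\ref{claim:proper-open-normal}, and part~(b) via the chain/induction argument showing $N\cap\propF{p}=N\cap\propU{p}$ for all $\propU{p}\in\mathcal U_n$, then deducing normality from the fact that each $\SAut(\propU{p}\cap F)$ normalizes both $N$ and $\propU{p}$. One minor remark: the hypothesis $(d,p)\neq(2,2)$ is actually needed for Lemma~\ref{claim:proper-open-normal} (via Proposition~\ref{prop-SmF-fi-F}), not for Lemma~\ref{prop-dense-normal} as you suggest, but this does not affect the argument.
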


\begin{proof}
(a) Suppose that $N\cap \propU{p} \not\subseteq \Phi(\propU{p})$ for some $\propU{p} \in \mathcal{U}_{n-1}$.  We note that $\Comm(\propF{p}) = \Comm(\propU{p})$ and $N$ is normalized by the subgroup $\la \propU{p},S_p(U) \ra$ of $L_n$.  Applying Lemma~\ref{prop-dense-normal}, we deduce that $N$ contains $\propU{p}$, so $N$ is open, and then Lemma~\ref{claim:proper-open-normal} implies that $N=L_n$, a contradiction.

\skv	
(b) The assertion of (a) can be reformulated as follows: for every $\propU{p}\in \mathcal{U}_{n-1}$ and every index $p$ subgroup 
$\propV{p}$ of $\propU{p}$ we have $N \cap \propU{p}=N \cap \propV{p}$. Since every $\propU{p}\in \mathcal{U}_n$
can be connected to $\propF{p}$ by a chain of subgroups in which every subgroup has index $p$ in the next one, it follows that
$N \cap \propF{p}=N \cap \propU{p}$ for every $\propU{p}\in \mathcal{U}_n$. Since both $N$ and $\propU{p}$
are normalized by $\SAut(\propU{p}\cap F)$, it follows that $N \cap \propF{p}$ is normalized by
$\SAut(\propU{p}\cap F)$ for every $\propU{p}\in \mathcal{U}_n$ and hence is normal in $L_n$. Since $K_n$ is the largest normal subgroup of $L_n$ contained in $\propF{p}$, we conclude that $N \cap \propF{p} \subseteq K_n$. 
\end{proof}

As an easy consequence of Proposition~\ref{claim:proper_normal}, we obtain an alternative description of the subgroups $K_n$:

\begin{Corollary} \label{Nn-superinvariant}
The group $K_n$ is the largest subgroup of $\propF{p}$ which is 
\begin{itemize}
\item[(a)] contained in every $\propU{p}\in\calU_n$ and 
\item[(b)] invariant under $\SAut(\propU{p}\cap F)$ for every $\propU{p}\in\calU_n$.
\end{itemize}
\end{Corollary}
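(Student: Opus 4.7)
The plan is twofold: show that $K_n$ itself satisfies conditions (a) and (b), and then show that any subgroup $M \subseteq \propF{p}$ with these properties must be contained in $K_n$.

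For the first part, I would observe that $K_n$ is a \emph{proper} normal subgroup of $L_n$: it is normal by construction as the normal core, and proper because $L_n \supseteq \SAut(\absF{})$ is not contained in $\propF{p}$ (a non-trivial Nielsen transformation is not induced by an inner automorphism of $\propF{p}$). Proposition~\ref{claim:proper_normal} therefore applies to $N = K_n$; its proof actually establishes the stronger equality $N \cap \propF{p} = N \cap \propU{p}$ for every $\propU{p} \in \calU_n$. Specializing to $K_n \subseteq \propF{p}$ yields $K_n \subseteq \propU{p}$, which is (a). Condition (b) is immediate from the inclusions $\SAut(\propU{p} \cap \absF{}) \subseteq S_{p,n}(\absF{}) \subseteq L_n$ for $\propU{p} \in \calU_n$, together with the fact that $K_n$ is normalized by all of $L_n$ by definition.

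For the maximality assertion, let $M \subseteq \propF{p}$ satisfy (a) and (b); the key idea is to pass to the topological closure $\overline{M}$ of $M$ in $\propF{p}$. First I would verify that $\overline{M}$ still satisfies both conditions: (a) because each $\propU{p} \in \calU_n$ is closed, and (b) because each $\phi \in \SAut(\propU{p} \cap \absF{})$ extends by continuity to a topological automorphism of $\propU{p}$ and homeomorphisms commute with closure, giving $\phi(\overline{M}) = \overline{\phi(M)} = \overline{M}$. Now the payoff: specializing (b) to $\propU{p} = \propF{p}$, the subgroup $\overline{M}$ is invariant under $\SAut(\absF{})$ and a fortiori under $\Inn(\absF{}) \subseteq \SAut(\absF{})$, so $\absF{}$ normalizes $\overline{M}$. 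Since $\overline{M}$ is closed and $\absF{}$ is dense in $\propF{p}$, a routine limit argument upgrades this to $\propF{p}$-normalization. Combined with (b), this makes $\overline{M}$ a subgroup of $\propF{p}$ normalized by $L_n = \la \propF{p}, S_{p,n}(\absF{})\ra$, and the defining property of the normal core then yields $\overline{M} \subseteq K_n$. Hence $M \subseteq \overline{M} \subseteq K_n$.

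The only technical subtlety worth flagging is that $M$ itself is not assumed closed, so condition (b) a priori supplies only $\absF{}$-invariance rather than $\propF{p}$-invariance; the closure trick is precisely what bypasses this, exploiting the fact that the class of subgroups satisfying (a) and (b) is stable under topological closure. I do not anticipate any serious obstacle beyond this.
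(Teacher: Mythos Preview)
Your proof is correct and follows essentially the same route as the paper's: both show $K_n$ satisfies (a) and (b) via Proposition~\ref{claim:proper_normal}, and both handle maximality by passing to the closure $\overline{M}$, verifying it inherits (a) and (b), using $\Inn(\absF{}) \subseteq \SAut(\absF{})$ plus density of $\absF{}$ in $\propF{p}$ to get $\propF{p}$-normalization, and concluding via the defining property of the normal core. The only cosmetic difference is that for property (a) you cite the equality $N \cap \propF{p} = N \cap \propU{p}$ extracted from the \emph{proof} of Proposition~\ref{claim:proper_normal}(b), whereas the paper instead runs a short induction on $[\propF{p}:\propU{p}]$ using only the \emph{statement} of Proposition~\ref{claim:proper_normal}(a).
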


\begin{proof} First we explain why $K_n$ satisfies (a) and (b). Note that (a) must hold for (b) to be a meaningful
statement, but once (a) is established, (b) is automatic since by assumption $K_n$ is normal in $L_n$. 
Property (a) for $K_n$
follows from Proposition~\ref{claim:proper_normal}(a)
by induction on the index $[\propF{p}:\propU{p}]$ starting with the equality $K_n=K_n\cap \propF{p}$ (which holds by definition).
\skv
Now let $N$ be any subgroup of $\propF{p}$ satisfying (a) and (b). Then the same is true for $\overline{N}$, the closure of $N$.
By (b) the normalizer of $\overline{N}$ in $\Comm(\propF{p})$ contains $F$ (since $F\subseteq \SAut(F)$) and hence also $\propF{p}$
(since $\overline{N}$ is closed). Thus by (b), $\overline{N}$ is normal in 
$\la\propF{p}, \bigcup\limits_{\propU{p}\in\calU_n} \SAut(\propU{p}\cap F)\ra=L_n$ and hence 
$N\subseteq\overline{N}=\overline{N} \cap \propF{p}$ is contained in $K_n$ by Proposition~\ref{claim:proper_normal}(b).
\end{proof}

We are now ready to prove Theorem~\ref{thm:normal-sbgps-Ln}.

\begin{proof}[Proof of Theorem \ref{thm:normal-sbgps-Ln}]
(a) Note that $\QZ(F(n))=\QZ(P_n)\cap F(n)$ since $F(n)$ is an open subgroup of $P_n$, so let us prove that $\QZ(P_n)\cap F(n)=\{1\}$.

Suppose first that $\QZ(P_n)=P_n$. By \cite[Prop. 4.3]{CaMo-decompo} (see also \cite[Thm 4.8]{BEW}), a compactly generated tdlc group with
dense quasi-center has a base of neighborhoods of the identity consisting of compact open normal subgroups. In particular, 
this would imply that $P_n$ has a proper open normal subgroup, and hence the same would be true for $L_n$, contrary to 
Lemma~\ref{claim:proper-open-normal}.

Thus, $\QZ(P_n)\neq P_n$, and if $\rho:L_n\to P_n$ is the canonical projection, then $M_n=\rho^{-1}(\QZ(P_n))$ is a proper normal subgroup of $L_n$. Proposition~\ref{claim:proper_normal} then implies that $M_n\,\cap\, \propF{p} \subseteq K_n$. Hence
$$\rho^{-1}(\QZ(P_n)\,\cap\, F(n))=\rho^{-1}(\QZ(P_n))\,\cap\, \rho^{-1}(F(n))=M_n\,\cap\, \propF{p} \subseteq K_n$$ and therefore
$\QZ(P_n)\,\cap\, F(n)=\{1\}$, as desired.
\skv

(b) The group $F(n)$ is pro-$p$ and clearly non-trivial. Since we just showed that $F(n)$ has trivial quasi-center, it must be infinite
and also can be identified with its image in $\Comm(F(n))$. Thus, $Q_n$ contains the infinite pro-$p$ group $F(n)$ 
as an open subgroup, so in particular, $Q_n$ is tdlc and non-discrete. It remains to show that $Q_n$ is abstractly simple.

Let $N$ be a non-trivial normal subgroup of $Q_n$. Since $F(n)$ has trivial quasi-center,
by Lemma~\ref{lem-Comm-trivialQZ}(\ref{item-Comm-local-normal}) 
the intersection $N\cap F(n)$ is non-trivial.
Hence if $\pi:L_n\to Q_n$ is the canonical map, then $\pi^{-1}(N)\cap \propF{p} \not\subseteq K_n$, and Proposition~\ref{claim:proper_normal}
implies that $\pi^{-1}(N)=L_n$ whence $N=Q_n$, as desired.
\end{proof}

\subsection{Additional properties of the groups $F(n)$}\label{subsec-add-properties-Fn}

We have already proved that $F(n)$ is a non-trivial pro-$p$ group with trivial quasi-center; in particular, it is not torsion and not nilpotent.
The following lemma provides alternative proofs of these facts but has the advantage of producing explicit elements which do not lie in $K_1$
(and hence also in $K_n$ for all $n\in\dbN$).

\begin{Lemma} The following hold:
\begin{itemize}
\item[(a)] Assume that $d\geq 2$, and let $x_1$ be a primitive element of $F$. Then $x_1^{p^k}\not\in K_1$ for any $k\in\dbN$.
\item[(b)] Assume that $d\geq 3$ and $\{x_1,x_2\}$ is a subset of a basis of $F$. Then the left-normed commutator
$h_k=[x_2,x_1,x_1^p,x_1^{p^2},\ldots, x_1^{p^{k-1}}]$ does not lie in $K_1$ for any $k\in\dbN$.
\end{itemize}
\end{Lemma}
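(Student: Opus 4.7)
My plan rests on the characterization of $K_1$ from Corollary~\ref{Nn-superinvariant}: $K_1 \subseteq \Phi(\propF{p})$ and $K_1$ is invariant under $\SAut(F)$ as well as under $\SAut(H)$ for every normal subgroup $H$ of index $p$ in $F$. To show an element $y \in \propF{p}$ is not in $K_1$, it thus suffices to exhibit a finite composition of such $\SAut$-actions whose image of $y$ escapes $\Phi(\propF{p})$.

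For (a), I would use iterative exponent reduction. Fix the basis $X=\{x_1,\ldots,x_d\}$ and set $H_1=F(X,x_1,p)$, whose Schreier basis (Lemma~\ref{lem:Schreier}(a)) contains both $x_1^p$ and $x_2$. Define $\alpha_1\in\SAut(H_1)$ by $\alpha_1(x_1^p)=x_2$, $\alpha_1(x_2)=x_1^{-p}$, fixing the other basis elements; the relevant $2\times 2$ block has determinant $+1$. Then $\alpha_1(x_1^{p^k})=x_2^{p^{k-1}}$. If $k=1$, this is the primitive element $x_2$, which lies outside $\Phi(\propF{p})$. For $k\geq 2$, iterate with the roles of $x_1,x_2$ swapped, using $X'=(x_2,x_1,x_3,\ldots,x_d)$ and $H_1'=F(X',x_2,p)$, to bring $x_2^{p^{k-1}}$ down to $x_1^{p^{k-2}}$. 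After $k$ swaps one reaches a primitive element of $F$, outside $\Phi(\propF{p})$, which forces $x_1^{p^k}\notin K_1$.

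For (b), the case $k=1$ proceeds by the same direct calculation: writing $h_1=z_{2,0}z_{2,1}^{-1}$ in the basis $Y_1$ (with $z_{2,j}=x_1^jx_2x_1^{-j}$), one computes that $\alpha_1(h_1)=x_1^{-p+1}x_2^{-1}x_1^{-1}$, whose image in $F^{ab}/pF^{ab}\cong\dbF_p^d$ is $-e_2\neq 0$. For $k\geq 2$, a single $\SAut(H)$-application is insufficient, since $h_k\in[H,H]$ is preserved by every automorphism of $H$ and hence stays in $[\propF{p},\propF{p}]\subseteq \Phi(\propF{p})$. My plan is to compose $\alpha_1$ with a second move $\beta\in\SAut(H_2)$ for $H_2=F(X,x_2,p)$: verify that $\alpha_1(h_k)$ has nonzero image in $H_2^{ab}/pH_2^{ab}$ — i.e., is primitive — and then, since $\SAut(H_2)$ surjects onto $\SL(H_2^{ab}/pH_2^{ab})$ which acts transitively on nonzero vectors, choose $\beta$ so that $\beta\alpha_1(h_k)\equiv x_1\pmod{\Phi(\propH{p}_2)}$. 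Because $x_1$ maps to $e_1\neq 0$ in $F^{ab}/pF^{ab}$, this yields $\beta\alpha_1(h_k)\notin\Phi(\propF{p})$.

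The hardest step is the primitivity verification in the $k\geq 2$ case. For $k=2$ it follows from a direct calculation using $[AB,C]=A[B,C]A^{-1}\cdot [A,C]$, which gives $\alpha_1(h_2)\equiv 2w_{1,0}-w_{1,1}-w_{1,p-1}\pmod p$ in $H_2^{ab}$ (where $w_{1,j}=x_2^jx_1x_2^{-j}$), and this is non-zero because $w_{1,p-1}$ has coefficient $-1$. For $k\geq 3$, however, $\alpha_1(h_k)=[\alpha_1(h_{k-1}),x_2^{p^{k-2}}]$ is itself a commutator of two elements of $H_2$, hence lies in $[H_2,H_2]$, and the two-step scheme collapses; I expect the right remedy is either to lengthen the chain by applying further $\SAut(H_j)$-moves (alternating between normal index-$p$ subgroups to successively escape deeper layers of the commutator filtration) or to proceed by induction on $k$ via a suitable commutator identity. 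The guiding heuristic is that the image of $h_k$ in the associated graded of the Zassenhaus filtration is a nonzero Lie element (a nested bracket of distinct restricted-Lie generators), so no $\SAut$-action can annihilate it; turning this into an explicit escape from $\Phi(\propF{p})$ is the main technical obstacle.
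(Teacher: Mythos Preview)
Your argument for (a) is correct and is essentially the paper's argument in explicit form: the paper observes that $K_1\subseteq\Phi(\propF{p})$ so $x_1\notin K_1$, and then invokes Lemma~\ref{lem-conjugate-power-SmF} to say $x_1$ and $x_1^{p^k}$ are $S_p(F)$-conjugate; your $\alpha_1$ is precisely the conjugating element from that lemma.

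For (b), there is a real gap. You correctly diagnose that for $k\geq 3$ your two-step scheme collapses because $\alpha_1(h_k)\in[H_2,H_2]$, and your proposed remedies (``lengthen the chain'', ``a suitable commutator identity'') are not concrete. The obstacle is structural: as long as you work with automorphisms of a fixed index-$p$ subgroup $H$, the nested commutator $h_k$ remains a $k$-fold commutator of basis elements of $H$, so no single $\SAut(H)$-move peels off a layer.

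The paper's device is to stop using automorphisms and instead use a virtual isomorphism between \emph{two different} index-$p$ subgroups. Take $x_3\in X\setminus\{x_1,x_2\}$ (this is where $d\geq 3$ enters), set $U_1=F(X,x_1,p)$ and $U_3=F(X,x_3,p)$, and use the \emph{commutator} Schreier basis of Lemma~\ref{lem:Schreier}(b). In that basis $[x_2,x_1]$ and $x_1^p$ are \emph{single basis elements} of $U_1$, while $x_2$ and $x_1$ are single basis elements of $U_3$. Hence there is an isomorphism $\phi:U_1\to U_3$ with $\phi(x_1^p)=x_1$ and $\phi([x_2,x_1])=x_2$. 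One then checks directly that $\phi(h_k)=h_{k-1}$ for all $k\geq 2$, so $\phi^{k}(h_k)=x_2\notin K_1$. Since $[\phi]$ decomposes as an element of $\Aut(F)$ composed with an element of $\Aut(U_3)$, and $\Aut(F)$ normalizes $L_1$ (it permutes the generators $\propF{p}$ and $\{\SAut(H)\}$), the core $K_1$ is $[\phi]$-invariant, and the conclusion follows. The point you were missing is that passing to the commutator basis turns the first bracket $[x_2,x_1]$ into a primitive element, which an isomorphism can send anywhere.
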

\begin{proof}
\skv
(a) First note that $K_1$ is contained in $\bigcap\limits_{\propU{p}\in\calU_1} \propU{p}=\Phi(\propF{p})$, so $x_1\not\in K_1$. On the other hand,
by Proposition~\ref{lem-conjugate-power-SmF}, $x_1$ is conjugate in $S_p(F)$ (and hence in $L_1$) to $x_1^p$
and hence to $x_1^{p^k}$ for any $k\in\dbN$. These two facts imply (a) since $K_1$ is normal in $L_1$. 

\skv 

(b) Let $X$ be any basis of $F$ containing $x_1$ and $x_2$ and choose any $x_3\in X\setminus\{x_1,x_2\}$ (this is possible since
$d\geq 3$ by assumption). For $i=1$ and $i=3$ let $U_i=F(X_0,x_i,p)$, the unique subgroup of index $p$ in $F$
which contains $X_0\setminus \{x_i\}$. Then 
$S_i=\{[x_j,\underbrace{x_i,\ldots, x_i}_{k\mbox{ times }}]: j\neq i, 0\leq k\leq p-1\}\cup\{x_i^p\}$
is a basis for $U_i$ by Lemma~\ref{lem:Schreier}(b).
Since $S_1$ contains $x_1^p$ and $[x_2,x_1]$ while $S_3$ contains $x_1$ and $x_2$,
there exists an isomorphism $\phi:U_1\to U_3$ such that $\phi(x_1^p)=x_1$ and $\phi([x_2,x_1])=x_2$.

Now for each $k\in\dbN$ consider the left-normed commutator $h_k=[x_2,x_1,x_1^p,x_1^{p^2},\ldots, x_1^{p^{k-1}}]$.
Then $\phi$ is defined on each $h_k$ and $\phi(h_k)=h_{k-1}$ for all $k\geq 2$, so $\phi^k(h_k)=x_2$ for all $k\in\dbN$.
Since $K_1$ does not contain $x_2$, as explained in (a), and $K_1$ is $\phi$-invariant, it follows that
$K_1$ does not contain $h_k$ for any $k$.
\end{proof}

Recall that the sequence of compact subgroups $(K_n)$ is decreasing and has trivial intersection. We do not know whether $K_n$ is trivial for $n$ large enough or whether $K_n$ is trivial for every $n \geq 1$. The condition that $K_n$ is trivial  is equivalent to saying that $F(n)$ is a free pro-$p$ group (see item (\ref{item-Fn-non-iso}) below). The following proposition shows that $F(n)$ has several properties which are typical for the free pro-$p$ group of rank $d$. This can either be seen as evidence that $F(n)$ is free pro-$p$ or, in case $F(n)$ is not free pro-$p$, that $F(n)$ is not isomorphic to any previously studied group.

\begin{Proposition}\label{prop:F(n)-properties}
	Suppose that $d \geq 3$, and let $n \geq 1$. The following hold:
	\begin{enumerate}
		\item \label{item-Fn-many-quotients} The minimal number of generators of $F(n)$ is $d$, and every $d$-generated group of order $p^j$ for $j \le n+1$ occurs as a quotient of $F(n)$.
		\item \label{item-Fn-iso-subgroups} For all $1 \le j \le n+1$ any two subgroups of index $p^j$ in $F(n)$ are isomorphic. 
		\item \label{item-Fn-non-iso} Let $m,n\in\dbN$. Then $F(m) \cong F(n)$ if and only if $K_m = K_n$, and $F(n)$ is a free pro-$p$ group if and only if $K_n = \{1\}$.
	\end{enumerate}
\end{Proposition}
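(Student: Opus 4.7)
The plan rests on a preliminary strengthening of Proposition~\ref{claim:proper_normal}(a) specifically for the subgroup $K_n$: namely, $K_n\subseteq\Phi(\propU{p})$ for every $\propU{p}\in\calU_n$ (not just $\calU_{n-1}$). Indeed, Corollary~\ref{Nn-superinvariant} gives $K_n\subseteq\propU{p}$, and normality of $K_n$ in $L_n$ implies invariance under $\SAut(\propU{p}\cap F)\subseteq L_n$. The image of $K_n$ in the Frattini quotient $\propU{p}/\Phi(\propU{p})\cong\dbF_p^{d(\propU{p})}$ is therefore $\SAut(\propU{p}\cap F)$-invariant, and since this action factors onto $\SL_{d(\propU{p})}(\dbF_p)$ acting irreducibly (using $d(\propU{p})\ge d\ge 3$), the image must be either trivial or the full Frattini quotient. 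In the latter case $K_n=\propU{p}$ would be open in $\propF{p}$, making $F(n)$ finite and $Q_n$ discrete, contradicting Theorem~\ref{thm:normal-sbgps-Ln}.

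For (1), taking $\propU{p}=\propF{p}$ in the preliminary step gives $F(n)/\Phi(F(n))\cong\propF{p}/\Phi(\propF{p})\cong\dbF_p^d$, so $F(n)$ has $d$ as its minimal number of generators. For the quotient statement, let $G$ be a $d$-generated $p$-group of order at most $p^{n+1}$ and fix a surjection $\psi:\propF{p}\to G$ with kernel $N$; it suffices to prove $K_n\subseteq N$. Since $G$ is a $p$-group, $Z(G)$ is non-trivial, and I choose an elementary abelian subgroup $H\le Z(G)$ of order at least $p$, so that $H\lhd G$ and $|G/H|\le p^n$. Setting $\propU{p}:=\psi^{-1}(H)$, we have $\propU{p}\in\calU_n$, $N\subseteq\propU{p}$, and $\propU{p}/N\cong H$ is elementary abelian, whence $\Phi(\propU{p})\subseteq N$. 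The preliminary step then yields $K_n\subseteq\Phi(\propU{p})\subseteq N$.

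For (2), I would first observe that every open subgroup $V\le\propF{p}$ of index at most $p^{n+1}$ contains $K_n$: since $V$ is subnormal in $\propF{p}$, pick $W$ containing $V$ with $V$ maximal in $W$, so $[W:V]=p$ and $W\in\calU_n$; then $K_n\subseteq\Phi(W)\subseteq V$. Thus open subgroups of $F(n)$ of index $p^j$ with $j\le n+1$ are precisely the quotients $V/K_n$ for open subgroups $V$ of $\propF{p}$ of the same index. The main obstacle is to show that any two such quotients $V_1/K_n$ and $V_2/K_n$ are isomorphic. My approach is to construct an element $\psi\in\Comm(\propF{p})$ normalizing $K_n$ with $\psi V_1\psi^{-1}=V_2$. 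Using Proposition~\ref{prop-decomposing-prod-aut} one decomposes $\psi$ into a product of automorphisms of open subgroups in a chain of index-$p$ steps; the factors corresponding to intermediate subgroups in $\calU_n$ can be chosen inside $L_n$, which normalizes $K_n$ by definition. The care needed in handling the topmost $\Aut(\propF{p})$-factor and in controlling the factors produced at depth $p^{n+1}$ (arising when $j=n+1$) is the technical heart of the argument.

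Finally, (3) follows from (1) and the Hopfian property of finitely generated profinite groups. Since $(K_n)$ is descending (Observation~\ref{obs:Nntrivial}), for $m\le n$ there is a canonical continuous surjection $F(n)\to F(m)$ with kernel $K_m/K_n$. Any isomorphism $F(m)\cong F(n)$ composed with this surjection yields a self-surjection of $F(n)$, which is finitely generated profinite by (1) and hence Hopfian; this self-surjection is therefore an isomorphism, forcing $K_m=K_n$. For the free pro-$p$ criterion, if $F(n)$ is free pro-$p$ then by (1) it has rank $d$ and hence is isomorphic to $\propF{p}$; composing the canonical surjection $\propF{p}\to F(n)$ with such an isomorphism gives a self-surjection of $\propF{p}$, which by Hopfianness is an isomorphism, forcing $K_n=\{1\}$. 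The converse direction is immediate.
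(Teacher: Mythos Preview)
Your preliminary strengthening ($K_n\subseteq\Phi(\propU{p})$ for all $\propU{p}\in\calU_n$, not only $\calU_{n-1}$) is correct and is exactly what the paper invokes in its proof of (\ref{item-Fn-many-quotients}); your irreducibility argument makes this step explicit. One minor point: in the ``full image'' case it is cleaner to note that $K_n$ is closed, so $K_n\Phi(\propU{p})=\propU{p}$ forces $K_n=\propU{p}$ to be open, giving a proper open normal subgroup of $L_n$ and contradicting Lemma~\ref{claim:proper-open-normal} directly, rather than going through Theorem~\ref{thm:normal-sbgps-Ln}. Your arguments for (\ref{item-Fn-many-quotients}) and (\ref{item-Fn-non-iso}) agree with the paper's.

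The genuine gap is in (\ref{item-Fn-iso-subgroups}). Decomposing a \emph{fixed} isomorphism $\psi$ via Proposition~\ref{prop-decomposing-prod-aut} runs into precisely the two obstacles you name, and you do not resolve either: the top factor $f_0$ lies only in $\Aut(F)$, which is not known to sit inside $L_n$ or to normalize $K_n$; and for $j=n+1$ the bottom factor lies in $\SAut$ of a subgroup of index $p^{n+1}$, again outside $S_{p,n}(F)$. The paper avoids both difficulties by dropping the attempt to realize a prescribed $\psi$ and instead proving directly that $S_{p,n}(F)$ acts transitively on open subgroups of $\propF{p}$ of each index $p^j$ with $j\le n+1$. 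This is a clean induction on $j$: given $\propU{p}_1,\propU{p}_2$ of index $p^j$, choose over-groups $\propV{p}_i\supseteq\propU{p}_i$ of index $p^{j-1}\le p^n$; by induction some $\gamma_0\in S_{p,n}(F)$ carries $\propV{p}_1$ to $\propV{p}_2$; then $\gamma_0\propU{p}_1\gamma_0^{-1}$ and $\propU{p}_2$ are index-$p$ subgroups of $\propV{p}_2\in\calU_n$, and $\SAut(\propV{p}_2\cap F)\subseteq S_{p,n}(F)$ acts transitively on such subgroups (via the transitive action of $\SL_r(\dbF_p)$ on hyperplanes). The base case uses only $\SAut(F)$, which already acts transitively on index-$p$ subgroups by Lemma~\ref{lem-description-SCQ}. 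Thus the argument never descends below level $n$ and never leaves $\SAut$, so both of your obstacles simply do not arise.
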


\begin{proof}

(\ref{item-Fn-many-quotients}) Proposition~\ref{claim:proper_normal} applied with $N = K_n$ yields  $K_n \subseteq \Phi(\propU{p})$ for every 
$\propU{p} \in \mathcal{U}_n$. In particular, we have $F(n) / \Phi(F(n)) \simeq \propF{p} / \Phi(\propF{p})$, and hence the minimal number of generators of $F(n)$ is $d$. If $P$ is a $d$-generated group finite $p$-group, then by the universal property of free pro-$p$ groups, there is a continuous surjective homomorphism $\pi_P: \propF{p} \rightarrow P$.  If in addition, $|P|\leq p^{n+1}$, then $\ker\pi_P$ contains $\Phi(\propU{p})$ for some $\propU{p} \in \mathcal{U}_n$, so 
$K_n \subseteq \ker\pi_P$, and hence $\pi_P$ factors through a surjective homomorphism from $F(n)$ to $P$.  This completes the proof of (\ref{item-Fn-many-quotients}).

(\ref{item-Fn-iso-subgroups}) Given two subgroups of $F(n)$ of the same index $p^j$ with $1\leq j\leq n+1$, we can represent
them as $\propU{p}_1/K_n$ and $\propU{p}_2/K_n$ where $\propU{p}_1$ and $\propU{p}_2$ are subgroups of index $p^j$ in $\propF{p}$. Since $K_n$ is normalized by
$S_{p,n}(F)$, to prove that $\propU{p}_1/K_n\cong \propU{p}_2/K_n$ it suffices to show that $\propU{p}_1$ and $\propU{p}_2$ are $S_{p,n}(F)$-conjugate. 

Suppose  first that $j \le n$ and write $U_i = \propU{p}_i \cap \absF{}$.  Then $U_1$ and $U_2$ both have index $p^j$ in $\absF{}$, so they are free groups of the same rank; hence there exists an isomorphism $\phi$ from $U_1$ to $U_2$.  Arguing as in the proof of Proposition~\ref{prop-decomposing-prod-aut}, we deduce that $\phi$ is realized by conjugating by some $\gamma \in S_{p,n}(F)$; by continuity, it follows that $\gamma \propU{p}_1 \gamma^{-1} = \propU{p}_2$.  

{For $j = n+1$, choose a subgroup $\propV{p}_i$ of index $p^n$ containing $\propU{p}_i$ for $i=1,2$.  By the $j=n$ case there exists
$\gamma \in S_{p,n}(F)$ such that $\gamma \propV{p}_1 \gamma^{-1} = \propV{p}_2$. Then $\gamma\propU{p}_1\gamma^{-1}$ and $\propU{p}_2$
are both index $p$ subgroups of $\propV{p}_2$, so there exists $\phi\in \SAut(\propV{p}_2 \cap F)$ such that 
$\phi(\gamma\propU{p}_1\gamma^{-1})\phi^{-1}=\propU{p}_2$. Since $\SAut(\propV{p}_2 \cap F)\subseteq S_{p,n}(F)$,
the subgroups $\propU{p}_1$ and $\propU{p}_2$ are conjugate by the element $\phi\gamma\in S_{p,n}(F)$, as desired.} 
 
	(\ref{item-Fn-non-iso}) We may assume $m < n$, which means that $K_n \subseteq K_m$, and hence there is a quotient homomorphism $\theta: F(n) \rightarrow F(m)$ with kernel $K_m/K_n$.  If $K_m = K_n$, then clearly $\theta$ is an isomorphism.  On the other hand, since $F(n)$ is a finitely generated pro-$p$ group, it is Hopfian, so if $\theta$ is not injective (in other words, $K_m \neq K_n$), then $F(m)$ is not isomorphic to $F(n)$.  Similarly, since $\propF{p}$ is Hopfian, we have $F(n) \cong \propF{p}$ if and only if $K_n$ is trivial. 
Finally, by (\ref{item-Fn-many-quotients}), if $F(n)$ is free pro-$p$, it must be free of rank $d$ and thus isomorphic to $\propF{p}$.
\end{proof}

\subsection{Consequences of Theorem \ref{thm:normal-sbgps-Ln}}\label{subsec-questions-tdlc}

Let $\propF{p}$ be a non-abelian free pro-$p$ group of finite rank. Recall that Question~\ref{qCM} formulated in the introduction has a positive answer 
for $\propF{p}$ if and only if $\Comm(\propF{p})$ has
a compactly generated topologically simple open subgroup containing $\propF{p}$. Earlier in this section we considered the groups 
$L_n$ as potential candidates for such a subgroup. Recall that each $L_n$ is contained in $\SCpC_p(\absF{})$ and that $\SCpC_p(\absF{})=\bigcup\limits_{n \geq 1} L_n$. We will now show that if one is trying
to find an answer to Question~\ref{qCM} within $\SCpC_p(\absF{})$, there is no loss of generality in restricting to the groups $L_n$ (see Corollary~\ref{cor-conditional-Ln-simple} below).

We will use the following terminology. 

\begin{Definition}\label{nocompactnormalsubgroups}
We will say that a topological group $L$ has {\it(NCNS)} if $L$ has no non-trivial compact normal subgroup.
\end{Definition}

We will need a simple general observation.

\begin{Lemma} \label{lem-nocptnormal-overgroup}
Let $G$ be a profinite group with trivial quasi-center. \begin{enumerate}
	\item \label{item-lemma-NCNS-overgroup}  If $L$ is an open subgroup of $\Comm(G)$ which has (NCNS), then any subgroup $L'$  of $\Comm(G)$ containing $L$ also has (NCNS).
	\item \label{item-lemma-NCNS-all-cpct} Suppose $G$ is torsion-free. Then an open subgroup $L$ of $\Comm(\propF{p})$ has (NCNS) if and only if there exists a compact open subgroup $U$ of $L$ such that the  normal core of $U$ in $L$ is trivial.
\end{enumerate}
\end{Lemma}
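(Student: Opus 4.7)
The plan is to handle both parts using a two-step reduction: first cut down any putative compact normal subgroup $N$ to something finite using the hypothesis at hand, then eliminate the finite residue via the triviality of $\QZ(\Comm(G))$ (which is a consequence of Lemma \ref{lem-Comm-trivialQZ}(1) applied to $\Comm(G)$ itself, since $G$ has trivial quasi-center).

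For part (\ref{item-lemma-NCNS-overgroup}), let $N$ be a compact normal subgroup of $L'$. I would first observe that $L$ is closed in $\Comm(G)$ (open subgroups of topological groups are closed), so $N\cap L$ is a closed subgroup of compact $N$, hence compact; it is also normal in $L$ because $N$ is normalized by $L'\supseteq L$. By the (NCNS) property of $L$, $N\cap L=\{1\}$. Since $L$ is open in $\Comm(G)$, it is open in $L'$, so $N\cap L$ is open in $N$; combined with $N\cap L=\{1\}$ this forces $N$ to be finite. Each $n\in N$ then has finite $L'$-conjugacy class, so $C_{L'}(n)$ has finite index in $L'$. Intersecting with $L$, the centralizer $C_L(n)$ is open in $L$ and therefore open in $\Comm(G)$, so $n$ centralizes an open subgroup of $\Comm(G)$. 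By Lemma \ref{lem-Comm-trivialQZ}(\ref{item-Comm-trivial-QZ}), $\QZ(\Comm(G))=\{1\}$, so $n=1$ and $N=\{1\}$.

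For part (\ref{item-lemma-NCNS-all-cpct}), the forward direction is immediate: if $L$ has (NCNS), then for any compact open subgroup $U$ of $L$, the normal core $\bigcap_{g\in L}gUg^{-1}$ is a compact normal subgroup of $L$, hence trivial. For the converse, suppose $U$ is a compact open subgroup of $L$ whose normal core is trivial, and let $N$ be a compact normal subgroup of $L$. The key computation is that for every $g\in L$, normality of $N$ gives
\[
N\cap gUg^{-1} = gNg^{-1}\cap gUg^{-1} = g(N\cap U)g^{-1},
\]
and intersecting over $g$ yields
\[
\bigcap_{g\in L} g(N\cap U)g^{-1} \;=\; N\cap \bigcap_{g\in L} gUg^{-1} \;=\; \{1\}.
\]
Since $N$ is a compact subgroup of the tdlc group $\Comm(G)$, it is profinite; $N\cap U$ is open in $N$ and the conjugates $g(N\cap U)g^{-1}$ are all open subgroups of $N$ of the same finite index. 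A profinite group has only finitely many open subgroups of any given finite index, so this intersection is actually a finite intersection of open subgroups of $N$, hence itself open in $N$. Being both trivial and open, it forces $N$ to be finite; then the quasi-center argument from part (\ref{item-lemma-NCNS-overgroup}) kills $N$.

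The main subtle step is the finiteness-of-conjugates argument in part (\ref{item-lemma-NCNS-all-cpct}); everything else is a straightforward combination of openness, the closed-graph-style intersection trick $N\cap gUg^{-1}=g(N\cap U)g^{-1}$, and Lemma \ref{lem-Comm-trivialQZ}. The torsion-free hypothesis on $G$ plays no essential role in the argument above; it could however be used to shortcut the final step, since under suitable torsion-freeness assumptions on compact open subgroups of $L$, the finiteness of $N$ combined with $N\cap U$ being torsion-free would already suffice to conclude $N=\{1\}$ without invoking the quasi-center.
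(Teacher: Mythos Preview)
Your proof of part (\ref{item-lemma-NCNS-overgroup}) is correct and essentially identical to the paper's argument.

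Your proof of part (\ref{item-lemma-NCNS-all-cpct}) contains a genuine gap. The assertion ``a profinite group has only finitely many open subgroups of any given finite index'' is false in general: it holds only for topologically finitely generated profinite groups, and there is no reason for the compact normal subgroup $N$ to be finitely generated. For a concrete counterexample, take $N = \prod_{i\in\mathbb N}\mathbb Z/p\mathbb Z$; this has infinitely many open subgroups of index $p$, and their intersection (the Frattini subgroup) is trivial. So from $\bigcap_{g\in L} g(N\cap U)g^{-1}=\{1\}$ with all conjugates of the same finite index, you cannot conclude that $N$ is finite.

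The paper's argument for the converse in part (\ref{item-lemma-NCNS-all-cpct}) proceeds differently and makes essential use of the torsion-freeness of $G$. One considers the faithful action of $L$ on $\Omega=L/U$. Every $N$-orbit is finite (since $N$ is compact and $U$ is open), and since $N$ is normal the $L$-action permutes the $N$-orbits transitively; hence all $N$-orbits have the same finite size, and the $N$-action embeds $N$ into a product of copies of a fixed finite symmetric group. This forces $N$ to be a torsion group. Now torsion-freeness of $G$ gives $N\cap G=\{1\}$, so $N$ is finite, and one finishes with the quasi-center argument exactly as in part (\ref{item-lemma-NCNS-overgroup}). In particular, contrary to your closing remark, the torsion-free hypothesis is not merely a shortcut for the final step; it is the mechanism by which finiteness of $N$ is obtained.
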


\begin{proof}
(\ref{item-lemma-NCNS-overgroup}). 	Let $K$ be any compact normal subgroup of $L'$. Then $K \cap L$ is compact and normal in $L$ and hence is trivial. Since $L$ is open and $K$ is compact, this implies that $K$ is finite. Hence $K$ is a finite subgroup of $\Comm(G)$ whose normalizer is open. This implies that $K$ lies in the quasi-center of $\Comm(G)$. Since the latter is trivial by Lemma \ref{lem-BEW}, $K$ is trivial.

(\ref{item-lemma-NCNS-all-cpct}). The `only if' direction is clear. Suppose now that $U$ is a compact open subgroup of $L$ with trivial normal core in $L$. This is equivalent to saying that $L$ acts faithfully on the set $\Omega=L/U$. Let $K$ be any compact normal subgroup of $L$. Since $U$ is open and $K$ is compact, every $K$-orbit in $\Omega$ is finite. Since $K$ is normal in $L$, the $L$-action preserves the partition of $\Omega$ into $K$-orbits, and since $L$ acts transitively on $\Omega$, $L$ also acts transitively on the set of $K$-orbits. It follows that there is finite group $A$ such that the $K$-action induces a homomorphism $K \to \prod_I A$, which is injective since the $L$-action is faithful. In particular, $K$ is torsion. Since we assume that $G$ is torsion-free, the intersection $K \cap G$ must be trivial. Hence $K$ is finite. For the same reason as in the proof of (\ref{item-lemma-NCNS-overgroup}), $K$ is trivial.
\end{proof}

\begin{Corollary} \label{cor-conditional-Ln-simple}
	The following are equivalent:
	\begin{enumerate}
		\item \label{item-G-no-cpt-norm} There exists a compactly generated open subgroup $L$ of $\SCpC_p(\absF{})$ 
which has property (NCNS).
		\item \label{item-G-top-simple} There exists a compactly generated open subgroup $L$ of $\SCpC_p(\absF{})$ which is topologically simple.
		\item \label{item-Ln-no-cpt-norm} {For all sufficiently large $n$ the subgroup $K_n$ is trivial.}
		\item \label{item-Ln-simpl}  {For all sufficiently large $n$ the group $L_n$ is abstractly simple.}
	\end{enumerate}
\end{Corollary}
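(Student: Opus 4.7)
The plan is to prove the cyclic chain of implications (3)$\Rightarrow$(4)$\Rightarrow$(2)$\Rightarrow$(1)$\Rightarrow$(3). Two of these are essentially immediate from results stated earlier. The equivalence (3)$\Leftrightarrow$(4) is precisely Corollary~\ref{cor-Ln-simple-Nntrivial} (which requires the standing assumption $(d,p)\neq(2,2)$), and (4)$\Rightarrow$(2) is trivial: when $L_n$ is abstractly simple, it is certainly topologically simple, and each $L_n$ is by construction a compactly generated open subgroup of $\SCpC_p(\absF{})$.

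For (2)$\Rightarrow$(1): let $L$ be a compactly generated topologically simple open subgroup of $\SCpC_p(\absF{})$, and suppose towards a contradiction that $L$ admits a non-trivial compact normal subgroup $N$. Since compact subsets of a Hausdorff group are closed, $N$ is a non-trivial closed normal subgroup of $L$, so topological simplicity forces $N=L$, i.e.\ $L$ is compact. Being open in the tdlc group $\SCpC_p(\absF{})$, it has a compact open subgroup and is therefore profinite; and because $L$ contains an open subgroup of $\propF{p}$, it is infinite. Any infinite profinite group, however, admits proper non-trivial open (hence closed) normal subgroups, contradicting topological simplicity. Hence $L$ has (NCNS).

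The main implication is (1)$\Rightarrow$(3). Let $L$ be a compactly generated open subgroup of $\SCpC_p(\absF{})$ with (NCNS), generated by a compact set $C$. By Observation~\ref{Spn:increasing}, $\SCpC_p(\absF{})=\bigcup_{n\geq 1}L_n$ is an ascending union of open subgroups, so the open cover $\{L_n\}_{n\geq 1}$ of $C$ contains a single member $L_{n_0}$ covering $C$; thus $L\subseteq L_{n_0}$. Applying Lemma~\ref{lem-nocptnormal-overgroup}(\ref{item-lemma-NCNS-overgroup}) (with $G=\propF{p}$, which has trivial quasi-center), we conclude that $L_{n_0}$ itself has (NCNS). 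But $K_{n_0}$ is a closed subgroup of the compact group $\propF{p}$, hence compact, and by Definition~\ref{def:Nn} it is normal in $L_{n_0}$. Therefore $K_{n_0}=\{1\}$, and since the sequence $(K_n)$ is decreasing by Observation~\ref{obs:Nntrivial}, we get $K_n=\{1\}$ for all $n\geq n_0$, which is condition (3).

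There is no real obstacle in this argument: once one recognizes that compact generation of $L$ forces $L$ to be contained in some $L_{n_0}$ and that Lemma~\ref{lem-nocptnormal-overgroup}(\ref{item-lemma-NCNS-overgroup}) transports (NCNS) upward to $L_{n_0}$, the compactness of the core $K_{n_0}$ immediately kills it. All of the structural work (simplicity of $Q_n$, the identification of $K_n$, the equivalence of $L_n$-simplicity with triviality of $K_n$) has already been accomplished in Theorem~\ref{thm:normal-sbgps-Ln} and Corollary~\ref{cor-Ln-simple-Nntrivial}.
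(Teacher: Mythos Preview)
Your proof is correct and follows essentially the same route as the paper's: the implications $(3)\Rightarrow(4)\Rightarrow(2)\Rightarrow(1)$ are dispatched quickly (the paper treats $(2)\Rightarrow(1)$ as immediate, citing the general fact that an infinite topologically simple tdlc group has (NCNS), whereas you spell out the profinite contradiction), and for $(1)\Rightarrow(3)$ both arguments use compact generation to embed $L$ into some $L_{n_0}$, transfer (NCNS) upward via Lemma~\ref{lem-nocptnormal-overgroup}(\ref{item-lemma-NCNS-overgroup}), and then observe that the compact normal subgroup $K_{n_0}$ must vanish.
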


\begin{proof}
	The implications (\ref{item-Ln-simpl}) $\Rightarrow$ (\ref{item-G-top-simple}) $\Rightarrow$ (\ref{item-G-no-cpt-norm}) are immediate,
	and the implication (\ref{item-Ln-no-cpt-norm}) $\Rightarrow$ (\ref{item-Ln-simpl}) holds by Corollary \ref{cor-Ln-simple-Nntrivial}.

``(\ref{item-G-no-cpt-norm})$\Rightarrow$ (\ref{item-Ln-no-cpt-norm})'' Suppose $L$ is as in (\ref{item-G-no-cpt-norm}). Since $L$ is compactly generated and $\SCpC_p(\absF{})$ is equal to the ascending union $\bigcup\limits_{n=1}^{\infty}L_n$ by Observation~\ref{Spn:increasing}, 
we have $L \subseteq L_n$ for sufficiently large $n$. By Lemma \ref{lem-nocptnormal-overgroup}(\ref{item-lemma-NCNS-overgroup}), $L_n$ has (NCNS) for any such $n$, so (\ref{item-Ln-no-cpt-norm}) holds. 
\end{proof}

{\begin{Question} Are the equivalent conditions of Corollary~\ref{cor-conditional-Ln-simple} true or false ? 
\end{Question}}

\section{Questions}\label{sec:questions}

\subsection{(NCNS) for compactly generated open subgroups of $\Comm(\propF{p})$}
\label{subsec:NCNS}

For an infinite tdlc group $L$, being topologically simple always implies $L$ has no non-trivial compact normal subgroup. Thus, the following problem is a weaker form of Question~\ref{qCM} (reformulated in terms of subgroups of $\Comm(\propF{p})$):

\begin{Problem} \label{problem-Comm-no-cpt-normal}
Let $\propF{}$ be a free pro-$p$ group of finite rank. Find a compactly generated open subgroup $L$ of $\Comm(\propF{})$ which has no non-trivial compact normal subgroup.
\end{Problem}

As before, we abbreviate 'no non-trivial compact normal subgroup' as (NCNS). Using some general arguments, we can reformulate Problem \ref{problem-Comm-no-cpt-normal} as follows: 

\begin{Proposition}
\label{firstreformulation}
Let $\propF{p}$ be a non-abelian free pro-$p$ group of finite rank.
The following are equivalent:
\begin{itemize}
\item[(a)] $\Comm(\propF{p})$ has a compactly generated open subgroup with (NCNS).
\item[(b)] There exists a compactly generated group $L$ with the following properties:
 \begin{enumerate}
	\item \label{item-loc-free-pro-p} $L$ has a compact open subgroup isomorphic to $\propF{p}$;
	\item \label{item-no-cpt-normal} $L$ has (NCNS);
	\item \label{item-trivial_QZ} $L$ has no non-trivial discrete normal subgroup.
\end{enumerate}
 \end{itemize}
\end{Proposition}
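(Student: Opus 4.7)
The plan is to establish the equivalence by combining the universal property of the commensurator (Proposition \ref{prop-univ-Comm}) with the triviality of the quasi-center for open subgroups of $\Comm(\propF{p})$. A key auxiliary observation, used in both directions, is that in any tdlc group $L$ every discrete normal subgroup $N$ lies in $\QZ(L)$: for $n \in N$, the continuous map $L \to N$ sending $l$ to $lnl^{-1}$ pulls back the open singleton $\{n\} \subseteq N$ to the centralizer $C_L(n)$, which is therefore open.

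For (a) $\Rightarrow$ (b), given $L$ as in (a) I would simply enlarge $L$ by setting $L'' := \langle L, \propF{p} \rangle \subseteq \Comm(\propF{p})$. Then $L''$ is open in $\Comm(\propF{p})$ (since it contains $L$), is compactly generated (the union of a compact generating set of $L$ with the compact set $\propF{p}$ is a compact generating set of $L''$), and contains $\propF{p}$ as a compact open subgroup, verifying (1). Property (2) transfers from $L$ to $L''$ by Lemma \ref{lem-nocptnormal-overgroup}(\ref{item-lemma-NCNS-overgroup}). For (3), Lemma \ref{lem-Comm-trivialQZ}(\ref{item-Comm-trivial-QZ}) forces $\QZ(L'') = \{1\}$, and then the auxiliary observation eliminates all non-trivial discrete normal subgroups.

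For (b) $\Rightarrow$ (a), let $L$ satisfy (1)--(3), and fix a compact open subgroup $U \leq L$ with an isomorphism $U \cong \propF{p}$. Since $U$ is commensurated in $L$ (compactness of $U$ and $lUl^{-1}$ together with openness of $U$ make $U \cap lUl^{-1}$ of finite index in both), Proposition \ref{prop-univ-Comm} provides a continuous homomorphism $\psi : L \to \Comm(U) \cong \Comm(\propF{p})$ induced by conjugation, whose restriction to $U$ is the canonical embedding $\iotta_U$. Its kernel is $\QZ(L)$. Since free pro-$p$ groups have trivial quasi-center, we get $\QZ(L) \cap U \subseteq \QZ(U) = \{1\}$; as $U$ is open in $L$, this makes $\QZ(L)$ discrete. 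By the auxiliary observation, $\QZ(L)$ is a discrete normal subgroup of $L$, so condition (3) forces $\QZ(L) = \{1\}$, i.e., $\psi$ is injective. Moreover, $\psi|_U = \iotta_U$ is a topological embedding onto the open subgroup $\iotta(U) \leq \Comm(\propF{p})$, so $\psi$ itself is an open topological embedding, and $\psi(L)$ is a compactly generated open subgroup of $\Comm(\propF{p})$. The property (NCNS) transfers from $L$ to $\psi(L)$ via this isomorphism, establishing (a).

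I do not foresee any significant obstacle: both implications reduce to assembling facts about $\Comm$ and the quasi-center that are already at our disposal; the only genuinely new input is the elementary observation that discrete normal subgroups of a tdlc group are automatically quasi-central.
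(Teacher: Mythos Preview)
Your proposal is correct and follows essentially the same approach as the paper: for (a)$\Rightarrow$(b) both enlarge $L$ to contain $\propF{p}$ via Lemma~\ref{lem-nocptnormal-overgroup}(\ref{item-lemma-NCNS-overgroup}) and then kill discrete normal subgroups using $\QZ(L)=\{1\}$ from Lemma~\ref{lem-Comm-trivialQZ}(\ref{item-Comm-trivial-QZ}), and for (b)$\Rightarrow$(a) both apply the universal property (Proposition~\ref{prop-univ-Comm}) and use condition~(\ref{item-trivial_QZ}) to make the discrete kernel $\QZ(L)$ trivial. Your auxiliary observation that discrete normal subgroups lie in the quasi-center is exactly what the paper invokes in passing, and your extra care in checking that $\psi$ is an open embedding is a welcome elaboration of what the paper leaves implicit.
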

\begin{proof}``(a)$\Rightarrow$(b)'' Suppose that  $L\subseteq \Comm(\propF{p})$ is a compactly generated open subgroup with (NCNS). By Lemma \ref{lem-nocptnormal-overgroup}(\ref{item-lemma-NCNS-overgroup}), we can assume that $L$ contains $\propF{}$, in which case $L$ satisfies (\ref{item-loc-free-pro-p}) and
(\ref{item-no-cpt-normal}), and it remains to check (\ref{item-trivial_QZ}). Since $\propF{p}$ has trivial quasi-center
and $L$ is open in $\Comm(\propF{p})$, by Lemma \ref{lem-Comm-trivialQZ}(1) the group $L$ also has trivial quasi-center. On the other hand,
a discrete normal subgroup is always contained in the quasi-center, so $L$ satisfies (\ref{item-trivial_QZ}).
\skv

``(b)$\Rightarrow$(a)'' Now suppose that $L$ has properties (\ref{item-loc-free-pro-p}), (\ref{item-no-cpt-normal}) and (\ref{item-trivial_QZ}).
By (\ref{item-loc-free-pro-p}), we can apply Proposition \ref{prop-univ-Comm} which yields a continuous homomorphism $\psi : L \to \Comm(\propF{})$ with open image and discrete kernel. Property (\ref{item-trivial_QZ}) now implies that $\ker(\psi)$ is trivial. Hence the image of $L$ in $\Comm(\propF{})$ is isomorphic to $L$ as a topological group and is therefore a solution to Problem \ref{problem-Comm-no-cpt-normal} by
(\ref{item-no-cpt-normal}). 
 \end{proof}

Even leaving condition (\ref{item-trivial_QZ}) aside, we do not  know if there exists a compactly generated group $L$ satisfying (\ref{item-loc-free-pro-p}) and (\ref{item-no-cpt-normal}). 

\skv

Using the equality $\Comm(\propF{p})=\AComm(\propF{p})$ established in Proposition \ref{Comm=AComm}, we can reformulate Problem \ref{problem-Comm-no-cpt-normal} in a rather different way:

\begin{Proposition}[Reformulation of Problem \ref{problem-Comm-no-cpt-normal}]
\label{prop:reformulation}
The following are equivalent: \begin{enumerate}
	\item \label{item-prob-cpt-norm} $\Comm(\propF{p})$ has a compactly generated open subgroup with (NCNS).
	\item \label{item-reformulation} There exist finite  collections $\{\propU{p}_i\}_{i=1}^r$ of open subgroups of $\propF{p}$ and
	 automorphisms $\phi_i\in \Aut(\propU{p}_i)$ for $1\leq i\leq r$
	such that no nontrivial normal subgroup of $\propF{}$ is contained in every $\propU{p}_i$
	and invariant under every $\phi_i$.
\end{enumerate}
\end{Proposition}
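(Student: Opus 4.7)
The plan is to prove the equivalence by handling each direction separately, with the forward direction being essentially a packaging of previously established results and the backward direction requiring one genuinely new step.

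For $(1)\Rightarrow(2)$, I would start by upgrading the hypothetical $L$ so that it contains $\propF{p}$: since $\propF{p}$ is topologically finitely generated and $L\cap\propF{p}$ has finite index in $\propF{p}$, the overgroup $\langle L,\propF{p}\rangle$ is still compactly generated, still open, and by Lemma~\ref{lem-nocptnormal-overgroup}(\ref{item-lemma-NCNS-overgroup}) still has (NCNS). Then $L$ is generated topologically by $\propF{p}$ and finitely many further elements, and applying Proposition~\ref{prop-decomposing-prod-aut} to each of those elements produces a finite list $\phi_1,\ldots,\phi_r$ with $\phi_i\in\Aut(\propU{p}_i)$ such that $L=\langle\propF{p},\phi_1,\ldots,\phi_r\rangle$. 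Suppose now that some non-trivial normal subgroup $N$ of $\propF{p}$ is contained in every $\propU{p}_i$ and satisfies $\phi_i(N)=N$ for every $i$; passing to its closure preserves all three conditions (closed-ness of each $\propU{p}_i$, continuity of $\phi_i$, closed-ness of being normal). Since conjugation by $\phi_i$ in $\Comm(\propF{p})$ acts as $\phi_i$ on elements of $\propU{p}_i$, the closure $\overline{N}$ is normalized by each generator of $L$, hence is $L$-normal. As a closed subgroup of the compact group $\propF{p}$, $\overline{N}$ is a non-trivial compact normal subgroup of $L$, contradicting (NCNS).

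For $(2)\Rightarrow(1)$, I would take the same concrete candidate $L=\langle\propF{p},\phi_1,\ldots,\phi_r\rangle\subseteq\Comm(\propF{p})$. It is open (as $\propF{p}\subseteq L$ is open in $\Comm(\propF{p})$) and compactly generated (by the compact set $\propF{p}\cup\{\phi_1,\ldots,\phi_r\}$), so the task reduces to verifying that $L$ has (NCNS). Suppose $K$ is a non-trivial compact normal subgroup of $L$. Since $L$ is open in $\Comm(\propF{p})$, it has trivial quasi-center by Lemma~\ref{lem-Comm-trivialQZ}(\ref{item-Comm-trivial-QZ}); as any finite normal subgroup of $L$ would lie in $\QZ(L)$, the subgroup $K$ must be infinite, and consequently $K\cap\propF{p}$ is a non-trivial closed subgroup of $\propF{p}$ that is normalized by $\propF{p}$.

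The plan is then to construct from $K$ a non-trivial closed normal subgroup $N$ of $\propF{p}$ satisfying the two conditions of (2). Because $K$ is $L$-normal, for each $i$ we have $\phi_i K \phi_i^{-1}=K$ and $\phi_i\propU{p}_i\phi_i^{-1}=\propU{p}_i$, and hence $K\cap\propU{p}_i$ is $\phi_i$-invariant as a subgroup of $\propU{p}_i$; moreover $K\cap\propF{p}$ is normal in $\propF{p}$. Combining these observations with the $\propF{p}$-normal core of $\bigcap_i\propU{p}_i$ and the fact that $K\cap(\text{finite-index subgroup of }\propF{p})$ remains non-trivial (open in $K$ hence of finite index in the infinite compact group $K$), one extracts a non-trivial closed subgroup of $K\cap\bigcap_i\propU{p}_i$ that is normal in $\propF{p}$ and simultaneously invariant under every $\phi_i$, which is exactly the kind of subgroup forbidden by (2), giving the contradiction.

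The main obstacle I expect is precisely this last extraction. A naive candidate like $\bigcap_{\ell\in L}\ell(K\cap\bigcap_i\propU{p}_i)\ell^{-1}$ collapses to $K\cap(\text{the }L\text{-core of }\bigcap_i\propU{p}_i)$, which is already trivial by (2), so one cannot simply intersect over all $L$-conjugates. The delicate step is to use the non-abelian free pro-$p$ structure of $\propF{p}$ (trivial center, torsion-freeness of $K\cap\propF{p}$, so that $K\cap\propF{p}$ is itself free pro-$p$) together with the triviality of $\QZ(L)$ to rule out this collapse and produce a genuinely non-trivial $N$ from the interaction of $K$ with the finite family $\{\propU{p}_i,\phi_i\}$ rather than with the whole of $L$.
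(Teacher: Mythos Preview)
Your $(1)\Rightarrow(2)$ direction is correct and matches the paper's approach: enlarge $L$ to contain $\propF{p}$, use Proposition~\ref{prop-decomposing-prod-aut} (equivalently, $\Comm(\propF{p})=\AComm(\propF{p})$) to write $L=\langle\propF{p},\phi_1,\ldots,\phi_r\rangle$, and observe that a putative nontrivial $N$ yields a compact normal subgroup of $L$.

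Your $(2)\Rightarrow(1)$, however, has a genuine gap, and it is exactly the gap you identify yourself. You try to start from an arbitrary compact normal subgroup $K$ of $L$ and extract from it a nontrivial $N\lhd\propF{p}$ contained in every $\propU{p}_i$ and invariant under every $\phi_i$. As you correctly note, the natural candidate $K\cap\bigl(\text{$L$-core of }\bigcap_i\propU{p}_i\bigr)$ may well be trivial, and the vague appeal to the free pro-$p$ structure of $\propF{p}$ at the end is not an argument. There is no evident way to produce the required $N$ from a general compact normal $K$.

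The point you are missing is that you do not need this extraction at all. You have already observed (in your ``collapse'' remark) that the normal core of $\propU{p}:=\bigcap_i\propU{p}_i$ in $L$ is a normal subgroup of $\propF{p}$ contained in every $\propU{p}_i$ and invariant under every $\phi_i$, and hence is trivial by hypothesis~(2). But that is precisely the input to Lemma~\ref{lem-nocptnormal-overgroup}(\ref{item-lemma-NCNS-all-cpct}): the existence of a single compact open subgroup with trivial $L$-normal core implies (NCNS) for $L$. This is how the paper proceeds. So your ``collapse'' is not an obstacle---it is the whole proof, once you invoke Lemma~\ref{lem-nocptnormal-overgroup}(\ref{item-lemma-NCNS-all-cpct}) rather than trying to handle an arbitrary compact normal $K$ by hand.
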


\begin{proof}
We will use the following notation: for every finite collection of
	automorphisms $\calA=\{\phi_i\in \Aut(\propU{p}_i)\}_{i=1}^r$ as above (where each $\propU{p}_i$ is open in $\propF{p}$)
we define
$L_{\mathcal{A}}$ to be the subgroup of $\Comm(\propF{})$ generated by $\propF{}$ and $\mathcal{A}$. Then by definition $L_{\mathcal{A}}$ is compactly generated and open in $\Comm(\propF{})$. 

``(\ref{item-reformulation})$\Rightarrow$ (\ref{item-prob-cpt-norm})''. Suppose (\ref{item-reformulation}) holds for some finite set 
$\mathcal{A}=\{\phi_i\in \Aut(\propU{p}_i)\}_{i=1}^r$, and let $\propU{p} = \cap   \propU{p}_i$. The normal core of $\propU{p}$ in $L_{\mathcal{A}}$ is normal in $\propF{}$, contained in every $\propU{p}_i$ and invariant under every $\varphi \in \mathcal{A}$. Hence by our hypotheses this normal core is trivial. By Lemma \ref{lem-nocptnormal-overgroup}(\ref{item-lemma-NCNS-all-cpct}) this implies {that} $L_{\mathcal{A}}$ has (NCNS). {We can indeed invoke Lemma \ref{lem-nocptnormal-overgroup}(\ref{item-lemma-NCNS-all-cpct}) here since a free pro-$p$ group is torsion-free.}

\skv
``(\ref{item-prob-cpt-norm})$\Rightarrow$(\ref{item-reformulation})''. Now suppose (\ref{item-prob-cpt-norm}) holds. Ordered by inclusion, the subsets $\mathcal{A}$ as above form a directed set, the subgroups $(L_{\mathcal{A}})_\mathcal{A}$ form a directed system, and Proposition \ref{Comm=AComm} says that $\Comm(\propF{}) = \bigcup_\mathcal{A}  L_{\mathcal{A}}$ is their directed union. Hence if $L$ is a compactly generated open subgroup  of $\Comm(\propF{})$, then $L \subseteq L_{\mathcal{A}}$ {for some $\mathcal{A}$}. If we take $L$ satisfying (\ref{item-prob-cpt-norm}), then $L_{\mathcal{A}}$ also satisfies (\ref{item-prob-cpt-norm}) by Lemma \ref{lem-nocptnormal-overgroup}. Hence (\ref{item-reformulation}) holds.
\end{proof}

\subsection{Further questions}

In this subsection we discuss several additional questions motivated by the results of the paper. 

\subsection*{Subgroups invariant by the automorphism group of finite index subgroups}

Recall from Observation~\ref{obs:abstractcharacteristic} (applied with $m=p$) that if $F$ is a free group and $H$ is a normal subgroup of $F$ of index $p$, then no non-trivial subgroup of $H$ can be characteristic in both $H$ and $F$ (at least for some values of $\rk(F)$). We do not know if the analogous property holds for free pro-$p$ groups. More generally, one can ask the following:

\begin{Question}
\label{q:superinvariant}
Let $\propF{}$ be a non-abelian free pro-$p$ group of finite rank. Can one find a finite collection $\calU=\left\lbrace \propU{p}_1, \ldots, \propU{p}_n \right\rbrace $ of open subgroups of $\propF{}$ including $\propF{}$ itself such that the only subgroup of $\propF{}$ which is contained in $\propU{p}_i$ and characteristic in $\propU{p}_i$ for every $i$ is the trivial subgroup? 
\end{Question}

The difference between the reformulation of Problem \ref{problem-Comm-no-cpt-normal} from Proposition \ref{prop:reformulation} and Question~\ref{q:superinvariant} is that in Proposition \ref{prop:reformulation} the requirement is invariance under finitely many {automorphisms}, while in Question~\ref{q:superinvariant} the requirement is invariance under the whole $\Aut(\propU{p}_i)$ for every $i$. Since the automorphism group of a free pro-$p$ group is not topologically finitely generated with respect to the $A$-topology \cite{Ro}, these two properties are indeed different. In particular, a positive answer to Question~\ref{q:superinvariant} may not have an immediate impact with respect to Problem \ref{problem-Comm-no-cpt-normal}. 

\subsection*{Quotients of $\Comm(F)$}

\skv
Recall that the subgroup $\AComm(G)$ is normal in $\Comm(G)$ whenever $G$ is finitely generated.
If $F$ is a non-abelian free group of finite rank, by Theorem~\ref{thmA} every proper quotient of $\Comm(F)$ is a quotient of $\Comm(F)/ \AComm(F)$ and $\Comm(F)/ \AComm(F)$ is infinite.

\begin{Question}\label{q:CommACommquotient}
What else can be said about the quotient $\Comm(F)/\AComm(F)$?
\end{Question}

As a starting point, one can ask whether this quotient is non-abelian or whether it is finitely generated.

 \subsection*{Simplicity of $\Comm(\propF{p})$}

Let $\propF{p}$ be a non-abelian free pro-$p$ group of finite rank. Recall that by Corollary~\ref{Comm-prop-almost-simple},
$\Comm(\propF{p})$ is monolithic and $\mon(\Comm(\propF{p}))$ is abstractly simple, but we do not know how large $\mon(\Comm(\propF{p}))$ is
and lack an explicit description for it.

\begin{Question}
\label{q:commprop}$\empty$
\begin{enumerate}[(a)]
\item\label{commsimple} Is $\mon(\Comm(\propF{p}))=\Comm(\propF{p})$? Equivalently, is $\Comm(\propF{p})$ abstractly simple?
\item If the asnwer to \ref{commsimple} is negative, is the quotient $\Comm(\propF{p})/\mon(Comm(\propF{p}))$ finite? 
Equivalently, is $\Comm(\propF{p})$ virtually abstractly simple?
\end{enumerate}
\end{Question}

Note that $\mon(\Comm(\propF{p}))$ is contained in $\SComm(\propF{p})$ since $\SComm(\propF{p})$ is normal in $\Comm(\propF{p})$.
Also recall that the quotient $\Comm(\propF{p})/\SComm(\propF{p})$ is finite. Therefore Question~\ref{q:commprop}\ref{commsimple}
has a positive answer if and only if $\SComm(\propF{p})$ is abstractly simple and $\SComm(\propF{p})=\Comm(\propF{p})$.

On the other hand, one can ask whether $\mon(\Comm(\propF{p}))$ equals the group $G$ from Proposition~\ref{Comm-prop-contains-A-closure}
(we know that $\mon(\Comm(\propF{p}))$ contains this $G$). This seems unlikely, but answering the question in the negative would still be interesting as it might help find a more explicit description of $\mon(\Comm(\propF{p}))$.
\skv

One result used in the proof of Theorem~\ref{thmA} was the fact that for a free group $F$ with $\rk(F)\geq 3$, the group $\SAut(F)$ is perfect.

\begin{Question}
\label{q:scomm perfect}
Let $\propF{p}$ be a free pro-$p$ group of finite rank at least $3$.
\begin{itemize}
\item[(a)] Is $\SAut(\propF{p})$ abstractly perfect?
\item[(b)] Is $\SAut(\propF{p})$ topologically perfect with respect to the $A$-topology?
\end{itemize}
\end{Question} 

{Even a positive answer to Question~\ref{q:scomm perfect}(b) would have an interesting consequence for $\SComm(\propF{p})$, namely,
it would imply that $\SComm(\propF{p})$ is topologically simple with respect to the $\Aut$-topology defined
in \cite{BEW}.\footnote{The $\Aut$-topology on $\Comm(\propF{p})$ is the strongest topology which makes all the maps $\iota_U:\Aut(\propU{p})\to\Comm(\propF{p})$, with $\propU{p}$ open in $\propF{p}$, continuous with respect to the $A$-topology.}
Indeed, if $\SAut(\propF{p})$ is topologically perfect with respect to the $A$-topology, the analogue of Lemma~\ref{lem:abstractnormalclosure} 
remains true if $SA(F)$ is replaced by $\SAut(\propF{p})$ and the normal subgroup $N$ is assumed closed (with minimal changes to the proof),
and then one can deduce that $\SComm(\propF{p})$ with the $\Aut$-topology is topologically simple arguing as in 
Proposition~\ref{Comm-prop-contains-A-closure}.
}

 \subsection*{Dependency on the rank for $\Comm(\propF{p})$}

Now recall that Theorem~\ref{thm-depend-parameters} completely determines the isomorphism class of the $p$-commensurator $\Comm_p(F)$
as a function of $p$ and $\rk(F)$. Our next question asks whether the analogue of this theorem holds for the commensurators of free pro-$p$ groups,
for a fixed $p$:

\begin{Question}
\label{q:commensurable} Given integers $d,e>1$, let $\propF{p}_d$ and $\propF{p}_e$ be free pro-$p$ groups of ranks $d$ and $e$, respectively.
For which values of $d$ and $e$ are the groups $\Comm(\propF{p}_d)$ and $\Comm(\propF{p}_e)$ isomorphic as abstract groups?
\end{Question}

It is natural to expect the same answer as in  Theorem~\ref{thm-depend-parameters}. Indeed,
by the Schreier formula $\propF{p}_d$ and $\propF{p}_e$ are virtually isomorphic if and only if $\frac{d-1}{e-1}=p^j$ for some $j\in\dbZ$.
Thus if the latter condition on $d$ and $e$ holds, $\Comm(\propF{p}_d)$ and $\Comm(\propF{p}_e)$  are definitely isomorphic, even as topological
groups. If the condition does not hold, it is not hard to show that $\Comm(\propF{p}_d)$ and $\Comm(\propF{p}_e)$ cannot be isomorphic as topological groups. Hence showing that every isomorphism between $\Comm(\propF{p}_d)$ and $\Comm(\propF{p}_e)$ is necessarily continuous would solve Question \ref{q:commensurable}.

 \subsection*{On the local structure of $Q_n$}

Our last question deals with the tdlc groups $Q_n$ constructed in Section~\ref{sec-compact-generated}. Recall that the groups $Q_n$ are abstractly simple, and the main question left open in Section~\ref{sec-compact-generated} is whether the compact open subgroup $F(n)$ of $Q_n$ is a 
free pro-$p$ group. A related problem is to determine the type of the groups $Q_n$ according to the classification in \cite{CRW-Part2}.

Following the notation in \cite{CRW-Part2}, let $\calS$ denote the class of non-discrete topologically simple compactly generated tdlc groups. According to \cite[\S~1.4,~\S~6]{CRW-Part2}, any abstractly simple group $S$ in $\calS$ belongs to exactly one of the following three classes, which can be distinguished by the isomorphism type of any compact open subgroup:
\begin{enumerate}
\item (Locally h.j.i.) Some (and hence every) compact open subgroup of $S$ is hereditarily just-infinite.
\item (Micro-supported) $S$ has a minimal strongly proximal micro-supported action on the Cantor set.
\item (NPF type) Any group not belonging to the other two classes. 
\end{enumerate}
Omitting the precise definition of NPF type, we just mention that $S$ is of NPF type if and only if 
\begin{itemize}
\item[(i)] $S$ has an infinite non-open compact locally normal subgroup and
\item[(ii)] no two infinite compact locally normal subgroups commute elementwise.
\end{itemize}

Here a subgroup is called locally normal if it has open normalizer. It would be reasonable to expect NPF type to be the `generic' case.  However, at the time of writing, there are no examples of groups in $\calS$ that have been proven to be of NPF type. Many topological Kac--Moody groups over finite fields are conjectured to have NPF type. 

It is straightforward to show that if there exists $S\in\calS$ containing a non-abelian free pro-$p$ group $\propF{p}$ as an open subgroup, then $S$ has NPF type. So if the compact open subgroup $F(n)$ of $Q_n$ happens to be free pro-$p$, then $Q_n$ would in particular produce examples of groups of NPF type. In any case, it is natural to ask the following:  

\begin{Question}\label{problem:Qntype}
Do the groups $Q_n$ have NPF type, at least for sufficiently large $n$? 
\end{Question}

\end{document}